\documentclass[10pt,reqno]{amsart}
\usepackage{amsmath,amssymb,latexsym,soul,mathrsfs}
\usepackage{ulem}
\usepackage{color,enumitem,graphicx}
\usepackage[english]{babel}
\usepackage[left=1.8cm,right=1.8cm,top=2.1cm,bottom=2.1cm]{geometry}
\usepackage{comment}
\usepackage{booktabs}
\usepackage{array}
\usepackage{multirow}
\usepackage{tabularx}
\usepackage{cite}

\makeatletter
\newcommand{\leqnomode}{\tagsleft@true}
\newcommand{\reqnomode}{\tagsleft@false}
\makeatother

\def\nd{\noindent}
\def\thend{\rule{3mm}{3mm}}
\newtheorem{theorem}{Theorem}[section]

\newtheorem{cor}{Corollary}[section]
\newtheorem{prop}{Proposition}[section]
\newtheorem{lem}{Lemma}[section]
\newtheorem{rmk}{Remark}[section]

\newcommand{\eps}{\epsilon}

\usepackage[colorlinks=true,
            urlcolor=blue,
            citecolor=red,
            linkcolor=blue,
            linktocpage,
            pdfpagelabels,
            bookmarksnumbered,
            bookmarksopen,
            backref=page]{hyperref}

\begin{document}
	\title[Extremal parameters, multiplicity and nonexistence for a singular elliptic problem]{Extremal parameters, multiplicity and nonexistence for a singular elliptic problem with competing nonlinearities}
	\vspace{1cm}
	
	\author{Jos\'e Carlos de Albuquerque$^{*}$}
	\address{J. C. de Albuquerque \newline Department of Mathematics, Federal University of Pernambuco, Brazil }
	\email{\tt josecarlos.melojunior@ufpe.br}
    \thanks{The first three authors were partially supported by CNPq with grant 301849/2025-0, 300411/2025-1 and 310440/2025-4, respectively.}
	
	\author{M. L. M. Carvalho}
	\address{M. L. M. Carvalho \newline Universidade Federal de Goi\'as, IME, Goi\^ania-GO, Brazil }
	\email{\tt marcos$\_$leandro$\_$carvalho@ufg.br}
	
	\author{Edcarlos D. Silva}
	\address{Edcarlos D. da Silva \newline  Universidade Federal de Goi\'as, IME, Goi\^ania-GO, Brazil}
	\email{\tt edcarlos@ufg.br}
	
	\author{M. L. Silva}
	\address{M. L. Silva \newline Universidade Federal de Goi\'as, IME, Goi\^ania-GO, Brazil }
	\email{\tt maxwell@ufg.br}

    \thanks{$^{*}$Corresponding author: J. C. de Albuquerque}
	
	\subjclass[2020]{35J75, 35J20, 35J61, 35B09}
		
	\keywords{Singular elliptic equations, Sharp existence threshold, Nonlinear Rayleigh quotient, Nehari manifold, Multiplicity and nonexistence, Energy classification, Competing nonlinearities.}

\begin{abstract}
We consider the following class of singular elliptic problems
\[
\begin{cases}
-\Delta u
=
-\dfrac{u}{|u|^{\beta+1}}\chi_{\{|u|>0\}}
+\lambda |u|^{p-1}u,
& \text{in }\Omega,\\[1mm]
u=0,
& \text{on }\partial\Omega,
\end{cases}
\]
where $\Omega\subset\mathbb R^N$ is a bounded smooth domain, $N\geq3$, $0<\beta,p<1$, and $\lambda>0$. The energy functional is not of class $C^1$ in $H_0^1(\Omega)$, and the standard critical point theory does not apply. We prove that a nontrivial nonnegative weak solution exists if and only if $\lambda$ exceeds a critical value $\bar\lambda$, which we identify and locate strictly between two Rayleigh-type parameters $0<\lambda_*<\bar\lambda<\lambda^*<+\infty$. This gives a sharp existence threshold. We also prove that for every $\lambda$ in a neighborhood on the left of $\lambda^*$, there exist at least two distinct nontrivial nonnegative weak solutions. The first is a minimizer on the Nehari manifold, with an energy changing sign at $\lambda^*$. The second has a strictly positive energy. For $\lambda<\lambda^*$ sufficiently close to $\lambda^*$, both solutions have positive energy. This contrasts with
previous multiplicity results for the same problem, in which the two
solutions obtained have energies of opposite signs.
\end{abstract}
	
\maketitle

\section{Introduction}
Singular elliptic equations arise naturally in several
models involving singular reactions. In fact, problems with singular nonlinearities have appeared, among
other contexts, in fluid mechanics, pseudoplastic flows,
non-Newtonian fluids, heterogeneous catalytic processes, and
biological pattern formation; see, for instance,
\cite{CrandallRabinowitzTartar,DiazMorelOswald,zhang1995,ShiYao,GherguRadulescu}.
In this work, we study the following class of singular elliptic
problems
\begin{equation}\label{P}
    \left\{
    \begin{array}{ll}
        -\Delta u
        =
        -\dfrac{u}{|u|^{\beta+1}}
        \chi_{\{|u|>0\}}
        +
        \lambda |u|^{p-1}u,
        & \text{in }\Omega,
        \\[2mm]
        u=0,
        & \text{on }\partial\Omega,
    \end{array}
    \right.
    \tag{$P_\lambda$}
\end{equation}
where $\Omega\subset\mathbb R^N$ is a smooth bounded domain, $N\geq3$,
$0<\beta,p<1$,  $\lambda>0$ is a real parameter and $\chi_{\{|u|>0\}}$ is the characteristic function for the set ${\{|u|>0\}}:=\{x\in \Omega\;;\;|u(x)|>0\}.$ From the
variational point of view, the same competition is reflected in the
formal energy associated with \eqref{P} given by
\[
    I_\lambda(u)
    =
    \frac12\|u\|^2
    +
    \frac{1}{1-\beta}
    \int_\Omega |u|^{1-\beta}\,dx
    -
    \frac{\lambda}{p+1}
    \int_\Omega |u|^{p+1}\,dx.
\]
The three terms above have different homogeneities, $1-\beta<p+1<2$. As a consequence, the behavior of $I_\lambda$ along one-dimensional fibers is determined by the interplay between the singular, sublinear and Dirichlet terms. This structure is one of the
main reasons why nonlinear Rayleigh quotients and Nehari-type
decompositions are particularly well suited to the analysis of
\eqref{P}. Furthermore, the singularity creates a substantial
analytical difficulty. Although $I_\lambda$ is formally associated
with \eqref{P}, $I_\lambda$ is not of class $C^1$ on
$H_0^1(\Omega)$, once it possesses the non differentiable mapping 
\[ u\longmapsto
    \int_\Omega |u|^{1-\beta}\,dx \]
at functions that vanish on sets of positive
Lebesgue measure. Therefore, the usual critical point theory cannot be applied
directly. In particular, the variational geometry suggested by the
formal functional can be combined with an approximation procedure
that allows one to construct critical points. After that,
we recover solutions of the original singular problem. Moreover, we
provide a complete variational description which depends on the parameter $\lambda > 0$. Namely, we identify the exact threshold for
existence of nontrivial nonnegative weak solutions, the change of sign
of the minimizing energy, and the structure of the solution set in a
full neighborhood of the critical parameter.

The dependence on the parameter $\lambda$ for related singular
problems has been studied from different points of view. In
\cite{zhang1995}, positive solutions were obtained for sufficiently
large values of $\lambda>0$. Later, in \cite{ShiYao}, the authors
considered
\[
    -\Delta u+K(x)u^{-\beta}=\lambda u^p,
\]
where $K$ is positive. They identified a critical parameter separating
the nonexistence and existence regimes for positive solutions. In
particular, the case $K\equiv1$ corresponds to the positive version
of \eqref{P}. In the same paper, the authors also obtained a maximal positive solution and
studied its dependence on $\lambda$.
Further contributions to singular elliptic equations include the
seminal work of Lazer and McKenna~\cite{LazerMcKenna}, where existence
and boundary behavior for singular Dirichlet problems were studied.
We also refer to D\'avila~\cite{Davila2004} for global regularity and
local $H^1$-minimizers associated with a nondifferentiable singular
functional, and to Figueiredo and Montenegro
\cite{FigueiredoMontenegro} for singular elliptic equations involving
critical nonlinearities.

A different and relevant point of view was developed in
\cite{DavilaMontenegro2003,Davila}. They considered equations of the
form
\[
    -\Delta u
    =
    \left(
    -u^{-\beta}
    +\lambda f(x,u)
    \right)\chi_{\{u>0\}},
\]
and studied maximal nonnegative solutions together with the transition
between positive and free-boundary behavior. Since $f(u)=u^p$ is
nonnegative, nondecreasing, concave, and sublinear, their framework
contains the nonnegative weak solutions for the problem  \eqref{P}. As a consequence, critical
parameters associated with positive solvability and with the
positive/free-boundary transition are already present in the
literature. These parameters are concerned with positivity properties
of the corresponding solutions. They should be distinguished from the
threshold for the existence of arbitrary nontrivial nonnegative weak
solutions for the problem considered above.

A variational treatment of \eqref{P} was developed in \cite{Montenegro}. In that paper, the authors introduced the regularization
\[
    g_\varepsilon(s)
    =
    \frac{s^q}{(s+\varepsilon)^{q+\beta}},
    \qquad
    0<q<p<1,
\]
and studied the corresponding nonsingular problems
\[
    -\Delta u+g_\varepsilon(u)=\lambda u^p.
\]
For $\lambda$ sufficiently large, they obtained two critical points
of the regularized energy, one by global minimization and another one by
the mountain-pass theorem. A delicate passage to the limit
$\varepsilon\to0^+$, based on uniform local gradient
estimates, implies the existence of two distinct nontrivial nonnegative weak
solutions of the singular problem. They also developed a second
approach based on perturbations of the domain. Hence, the authors in \cite{Montenegro} studied the existence of a positive
solution for large $\lambda$ and, under some extra restrictions on
$p$, they proved a uniqueness result within a suitable class of positive functions.

The same equation can be used in the framework considered in \cite{AnelloFaraci2015}. They proved
the existence of two distinct nontrivial nonnegative weak solutions
for sufficiently large values of $\lambda$, for the equation
\[
    -\Delta u
    =
    \left(
    \lambda u^{p}-u^{-\beta}
    \right)\chi_{\{u>0\}},  
\] one of them being
positive and a local minimizer of the energy, and obtained
nonexistence of positive solutions below a critical parameter. Later, a multiplicity result for a different range of singular
exponents was obtained in \cite{AnelloFaraci2017}.

The approximation procedure introduced for singular elliptic problems
has also been developed in several works. For example, in \cite{MontenegroStapenhorst} the authors obtained positive solutions by the method
of perturbation of domains. Furthermore, in \cite{StapenhorstExp,StapenhorstCritical} more general nonlinearities were considered. In those works, the authors also constructed solutions
through suitable regularization of the singular term. The main feature is to consider a procedure taking a limit in order to control the singular nonlinear term. In fact, we apply a regularization procedure, and the passage
$\varepsilon\to0^+$ is now a well-known tool.

The fibering approach for semilinear elliptic problems was
developed in \cite{yavdat2} where the parameter $\lambda > 0$ is controlled by an extremal value $\lambda^*$ which
is characterized through variational principles. This mathematical viewpoint
was later systematically formulated in terms of nonlinear generalized
Rayleigh quotients, see \cite{yavdat1}. In that work, the extremal parameters $\lambda_*$ and $\lambda^*$ are introduced
through nonlinear Rayleigh quotients and used in the Nehari manifold
analysis. These parameters were used in order to ensure existence and multiplicity of critical values of the energy functional associated with the semilinear elliptic problem. More recently, nonlinear
Rayleigh quotient techniques have been applied to several singular
nonlocal problems, see \cite{CardosoSilvaCarvalhoYang, SilvaLeiteSilva, SilvaRochaSilva}.

It is important to stress that existence and 
multiplicity of positive solutions for Problem \eqref{P} and related
problems have been widely studied in recent years. However, some important issues are not considered in the literature. Here, we collect some interesting questions for the study of singular elliptic problems.
\begin{itemize}
    \item [(i)] the exact threshold for the
existence of nontrivial nonnegative weak solutions;
\item [(ii)] the change of
sign of the minimizing energy as a function of $\lambda$;
\item [(iii)] the
full Nehari geometry in a neighborhood of the critical parameter.
\end{itemize}
\subsection{Contribution of the present work}\label{sec:contribution}
The purpose of the present work is to provide existence and multiplicity of solutions for singular elliptic problems that depend on a parameter $\lambda > 0$. Our approach combines the regularization method with
nonlinear Rayleigh quotients and the Nehari method. A key feature of our approach is the use of nonlinear Rayleigh
quotients to identify sharp extremal parameters governing the Nehari
geometry. This analysis also allows us to apply the Lagrange multiplier theorem
and to show that components of the Nehari manifold are
natural constraints. Recall that the regularized
singular term is not homogeneous. Furthermore, for the regularized problem, we need to consider the fibering maps, Rayleigh quotients, and extremal values which depend on a parameter $\varepsilon > 0$. The main challenge here is to show that the variational and
geometric properties for the fibering maps are stable as $\varepsilon\to0^+$. The last assertion, together with some careful analysis, implies that the original singular problem has the same properties as the regularized problem. More precisely, we identify two Rayleigh-type parameters:
$\lambda_*$, associated with the Nehari geometry, and $\lambda^*$,
associated with the change of sign of the minimizing energy. We
prove the multiplicity of solutions in a whole neighborhood of $\lambda^*$, including
parameters below $\lambda^*$. Hence, we classify the corresponding energy
levels. We further identify an extreme sharp parameter $\bar\lambda$,
such that the existence of weak nontrivial nonnegative 
solutions satisfies $\lambda_*<\bar\lambda<\lambda^*$. The main contribution is not only to recover multiplicity of solutions for
large parameters $\lambda > 0$. In fact, we provide a finer variational description of
the parameter $\lambda > 0$ and find some extra properties on existence and nonexistence of solutions for Problem \eqref{P}. Furthermore, using the nonlinear
Rayleigh quotient method combined with regularization, we give the
geometric information on singular elliptic problems.

Our results also provide the exact threshold
$\bar\lambda$ separating existence from nonexistence of nontrivial
nonnegative weak solutions. Furthermore, we give the location of $\bar\lambda$ inside
the interval $(\lambda_*,\lambda^*)$. Moreover, we prove the existence of two
distinct nontrivial nonnegative weak solutions for every
$\lambda\in(\lambda^*-\epsilon,+\infty)$, including parameters below
$\lambda^*$. On the other hand, we give the complete classification of the energy of the minimizing
solution as positive, zero, or negative, according to the position of
$\lambda$ relative to $\lambda^*$. In particular, for $\lambda$ in a
left neighborhood of $\lambda^*$, there exist two solutions with positive energies. This phenomenon does not appear in the previous results given by \cite{Montenegro} and \cite{AnelloFaraci2015}.

The main technical novelty is to consider the nonlinear Rayleigh quotient
analysis, which is developed at the level of the regularized problems. For this problem, 
the standard arguments are not available due to the loss of homogeneity. This
requires a separate analysis of the fibering map, the Rayleigh
levels, and the associated minimizers, together with some uniform estimates
with respect to the regularization parameter $\varepsilon  > 0$. Hence, we show that the
variational and geometric information is stable as
$\varepsilon\to0^+$ and can be used for the original singular elliptic
problem. To the best of our knowledge, the present work is the first one
considering the nonlinear Rayleigh quotient method for singular elliptic problems where the perturbation techniques are discussed taking into account the presence of competing nonhomogeneous nonlinearities.

It is worthwhile to mention that our main results extend the work \cite{Montenegro} in several directions. First, we identify the sharp
existence threshold $\bar\lambda$. We also prove the multiplicity of solutions for
$\lambda$ below $\lambda^*$. Furthermore, we classify the sign of the
minimizing energy. Moreover, we show that the two solutions have energies of
the same sign for $\lambda$ close to $\lambda^*$ from below.
On the other hand, we also mention that the present work generalizes some results discussed in \cite{AnelloFaraci2015}. Indeed, we do not require
$\lambda$ to be large and we provide the sharp existence threshold which describes the full Nehari geometry in a neighborhood of the
critical parameter. These improvements are summarized in
Figures~\ref{fig-EM1} and~\ref{fig-EM2} below.

The main differences between the present work and the existing literature
are summarized in Table~\ref{tab:comparison}. We emphasize that the
comparison with \cite{DavilaMontenegro2003} concerns the threshold for
positive solvability, which is a different (though related) question from
the sharp threshold for nontrivial nonnegative weak solutions considered here.

\begin{table}[htbp]
\centering
\caption{Comparison of the present work with previous contributions.}
\label{tab:comparison}
\footnotesize
\renewcommand{\arraystretch}{1.0}
\begin{tabularx}{\textwidth}{@{}l *{4}{>{\raggedright\arraybackslash}X} @{}}
\toprule
\textbf{Feature} 
& \textbf{\cite{Montenegro}} 
& \textbf{\cite{AnelloFaraci2015}} 
& \textbf{\cite{DavilaMontenegro2003}} 
& \textbf{Present work} \\
\noalign{\global\arrayrulewidth=0.01pt}\hline

Existence regime 
& $\lambda$ large 
& $\lambda$ large 
& Positive solvability threshold 
& $\lambda \geq \bar\lambda$ (sharp) \\
\noalign{\global\arrayrulewidth=0.01pt}\hline
Multiplicity 
& Two solutions\newline for $\lambda$ large 
& Two solutions \newline for $\lambda$ large 
& Not addressed 
& Two solutions for $\lambda \in (\lambda^*-\epsilon,+\infty)$ \\
\noalign{\global\arrayrulewidth=0.01pt}\hline
Sharp existence threshold 
& No 
& No 
& Partial (for positive solutions) 
& Yes ($\bar\lambda$) \\
\noalign{\global\arrayrulewidth=0.01pt}\hline
Sign of minimizing energy 
& Not classified 
& Not classified 
& Not classified 
& Changes sign according to $\lambda \lessgtr \lambda^*$ \\
\noalign{\global\arrayrulewidth=0.01pt}\hline
Nehari geometry 
& Not described 
& Not described 
& Not described 
& Complete description via $\lambda_*$, $\bar\lambda$, $\lambda^*$ \\
\noalign{\global\arrayrulewidth=0.01pt}\hline
Strict positivity 
& Not established 
& Yes (one solution) 
& Yes (positive branch) 
& Not addressed \\
\noalign{\global\arrayrulewidth=0.01pt}\hline
Energy of the two solutions 
& Opposite signs 
& Opposite signs \newline (for large $\lambda$) 
& Not applicable 
& Same sign for $\lambda$ close to $\lambda^*$ from below \\
\noalign{\global\arrayrulewidth=0.01pt}\hline
Method 
& Regularization $+$ variational 
& Variational 
& Monotone iteration $+$ free boundary 
& Regularization and Nonlinear Rayleigh quotients \\
\bottomrule
\end{tabularx}
\end{table}
\subsection{Strategy of the proof}\label{sec:strategy}
The proof combines three main ingredients. First, we develop the
nonlinear Rayleigh quotient machinery directly for the singular
problem and identify the threshold parameters $\lambda_*$ and
$\lambda^*$, together with the associated fibering geometry and the
decomposition of the Nehari set. This first step identifies the
natural thresholds and the geometry of the Nehari set, and provides
the variational framework for the subsequent analysis. Second,
following the regularization procedure of Montenegro and Silva
\cite{Montenegro}, we introduce a family of nonsingular problems and
study the corresponding Rayleigh quotients $\lambda_{n,\eps}$ and
$\lambda_{e,\eps}$. Since the regularized singular term is not
homogeneous along the fibers, the threshold levels, the minimizers,
and the associated energies must be analyzed separately, and we
establish uniform estimates with respect to the regularization
parameter, both for the Rayleigh levels and for the associated
minimizers. This second step provides a family of nonsingular problems
for which the standard critical point theory is available. Third, we
let $\varepsilon\to0^+$ and show that the variational information, the
critical levels, and the solutions constructed at the regularized
level persist in the singular elliptic problem. This last step allows us to
transfer the Nehari decomposition, the energy classification, and the
extremal parameter $\bar\lambda$ to the original problem \eqref{P}.

\subsection{Notation and weak solutions}\label{sec:notation}

Before stating our main results, we specify the notion of weak solution
for the present work. Due to the presence of the singular
term, the weak formulation requires some careful analysis. In particular, the
singular term is understood only on the set $\{|u|>0\}$. Moreover, we consider the corresponding local integrability condition. Hence, we
use the following notion of weak solution: We say that
$u\in H_0^1(\Omega)$ is a weak solution of \eqref{P} whenever
\(
    \frac{1}{|u|^\beta}\chi_{\{|u|>0\}}
    \in L^1_{\mathrm{loc}}(\Omega)
\)
and
\[
    \int_\Omega \nabla u\nabla\varphi\,dx
    =
    \int_{\{|u|>0\}}
    \left(
        -\frac{u}{|u|^{\beta+1}}
        +\lambda |u|^{p-1}u
    \right)\varphi\,dx
\]
for every $\varphi\in C_c^\infty(\Omega)$. In fact, the last assertion is equivalent to the following identity  
\[
    \int_\Omega \nabla u\nabla\varphi\,dx
    +
    \int_{\{|u|>0\}}
    \frac{u}{|u|^{\beta+1}}\varphi\,dx
    =
    \lambda
    \int_\Omega |u|^{p-1}u\,\varphi\,dx,
    \qquad
    \forall\,\varphi\in C_c^\infty(\Omega).
\]
This approach aligns with \cite{Montenegro}, where nonnegative solutions are considered for the problem
\[
    -\Delta u
    =
    \left(
        -u^{-\beta}
        +\lambda u^p
    \right)\chi_{\{u>0\}}
\]
where the weak formulation is given by
\[
    \int_\Omega \nabla u\nabla\varphi\,dx
    =
    \int_{\{u>0\}}
    \left(
        -\frac{1}{u^\beta}
        +\lambda u^p
    \right)\varphi\,dx,
    \qquad
    \forall\,\varphi\in C_c^1(\Omega).
\]
Indeed, whenever $u\geq0$, we deduce that 
\(
    \{|u|>0\}=\{u>0\},
    ~
    \frac{u}{|u|^{\beta+1}}
    =
    \frac{1}{u^\beta}
    ~\text{on }\{u>0\},
\)
and
\(
    |u|^{p-1}u=u^p.
\)
Therefore, for nonnegative functions, our notion of weak solution
coincides with that used in \cite{Montenegro}. In fact, we use the odd extensions in the following form:
\[
    s\longmapsto \frac{s}{|s|^{\beta+1}}
    \quad\text{and}\quad
    s\longmapsto |s|^{p-1}s.
\]
The last assertion allows us to formulate problem $(P_\lambda)$ in order to consider a more general setting.  Hence, our formulation also solves the problem considered in~\cite{Montenegro}. Notice also that nonnegativity is obtained
through the variational  procedures using the
invariance of the functionals under the maps $u \mapsto |u|$.

Now, for each $u\in H_0^1(\Omega)\setminus\{0\}$, we consider the fibering map
$\phi_u:(0,+\infty)\to\mathbb{R}$ defined by
\begin{equation}\label{fibering}
	\phi_u(t):=I_\lambda(tu)
	=\frac{t^2}{2}\|u\|^2
	+\frac{t^{1-\beta}}{1-\beta}\|u\|_{1-\beta}^{1-\beta}
	-\frac{\lambda t^{p+1}}{p+1}\|u\|_{p+1}^{p+1},
	\qquad t>0.
\end{equation}
Notice that $\phi_u\in C^\infty((0,+\infty),\mathbb{R})$ for each
$u\in H_0^1(\Omega)\setminus\{0\}$.
Throughout the paper, we use the notation
$$\|u\|_r:=\|u\|_{L^r(\Omega)} \quad \mbox{and} \quad \|u\|^2:=\|u\|^2_{H_0^1(\Omega)}=\int_\Omega|\nabla u|^2\,dx,$$ where $1\leq r\leq\infty$.
Moreover, $S_{p+1}$ and $S_{2^*}$ denote the Sobolev constants for
the embeddings of $H_0^1(\Omega)$ into $L^{p+1}(\Omega)$ and
$L^{2^*}(\Omega)$, respectively. More specifically, we obtain 
\(\|u\|_{p+1}\leq S_{p+1}\|u\|\) and
\(\|u\|_{2^*}\leq S_{2^*}\|u\|\) for every $u\in H_0^1(\Omega)$.

At this stage, by using the same ideas from \cite{nehari1,nehari2}, we
consider the Nehari method for our Problem \eqref{P} as follows:
	\begin{equation}
			\mathcal{N}_{\lambda}:=\left\{ u \in H_0^1(\Omega)\setminus \{0\} : \phi_u'(1)=0 \right\}
			=\left\{ u\in  H_0^1(\Omega)\setminus \{0\}:\Vert u\Vert^2+\|u\|^{1-\beta}_{1-\beta}=\lambda\|u\|^{p+1}_{p+1}\right\}.
		\label{nehari}
	\end{equation}
	Under these conditions, following the same ideas explored in \cite{tarantello}, we split the Nehari set $\mathcal{N}_\lambda$ into three disjoint subsets;
\[
\begin{aligned}
\mathcal{N}^+_{\lambda}
&=\{u\in\mathcal{N}_{\lambda}: \phi_u''(1)>0\},\\
\mathcal{N}^-_{\lambda}
&=\{u\in\mathcal{N}_{\lambda}: \phi_u''(1)<0\},\\
\mathcal{N}^0_{\lambda}
&=\{u\in\mathcal{N}_{\lambda}: \phi_u''(1)=0\}.
\end{aligned}
\]
In order to apply the nonlinear Rayleigh quotient, we also need to
consider some definitions. First, we shall define 
	\begin{eqnarray}\label{conjE}
		\mathcal{E}_{\lambda}\!\!&\!\!=\!\!&\!\!\{u\in H_0^1(\Omega)\setminus \{0\}:I_{\lambda}(u)=0 \}
		=\left\{ u\in  H_0^1(\Omega)\setminus \{0\}:\dfrac{1}{2}\Vert u\Vert^2+\frac{1}{1-\beta}\|u\|^{1-\beta}_{1-\beta} =\dfrac{\lambda}{p+1}\|u\|^{p+1}_{p+1}\right\}.
	\end{eqnarray}
This work aims to find weak solutions for the main problem by solving the following minimization problems:
	\begin{equation}\label{ee1}
		c_{\mathcal{N}^-_{\lambda}}:=\inf\{I_{\lambda}(u): u\in\mathcal{N}_{\lambda}^-\},
	\end{equation}
	\begin{equation}\label{ee2}
		c_{\mathcal{N}^+_{\lambda}}:=\inf\{I_{\lambda}(u): u\in\mathcal{N}_{\lambda}^+\}.
	\end{equation}
	Notice also that
	\begin{equation}\label{a0i}
		u \in \mathcal{N}_{\lambda} \quad \mbox{if and only if} \quad \lambda = \dfrac{\|u\|^2 +\|u\|^{1-\beta}_{1-\beta}}{\|u\|^{p+1}_{p+1}}.
	\end{equation}
	Similarly, we deduce the following assertion:
	\begin{equation}\label{a11i}
		u\in\mathcal{E}_{\lambda} \quad \mbox{if and only if} \quad \lambda = \dfrac{\frac{1}{2}\|u\|^2 +\frac{1}{1-\beta}\|u\|^{1-\beta}_{1-\beta}}{\frac{1}{p+1}\|u\|^{p+1}_{p+1}}.
	\end{equation}
	In this case, we define the functionals $R_n,R_e: H_0^1(\Omega)\setminus\{0\} \to \mathbb{R}$ by
	\begin{eqnarray}\label{R-pert}
		R_{n}(u)=\dfrac{\|u\|^2 +\|u\|^{1-\beta}_{1-\beta}}{\|u\|^{p+1}_{p+1}}\qquad\mbox{and}\qquad R_{e}(u)= \dfrac{\frac{1}{2}\|u\|^2 +\frac{1}{1-\beta}\|u\|^{1-\beta}_{1-\beta}}{\frac{1}{p+1}\|u\|^{p+1}_{p+1}}.
	\end{eqnarray}
The functionals $R_n$ and $R_e$ are continuous on
$H_0^1(\Omega)\setminus\{0\}$. However, due to the
singular term $\|u\|_{1-\beta}^{1-\beta}$, these functionals are not 
Fr\'echet differentiable on $H_0^1(\Omega)\setminus\{0\}$ in general.
Nevertheless, for every fixed $u\in H_0^1(\Omega)\setminus\{0\}$, the
maps
\[
t\mapsto R_n(tu)
\qquad\text{and}\qquad
t\mapsto R_e(tu)
\]
are of class $C^\infty$. As a consequence,  taking into account 
the singular nonlinear term, derivatives of the
Rayleigh quotients along fibers are understood with respect to the
scalar variable $t>0$. At this stage, we also consider
	\begin{equation}\label{mun}
	   \lambda_*:=\inf_{u\in H_0^1(\Omega)\setminus\{0\}}\inf_{t>0}R_n(tu) \quad \mbox{and} \quad \lambda^*:=\inf_{u\in H_0^1(\Omega)\setminus\{0\}}\inf_{t>0}R_e(tu).
    \end{equation}
We emphasize that all functionals considered in this work are
invariant under the function $u \mapsto |u|$. Namely, 
\[
I_\lambda(|u|)=I_\lambda(u),\qquad
R_n(|u|)=R_n(u),\qquad
R_e(|u|)=R_e(u),
\]
Analogously, these types of identities hold for the perturbed functionals.
Consequently, the minimizing sequences and constrained minimizers can be
replaced by their absolute values without changing the corresponding
energy levels. This symmetry is also used in the proof of our main results.

\subsection{Main results}\label{sec:main-results}

The main results of this work can be stated as follows.
\begin{theorem}\label{theorem1}
    Suppose $0<\beta,p<1$. Then
    \(0<\lambda_*<\lambda^*<+\infty\).
    Furthermore, there exists $\epsilon_1>0$ such that Problem
    \eqref{P} admits a nontrivial nonnegative weak solution
    $u_\lambda\in H_0^1(\Omega)$ whenever
    \(\lambda\in(\lambda^*-\epsilon_1,+\infty).\)
    Moreover, for each such $\lambda$, there exists $A>0$ such that
    $I_\lambda(u_\lambda)\geq A$.
\end{theorem}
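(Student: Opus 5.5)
We split the proof into three parts: the explicit computation of the extremal parameters, the construction of a solution for a regularized problem, and the passage $\varepsilon\to0^+$.

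\textbf{Computing $\lambda_*$ and $\lambda^*$.} Fix $u\neq0$ and write $a=\|u\|^2$, $b=\|u\|_{1-\beta}^{1-\beta}$, $c=\|u\|_{p+1}^{p+1}$. Then
\[
R_n(tu)=\frac{t^{1-p}a+t^{-(p+\beta)}b}{c}.
\]
This function tends to $+\infty$ both as $t\to0^+$ and as $t\to+\infty$, and it has a unique critical point $t_n(u)$. Substituting $t_n(u)$ gives
\[
\inf_{t>0}R_n(tu)=C_{n}\,\frac{a^{\frac{p+\beta}{1+\beta}}\,b^{\frac{1-p}{1+\beta}}}{c},
\]
where $C_n=C_n(p,\beta)>0$. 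The same computation for $R_e$ yields an analogous expression with a constant $C_e$.

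The quotient on the right is $0$-homogeneous. Using $c\le S_{p+1}^{p+1}a^{(p+1)/2}$ is not enough by itself. Instead, interpolate $\|u\|_{p+1}$ between $L^{1-\beta}$ and $L^{2^*}$, so that $c\le b^{\theta_1}\|u\|_{2^*}^{\theta_2}$ with exponents matching by homogeneity; the exponents balance exactly. Then Sobolev gives $\lambda_*>0$, and $\lambda^*<\infty$ is trivial. Note that $\inf_t R_e(tu)$ and $\inf_t R_n(tu)$ differ by the ratio $C_e/C_n$. To prove $\lambda_*<\lambda^*$, I would use the standard fibering identity $\frac{d}{dt}R_e(tu)=\frac{p+1}{t}\,(R_n(tu)-R_e(tu))$ (up to constants). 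At the minimum point $t_e$ of $R_e$ this gives $R_n(t_eu)=R_e(t_eu)$. Moreover, at $t_e$ the function $R_n$ is strictly decreasing, because $t_n<t_e$. Hence $\inf R_n(tu)<R_e(t_eu)$, and the strict inequality passes to the infima once we know $\lambda^*$ is attained.

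\textbf{Regularized problem.} Replace $|u|^{1-\beta}$ by $G_\varepsilon(u)=\int_0^{|u|}g_\varepsilon$ (Montenegro--Silva). This gives $C^1$ functionals $I_{\lambda,\varepsilon}$ with quotients $\lambda_{n,\varepsilon}$ and $\lambda_{e,\varepsilon}$. Since $G_\varepsilon$ increases to $|s|^{1-\beta}/(1-\beta)$ as $\varepsilon\downarrow0$, I would show $\lambda_{e,\varepsilon}\to\lambda^*$, using monotone convergence together with the attainment of $\lambda^*$. For $\lambda\in(\lambda^*-\epsilon_1,\infty)$, I would minimize $I_{\lambda,\varepsilon}$ on $\mathcal N^-_{\lambda,\varepsilon}$.

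The point of the choice $\epsilon_1$ is that below $\lambda^*$ the $\mathcal N^-$ level stays positive. Indeed, if $u\in\mathcal N^-$ with $I\le0$, then $R_e(u)\le\lambda<\lambda^*$. Conversely, for $\lambda$ slightly below $\lambda^*$, the set $\mathcal N^-$ is nonempty because $\lambda>\lambda_*$. We also need $\mathcal N^0_{\lambda,\varepsilon}=\emptyset$, which should hold at least when $\lambda$ is away from $\lambda_*$. With that in hand, coercivity and weak lower semicontinuity give a minimizer. The Lagrange multiplier argument then shows it is a critical point of $I_{\lambda,\varepsilon}$, and replacing it by $|u|$ makes it nonnegative.

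\textbf{Limit $\varepsilon\to0^+$.} On $\mathcal N^-$, the identity $\phi''<0$ combined with the Nehari identity yields a uniform lower bound $\|u_\varepsilon\|\ge\delta$, since the Dirichlet term dominates. Uniform upper bounds follow from coercivity. The energy levels satisfy $I_{\lambda,\varepsilon}(u_\varepsilon)\ge A>0$ uniformly in $\varepsilon$. For $\lambda\ge\lambda^*$ this holds because it is the $\mathcal N^-$ level with positive fiber maximum. For $\lambda<\lambda^*$ it follows from a quantitative version of $\inf R_e\ge\lambda_{e,\varepsilon}>\lambda$.

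Then pass to the limit as in \cite{Montenegro}, using local gradient estimates to obtain a weak solution $u_\lambda\ge0$. Strong $H^1$ convergence comes from testing the equation with $u_\varepsilon$. This gives $u_\lambda\neq0$ and $I_\lambda(u_\lambda)=\lim I_{\lambda,\varepsilon}(u_\varepsilon)\ge A$.

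\textbf{Main obstacle.} The main obstacle is this last step. One must show that the convergence is strong and that the singular term passes to the limit on $\{u_\lambda>0\}$ with $u^{-\beta}\chi\in L^1_{loc}$. A second difficulty is the uniformity of $\epsilon_1$ and $A$ in $\varepsilon$, which rests on the convergence $\lambda_{e,\varepsilon}\to\lambda^*$.
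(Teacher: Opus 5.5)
The gap is in the regularized existence step. You minimize $I_{\lambda,\varepsilon}$ over $\mathcal N^-_{\lambda,\varepsilon}$ and rely on $\mathcal N^0_{\lambda,\varepsilon}=\emptyset$ ``away from $\lambda_*$''. That is false here for \emph{every} $\lambda\ge\lambda_{\varepsilon,*}$. The function $\lambda_{n,\varepsilon}$ is continuous on the connected set $H_0^1(\Omega)\setminus\{0\}$, attains its minimum $\lambda_{\varepsilon,*}$, and is unbounded above (concentrating bumps). Hence every such $\lambda$ equals $\lambda_{n,\varepsilon}(w)$ for some $w$, and then $t_{n,\varepsilon}(w)w\in\mathcal N^0_{\lambda,\varepsilon}$; the same holds for $\varepsilon=0$.

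Consequently, a minimizing sequence in $\mathcal N^-_{\lambda,\varepsilon}$ may converge to a point of $\mathcal N^0_{\lambda,\varepsilon}$: the two fiber critical points $t^-<t^+$ merge as $\lambda_{n,\varepsilon}(w)\uparrow\lambda$. There $\mathcal J_{\lambda,\varepsilon}'(u)u=\phi''_{\varepsilon,u}(1)=0$, so the Lagrange multiplier argument cannot force $\mu=0$. Nor is there an energy gap to exclude this. For the limit problem, $I_\lambda(u)>\frac{(1-p)(1+\beta)}{2(1-\beta)(p+1)}\|u\|^2$ on $\mathcal N^-_\lambda$, with equality on $\mathcal N^0_\lambda$, so both levels are positive quantities of the same kind. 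The paper separates $\mathcal N^+$ from $\mathcal N^0$ only because $c_{\mathcal N^+_\lambda}\to0$ as $\lambda\uparrow\lambda^*$, while $\inf_{\mathcal N^0}I\ge\sigma>0$.

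This is not a technicality. Apart from the false emptiness of $\mathcal N^0$, nothing in your scheme uses $\lambda>\lambda^*-\epsilon_1$: positivity of $I$ on $\mathcal N^-$ holds for every $\lambda$, by the inequality above. Yet (P) has no nontrivial nonnegative solution for $\lambda\in(\lambda_*,\bar\lambda)$, with $\bar\lambda>\lambda_*$ (Theorem~\ref{theorem4a}). A minimizer lying in $\mathcal N^-_{\lambda,\varepsilon}$ would, after replacing it by its modulus, be a nonnegative critical point. Since $\|u\|_{p+1}\ge\delta_\lambda$ on $\mathcal N_{\lambda,\varepsilon}$ uniformly in small $\varepsilon$, such points pass to nontrivial solutions of (P). So for those $\lambda$ the $\mathcal N^-$-infimum is genuinely not attained in $\mathcal N^-_{\lambda,\varepsilon}$ once $\varepsilon$ is small, and any argument near $\lambda^*$ must use that proximity quantitatively.

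The paper does this by dropping the constraint and applying the Mountain Pass Theorem to the $C^1$ functional $I_{\lambda,\varepsilon}$:
\begin{itemize}
\item $I_{\lambda,\varepsilon}\ge A=\rho^2/4$ on $\partial B_\rho(0)$, uniformly in $\varepsilon$.
\item The endpoint is the attained $\lambda^*$-minimizer $v_{\lambda^*}$ with $I_{\lambda^*}(v_{\lambda^*})=0$. For it, $I_{\lambda,\varepsilon}(v_{\lambda^*})<I_\lambda(v_{\lambda^*})=\frac{\lambda^*-\lambda}{p+1}\|v_{\lambda^*}\|_{p+1}^{p+1}<A$ as soon as $\lambda^*-\lambda<\delta_1$. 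This is exactly where the restriction on $\lambda$ enters.
\item Palais--Smale holds because $g_\varepsilon$ is bounded, and nonnegativity comes from restricting to nonnegative paths.
\end{itemize}
This gives the $\varepsilon$-uniform bound $I_{\lambda,\varepsilon}(u_{\lambda,\varepsilon})\ge A$. After that, your limit step goes through: Montenegro--Silva estimates, then strong convergence by comparing the regularized Nehari identity with the Nehari identity of the limit, obtained by testing with $(u_\lambda-1/k)^+$.

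There are also some minor slips:
\begin{itemize}
\item Interpolating between $L^{1-\beta}$ and $L^{2^*}$ does not balance the exponents. Use $L^2$ and Poincar\'e instead, which gives $\lambda_n\ge c_{\beta,p}\Lambda_1^{(p+\beta)/(1+\beta)}$.
\item $t\mapsto R_n(tu)$ is increasing, not decreasing, at $t_e>t_n$. Your conclusion $\lambda_n<\lambda_e$ is unaffected.
\item On $\mathcal N^-$ it is the singular term, not the Dirichlet term, that dominates.
\end{itemize}
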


\begin{theorem}\label{theorem2}
	Suppose $0<\beta,p < 1$. Then there exists $\epsilon_2>0$ such that
	Problem \eqref{P} admits a nontrivial nonnegative weak solution
	$v_{\lambda} \in H_0^1(\Omega)$, distinct from the solution
	$u_\lambda$ of Theorem~\ref{theorem1}, whenever
	$\lambda \in (\lambda^*-\epsilon_2, +\infty)$, satisfying the
	following statements:
	\begin{itemize}
		\item[(i)] It holds that $\phi_{v_\lambda}''(1)>0$.
		\item[(ii)] Assume that $\lambda\in (\lambda^*-\epsilon_2, \lambda^*)$. Then $I_{\lambda}(v_{\lambda})>0$;
		\item[(iii)] For $\lambda=\lambda^*$ we obtain that $I_{\lambda^*}(v_{\lambda^*})=0$;
		\item[(iv)] Assume that $\lambda>\lambda^*$. Then $I_\lambda(v_{\lambda})<0$.
	\end{itemize}
\end{theorem}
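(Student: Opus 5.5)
The solution $v_\lambda$ will be obtained as a minimizer of $I_\lambda$ on the component $\mathcal{N}^+_\lambda$; equivalently, it is the local (or global) minimizer of the energy. The work is done first at the level of the regularized problems and then passed to the limit $\varepsilon\to0^+$.

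\textbf{Fibering geometry.} We begin with the fiber $\phi_u$ in \eqref{fibering}. Writing $\phi_u'(t)=t^{-\beta}\big(t^{1+\beta}\|u\|^2+\|u\|_{1-\beta}^{1-\beta}-\lambda t^{p+\beta}\|u\|_{p+1}^{p+1}\big)$ and using $1-\beta<p+1<2$, one checks that $t\mapsto R_n(tu)$ decreases and then increases, with a unique global minimum. Hence:
\begin{itemize}
\item if $\lambda>\inf_t R_n(tu)$, the fiber has exactly two critical points $t^-(u)<t^+(u)$, a local maximum and a local minimum, so that $t^+(u)u\in\mathcal{N}^+_\lambda$;
\item the identity $\frac{d}{dt}R_e(tu)=\frac{p+1}{t}\,\frac{\phi_u'(t)}{\|u\|_{p+1}^{p+1}t^{p+1}}$ (up to a positive factor) shows that $R_e(tu)=\lambda$ and $R_n(tu)=\lambda$ at the minimum of $R_e$ coincide.
\end{itemize}
It follows that $\phi_u(t^+(u))$ is negative, zero or positive according as $\lambda>$, $=$, $<\inf_t R_e(tu)$. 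This yields the sign pattern in (ii)–(iv) at the level of $c_{\mathcal{N}^+_\lambda}$: for $\lambda>\lambda^*$ pick $u$ with $\inf_tR_e(tu)<\lambda$, which gives $c_{\mathcal{N}^+_\lambda}<0$; for $\lambda\le\lambda^*$ every element of $\mathcal{N}^+_\lambda$ has $I_\lambda\ge0$.

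\textbf{Regularized problem and limit.} For each $\varepsilon>0$ we work with $I_{\lambda,\varepsilon}$, which uses the primitive $G_\varepsilon$ of the $g_\varepsilon$ of \cite{Montenegro}. This functional is $C^1$ and coercive, since $p+1<2$. For $\lambda$ in the stated range:
\begin{itemize}
\item we minimize $I_{\lambda,\varepsilon}$ over $\mathcal{N}^+_{\lambda,\varepsilon}$, or, when $\lambda>\lambda^*$, simply globally;
\item we replace minimizing sequences by their absolute values, use weak compactness and compact embeddings to obtain a nonnegative minimizer $v_{\lambda,\varepsilon}$, and use the Lagrange multiplier argument (valid since $\mathcal{N}^0_{\lambda,\varepsilon}=\emptyset$ near the minimizer) to show it is a critical point;
\item we obtain uniform bounds on $\|v_{\lambda,\varepsilon}\|$ from coercivity, and uniform lower bounds $\|v_{\lambda,\varepsilon}\|\ge c>0$ from the Nehari identity and Sobolev embedding, which prevents collapse to $0$.
\end{itemize}
We then let $\varepsilon\to0^+$. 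The local gradient estimates of \cite{Montenegro} give strong $H^1$ convergence to a nonnegative weak solution $v_\lambda$ of \eqref{P}, including $u^{-\beta}\chi_{\{u>0\}}\in L^1_{\rm loc}$. Strong convergence passes the Nehari identity, the second-derivative inequality (i) and the energy to the limit. Strictness $\phi''_{v_\lambda}(1)>0$ in the limit follows from the quantitative separation of $\mathcal{N}^+$ from $\mathcal{N}^0$ given by the fiber structure: two distinct critical points of $\phi_{v}$ persist because $R_n(v)=\lambda>\inf_tR_n(tv)$.

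\textbf{Signs, distinctness, and the main obstacle.} For (iii) and (iv), the limit energy equals $c_{\mathcal{N}^+_\lambda}$, using $I_{\lambda,\varepsilon}\to I_\lambda$ uniformly on bounded sets, so the sign follows from the first step. For (ii), $\lambda<\lambda^*$ gives $I_\lambda\ge0$ on $\mathcal{N}^+_\lambda$. Strict positivity then follows because $I_\lambda(v)=0$ with $v\in\mathcal{N}_\lambda$ forces $R_e(v)=R_n(v)=\lambda$. This makes $v$ the minimum point of $R_e$ on its fiber, so $\lambda\ge\lambda^*$, a contradiction.

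Distinctness from $u_\lambda$ comes from the Nehari components: $u_\lambda$ is the $\mathcal{N}^-$-type (mountain-pass) solution with $\phi''_{u_\lambda}(1)<0$, while $\phi''_{v_\lambda}(1)>0$.

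The main obstacle is the range $\lambda\in(\lambda^*-\epsilon_2,\lambda^*)$. There $c_{\mathcal{N}^+_\lambda}>0$ and $0$ is the global minimizer, so $v_\lambda$ is only a local minimizer. We must show that $\mathcal{N}^+_{\lambda,\varepsilon}\neq\emptyset$ uniformly in $\varepsilon$, which requires $\lambda>\lambda_{n,\varepsilon}$ uniformly. This should follow from $\lambda_{n,\varepsilon}\to\lambda_*<\lambda^*$, so $\epsilon_2<\lambda^*-\lambda_*$. We must also show that minimizing sequences on $\mathcal{N}^+_{\lambda,\varepsilon}$ do not approach $\mathcal{N}^0_{\lambda,\varepsilon}$. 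To handle this, I would first exploit that $\lambda^*$ is attained by some $w$, and show that $c_{\mathcal{N}^+_\lambda}$ is strictly less than the infimum over $\mathcal{N}^0_\lambda$ for $\lambda$ near $\lambda^*$; this is a continuity argument in $\lambda$ that must be made uniform in $\varepsilon$.
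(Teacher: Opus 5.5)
Your overall architecture matches the paper's: minimize $I_{\lambda,\varepsilon}$ on $\mathcal N^+_{\lambda,\varepsilon}$, pass to moduli, use the Lagrange multiplier, let $\varepsilon\to0^+$ with the Montenegro--Silva estimates, and read the signs off $c_{\mathcal N^+_\lambda}$. However, the step that places the limit in $\mathcal N^+_\lambda$ is circular. Since $v_\lambda\in\mathcal N_\lambda$ gives $R_n(v_\lambda)=\lambda$, one has $v_\lambda\in\mathcal N^0_\lambda$ iff $t_n(v_\lambda)=1$ iff $\lambda_n(v_\lambda)=\lambda$. So the inequality $\lambda>\inf_tR_n(tv_\lambda)$ you invoke is exactly what has to be proved. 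At the regularized level you only have $\lambda_{n,\varepsilon}(v_{\lambda,\varepsilon})<\lambda$, which degenerates to $\le$ in the limit; the fiber structure gives no uniform gap.

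Your identification of the limit energy has the same flaw. Uniform convergence $I_{\lambda,\varepsilon}\to I_\lambda$ does not compare infima over the different sets $\mathcal N^+_{\lambda,\varepsilon}$ and $\mathcal N^+_\lambda$. Moreover, the bound $I_\lambda(v_\lambda)\ge c_{\mathcal N^+_\lambda}$ presupposes $v_\lambda\in\mathcal N^+_\lambda$.

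The missing input is an energy gap at $\varepsilon=0$, in two parts.
\begin{itemize}
\item Upper bound: for any $w\in\mathcal N^+_\lambda$ one has $R_{n,\varepsilon}(w)<R_n(w)=\lambda$, so $\phi_{\varepsilon,w}$ decreases on $[1,t^+_{\lambda,\varepsilon}(w)]$ and $c_{\mathcal N^+_{\lambda,\varepsilon}}\le I_{\lambda,\varepsilon}(t^+_{\lambda,\varepsilon}(w)w)<I_\lambda(w)$ (Proposition~\ref{eps-0}). Hence $I_\lambda(v_\lambda)\le c_{\mathcal N^+_\lambda}$.
\item Separation: $c_{\mathcal N^+_\lambda}<\inf_{\mathcal N^0_\lambda}I_\lambda$ for $\lambda>\lambda^*-\epsilon_2$, which is \eqref{energy-separation-N0}. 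It combines $I_\lambda\ge\sigma_\lambda>0$ on $\mathcal N^0_\lambda$ with $\sigma_\lambda\ge\sigma_{\lambda^*}$ for $\lambda\le\lambda^*$ (Proposition~\ref{N0-dist-0}), $c_{\mathcal N^+_\lambda}\le0$ for $\lambda\ge\lambda^*$, and $0<c_{\mathcal N^+_\lambda}\le\frac{\lambda^*-\lambda}{p+1}\|v_{\lambda^*}\|_{p+1}^{p+1}$ for $\lambda<\lambda^*$ (Lemma~\ref{c-lamb-cont}).
\end{itemize}
Together these exclude $v_\lambda\in\mathcal N^0_\lambda$. Only then does $v_\lambda\in\mathcal N^+_\lambda$ give $I_\lambda(v_\lambda)=c_{\mathcal N^+_\lambda}$ and hence items (ii)--(iv). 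The paper obtains the equality first, from Proposition~\ref{C-cont-eps}, and then applies \eqref{energy-separation-N0}. You do state this separation in your last paragraph, but only as a device for the regularized minimizers; it must also be used at $\varepsilon=0$.

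Your distinctness argument is also unsupported. Theorem~\ref{theorem1} gives only $I_\lambda(u_\lambda)\ge A$, and nothing establishes $\phi''_{u_\lambda}(1)<0$. A mountain-pass critical point produced by deformation need not lie in $\mathcal N^-_{\lambda,\varepsilon}$, and even if it did, strictness would be lost in the limit for the reason above. Use the energy comparison of Proposition~\ref{lim-sol} instead: $I_\lambda(v_\lambda)\le c_{\mathcal N^+_\lambda}<A\le I_\lambda(u_\lambda)$. Here $c_{\mathcal N^+_\lambda}\le0$ for $\lambda\ge\lambda^*$, and $c_{\mathcal N^+_\lambda}<A$ for $\lambda<\lambda^*$ close to $\lambda^*$ by Lemma~\ref{c-lamb-cont}. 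The constant $A$ can be taken independent of such $\lambda$, because $I_{\lambda,\varepsilon}\ge I_{\lambda^*,\varepsilon}$ for $\lambda\le\lambda^*$.

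Finally, the uniform bound $\|v_{\lambda,\varepsilon}\|\ge c>0$ does not follow from the Nehari identity and Sobolev embedding alone. Since $p+1<2$, they only give the upper bound $\|u\|^{1-p}\le\lambda S_{p+1}^{p+1}$. You need the regularized term to dominate the reaction near $0$ uniformly in $\varepsilon$. For example, $\lambda|s|^{p+1}\le\frac12 g_\varepsilon(s)s$ for $|s|\le\eta$ and $\varepsilon\le\eta$, provided $\lambda2^{q+\beta}\eta^{p+\beta}\le\frac12$, as in Proposition~\ref{N-way-origin-eps}(ii). This is what keeps $v_\lambda\neq0$ when $\lambda<\lambda^*$, where no negative energy bound is available.
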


	As a consequence, by using Theorems \ref{theorem1} and
	\ref{theorem2}, we obtain the following result.
\begin{cor}\label{coro} Suppose $0<\beta,p < 1$. Then Problem \eqref{P} admits at least two distinct nontrivial nonnegative weak solutions for each $\lambda \in (\lambda^*-\eps_2,+\infty)$.
   \end{cor}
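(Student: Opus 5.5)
The corollary follows by combining Theorems~\ref{theorem1} and \ref{theorem2} on a common parameter range. Fix $\lambda\in(\lambda^*-\eps_2,+\infty)$. To apply both theorems we need $\lambda>\lambda^*-\epsilon_1$ as well. So we first arrange $\eps_2\le\epsilon_1$, replacing $\eps_2$ by $\min\{\epsilon_1,\eps_2\}$ if necessary. This shrinking is harmless: Theorem~\ref{theorem2} remains true on any smaller left neighbourhood of $\lambda^*$, and the second solution there is, by its statement, distinct from the solution $u_\lambda$ produced by Theorem~\ref{theorem1}. With this convention, Theorem~\ref{theorem1} gives a nontrivial nonnegative weak solution $u_\lambda$ with $I_\lambda(u_\lambda)\ge A>0$. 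Theorem~\ref{theorem2} gives a nontrivial nonnegative weak solution $v_\lambda$ with $\phi_{v_\lambda}''(1)>0$.

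It remains to check that $u_\lambda\neq v_\lambda$ independently of the wording ``distinct'' in Theorem~\ref{theorem2}. I would split into cases according to the position of $\lambda$.

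\emph{Case $\lambda\ge\lambda^*$.} Items (iii) and (iv) of Theorem~\ref{theorem2} give $I_\lambda(v_\lambda)\le 0$. Since $I_\lambda(u_\lambda)\ge A>0$, the energies differ, so $u_\lambda\ne v_\lambda$.

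\emph{Case $\lambda\in(\lambda^*-\eps_2,\lambda^*)$.} Both energies are positive, so energy alone does not separate the solutions. Instead I would use the fibering geometry. The solution $u_\lambda$ is the mountain-pass type solution, obtained from the minimization over $\mathcal N_\lambda^-$ in \eqref{ee1}. Its construction via the regularized problems should yield $\phi_{u_\lambda}''(1)<0$, or at least $\phi_{u_\lambda}''(1)\le0$ in the limit. On the other hand, $\phi_{v_\lambda}''(1)>0$ by item (i). Hence $u_\lambda\neq v_\lambda$.

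The main obstacle is this second case. The strict sign $\phi_{v_\lambda}''(1)>0$ is given by item (i), but the sign $\phi_{u_\lambda}''(1)\le 0$ is not stated in Theorem~\ref{theorem1}. It must be traced through the proof of Theorem~\ref{theorem1}. One first shows $\phi_{u_{\lambda,\varepsilon}}''(1)<0$ at the regularized level. Then one passes to the limit as $\varepsilon\to0^+$, using strong $H_0^1$ convergence and the convergence of the $L^{1-\beta}$ and $L^{p+1}$ norms. Alternatively, one can rely directly on the statement of Theorem~\ref{theorem2} that $v_\lambda$ is distinct from $u_\lambda$, and then the corollary is immediate.
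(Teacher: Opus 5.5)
Your reduction to $\eps_2\le\epsilon_1$, your treatment of the case $\lambda\ge\lambda^*$, and your closing fallback (use the distinctness asserted in Theorem~\ref{theorem2}) all match the paper. The paper shrinks $\eps_2$, takes $u_\lambda$ from Theorem~\ref{theorem1} and $v_\lambda$ from Theorem~\ref{theorem2}, and cites Proposition~\ref{lim-sol} for $u_\lambda\neq v_\lambda$. So the corollary is proved by your last sentence.

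Your independent argument for $\lambda\in(\lambda^*-\eps_2,\lambda^*)$, however, does not go through.

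\textbf{The identification of $u_\lambda$ is wrong.} You identify $u_\lambda$ with a minimizer of $c_{\mathcal N_\lambda^-}$ in \eqref{ee1}, but the paper never minimizes over $\mathcal N_\lambda^-$. Instead, $u_\lambda$ is the limit of the mountain-pass critical points $u_{\lambda,\eps}$ of Proposition~\ref{MPS}. Nothing places these in $\mathcal N^-_{\lambda,\eps}$: a mountain-pass critical point is only known to satisfy $I_{\lambda,\eps}(u_{\lambda,\eps})\ge A$. Nothing in the paper rules out that it lies in $\mathcal N^+_{\lambda,\eps}$ at a level above $c_{\mathcal N^+_{\lambda,\eps}}$. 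So $\phi''_{u_{\lambda,\eps}}(1)<0$ would need a separate proof that you do not supply. One option would be to show that $c_{\mathcal N^-_{\lambda,\eps}}$ is attained in $\mathcal N^-_{\lambda,\eps}$ and replace the mountain-pass solution by that minimizer.

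\textbf{Energy does separate the solutions.} Your premise that ``energy alone does not separate the solutions'' holds only at the level of the theorem statements. In the constructions it does separate them, and this is exactly what Proposition~\ref{lim-sol} proves:
\begin{itemize}
\item The constant $A=\rho^2/4$ from Lemma~\ref{GMP1} can be taken uniform for $\lambda\le\lambda^*$ near $\lambda^*$, since $I_{\lambda,\eps}\ge I_{\lambda^*,\eps}$ there.
\item By Lemma~\ref{c-lamb-cont} and $c_{\mathcal N^+_{\lambda^*}}=0$, one has $c_{\mathcal N^+_\lambda}<A$ for $\lambda<\lambda^*$ close to $\lambda^*$.
\end{itemize}
Hence
\[
I_\lambda(v_\lambda)=c_{\mathcal N_\lambda^+}<A\le I_\lambda(u_\lambda),
\]
even though both energies are positive. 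So the case you call the main obstacle is settled by this quantitative energy gap, not by the fibering geometry of $u_\lambda$.
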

	Now, we define
\begin{equation}\label{lambda-bar}
    \mathcal{S}
    :=
    \left\{
        \lambda>0:
        \eqref{P}\text{ admits a nontrivial nonnegative weak solution}
    \right\},
    \qquad
    \bar{\lambda}:=\inf\mathcal{S}.
\end{equation}
\begin{theorem}\label{theorem4a} If $0<\beta,p<1$, then $\lambda_*<\bar\lambda<\lambda^*$ and $(P_{\;\bar\lambda})$ admits a nontrivial nonnegative weak solution.
\end{theorem}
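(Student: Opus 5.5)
We split Theorem~\ref{theorem4a} into three claims: (a) $\bar\lambda\ge\lambda_*$, with strict inequality; (b) $\bar\lambda<\lambda^*$; (c) $\bar\lambda\in\mathcal S$. All three rest on testing the weak formulation with the solution itself.

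\textbf{Step 1 (Nehari identity for weak solutions and $\bar\lambda\ge\lambda_*$).} Let $u\ge0$ be a nontrivial weak solution. We first justify taking $\varphi=u$ in the weak formulation. We approximate $u$ by nonnegative $\varphi_k\in C_c^\infty(\Omega)$ with $\varphi_k\to u$ in $H_0^1$ and $0\le\varphi_k\le$ a suitable truncation of $u$. The singular term $\int_{\{u>0\}}u^{-\beta}\varphi_k$ is then controlled by $\int u^{1-\beta}$ via Fatou or monotone convergence, following the arguments of \cite{Montenegro}. This gives
\[
\|u\|^2+\|u\|_{1-\beta}^{1-\beta}=\lambda\|u\|_{p+1}^{p+1},
\]
so $u\in\mathcal N_\lambda$ and $\lambda=R_n(u)\ge\lambda_*$. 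Hence $\bar\lambda\ge\lambda_*$.

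\textbf{Step 1, strictness.} We must exclude $\lambda$ near $\lambda_*$. From the earlier Rayleigh analysis, $\lambda_*=\inf_u\min_t R_n(tu)$, and this infimum is attained at some $w$ with $t=1$ critical, i.e. $\frac{d}{dt}R_n(tw)|_{t=1}=0$. That condition is exactly $w\in\mathcal N^0_{\lambda_*}$. If a solution existed at $\lambda=\lambda_*$, it would be a minimizer of $\min_t R_n(tu)$, forcing $u\in\mathcal N^0_{\lambda_*}$. We would then combine $\phi_u''(1)=0$ with the equation tested against $u$ and against the variation direction (a Pohozaev/Lagrange-multiplier type contradiction, as in \cite{yavdat1}) to show the weak solution property is incompatible with $\mathcal N^0$.

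For strictness in the infimum we need more than excluding $\lambda_*$ itself. We argue by contradiction: take solutions $u_k$ at $\lambda_k\downarrow\bar\lambda=\lambda_*$. These are bounded in $H_0^1$, by a Nehari estimate using $p+1<2$, and bounded away from $0$, by the inequality $\|u\|^2+\|u\|_{1-\beta}^{1-\beta}\le\lambda S\|u\|^{p+1}$ combined with interpolation of $\|u\|_{p+1}$ between $\|u\|_{1-\beta}$ and $\|u\|_{2^*}$. They therefore converge weakly, and by the Step 3 limit argument strongly, to a solution at $\lambda_*$. That limit is excluded above. \emph{This is the step I expect to be the main obstacle:} the identity $\phi_u''(1)=0$ alone does not obviously contradict being a solution, so the key point is that solutions satisfy the Nehari identity while $R_n$-minimizers on $\mathcal N^0$ satisfy an additional Euler--Lagrange equation for $R_n$. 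Subtracting the two equations should force $\|u\|^2$, $\|u\|_{1-\beta}^{1-\beta}$ and $\|u\|_{p+1}^{p+1}$ to satisfy two independent linear relations. Together with the strict convexity gaps coming from $1-\beta<p+1<2$, this should give the contradiction.

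\textbf{Step 2 ($\bar\lambda<\lambda^*$).} By Theorem~\ref{theorem1}, $(\lambda^*-\epsilon_1,\infty)\subset\mathcal S$. Hence $\bar\lambda\le\lambda^*-\epsilon_1<\lambda^*$.

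\textbf{Step 3 ($\bar\lambda\in\mathcal S$).} Take $\lambda_k\downarrow\bar\lambda$ with nonnegative solutions $u_k$. From the Nehari identity, $\|u_k\|^2\le\lambda_k S_{p+1}^{p+1}\|u_k\|^{p+1}$, so $(u_k)$ is bounded. A lower bound $\|u_k\|\ge c>0$ follows as in Step 1, since $\lambda_k$ stays bounded above and $\min_t R_n(tu)\to\infty$ as $\|u\|\to0$ uniformly in the relevant ratio. Passing to a subsequence, $u_k\rightharpoonup u$ in $H_0^1$ and $u_k\to u$ in $L^{p+1}$ and a.e.

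To pass to the limit in the singular term we use the uniform local gradient and $L^1_{\rm loc}$ estimates of \cite{Montenegro} (the same tool used for $\varepsilon\to0^+$). This gives $\int_{\{u_k>0\}}u_k^{-\beta}\varphi\to\int_{\{u>0\}}u^{-\beta}\varphi$ for $\varphi\in C_c^\infty$. On $\{u=0\}$, the contribution vanishes thanks to the estimate $u_k^{-\beta}\chi_{\{u_k>0\}}\lesssim$ local bounds and $u_k\to0$ there; the argument is identical to the regularized passage. Hence $u$ is a weak solution at $\bar\lambda$.

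Nontriviality follows from strong convergence. Testing with $u_k$ and with $u$ gives $\|u_k\|^2+\|u_k\|_{1-\beta}^{1-\beta}\to\|u\|^2+\|u\|_{1-\beta}^{1-\beta}$, so $\|u_k\|\to\|u\|\ge c>0$. Therefore $\bar\lambda\in\mathcal S$.
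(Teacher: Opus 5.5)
\textbf{Gap 1: excluding a solution at $\lambda=\lambda_*$.} This is the step that fails. Testing the equation with $u$ and using $\phi_u''(1)=0$ gives exactly
\[
\|u\|^2+\|u\|_{1-\beta}^{1-\beta}=\lambda_*\|u\|_{p+1}^{p+1},\qquad 2\|u\|^2+(1-\beta)\|u\|_{1-\beta}^{1-\beta}=(p+1)\lambda_*\|u\|_{p+1}^{p+1}.
\]
These are precisely the two relations defining $\mathcal N^0_{\lambda_*}$, and they are consistent: the rescaled minimizer $t_n(u_*)u_*$ of $\lambda_n$ satisfies them, together with every universally valid interpolation inequality. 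So no ``convexity gap'' among these three numbers can give a contradiction. The problem is that you subtract the Nehari identity, which is only a scalar consequence, rather than the weak formulation itself.

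The missing idea is to subtract the two equations in distributional form. Since $R_n(u)=\lambda_*\le\lambda_n(v)\le R_n(v)$ for all $v$, $u$ is a global minimizer of $R_n$. Although $R_n$ is not differentiable, $t\mapsto R_n(u+t\varphi)$ is differentiable at $0$ for $\varphi\in H_0^1(\Omega)\cap L^\infty(\Omega)$ vanishing on $\{u\le\delta\}$. This yields
\[
2\int_\Omega\nabla u\nabla\varphi\,dx+(1-\beta)\int_{\{u>0\}}u^{-\beta}\varphi\,dx=(p+1)\lambda_*\int_\Omega u^p\varphi\,dx .
\]
Subtract twice the weak formulation of $(P_{\lambda_*})$ tested with the same $\varphi$ (admissible by Lemma~\ref{lem:localized-test}). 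The gradient terms cancel and you get $(1+\beta)\int u^{-\beta}\varphi=(1-p)\lambda_*\int u^p\varphi$. Taking $\varphi=\eta_\delta(u)\psi$ with $\psi\in C_c^\infty(\Omega)$ then gives $u^{p+\beta}=(1+\beta)/((1-p)\lambda_*)$ a.e.\ on $\{u>0\}$. Hence $u$ is a constant multiple of $\chi_{\{u>0\}}$, so $\nabla u=0$ a.e.\ and $u\equiv0$, a contradiction. Once this is known, $\bar\lambda>\lambda_*$ follows at once from $\bar\lambda\in\mathcal S$ and $\lambda_*\notin\mathcal S$, so your separate sequence argument is not needed.

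\textbf{Gap 2: passing to the limit in Step 3.} The uniform $L^\infty$ and local gradient estimates of Montenegro and Silva are proved for solutions of the regularized problems $-\Delta u+g_\varepsilon(u)=\lambda u^p$. They are not available for arbitrary weak solutions $u_k$ of the singular problems $(P_{\lambda_k})$. Your remark that the contribution on $\{u=0\}$ vanishes also does not hold as stated: where $u_k>0$ and $u_k\to0$, the integrand $u_k^{-\beta}$ blows up rather than vanishes. What you can honestly obtain is this: truncate at level $\delta$, apply Fatou, and let $\delta\downarrow0$, using the uniform bound on the singular integrals coming from the equation. This gives only
\[
\int_\Omega\nabla \bar u\nabla\varphi\,dx+\int_{\{\bar u>0\}}\bar u^{-\beta}\varphi\,dx\le\bar\lambda\int_\Omega\bar u^p\varphi\,dx,\qquad 0\le\varphi\in C_c^\infty(\Omega).
\]
That is, the weak limit is merely a nontrivial subsolution of $(P_{\bar\lambda})$, because part of the singular term may concentrate on $\{\bar u=0\}$.

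The paper supplies an additional step, Proposition~\ref{prop:existence-monotone}. It minimizes a regularized functional truncated below at $\bar u$ and shows, via Kato's inequality for measures and the weak maximum principle, that the minimizer stays above $\bar u$. Only then does it let $\varepsilon\to0^+$, where the regularized estimates do apply; nontriviality comes from the comparison with $\bar u$. Without such a subsolution-to-solution step, your Step 3 is not justified, nor is the compactness part of your strictness argument, which relies on it. Replacing the $u_k$ by regularized solutions and running a diagonal argument would also work, but you do not do this.

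Finally, $\lambda_n(u)=\min_t R_n(tu)$ is $0$-homogeneous and does not blow up as $\|u\|\to0$. The uniform lower bound on $\|u_k\|_{p+1}$ comes instead from the interpolation estimate \eqref{dis-ori}.
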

Now, we establish the sharp existence threshold for Problem \eqref{P}.
\begin{theorem}\label{theorem4}
Suppose $0<\beta,p<1$. Then Problem~\eqref{P} admits a nontrivial
nonnegative weak solution if and only if
\(
\lambda\geq\bar\lambda.
\)
Equivalently,
\(
\mathcal S=[\bar\lambda,+\infty).
\)
\end{theorem}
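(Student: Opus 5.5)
Theorem~\ref{theorem4a} already gives $\bar\lambda\in\mathcal S$, so $\mathcal S\subset[\bar\lambda,+\infty)$ by definition of the infimum and $\bar\lambda\in\mathcal S$. It remains to show $(\bar\lambda,+\infty)\subset\mathcal S$. Theorem~\ref{theorem1} covers $\lambda>\lambda^*-\epsilon_1$. So the real task is the window $\bar\lambda<\lambda\le\lambda^*-\epsilon_1$. For it I would use a sub/supersolution (monotone) argument that propagates solvability upward in $\lambda$.

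Fix $\mu\in\mathcal S$ with $\mu<\lambda$, and let $w\ge0$, $w\not\equiv0$, be a weak solution of $(P_\mu)$. Such $\mu$ exists: either take $\mu=\bar\lambda$, or use $\inf\mathcal S=\bar\lambda$. Since $\lambda w^p\ge\mu w^p$, $w$ is a weak subsolution of $(P_\lambda)$ in the sense of the weak formulation above. For a supersolution, I would take $W=M e$, where $e$ solves $-\Delta e=1$, $e=0$ on $\partial\Omega$. Then $-\Delta W=M$, while $\lambda W^p\le \lambda M^p\|e\|_\infty^p$, so $W$ is a supersolution for $M$ large because $p<1$; the singular term has a favorable sign. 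The inequality $w\le W$ should follow from a comparison argument: $w$ is a subsolution of $-\Delta u\le\lambda u^p$, and sublinear comparison for the concave nonlinearity $u^p$ (Brezis--Oswald type) gives $w\le \bar U$, the unique positive solution of $-\Delta u=\lambda u^p$, hence $w\le W$ for large $M$. Elliptic estimates give $w\in L^\infty$.

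Next I would run the argument at the regularized level to avoid the singularity. For the problems $-\Delta u+g_\varepsilon(u)=\lambda u^p$ of the paper, consider $\tilde I_{\lambda,\varepsilon}$ truncated to the order interval $[w,W]$. Its minimizer $u_\varepsilon$ exists by weak lower semicontinuity and coercivity. It is a critical point in the interval, hence a solution, by the standard variational sub/supersolution lemma. Here $g_\varepsilon(s)\le s^{-\beta}$, so $w$ is still a subsolution of the regularized problem. Alternatively, use monotone iteration after adding $Ku$ to make the right-hand side increasing, although $g_\varepsilon$ is not monotone, so the truncated minimization is safer.

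The estimates $w\le u_\varepsilon\le W$ and the $H_0^1$ bound $\|u_\varepsilon\|\le C$ come from testing with $u_\varepsilon$. Passing $\varepsilon\to0^+$ with the uniform local gradient estimates already used for Theorems~\ref{theorem1}--\ref{theorem2} yields a weak solution $u$ of $(P_\lambda)$ with $u\ge w\not\equiv0$. Hence $u$ is nontrivial nonnegative and $\lambda\in\mathcal S$.

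The main obstacle is justifying the limit in the singular term on $\{u>0\}$ and the integrability condition $u^{-\beta}\chi_{\{u>0\}}\in L^1_{\rm loc}$. I would first try to reuse verbatim the passage-to-the-limit lemma behind Theorem~\ref{theorem1}, whose hypotheses are only uniform $H_0^1$ bounds and the regularized equation. The lower barrier $u_\varepsilon\ge w$ is only needed to guarantee nontriviality, which replaces the energy bound $I_\lambda\ge A$ used there.
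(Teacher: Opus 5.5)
Your proposal is correct and follows essentially the paper's route. The paper takes $\bar\lambda\in\mathcal S$ from Theorem~\ref{theorem4a}, then applies Proposition~\ref{prop:existence-monotone} with $\lambda_0=\bar\lambda$, which is exactly your argument: a solution at a smaller parameter serves as subsolution, one minimizes a truncated regularized functional, passes to the limit $\varepsilon\to0^+$ via Lemma~\ref{lem:singular-compactness}, and gets nontriviality from $u_\varepsilon\ge w$; $\lambda<\bar\lambda$ is excluded by the definition of the infimum. The only differences are minor. The paper truncates only from below, since sublinear growth already gives coercivity, so your upper barrier $W=Me$, the comparison $w\le W$, and the split at $\lambda^*-\epsilon_1$ are all unnecessary. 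The paper also proves $u_\varepsilon\ge w$ via Kato's inequality rather than by testing directly with $(w-u_\varepsilon)^+$.
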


We  emphasize that although $\mathcal{N}_{\lambda}\not=\emptyset$ for all $\lambda\geq \lambda_{\ast},$ we only have nonnegative solutions for $\lambda\geq \bar\lambda.$

\begin{rmk}
The variational approach developed in \cite{Yijing} cannot be applied
directly to Problem~\eqref{P}. Indeed, the negative singular term
$-1/u^\beta$ competes with the positive sublinear term $\lambda u^p$,
and hence the positivity and monotonicity arguments used there are not
directly available in the present setting. This provides an additional
motivation for combining the regularization procedure with the
nonlinear Rayleigh quotient method.
\end{rmk}

\begin{rmk}
In \cite{Montenegro}, the authors proved the existence of at
least two distinct nontrivial nonnegative solutions for sufficiently
large values of $\lambda$. They introduced a family of regularized
problems and provided a sequence procedure using some standard arguments. However, the authors do not determine the sharp threshold for the existence of
nontrivial nonnegative weak solutions. Notice also that the authors do not describe the
transition of the Nehari geometry and the corresponding energy levels
which depend on the size of $\lambda$. Moreover, strict positivity of solutions was not established.
\end{rmk}

\begin{rmk}
Now, by using the nonlinear Rayleigh quotient and the regularization
method, we obtain a finer description of the parameter dependence of
the problem. The Nehari threshold $\lambda_*$ and the energy-change
threshold $\lambda^*$ describe, respectively, the changes in the
fibering geometry and in the sign of the minimizing energy, whereas the
existence threshold $\bar\lambda$ determines the sharp lower boundary
for the existence of nontrivial nonnegative weak solutions. In
particular, we establish existence at $\lambda=\bar\lambda$ and
nonexistence of nontrivial weak solutions for
$\lambda<\bar\lambda$, together with the multiplicity and energy
conclusions stated in the main theorems.
\end{rmk}

\begin{figure}[!ht]
\centering
\includegraphics[scale=0.6]{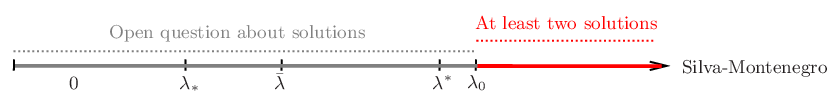}
\caption{Range of parameters for the existence of solutions obtained
in \cite{Montenegro}.}
\label{fig-EM1}
\end{figure}
\begin{figure}[!ht]
\centering
\includegraphics[scale=0.6]{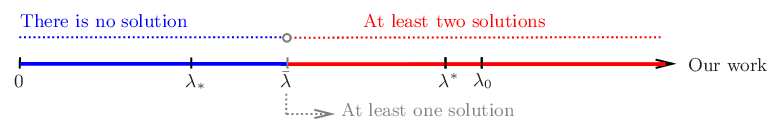}
\caption{Our Range of parameters for the existence and nonexistence of
solutions.}
\label{fig-EM2}
\end{figure}

\begin{rmk}\label{rmk:novelty-montenegro}
It is important to distinguish what is adapted from \cite{Montenegro}
and what is genuinely new in the present work. The regularization
scheme, based on the family
$g_\epsilon(s)=s^q/(s+\epsilon)^{q+\beta}$ with $0<q<p<1$, and the
passage to the singular limit $\epsilon\to0^+$, which relies on the
uniform $L^\infty$ estimates and the local gradient estimates of
\cite{Montenegro}, are the same as those employed there. In
particular, the existence of two distinct nontrivial nonnegative weak
solutions for sufficiently large $\lambda$ was already established in
\cite{Montenegro}, together with a perturbation-of-domains approach
and, under an additional restriction on $p$, a uniqueness result
within a suitable class of positive solutions. Thus, the use of a
regularized problem and the passage $\epsilon\to0^+$ are not the novelty of the present contribution.
\end{rmk}

\begin{rmk}\label{rmk:regularity-classical} We briefly comment on the regularity of the solutions constructed in
this work. For each fixed $\epsilon>0$,
\[
    g_\epsilon(s)
    =
    \frac{|s|^{q-1}s}{(|s|+\epsilon)^{q+\beta}}
\]
is continuous and bounded. However, it is not
differentiable at the origin because $0<q<1$. More precisely,
$g_\epsilon$ is locally H\"older continuous with exponent $q$. The solutions
$v_{\lambda,\epsilon}$ and $u_{\lambda,\epsilon}$ obtained in
Propositions~\ref{min-real-N-eps} and~\ref{MPS} are nonnegative.
Therefore, the uniform $L^\infty$ estimate for the regularized
problem established in \cite[Lemma~2.4]{Montenegro} applies to these
solutions. In particular, $v_{\lambda,\epsilon},u_{\lambda,\epsilon}\in L^\infty(\Omega)$. Hence the right-hand side of
\[
    -\Delta u
    =
    -g_\epsilon(u)
    +
    \lambda |u|^{p-1}u
\]
belongs to $L^\infty(\Omega)$. Since $\Omega$ is smooth, the global
$L^r$-regularity theory for the Dirichlet problem yields $v_{\lambda,\epsilon},u_{\lambda,\epsilon}\in W^{2,r}(\Omega)$, for every $1<r<\infty$. Consequently, by the Sobolev embedding theorem, $v_{\lambda,\epsilon},u_{\lambda,\epsilon}\in C^{1,\alpha}(\overline\Omega)$, for every $\alpha\in(0,1)$. Furthermore, the maps
\[
    s\longmapsto |s|^{p-1}s
    \qquad\text{and}\qquad
    s\longmapsto g_\epsilon(s)
\]
are locally H\"older continuous with exponents $p$ and $q$,
respectively. Since $q<p$, the right-hand side of the regularized
equation is locally H\"older continuous once
$u\in C^{1,\alpha}(\overline\Omega)$. Standard Schauder estimates
imply that $v_{\lambda,\epsilon},u_{\lambda,\epsilon}\in C^{2,\gamma}(\overline\Omega)$, for every $0<\gamma<q$. Thus the regularized solutions are classical.

We now turn to the singular problem~\eqref{P}. The uniform
$L^\infty$ estimate and the local gradient estimate of
\cite[Lemma~2.4 and Lemma~3.1]{Montenegro}, applied to the
regularized solutions, imply that their limits
$u_\lambda$ and $v_\lambda$ are bounded and locally Lipschitz
continuous in $\Omega$. The truncation and Fatou arguments used in the passage
$\epsilon\to0^+$, as in
\cite[Lemma~4.1 and the proof of Theorem~1.1]{Montenegro}, yield $u_\lambda^{-\beta}\chi_{\{u_\lambda>0\}},\,v_\lambda^{-\beta}\chi_{\{v_\lambda>0\}}\in L^1_{\mathrm{loc}}(\Omega)$, which is precisely the local integrability required in the notion of
weak solution adopted in Section~\ref{sec:notation}.
\end{rmk}

\subsection{Outline}\label{sec:outline}
The remainder of this paper is organized as follows.
In Section~\ref{R-nt}, we study the Rayleigh quotients $R_n$ and $R_e$
associated with Problem~\eqref{P}. We establish the relations with
the Nehari set and the threshold parameters $\lambda_*$ and
$\lambda^*$. We also derive the main geometric and minimization
properties associated with the decomposition of the Nehari set. In
Section~\ref{trunc}, following the regularization procedure introduced
in \cite{Montenegro}, we study the perturbed problem
\eqref{P-pertubed}. We define the corresponding perturbed Rayleigh
quotients, analyze the associated threshold levels, and examine their
dependence on the regularization parameter $\varepsilon$. Section
\ref{Pass-lim} contains the most technical part of the proof, where
the passage to the limit is analyzed, and the extremal parameter is
identified. We then complete the proofs of the existence,
multiplicity, and energy-classification results. Finally, we establish
existence at the extremal parameter $\bar\lambda$ and prove the
corresponding nonexistence result below this threshold parameter.

\section{Analysis of the singular problem via Rayleigh quotients}\label{R-nt}
    
This section establishes properties of the energy functional $I_{\lambda }$ and nonlinear Rayleigh quotients for Problem \eqref{P}, followed by two remarks on the functionals $R_n$ and $R_e$ defined in \eqref{R-pert}.
    
\begin{rmk}\label{rmk1}
Let $u\in H_0^1(\Omega)\setminus\{0\}$ be fixed, and define \( \phi_u(t):=I_\lambda(tu),~ t>0.\)
Then
\[
\phi_u'(1)
=
\|u\|^2+\|u\|_{1-\beta}^{1-\beta}
-\lambda\|u\|_{p+1}^{p+1}.
\]
Consequently,
\begin{itemize}
    \item[(i)] $R_n(u)=\lambda$ if and only if $\phi_u'(1)=0$;
    \item[(ii)] $R_n(u)>\lambda$ if and only if $\phi_u'(1)>0$;
    \item[(iii)] $R_n(u)<\lambda$ if and only if $\phi_u'(1)<0$.
\end{itemize}
\end{rmk}
		
		\begin{rmk}\label{rmk11} 
			Let $u\in H_0^1(\Omega)\setminus\{0\}$ be fixed. The following conclusions hold:
			\begin{itemize}
				\item[(i)] ${R}_e(u)=\lambda$ if and only if $I_\lambda(u)=0$; 
				\item[(ii)] $R_e(u) > \lambda$ if and only if $I_\lambda (u) > 0$;
				\item[(iii)] $R_e(u) < \lambda$ if and only if $I_\lambda (u) < 0$.
			\end{itemize}
		\end{rmk}
        
		Let $u\in H_0^1(\Omega)\setminus\{0\}$ be fixed. Since
\[
R_e(tu)-\lambda
=
\frac{p+1}{t^{p+1}\|u\|_{p+1}^{p+1}}
\phi_u(t),
\]
if $R_e(tu)=\lambda$, then $\phi_u(t)=0$, and differentiation with
respect to $t$ yields
\begin{equation}\label{aee}
\frac{d}{dt}R_e(tu)
=
\frac{p+1}{t^{p+1}\|u\|_{p+1}^{p+1}}
\phi_u'(t).
\end{equation}
As a consequence, we obtain the following result.

\begin{prop}\label{importante}
Suppose that $u\in H_0^1(\Omega)\setminus\{0\}$ and that
$R_e(tu)=\lambda$ for some $t>0$. Then:
\begin{itemize}
    \item[(i)]
    $\dfrac{d}{dt}R_e(tu)>0$ if and only if $\phi_u'(t)>0$;
    \item[(ii)]
    $\dfrac{d}{dt}R_e(tu)<0$ if and only if $\phi_u'(t)<0$;
    \item[(iii)]
    $\dfrac{d}{dt}R_e(tu)=0$ if and only if $\phi_u'(t)=0$.
\end{itemize}
\end{prop}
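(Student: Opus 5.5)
The proposition follows once identity \eqref{aee} is established, so the plan is to derive it directly from the factorization of $R_e(tu)-\lambda$ through the fibering map. Fix $u\in H_0^1(\Omega)\setminus\{0\}$. Since $u\neq0$, we have $\|u\|_{p+1}>0$. By the definitions \eqref{R-pert} and \eqref{fibering}, for every $t>0$,
\[
R_e(tu)-\lambda=\frac{p+1}{t^{p+1}\|u\|_{p+1}^{p+1}}\,\phi_u(t).
\]
This holds because $\tfrac{\lambda}{p+1}t^{p+1}\|u\|_{p+1}^{p+1}$ is exactly the term subtracted in $\phi_u(t)$. Set $h(t):=\frac{p+1}{t^{p+1}\|u\|_{p+1}^{p+1}}$. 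Then $h$ is smooth and strictly positive on $(0,+\infty)$. Both sides are $C^\infty$ in $t$, because all the norms are fixed constants and only powers of $t$ appear.

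Next, differentiate with respect to $t$ using the product rule:
\[
\frac{d}{dt}R_e(tu)=h'(t)\,\phi_u(t)+h(t)\,\phi_u'(t).
\]
At a value $t>0$ with $R_e(tu)=\lambda$, the factorization together with $h(t)>0$ gives $\phi_u(t)=0$. The first term therefore vanishes, which yields \eqref{aee}:
\[
\frac{d}{dt}R_e(tu)=h(t)\,\phi_u'(t).
\]

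Since $h(t)>0$, the sign of $\frac{d}{dt}R_e(tu)$ coincides with the sign of $\phi_u'(t)$. This gives (i), (ii) and (iii) simultaneously, each as an equivalence.

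There is no real obstacle. The only point needing care is that derivatives are taken along the fiber in the scalar variable $t$, not in the Fréchet sense in $H_0^1(\Omega)$, since the term $\|u\|_{1-\beta}^{1-\beta}$ is not differentiable there. Along the fiber, however, this term is just the smooth function $t^{1-\beta}\|u\|_{1-\beta}^{1-\beta}$, so no regularity issue arises.
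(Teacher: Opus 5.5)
Your proposal is correct and follows the paper's argument. The paper also factors $R_e(tu)-\lambda=\frac{p+1}{t^{p+1}\|u\|_{p+1}^{p+1}}\phi_u(t)$, differentiates in $t$ using $\phi_u(t)=0$ to obtain \eqref{aee}, and reads off (i)--(iii) from the positivity of the prefactor; you simply write out the product-rule step that the paper leaves implicit.
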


\begin{proof}
The result follows immediately from \eqref{aee}, since
\( \displaystyle
\frac{p+1}{t^{p+1}\|u\|_{p+1}^{p+1}}>0.
\)
\end{proof}
		
		Analogously, let $u\in H_0^1(\Omega)\setminus\{0\}$ be fixed. Since
\[
R_n(tu)-\lambda
=
\frac{\phi_u'(t)}
{t^p\|u\|_{p+1}^{p+1}},
\]
if $R_n(tu)=\lambda$, then $\phi_u'(t)=0$. Differentiating with
respect to $t$, we obtain
\begin{equation}\label{save}
\frac{d}{dt}R_n(tu)
=
\frac{\phi_u''(t)}
{t^p\|u\|_{p+1}^{p+1}}.
\end{equation}
In particular, we have the following result.

\begin{prop}\label{der-Rn}
Suppose that $u\in H_0^1(\Omega)\setminus\{0\}$ and that
$R_n(tu)=\lambda$ for some $t>0$. Then:
\begin{itemize}
    \item[(i)]
    $\dfrac{d}{dt}R_n(tu)>0$ if and only if $\phi_u''(t)>0$;
    \item[(ii)]
    $\dfrac{d}{dt}R_n(tu)<0$ if and only if $\phi_u''(t)<0$;
    \item[(iii)]
    $\dfrac{d}{dt}R_n(tu)=0$ if and only if $\phi_u''(t)=0$.
\end{itemize}
\end{prop}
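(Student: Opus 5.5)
This follows the same pattern as Proposition~\ref{importante}, now applied to $R_n$ instead of $R_e$. Fix $u\in H_0^1(\Omega)\setminus\{0\}$ and set $a=\|u\|^2$, $b=\|u\|_{1-\beta}^{1-\beta}$, $c=\|u\|_{p+1}^{p+1}>0$. By homogeneity of each term,
\[
R_n(tu)=\frac{t^2a+t^{1-\beta}b}{t^{p+1}c},\qquad \phi_u'(t)=ta+t^{-\beta}b-\lambda t^{p}c .
\]
From these formulas one reads off directly the identity stated before the proposition,
\[
R_n(tu)-\lambda=\frac{\phi_u'(t)}{t^{p}c},\qquad t>0.
\]
Both sides are smooth in $t$ on $(0,+\infty)$. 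This holds even though $R_n$ is not Fréchet differentiable on $H_0^1(\Omega)$, because along a fiber the singular term is just the scalar power $t^{1-\beta}b$.

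Next, differentiate the identity in $t$ using the quotient rule:
\[
\frac{d}{dt}R_n(tu)=\frac{\phi_u''(t)}{t^{p}c}-\frac{p\,\phi_u'(t)}{t^{p+1}c}.
\]
Now use the hypothesis $R_n(tu)=\lambda$. By the identity above this forces $\phi_u'(t)=0$, so the second term vanishes and \eqref{save} follows:
\[
\frac{d}{dt}R_n(tu)=\frac{\phi_u''(t)}{t^{p}c}.
\]

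Finally, since $t>0$ and $c>0$, the factor $t^{p}c$ is strictly positive. Hence $\frac{d}{dt}R_n(tu)$ and $\phi_u''(t)$ have the same sign, and one is zero exactly when the other is. This gives (i), (ii) and (iii) simultaneously.

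There is no real obstacle here. The only point needing care is to make sure the extra term $-p\,\phi_u'(t)/(t^{p+1}c)$ from the quotient rule is dropped only after invoking $R_n(tu)=\lambda$. Without that hypothesis, \eqref{save} is false in general.
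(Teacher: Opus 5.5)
Your proof is correct and is the paper's argument: you differentiate the identity $R_n(tu)-\lambda=\phi_u'(t)/(t^p\|u\|_{p+1}^{p+1})$ in $t$, use $R_n(tu)=\lambda$ to get $\phi_u'(t)=0$ and hence \eqref{save}, and then read off (i)--(iii) from the positivity of $t^p\|u\|_{p+1}^{p+1}$. Your explicit handling of the extra quotient-rule term $-p\,\phi_u'(t)/(t^{p+1}\|u\|_{p+1}^{p+1})$, which the paper drops without comment, is a welcome clarification.
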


\begin{proof}
The result follows immediately from \eqref{save}, since
\(
t^p\|u\|_{p+1}^{p+1}>0.
\)
\end{proof}

    \begin{lem}\label{lam-n-e}
     For each $u\in H_0^1(\Omega)\backslash\{0\}$, there exist unique $t_{n}(u)>0$ and $t_{e}(u)>0$ such that
      \[
       R_{n}(t_{n}(u)u)=\min_{t>0}R_{n}(tu) \quad \mbox{and} \quad R_{e}(t_{e}(u)u)=\min_{t>0}R_{e}(tu).
      \]
    \end{lem}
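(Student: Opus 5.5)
Fix $u\in H_0^1(\Omega)\setminus\{0\}$ and write $a=\|u\|^2$, $b=\|u\|_{1-\beta}^{1-\beta}$ and $c=\|u\|_{p+1}^{p+1}$. All three are positive: $b,c>0$ because $u\neq0$ on a set of positive measure. The plan is to reduce both statements to an elementary one-variable analysis. We have the explicit formulas
\[
R_n(tu)=a\,t^{1-p}+b\,t^{-\beta-p}\quad\text{up to the factor } c^{-1},
\]
and
\[
R_e(tu)=\frac{p+1}{c}\Big(\tfrac{a}{2}t^{1-p}+\tfrac{b}{1-\beta}t^{-\beta-p}\Big).
\]
Both have the form $f(t)=A t^{\gamma}+B t^{-\delta}$ with $A,B>0$, $\gamma=1-p>0$ and $\delta=\beta+p>0$. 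For $R_n$ we take $A=a/c$, $B=b/c$. For $R_e$ we take $A=\frac{(p+1)a}{2c}$, $B=\frac{(p+1)b}{(1-\beta)c}$.

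The key step is a single lemma for such $f$ on $(0,+\infty)$. First, $f$ is $C^\infty$, and $f(t)\to+\infty$ both as $t\to0^+$ (from the term $Bt^{-\delta}$) and as $t\to+\infty$ (from the term $At^\gamma$). So $f$ attains a global minimum at some interior point. Next, compute
\[
f'(t)=t^{-\delta-1}\big(\gamma A t^{\gamma+\delta}-\delta B\big).
\]
The bracket is strictly increasing in $t$, since $\gamma+\delta=1+\beta>0$. Hence $f'$ has exactly one zero,
\[
t_0=\Big(\frac{\delta B}{\gamma A}\Big)^{1/(1+\beta)},
\]
with $f'<0$ on $(0,t_0)$ and $f'>0$ on $(t_0,+\infty)$. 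Thus $t_0$ is the unique global minimizer.

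Applying this gives explicit values:
\[
t_n(u)=\Big(\frac{(\beta+p)\,b}{(1-p)\,a}\Big)^{1/(1+\beta)},
\]
\[
t_e(u)=\Big(\frac{2(\beta+p)\,b}{(1-p)(1-\beta)\,a}\Big)^{1/(1+\beta)}.
\]
It is worth recording these, since later steps will likely need them: for instance, the continuity of $u\mapsto t_n(u)$, and the $0$-homogeneity identity $t_n(su)=t_n(u)/s$.

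There is no real obstacle here; the only point needing care is that $b>0$ and $c>0$ for every nontrivial $u$, which ensures both the blow-up at $0^+$ and that the quotients are well defined. As a consistency check with Proposition~\ref{der-Rn}, I would also note that at $t=t_n(u)$ we get $\frac{d}{dt}R_n(tu)=0$, so the corresponding $\lambda=R_n(t_nu)$ puts $t_nu$ in $\mathcal N^0_\lambda$.
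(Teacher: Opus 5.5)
Your proof is correct and follows essentially the same route as the paper: blow-up of $R_n(tu)$ and $R_e(tu)$ at $t\to0^+$ and $t\to+\infty$, together with a unique zero of the $t$-derivative. Your explicit formulas for $t_n(u)$ and $t_e(u)=\bigl(\tfrac{2}{1-\beta}\bigr)^{1/(1+\beta)}t_n(u)$ match the paper's. Your sign analysis of $f'$ also makes the uniqueness of the minimizer slightly more explicit than the paper's ``direct computation.''
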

    \begin{proof}
        First, ${\lim\limits_{t\to 0^+}R_n(tu)}=\lim\limits_{t\to+\infty}R_n(tu)=+\infty.$	A direct computation shows that $\frac{d}{dt}R_n(tu)=0$ if and only if
	\begin{eqnarray}\label{t-n}
		t=t_n(u):=\left[\frac{(\beta+p)\|u\|_{1-\beta}^{1-\beta}}{(1-p)\|u\|^2}\right]^{\frac{1}{1+\beta}}.
	\end{eqnarray}
    Similarly, $\lim\limits_{t\to 0^+}R_e(tu)=\lim\limits_{t\to+\infty}R_e(tu)=+\infty$
		and $\frac{d}{dt}R_e(tu)=0$ if and only if
		$$t=t_e(u):=\left(\frac{2}{1-\beta}\right)^{\frac{1}{1+\beta}}t_n(u),$$
    which finishes the proof.
    \end{proof}
In view of the previous lemma, we define
	\begin{equation}\label{lambda_n}
		\lambda_n(u):=R_n(t_n(u)u)=c_{\beta,p}\frac{\|u\|^{2\frac{p+\beta}{\beta+1}}\|u\|_{1-\beta}^{(1-\beta)\frac{1-p}{\beta+1}}}{\|u\|_{p+1}^{p+1}},\;\;\;\text{with}\;\;\;c_{\beta,p}:=\frac{1+\beta}{1-p}\left(\frac{1-p}{\beta+p}\right)^{\frac{\beta+p}{\beta+1}}
	\end{equation}
	and
		\begin{equation}\label{lambda_e}
			\lambda_e(u):=R_e(t_e(u)u)=\tilde{c}_{\beta,p}\frac{\|u\|^{2\frac{p+\beta}{\beta+1}}\|u\|_{1-\beta}^{(1-\beta)\frac{1-p}{\beta+1}}}{\|u\|_{p+1}^{p+1}},\;\;\;\text{where}\;\;\;\tilde{c}_{\beta,p}:=\frac{p+1}{1-\beta}\left(\frac{1-\beta}{2}\right)^{\frac{\beta+p}{\beta+1}}{c}_{\beta,p}.
		\end{equation}

    \begin{rmk}
		We point out that $\tilde{c}_{\beta,p}>{c}_{\beta,p}$. Indeed, this assertion is equivalent to proving that $f(p)>f(-\beta)$, where $f(t)=\left(\frac{t+1}{2}\right)^{\frac{1}{1-t}}$. Since $f$ is increasing in $[-\beta,p]$, the desired result follows. 
    \end{rmk}

   In view of Lemma \ref{lam-n-e} together with the identity
    $$R_n(tu) - R_e(tu) = \frac{t}{p+1} \frac{d}{dt} R_e(tu), \quad t>0,$$    
    we can show how the geometry of the fibering maps associated with $R_n$ and $R_e$ is organized, see Figure \ref{fig2}.
    \begin{figure}[!ht]
    \center{\includegraphics[scale=0.7]{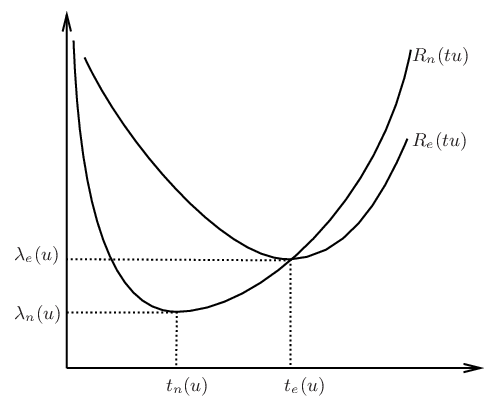}}
			\caption{The fibering functions $R_n(tu)$ and $R_e(tu)$.}
			\label{fig2}
	\end{figure}

        \begin{prop}\label{N-way-origin} If $0<\beta,p < 1$, and $\lambda\geq \lambda_*$, then $\mathcal{N}_{\lambda}$ is bounded away from the origin, where $\lambda_*$ is given by \eqref{mun}.

		\end{prop}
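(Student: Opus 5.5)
The plan is to argue directly from the Nehari identity
\[
\|u\|^2+\|u\|_{1-\beta}^{1-\beta}=\lambda\|u\|_{p+1}^{p+1},\qquad u\in\mathcal N_\lambda,
\]
and show that $\|u\|\geq c_\lambda>0$ for a constant depending only on $\lambda,\beta,p,N,\Omega$. The heuristic is that near the origin the singular term, of homogeneity $1-\beta<p+1$, dominates the right-hand side, so the identity cannot hold for small $u$. The difficulty is that smallness of $\|u\|$ alone does not force $\|u\|_{1-\beta}$ to dominate $\|u\|_{p+1}^{p+1}$. The left-hand side has two pieces, and both must be used simultaneously.

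\textbf{Step 1 (interpolation).} Choose $\theta\in(0,1)$ with $\frac1{p+1}=\frac{\theta}{1-\beta}+\frac{1-\theta}{2^*}$. By H\"older's inequality and the Sobolev embedding,
\[
\|u\|_{p+1}^{p+1}\le \|u\|_{1-\beta}^{\theta(p+1)}\|u\|_{2^*}^{(1-\theta)(p+1)}\le S_{2^*}^{\delta}\, a^{\gamma}\|u\|^{\delta},
\]
where $a:=\|u\|_{1-\beta}^{1-\beta}$, $\gamma:=\frac{\theta(p+1)}{1-\beta}\in(0,1)$ and $\delta:=(1-\theta)(p+1)$. By homogeneity, $(1-\beta)\gamma+\delta=p+1$.

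\textbf{Step 2 (Young's inequality).} Write $K:=\lambda S_{2^*}^{\delta}$. Young's inequality with exponents $1/\gamma$ and $1/(1-\gamma)$ gives $K a^\gamma\|u\|^\delta\le a+C\|u\|^{\delta/(1-\gamma)}$. Inserting this into the Nehari identity cancels $a$ and leaves $\|u\|^2\le C\|u\|^{\delta/(1-\gamma)}$. If $\delta/(1-\gamma)>2$, this yields $\|u\|\ge C^{-1/(\delta/(1-\gamma)-2)}>0$, which is the claim.

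\textbf{Main obstacle: the exponent check.} The condition $\delta>2(1-\gamma)$ is equivalent, using $(1-\beta)\gamma+\delta=p+1$, to $\gamma>\frac{1-p}{1+\beta}$. I am not certain this holds for every admissible $\beta,p,N$ with the above choice of $\theta$. The scaling heuristic indicates that the result itself is true. For a test profile with amplitude $t$ and support radius $r$, the identity forces both $t\gtrsim\lambda^{-1/(p+\beta)}$ (from $a\le\lambda b$) and $r^2\gtrsim t^{1-p}/\lambda$ (from $\|u\|^2\le\lambda b$). Hence $\|u\|^2\sim t^2r^{N-2}$ is bounded below.

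If the $2^*$-interpolation is too weak, I would first redo Steps 1--2 interpolating between $L^{1-\beta}$ and $L^{q}$ for an intermediate $q\in(p+1,2^*]$ that makes $\gamma$ larger. Failing that, I would keep the two inequalities $a\le\lambda\|u\|_{p+1}^{p+1}$ and $\|u\|^2\le\lambda\|u\|_{p+1}^{p+1}$ separate and use each in its own H\"older splitting.

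Finally, for $\lambda\geq\lambda_*$ the set $\mathcal N_\lambda$ is nonempty by the definition \eqref{mun} together with Lemma~\ref{lam-n-e}, so the statement is not vacuous; the bound itself does not use the lower restriction on $\lambda$.
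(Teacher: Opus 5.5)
Your argument is correct, and the exponent check you flag as the main obstacle is automatically satisfied. The defining relation for $\theta$ reads $1=\frac{\theta(p+1)}{1-\beta}+\frac{(1-\theta)(p+1)}{2^*}$, i.e.\ $\gamma+\delta/2^*=1$. Hence $\delta/(1-\gamma)=2^*$ identically. Step~2 therefore ends with $\|u\|^2\le C\|u\|^{2^*}$, so $\|u\|\ge C^{-1/(2^*-2)}$ for every $u\in\mathcal N_\lambda$, with no restriction on $\beta,p,N$. Your criterion agrees with this: $\gamma=\frac{2^*-(p+1)}{2^*-(1-\beta)}$ and $\gamma-\frac{1-p}{1+\beta}=\frac{(2^*-2)(p+\beta)}{(1+\beta)(2^*-1+\beta)}>0$. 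No change of interpolation endpoint is needed.

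The paper uses the same interpolation (its $s$ is your $\gamma$) but proceeds exactly as in your fallback. It inserts the two separate consequences $\|u\|_{1-\beta}^{1-\beta}\le\lambda\|u\|_{p+1}^{p+1}$ and $\|u\|^2\le\lambda\|u\|_{p+1}^{p+1}$ of the Nehari identity into the interpolation inequality. This gives $b\le C\,b^{\,s+(1-s)2^*/2}$ for $b=\|u\|_{p+1}^{p+1}$, with exponent exceeding $1$ by $\frac12(1-s)(2^*-2)$. The paper thus bounds $\|u\|_{p+1}$ from below first, and then $\|u\|$ via $\|u\|\ge S_{p+1}^{-1}\|u\|_{p+1}$.

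Your Young-inequality version instead absorbs the singular term into the left-hand side and bounds $\|u\|$ directly. The $L^{p+1}$ lower bound that the paper uses later (e.g.\ in Corollary~\ref{weak-conv}) then follows from $\|u\|^2\le\lambda\|u\|_{p+1}^{p+1}$, so the two routes are equivalent in substance.

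One small slip in your closing remark: nonemptiness of $\mathcal N_{\lambda_*}$ requires the infimum $\lambda_*$ to be attained (Lemma~\ref{N0}(iv)). This does not follow from \eqref{mun} and Lemma~\ref{lam-n-e} alone. It is irrelevant to the statement itself.
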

		\begin{proof}
			For $u\in \mathcal{N}_{\lambda}$, we have
			\begin{equation}\label{des-1}
				\|u\|_{1-\beta}^{1-\beta}\leq\displaystyle \lambda\|u\|_{p+1}^{p+1}\qquad\mbox{and}\qquad\|u\|\leq\displaystyle \lambda^{\frac{1}{2}}\|u\|_{p+1}^{\frac{p+1}{2}}.
			\end{equation}		
		Now, in view of \eqref{des-1}, the embedding $H_0^1(\Omega)\hookrightarrow L^{2^{\ast}}(\Omega)$ and the interpolation $p+1= s(1-\beta)+(1-s)2^{\ast}$, we obtain
			 \begin{align*}
			 		\|u\|_{p+1}^{p+1}\leq & \, {\|u\|_{1-\beta}^{s(1-\beta)}}\|u\|_{2^{\ast}}^{(1-s)2^{\ast}}\\
                    \leq & \, \lambda^s\|u\|_{p+1}^{s(p+1)} S_{2^{\ast}}^{(1-s)2^{\ast}}{\|u\|^{{(1-s)2^{\ast}}}}\\
                    \leq & \, \lambda^s S_{2^{\ast}}^{(1-s)2^{\ast}}\|u\|_{p+1}^{s(p+1)} \left({\lambda^{\frac{1}{2}}\|u\|_{p+1}^{\frac{p+1}{2}}}\right)^{(1-s)2^{\ast}}\\
                    = & \, \lambda^{\frac{2s+(1-s)2^{\ast}}{2}} S_{2^{\ast}}^{(1-s)2^{\ast}}\|u\|_{p+1}^{s(p+1)+\frac{(p+1)(1-s)2^{\ast}}{2}}.
		 	\end{align*}
	 		 Hence, by using the fact that 
	 		 $$\frac{2s(p+1)+(p+1)(1-s)2^{\ast}}{2}-(p+1)=\frac{p+1}{2}(1-s)(2^{\ast}-2),$$
	 		 we obtain  
            $$
	 		 \frac{1}{\lambda^{\frac{2s+(1-s)2^{\ast}}{2}}S_{2^{\ast}}^{(1-s)2^{\ast}}}\leq\|u\|_{p+1}^{\frac{2s(p+1)+(p+1)(1-s)2^{\ast}}{2}-(p+1)}=\|u\|_{p+1}^{\frac{(p+1)}{2}[(1-s)(2^{\ast}-2)]}.
	 		 $$
		Therefore,		
			\begin{equation}\label{dis-ori}
				\|u\|\geq  S_{p+1}^{-1}\|u\|_{p+1}\geq\frac{1}{S_{p+1}\lambda^{\frac{2s+(1-s)2^{\ast}}{2(1-s)(2^{\ast}-2)}\;}S_{2^{\ast}}^{\frac{2^{\ast}}{2^{\ast}-2}}}=:\delta_\lambda,
			\end{equation}
        and the proof is complete.
		\end{proof}
		
		\begin{prop} \label{fechada}	
			Suppose $0<\beta,p < 1$ and $\lambda\geq\lambda_*$. Then there exists $C=C_\lambda > 0$ such that $\|u\| \leq C$ for every $u \in \mathcal{N}_{\lambda}$. Furthermore, the Nehari sets $\mathcal{N}_{\lambda}$ and $\mathcal{N}_{\lambda}^0$ are closed.
		\end{prop}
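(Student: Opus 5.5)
Boundedness comes from comparing the growth rates in the Nehari identity. If $u\in\mathcal{N}_\lambda$, then
\[
\|u\|^2\le \|u\|^2+\|u\|_{1-\beta}^{1-\beta}=\lambda\|u\|_{p+1}^{p+1}\le \lambda S_{p+1}^{p+1}\|u\|^{p+1}.
\]
Since $p+1<2$, this gives $\|u\|^{1-p}\le \lambda S_{p+1}^{p+1}$. We may therefore take $C_\lambda:=(\lambda S_{p+1}^{p+1})^{1/(1-p)}$. This step needs no assumption on $\lambda$ beyond positivity.

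For closedness of $\mathcal{N}_\lambda$, take $u_k\in\mathcal{N}_\lambda$ with $u_k\to u$ in $H_0^1(\Omega)$. The three maps $u\mapsto\|u\|^2$, $u\mapsto\|u\|_{p+1}^{p+1}$ and $u\mapsto\|u\|_{1-\beta}^{1-\beta}$ are continuous on $H_0^1(\Omega)$.
\begin{itemize}
\item For the first two this is standard, using the Sobolev embedding into $L^{p+1}(\Omega)$.
\item For the singular term, $\Omega$ is bounded, so $H_0^1(\Omega)\hookrightarrow L^1(\Omega)$, and the elementary inequality $\big||a|^{1-\beta}-|b|^{1-\beta}\big|\le|a-b|^{1-\beta}$ together with H\"older's inequality gives
\[
\big|\|u_k\|_{1-\beta}^{1-\beta}-\|u\|_{1-\beta}^{1-\beta}\big|\le |\Omega|^{\beta}\|u_k-u\|_1^{1-\beta}\to0 .
\]
\end{itemize}
Passing to the limit, $u$ satisfies $\|u\|^2+\|u\|_{1-\beta}^{1-\beta}=\lambda\|u\|_{p+1}^{p+1}$. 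The only remaining issue is that $u$ might be $0$. This is excluded by Proposition~\ref{N-way-origin}: since $\lambda\ge\lambda_*$, we have $\|u_k\|\ge\delta_\lambda>0$, hence $\|u\|\ge\delta_\lambda$. Thus $u\in\mathcal{N}_\lambda$.

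For $\mathcal{N}_\lambda^0$, compute
\[
\phi_u''(1)=\|u\|^2-\beta\|u\|_{1-\beta}^{1-\beta}-\lambda p\|u\|_{p+1}^{p+1}.
\]
This is a continuous function of $u$ by the same three continuity facts. So if $u_k\in\mathcal{N}^0_\lambda$ and $u_k\to u$, then $u\in\mathcal{N}_\lambda$ by the previous step, and $\phi_u''(1)=\lim_k\phi_{u_k}''(1)=0$. Hence $u\in\mathcal{N}^0_\lambda$.

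The main obstacle is the continuity of the non-smooth term $\|\cdot\|_{1-\beta}^{1-\beta}$, handled by the H\"older-type inequality above, together with the nondegeneracy of the limit, which is where the hypothesis $\lambda\ge\lambda_*$ enters via Proposition~\ref{N-way-origin}.
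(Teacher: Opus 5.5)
Your proof is correct and follows the paper's argument: the bound $\|u\|^{1-p}\le\lambda S_{p+1}^{p+1}$ from the Nehari identity, continuity of the three norm terms (the paper writes $\mathcal{N}_\lambda\cup\{0\}$ and $\mathcal{N}^0_\lambda\cup\{0\}$ as zero sets of continuous functionals, using $\phi_u'(1)+\phi_u''(1)$ where you use $\phi_u''(1)$, which is equivalent on $\mathcal{N}_\lambda$), and Proposition~\ref{N-way-origin} to keep the limit away from the origin. Your H\"older estimate for the continuity of $\|\cdot\|_{1-\beta}^{1-\beta}$ and your constant $C_\lambda=(\lambda S_{p+1}^{p+1})^{1/(1-p)}$, which bounds $\|u\|$ itself rather than $\|u\|^{1-p}$, supply details the paper leaves implicit.
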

		\begin{proof}
			 For $u\in \mathcal{N}_\lambda$, we have 
            \begin{equation}\label{C_lambda}
             \|u\|^{1-p}\leq\lambda S_{p+1}^{p+1}:=C_{\lambda}.
            \end{equation} 
        Moreover, the continuous functionals
				$F_1, F_2:H_0^1(\Omega)\to \mathbb{R}$ given by 
				$$F_1(u)=\|u\|^2+\|u\|^{1-\beta}_{1-\beta}-\lambda\|u\|^{p+1}_{p+1}\qquad\mbox{and}\qquad F_2(u)=2\|u\|^2+(1-\beta)\|u\|^{1-\beta}_{1-\beta}-\lambda(p+1)\|u\|^{p+1}_{p+1}$$
				imply that $\mathcal{N}_\lambda\cup \{0\}=F_{1}^{-1}(\{0\})$ and $\mathcal{N}_\lambda^0\cup \{0\}=F_{1}^{-1}(\{0\})\cap F_{2}^{-1}(\{0\})$ are closed sets. In view of Proposition \ref{N-way-origin}, the sets $\mathcal{N}_\lambda$ and $\mathcal{N}_\lambda^0$ are bounded away from zero, we conclude that $\mathcal{N}_\lambda$ and $\mathcal{N}_\lambda^0$ are closed.
		\end{proof}
	
		\begin{cor}\label{weak-conv}
			Suppose $0<\beta,p < 1$ and $\lambda\geq\lambda_*$. Then, for any sequence $(u_k)\subset \mathcal{N}_\lambda$, there exist a subsequence $(u_{k_j})\subset (u_k)$ and $u\in H_0^1(\Omega)\setminus\{0\}$ such that $u_{k_j}\rightharpoonup u$ as $j\rightarrow\infty$.
		\end{cor}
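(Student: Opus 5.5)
The plan is to combine the uniform bound of Proposition~\ref{fechada} with the lower bound of Proposition~\ref{N-way-origin}, and then to rule out a zero weak limit by passing to the limit in the Nehari identity using compactness.

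\textbf{Step 1 (weak limit).} Let $(u_k)\subset\mathcal N_\lambda$ with $\lambda\geq\lambda_*$. By Proposition~\ref{fechada}, $\|u_k\|\leq C_\lambda^{1/(1-p)}$ for every $k$. Since $H_0^1(\Omega)$ is reflexive, a subsequence converges weakly, $u_{k_j}\rightharpoonup u$ in $H_0^1(\Omega)$.

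\textbf{Step 2 (strong convergence in low norms).} Because $\Omega$ is bounded and $p+1<2^*$, Rellich--Kondrachov gives $u_{k_j}\to u$ strongly in $L^{p+1}(\Omega)$ and in $L^2(\Omega)$.

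\textbf{Step 3 ($u\neq0$).} The key point is to show that $\|u_k\|_{p+1}$ is bounded away from zero. Take the chain of inequalities in the proof of Proposition~\ref{N-way-origin}. Before the final Sobolev step, it already yields
\[
\|u_k\|_{p+1}^{\frac{p+1}{2}(1-s)(2^*-2)}\geq \lambda^{-\frac{2s+(1-s)2^*}{2}}S_{2^*}^{-(1-s)2^*}>0,
\]
where the exponent on the left is positive because $s\in(0,1)$ and $2^*>2$. This gives a uniform bound $\|u_k\|_{p+1}\geq\eta_\lambda>0$. By Step 2, $\|u\|_{p+1}=\lim_j\|u_{k_j}\|_{p+1}\geq\eta_\lambda$, so $u\neq0$.

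\textbf{Main obstacle.} The only subtle point is that a weak limit need not inherit the $H_0^1$ lower bound $\|u_k\|\geq\delta_\lambda$ of Proposition~\ref{N-way-origin}, since the norm is only weakly lower semicontinuous. This is handled by using the $L^{p+1}$ lower bound instead, which does pass to the limit because the convergence in $L^{p+1}(\Omega)$ is strong.
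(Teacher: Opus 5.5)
Your proof is correct and follows the paper's argument: boundedness from Proposition~\ref{fechada}, weak compactness together with the compact embedding into $L^{p+1}(\Omega)$, and then the uniform lower bound on $\|u_k\|_{p+1}$ contained in \eqref{dis-ori} to conclude that $u\neq0$. Your remark that it is the $L^{p+1}$ lower bound, not the $H_0^1$ one, that survives the weak limit is exactly the point on which the paper's proof relies.
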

    \begin{proof}
In view of Proposition~\ref{fechada}, $(u_k)$ is bounded in
$H_0^1(\Omega)$. Hence, up to a subsequence, $u_k\rightharpoonup u$ weakly in $H_0^1(\Omega)$ and $u_k\to u$ strongly in $L^{p+1}(\Omega)$. On the other hand, estimate~\eqref{dis-ori} yields $\|u_k\|_{p+1}^{p+1}\geq d_\lambda>0$ for every $k$, where $d_\lambda$ is independent of $k$.
Thus, $u\neq0$, since
\[\|u\|_{p+1}^{p+1}
    =
    \lim_{k\to\infty}\|u_k\|_{p+1}^{p+1}
    \geq d_\lambda>0.\]
This completes the proof.
\end{proof}
	
		\begin{prop}\label{N0-dist-0}
			Suppose $0<\beta,p < 1$ and $\lambda\geq\lambda_*$. Then there exists $\sigma_\lambda>0$ such that $I_{\lambda}(u)\geq \sigma_\lambda$ for all $u\in \mathcal{N}_\lambda^0$.
		\end{prop}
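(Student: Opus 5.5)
For $u\in\mathcal N^0_\lambda$ we have two identities at once:
\[
\|u\|^2+\|u\|_{1-\beta}^{1-\beta}=\lambda\|u\|_{p+1}^{p+1},\qquad 2\|u\|^2+(1-\beta)\|u\|_{1-\beta}^{1-\beta}=\lambda(p+1)\|u\|_{p+1}^{p+1}.
\]
The plan is to use them to write $I_\lambda(u)$ as a positive multiple of $\|u\|^2$ alone, and then invoke Proposition~\ref{N-way-origin}.

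\textbf{Step 1: eliminate the other two terms.} Multiply the first identity by $(p+1)$ and subtract the second. This gives
\[
(p+\beta)\|u\|_{1-\beta}^{1-\beta}=(1-p)\|u\|^2 ,
\]
which is consistent with $t_n(u)=1$ in \eqref{t-n}. It also gives $\lambda\|u\|_{p+1}^{p+1}=\frac{1+\beta}{p+\beta}\|u\|^2$.

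\textbf{Step 2: substitute into $I_\lambda$.} This yields
\[
I_\lambda(u)=\|u\|^2\left[\frac12+\frac{1-p}{(1-\beta)(p+\beta)}-\frac{1+\beta}{(p+1)(p+\beta)}\right]=:\kappa_{\beta,p}\|u\|^2 .
\]

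\textbf{Step 3: show $\kappa_{\beta,p}>0$.} This is the only genuine check. After clearing the common denominator $2(1-\beta)(p+\beta)(p+1)$, the numerator is
\[
(1-\beta)(p+\beta)(p+1)+2(1-p)(p+1)-2(1+\beta)(1-\beta).
\]
One way to see positivity is to regard it as a polynomial in $p$, but the structural route is cleaner. On $\mathcal N^0_\lambda$, Lemma~\ref{lam-n-e} gives $t_n(u)=1$ and $\lambda=\lambda_n(u)$. The identity $R_n(tu)-R_e(tu)=\frac{t}{p+1}\frac{d}{dt}R_e(tu)$ together with $t_e(u)>t_n(u)$ shows that $\frac{d}{dt}R_e(tu)<0$ at $t=1$. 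Hence $R_e(u)>R_n(u)=\lambda$, so $I_\lambda(u)>0$ by Remark~\ref{rmk11}. Since $I_\lambda(u)=\kappa_{\beta,p}\|u\|^2$ and $u\neq0$, this forces $\kappa_{\beta,p}>0$, because the constant does not depend on $u$. This argument requires $\mathcal N^0_\lambda\neq\emptyset$; if the set is empty the statement is vacuous.

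\textbf{Step 4: conclude.} Proposition~\ref{N-way-origin} gives $\|u\|\ge\delta_\lambda$ for every $u\in\mathcal N_\lambda\supset\mathcal N^0_\lambda$. Therefore
\[
I_\lambda(u)\ge\kappa_{\beta,p}\,\delta_\lambda^2=:\sigma_\lambda>0 \qquad\text{for all } u\in\mathcal N^0_\lambda .
\]

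The main obstacle is Step 3, namely certifying the sign of $\kappa_{\beta,p}$. I would rely on the $R_n/R_e$ comparison along fibers rather than on a direct polynomial inequality, and keep the explicit numerator only as a sanity check.
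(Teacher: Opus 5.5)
Your proof is correct and follows the paper's argument: solve the two identities, substitute into $I_\lambda$ to get a constant multiple of $\|u\|^2$, and use $\|u\|\geq\delta_\lambda$ from Proposition~\ref{N-way-origin}. Your Step 3 is valid but unnecessary, since the numerator factors as $(p+\beta)(1-p)(1+\beta)$; this gives $\kappa_{\beta,p}=\frac{(1-p)(1+\beta)}{2(1-\beta)(p+1)}$, which is manifestly positive and is exactly the constant the paper obtains.
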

		\begin{proof}
			Let $u\in \mathcal{N}_{\lambda}^0$ be fixed. Then
            $$
            \left\{
            \begin{array}{l}
            \|u\|^2+\|u\|_{1-\beta}^{1-\beta}=\lambda \|u\|_{p+1}^{p+1},\vspace{0,2cm} \\
            2\|u\|^2+(1-\beta)\|u\|_{1-\beta}^{1-\beta}=(p+1)\lambda \|u\|_{p+1}^{p+1}.
            \end{array}
            \right.
			$$ 
            Thus,		
			$$
            \left\{
            \begin{array}{l}
            (1-p)\|u\|^2-(\beta+p)\|u\|_{1-\beta}^{1-\beta}=0,\vspace{0,2cm}\\
            (\beta+1)\|u\|_{1-\beta}^{1-\beta}=(1-p)\lambda\|u\|_{p+1}^{p+1},
            \end{array}
            \right.
            \quad \Longrightarrow \quad 
            \left\{
            \begin{array}{l}
                \|u\|_{1-\beta}^{1-\beta} =\frac{1-p}{\beta+p}\|u\|^2\\ \\
                \lambda\|u\|_{p+1}^{p+1} =\frac{1+\beta}{\beta+p}\|u\|^2.
            \end{array}
            \right.
            $$		
			Hence,
\[
I_\lambda(u)
=
\frac{(1-p)(1+\beta)}
{2(1-\beta)(p+1)}\|u\|^2.
\]
In view of \eqref{dis-ori}, we obtain
\begin{equation}\label{sigm-lambda}
I_\lambda(u)
\geq
\frac{(1-p)(1+\beta)}
{2(1-\beta)(p+1)}
\left(
\frac{1}{
S_{p+1}^{p+1}
\lambda^{\frac{2s+(1-s)2^*}{(1-s)(2^*-2)}}
S_{2^*}^{\frac{2\cdot2^*}{2^*-2}}
}
\right)^{\frac{2}{p+1}}
=:\sigma_\lambda>0.
\end{equation}		
This ends the proof.
\end{proof}

	\begin{lem}\label{N0}
		If $0<\beta,p < 1$, and  $\lambda_n : H_0^1(\Omega) \setminus \{0\} \to \mathbb{R}$ given by \eqref{lambda_n}, then the following assertions hold:
		\begin{itemize}
			\item[(i)] $\lambda_n$ is $0$--homogeneous, i.e., $\lambda_n(tu)=\lambda_n(u)$ for every $t>0$ and $u\in H_0^1(\Omega)\setminus\{0\}$;
			\item[(ii)] The function $\lambda_n: H_0^1(\Omega)\setminus\{0\} \to \mathbb{R}$ is continuous and weakly lower semicontinuous;
			\item[(iii)] $\lambda_n$ is bounded from below by a positive constant, i.e., there exists $C = C(\beta,p,N,\Omega) > 0$ such that $\lambda_n(u) \geq C > 0$ for all $u\in H_0^1(\Omega)$. Furthermore, $\lambda_n$ is unbounded from above, i.e., there exists a sequence $(w_k) \in H_0^1(\Omega) \setminus \{0\}$ such that $\lambda_n(w_k) \to \infty$ as $k \to \infty$;
			\item[(iv)] There exists $u\in H_0^1(\Omega)\setminus\{0\}$ such that $$\lambda_n(u)=\displaystyle\lambda_*=\inf_{v\in H_0^1(\Omega)\setminus\{0\}}\lambda_n(v)>0.$$
		\end{itemize}	
	\end{lem}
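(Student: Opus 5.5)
Items (i)–(iii) are read off the explicit formula \eqref{lambda_n}, and item (iv) follows by the direct method.

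\emph{Item (i).} Replacing $u$ by $tu$ multiplies the numerator by
\[
t^{2\frac{p+\beta}{\beta+1}+(1-\beta)\frac{1-p}{\beta+1}} = t^{p+1},
\]
since $2(p+\beta)+(1-\beta)(1-p)=(p+1)(1+\beta)$. The denominator also scales by $t^{p+1}$. So $\lambda_n(tu)=\lambda_n(u)$.

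\emph{Item (ii), continuity.} On $H_0^1(\Omega)\setminus\{0\}$ the norms $\|\cdot\|$, $\|\cdot\|_{1-\beta}$ and $\|\cdot\|_{p+1}$ are continuous. The maps $u\mapsto \|u\|_{1-\beta}^{1-\beta}$ and $u\mapsto\|u\|_{p+1}^{p+1}$ are continuous in $L^{p+1}$, hence in $H_0^1$, using $\int||u|^{r}-|v|^{r}|\le\int|u-v|^{r}$ for $r\le1$ and a bounded domain. The denominator does not vanish for $u\neq0$.

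\emph{Item (ii), weak lower semicontinuity.} Let $u_k\rightharpoonup u\neq0$. By compactness of $H_0^1\hookrightarrow L^{p+1}$, and since $L^{p+1}$ controls $L^{1-\beta}$ on bounded $\Omega$, we get $\|u_k\|_{1-\beta}\to\|u\|_{1-\beta}$ and $\|u_k\|_{p+1}\to\|u\|_{p+1}$. The norm $\|\cdot\|$ is weakly l.s.c., and it enters with a positive exponent. Hence $\liminf\lambda_n(u_k)\ge\lambda_n(u)$.

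\emph{Item (iii), lower bound.} By (i) we may normalize so that $t_n(u)=1$. Then $\lambda_n(u)=R_n(u)=\lambda$ with $u\in\mathcal N_\lambda$. Rather than argue through $\mathcal N_\lambda$, I bound the formula directly. Use the interpolation $\|u\|_{p+1}^{p+1}\le\|u\|_{1-\beta}^{s(1-\beta)}\|u\|_{2^*}^{(1-s)2^*}$ from Proposition \ref{N-way-origin}, together with Sobolev, which gives $\|u\|_{p+1}^{p+1}\le S_{2^*}^{(1-s)2^*}\|u\|_{1-\beta}^{s(1-\beta)}\|u\|^{(1-s)2^*}$. Both sides of the ratio are $(p+1)$-homogeneous, so it suffices to compare the exponents. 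Write $a=\|u\|$ and $b=\|u\|_{1-\beta}^{1-\beta}$, and set
\[
\alpha=\tfrac{2(p+\beta)}{1+\beta},\qquad \gamma=\tfrac{1-p}{1+\beta}.
\]
The numerator is $a^\alpha b^\gamma$ and the interpolation bound is $a^{(1-s)2^*}b^{s}$. The two expressions do not match exactly, so I will use Young's inequality. From $\alpha+(1-\beta)\gamma=p+1=(1-s)2^*+s(1-\beta)$ one checks that $\gamma<s$ iff $\alpha>(1-s)2^*$, which need not hold. A cleaner route: minimize over the two-parameter family $a,b$ subject to the extra constraint $b\le C a^{1-\beta}$ (Hölder plus Sobolev). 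The main obstacle is here, namely making sure the exponents combine into a uniform bound. If direct exponent matching fails, I will argue by contradiction. Take $\lambda_n(u_k)\to0$ with the normalization $\|u_k\|_{p+1}=1$ and $t_n(u_k)=1$, so $(1-p)\|u_k\|^2=(\beta+p)\|u_k\|_{1-\beta}^{1-\beta}$. Then $\lambda_n(u_k)\to0$ forces $\|u_k\|\to0$. This contradicts $1=\|u_k\|_{p+1}\le S_{p+1}\|u_k\|$. This argument avoids exponent bookkeeping entirely, so I will use it.

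\emph{Item (iii), unboundedness.} Take $w_k$ concentrating: a fixed bump $w$ rescaled as $w_k(x)=w(kx)$ with support shrinking. Scaling gives $\|w_k\|^2\sim k^{2-N}$, $\|w_k\|_{1-\beta}^{1-\beta}\sim k^{-N}$ and $\|w_k\|_{p+1}^{p+1}\sim k^{-N}$. So
\[
\lambda_n(w_k)\sim k^{(2-N)\frac{p+\beta}{1+\beta}-N\frac{1-p}{1+\beta}+N}=k^{\frac{2(p+\beta)}{1+\beta}}\to\infty.
\]

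\emph{Item (iv).} Take a minimizing sequence normalized by $\|v_k\|=1$ via (i). It is bounded, so up to a subsequence $v_k\rightharpoonup v$ in $H_0^1$. Since $\lambda_n(v_k)$ stays bounded and $\|v_k\|=1$, the relation $\lambda_n(v_k)\ge c\,\|v_k\|_{1-\beta}^{\gamma(1-\beta)}/\|v_k\|_{p+1}^{p+1}$ together with Hölder ($\|v\|_{1-\beta}\le C\|v\|_{p+1}$) forces $\|v_k\|_{p+1}$ to be bounded below. By compactness, $v\neq0$. Weak lower semicontinuity from (ii) then gives $\lambda_n(v)\le\lambda_*$, hence equality. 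Finally $\lambda_*>0$ by (iii), and $\lambda_*=\inf\lambda_n$ agrees with the definition \eqref{mun} by Lemma \ref{lam-n-e}.
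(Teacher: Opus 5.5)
\textbf{The lower bound in (iii) fails as written.} You impose $\|u_k\|_{p+1}=1$ and $t_n(u_k)=1$ simultaneously. But $0$-homogeneity leaves only one scaling degree of freedom: since $t_n(su)=t_n(u)/s$, fixing $t_n(u_k)=1$ already determines $\|u_k\|_{p+1}$. With either normalization alone the contradiction disappears. Under $t_n(u_k)=1$ you still get $\|u_k\|\to0$, but Sobolev no longer contradicts this. Under $\|u_k\|_{p+1}=1$ you only get $\|u_k\|_{1-\beta}\to0$, possibly with $\|u_k\|\to\infty$. So an interpolation estimate cannot be avoided.

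In fact, the exponent route you abandoned does close. With your $s,\gamma$ one always has $s-\gamma=\frac{(\beta+p)(2^*-2)}{(1+\beta)(2^*-1+\beta)}>0$, and $\alpha-(1-s)2^*=(1-\beta)(s-\gamma)$. Hence $\lambda_n(u)\geq C\,(a^{1-\beta}/b)^{s-\gamma}$, which is bounded below by a positive constant because $b\le Ca^{1-\beta}$. The paper's route is cleaner still. It interpolates between $L^2$ and $L^{1-\beta}$ with $t=\frac{p+\beta}{1+\beta}$, so that $1-t=\gamma$ and the exponents match exactly: $\|u\|_{p+1}^{p+1}\le\|u\|_2^{2t}\|u\|_{1-\beta}^{(1-\beta)(1-t)}\le\Lambda_1^{-t}\|u\|^{2t}\|u\|_{1-\beta}^{(1-\beta)(1-t)}$. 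This gives $\lambda_n\geq c_{\beta,p}\Lambda_1^{t}$ directly.

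\textbf{In (iv), the key step uses an inequality pointing the wrong way.} The key step is non-vanishing of the weak limit. With $\|v_k\|=1$ you have $\lambda_n(v_k)=c_{\beta,p}\|v_k\|_{1-\beta}^{(1-\beta)\gamma}/\|v_k\|_{p+1}^{p+1}$. H\"older's $\|v_k\|_{1-\beta}\le C\|v_k\|_{p+1}$ bounds the numerator from above. It therefore yields only $\lambda_n(v_k)\le C\|v_k\|_{p+1}^{(1-\beta)\gamma-(p+1)}$, which is perfectly compatible with $\|v_k\|_{p+1}\to0$.

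What you need is an upper bound for $\|v_k\|_{p+1}$ in terms of $\|v_k\|_{1-\beta}$, i.e.\ the interpolation above. It gives $\lambda_n(v_k)\geq c_{\beta,p}\|v_k\|_2^{-2t}$. If $v=0$, then $\|v_k\|_2\to0$ by the compact embedding, contradicting the boundedness of $\lambda_n(v_k)$.

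The paper instead normalizes $\|u_k\|_{p+1}=1$. It uses the $L^{1-\beta}$--$L^{2^*}$ interpolation to get $\lambda_n(u_k)\geq C\|u_k\|^{\kappa}$ with $\kappa>0$ (which is again exactly $s>\gamma$). This yields $H_0^1$-boundedness, and $\|u\|_{p+1}=1$ in the limit.

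\textbf{What is fine.} Items (i) and (ii) are correct. Your concentrating-bump proof of unboundedness, with exponent $\frac{2(p+\beta)}{1+\beta}$, is a valid alternative to the paper's eigenfunction sequence.
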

	\begin{proof}
     For $(i)$, we notice that
      \[
          \lambda_n(tu) =  c_{\beta,p}\frac{\|tu\|^{2\frac{p+\beta}{\beta+1}}\|tu\|_{1-\beta}^{(1-\beta)\frac{1-p}{\beta+1}}}{\|tu\|_{p+1}^{p+1}}= t^{\,2\frac{p+\beta}{\beta+1}+(1-\beta)\frac{1-p}{\beta+1}-(p+1)}\lambda_n(u)=\lambda_n(u).
      \]
    Item $(ii)$ follows from the compact Sobolev embeddings and the weak lower semicontinuity of the $H_0^1(\Omega)$-norm. To prove $(iii)$, we use the interpolation $p+1={2}t+{(1-\beta)}(1-t)$ jointly with the Sobolev embedding to obtain 
		$$
		\|u\|_{p+1}^{p+1}=\int_{\Omega}|u|^{2t}|u|^{(1-\beta)(1-t)}\leq\|u\|_{2}^{2t}\|u\|_{1-\beta}^{(1-\beta)(1-t)}\leq\frac{1}{\Lambda_1^{t}}\|u\|^{2t}\|u\|_{1-\beta}^{(1-\beta)(1-t)},
		$$
	where $\Lambda_1$ denotes the first eigenvalue of the Laplacian. Consequently,
		$$
    		\lambda_n(u):=c_{\beta,p}\frac{\|u\|^{2t}\|u\|_{1-\beta}^{(1-\beta)(1-t)}}{\|u\|_{p+1}^{p+1}}\geq c_{\beta,p}\Lambda_1^{t}>0, \quad \forall u\in H_0^1(\Omega)\setminus\{0\},
		$$	
		  which proves that $\lambda_n$ is bounded from below by a positive constant. On the other hand, let $(\varphi_k)$ be a sequence of $L^2$-normalized eigenfunctions associated with the eigenvalues $(\Lambda_k)$. Then
          \begin{eqnarray}\label{coer-ln}
              \lambda_n(\varphi_k)=c_{\beta,p}\frac{\Lambda_k^{t}\|\varphi_k\|_{1-\beta}^{(1-\beta)(1-t)}}{\|\varphi_k\|_{p+1}^{p+1}}\geq c_{\beta,p}\frac{\Lambda_k^{t}\|\varphi_k\|_{1-\beta}^{(1-\beta)(1-t)}}{\|\varphi_k\|_{1-\beta}^{(1-\beta)(1-t)}}= c_{\beta,p}\Lambda_k^{t}\to \infty, \quad \mbox{as } k\rightarrow\infty.
          \end{eqnarray}

    \medskip
    \noindent \textbf{Proof of $(iv)$.}
    Let $(u_k)\subset H_0^1(\Omega)\setminus\{0\}$ be a minimizing sequence for $\lambda_*$. In view of item $(i)$, we may assume, without loss of generality, that $\|u_k\|_{p+1}=1$ for every $k\in\mathbb{N}$. Thus,
\[
\lambda_n(u_k)
=
c_{\beta,p}
\|u_k\|^{2\frac{p+\beta}{1+\beta}}
\|u_k\|_{1-\beta}^{
(1-\beta)\frac{1-p}{1+\beta}}
\longrightarrow\lambda_*.
\]
Let $\theta\in(0,1)$ be determined by
\(
p+1=\theta(1-\beta)+(1-\theta)2^*.
\)
By interpolation and the Sobolev embedding, we obtain
\[
\begin{aligned}
1
=
\|u_k\|_{p+1}^{p+1}
\leq
\|u_k\|_{1-\beta}^{\theta(1-\beta)}
\|u_k\|_{2^*}^{(1-\theta)2^*}\leq
S_{2^*}^{(1-\theta)2^*}
\|u_k\|_{1-\beta}^{\theta(1-\beta)}
\|u_k\|^{(1-\theta)2^*}.
\end{aligned}
\]
Consequently,
\[
\|u_k\|_{1-\beta}^{
(1-\beta)\frac{1-p}{1+\beta}}
\geq
C
\|u_k\|^{-
\frac{(1-\theta)2^*}{\theta}
\frac{1-p}{1+\beta}}.
\]
Hence,
\(
\lambda_n(u_k)
\geq
C\|u_k\|^\kappa,
\)
where
\(
\kappa
:=
2\frac{p+\beta}{1+\beta}
-
\frac{(1-\theta)2^*}{\theta}
\frac{1-p}{1+\beta}
=
\frac{(p+\beta)(p+1)(2^*-2)}
{(1+\beta)(2^*-(p+1))}
>0.
\)
Since $(\lambda_n(u_k))$ is bounded, we conclude that $(u_k)$ is
bounded in $H_0^1(\Omega)$. Thus, up to a subsequence,
\(
u_k\rightharpoonup u
~\text{in }H_0^1(\Omega)
\)
and, by the compact Sobolev embedding,
\(
u_k\to u
\quad\text{in }L^{p+1}(\Omega).
\)
Therefore,
\(
\|u\|_{p+1}=1,
\)
and hence $u\neq0$. Moreover,
\(
\|u_k\|_{1-\beta}^{1-\beta}
\rightarrow
\|u\|_{1-\beta}^{1-\beta}.
\)
Indeed, taking into account that $0<1-\beta<p+1$ and $\Omega$ is bounded, the strong
convergence in $L^{p+1}(\Omega)$ implies convergence in
$L^{1-\beta}(\Omega)$ by interpolation, and hence
\[
\int_\Omega
\left|
|u_k|^{1-\beta}-|u|^{1-\beta}
\right|\,\mathrm{d}x
\leq
\int_\Omega |u_k-u|^{1-\beta}\,\mathrm{d}x
\rightarrow0.
\]
Thus, by the weak lower semicontinuity of the norm, we conclude that
\[
\lambda_n(u)
\leq
\liminf_{k\to\infty}\lambda_n(u_k)
=
\lambda_*.
\]
On the other hand, by the definition of $\lambda_*$, we have
\(
\lambda_*\leq\lambda_n(u).
\)
Therefore,
\(
\lambda_n(u)=\lambda_*,
\)
which proves item $(iv)$.
\end{proof}
	
	\begin{lem}\label{gamma}
		If $0<\beta,p < 1$, and $\lambda_e : H_0^1(\Omega) \setminus \{0\} \to \mathbb{R}$ comes from \eqref{lambda_e}, then the following statements hold:
		\begin{itemize}
			\item[(i)] $\lambda_e : H_0^1(\Omega)\setminus\{0\} \to \mathbb{R}$ is $0$--homogeneous,  continuous, weakly lower semicontinuous, and bounded from below;
			\item[(ii)] $\lambda_e(u)>\lambda_n(u)$ for every $u \in H_0^1(\Omega) \setminus \{0\}$;
			\item[(iii)] There exists $u^*\in H_0^1(\Omega)\setminus\{0\}$ such that $$\displaystyle\lambda^*=\lambda_e(u^*)=\inf_{v\in H_0^1(\Omega)\setminus\{0\}}R_e(v)>0.$$ 
            In particular, $0<\lambda_*<\lambda^*$. Moreover, there exists $t^{+}_{\lambda^*} (u^*)=t_{e}(u^*)>0$ such that $v_{\lambda^*}=t^{+}_{\lambda^*} (u^*)u^*\in \mathcal{N}_{\lambda^*}^+$ and $I_{\lambda^*}(v_{\lambda^*})=c_{\mathcal{N}_{\lambda^*}^{+}}=0$.
		\end{itemize}
	\end{lem}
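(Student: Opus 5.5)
Item (i) is read off from formula \eqref{lambda_e}: $\lambda_e(u)=\frac{\tilde c_{\beta,p}}{c_{\beta,p}}\lambda_n(u)$. Hence $0$-homogeneity, continuity and weak lower semicontinuity transfer directly from Lemma~\ref{N0}(i)--(ii). Since $\tilde c_{\beta,p}/c_{\beta,p}>0$, the positive lower bound $\lambda_e\ge \frac{\tilde c_{\beta,p}}{c_{\beta,p}}c_{\beta,p}\Lambda_1^t$ follows from Lemma~\ref{N0}(iii). Item (ii) is immediate from the same identity, because $\tilde c_{\beta,p}>c_{\beta,p}$ by the remark preceding Figure~\ref{fig2} and $\lambda_n(u)>0$.

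For (iii), I will identify the infimum. By Lemma~\ref{lam-n-e}, $\inf_{t>0}R_e(tu)=\lambda_e(u)$, so $\lambda^*=\inf_{u}\lambda_e(u)$. Also $\inf_v R_e(v)=\inf_u\inf_t R_e(tu)$, since every $v$ lies on its own fiber ($t=1$). Because $\lambda_e$ is a fixed positive multiple of $\lambda_n$, any minimizer $u^*$ of $\lambda_n$ from Lemma~\ref{N0}(iv) also minimizes $\lambda_e$. This gives
\[
\lambda^*=\lambda_e(u^*)=\tfrac{\tilde c_{\beta,p}}{c_{\beta,p}}\lambda_*>\lambda_*>0 .
\]
(Alternatively, one can rerun the minimizing-sequence argument of Lemma~\ref{N0}(iv), normalizing $\|u_k\|_{p+1}=1$.) Finiteness of $\lambda^*$ is clear.

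It remains to prove the fibering statements at $\lambda=\lambda^*$. Set $v=t_e(u^*)u^*$, so $R_e(v)=\lambda^*$, and Remark~\ref{rmk11}(i) gives $I_{\lambda^*}(v)=0$. Since $t_e(u^*)$ is a critical point of $t\mapsto R_e(tu^*)$ at the level $\lambda^*$, \eqref{aee} yields $\phi_{u^*}'(t_e)=0$, hence $v\in\mathcal N_{\lambda^*}$. To get the sign of $\phi_v''(1)$, I will use the identity $R_n(tu)-R_e(tu)=\frac{t}{p+1}\frac{d}{dt}R_e(tu)$. It gives $R_n(v)=R_e(v)=\lambda^*$, consistent with the above. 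Because $t_e=(2/(1-\beta))^{1/(1+\beta)}t_n>t_n$, and $t\mapsto R_n(tu^*)$ has a unique critical point (its minimum at $t_n$), $R_n(tu^*)$ is strictly increasing at $t_e$. One still has to exclude a degenerate zero derivative there; this holds because the unique zero of the derivative is $t_n\neq t_e$. Proposition~\ref{der-Rn} then gives $\phi_v''(1)>0$, that is, $v\in\mathcal N^+_{\lambda^*}$.

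Finally, $c_{\mathcal N^+_{\lambda^*}}=0$. The bound $\le 0$ comes from $v$. For $\ge0$: for any $w\in\mathcal N_{\lambda^*}$ we have $R_e(w)\ge\lambda^*$ by the infimum characterization, so $I_{\lambda^*}(w)\ge0$ by Remark~\ref{rmk11}. The main point needing care is the strict monotonicity of $R_n$ along the fiber at $t_e$; it follows from the explicit derivative computation in Lemma~\ref{lam-n-e}, whose numerator changes sign only at $t_n$.
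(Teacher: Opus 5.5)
Your proposal is correct and follows the paper's proof essentially step by step. The identity $\lambda_e=(\tilde c_{\beta,p}/c_{\beta,p})\lambda_n$ transfers (i)--(iii) from Lemma~\ref{N0}. The relation $R_n(tu)-R_e(tu)=\frac{t}{p+1}\frac{d}{dt}R_e(tu)$ places $t_e(u^*)u^*$ on $\mathcal N_{\lambda^*}$ at zero energy, and $t_e(u^*)>t_n(u^*)$ then gives membership in $\mathcal N^+_{\lambda^*}$. The only cosmetic difference is how you get the sign: you obtain $\phi''>0$ directly from the strict increase of $R_n(\cdot\,u^*)$ past $t_n$ together with Proposition~\ref{der-Rn}. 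The paper instead uses the two-critical-point structure of Proposition~\ref{compar} to identify $t_e(u^*)=t^{n,+}_{\lambda^*}(u^*)$; you should state this identification explicitly, since the lemma names $v_{\lambda^*}$ through $t^+_{\lambda^*}(u^*)$.
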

	\begin{proof}
		From \eqref{lambda_n} and \eqref{lambda_e}, we have
\(
\lambda_e(u)
=
\frac{\tilde c_{\beta,p}}{c_{\beta,p}}\lambda_n(u),
~
\frac{\tilde c_{\beta,p}}{c_{\beta,p}}>1.
\)
Hence, items (i)--(iii) follow directly from Lemma \ref{N0}. In particular,
the minimizers of $\lambda_e$ and $\lambda_n$ coincide and
\(
\lambda^*
=
\frac{\tilde c_{\beta,p}}{c_{\beta,p}}\lambda_*
>
\lambda_*.
\)
It remains only to prove the last statement of item (iii). First, we claim that $c_{\mathcal{N}_{\lambda^*}^+} \geq 0$. Indeed, for any $u \in \mathcal{N}_{\lambda^*}^+$, we have $\lambda_e(u) \geq \lambda^* = R_n(u)$. Since $t_e(u)$ is the global minimum of the map $t \mapsto R_e(tu)$, it follows that $$R_e(u) \geq \lambda_e(u) = R_e(t_e(u)u) \geq \lambda^* = R_n(u).$$ In view of Remark \ref{rmk11} (i), we have $I_{\lambda^*}(u) \geq 0$ for all $u \in \mathcal{N}_{\lambda^*}^+$. Therefore, $c_{\mathcal{N}_{\lambda^*}^+}\geq0$.

        To show that $c_{\mathcal{N}_{\lambda^*}^{+}}=0$, we prove that there exists a function $v_{\lambda^*}\in \mathcal{N}_{\lambda^*}^+$ such that $I_{\lambda^*}(v_{\lambda^*}) = 0$. By the first part of item (iii), there exists a nontrivial function $u^* \in H_0^1(\Omega) \setminus \{0\}$ such that $\lambda^* = \lambda_e(u^*) = R_e(t_e(u^*)u^*)$, which is equivalent to $I_{\lambda^*}(t_e(u^*)u^*) = 0$; see Remark \ref{rmk11} (i). On the other hand, it is straightforward to verify that 
\begin{equation}
    R_n(tu) - R_e(tu) = \frac{t}{p+1} \frac{d}{dt} R_e(tu).
\end{equation}
Since $t_e(u^*)$ is the unique critical point of the map $t \mapsto R_e(tu^*)$, we conclude that $\lambda^* = R_e(t_e(u^*)u^*)= R_n(t_e(u^*)u^*)$, which implies that $I_{\lambda^*}(t_{\lambda^*}^{n,+}(u^*)u^*) = 0$; see Remark \ref{rmk11} (i). Since $\lambda^*>\lambda_*=\lambda_n(u^*)$, the fibering map $\phi_{u^*}$ has exactly two critical points $t_{\lambda^*}^{n,-}(u^*)<t_n(u^*)<t_{\lambda^*}^{n,+}(u^*)$, and the minimum of $t\mapsto R_e(tu^*)$ is attained to the right of $t_n(u^*)$, so $t_e(u^*)>t_n(u^*)$. Hence, the identity above forces $t_e(u^*) = t_{\lambda^*}^{n,+}(u^*)$. Therefore, $c_{\mathcal{N}_{\lambda^*}^{+}}=I_{\lambda^*}(t_{\lambda^*}^{n,+}(u^*)u^*)=0$.
\end{proof}

	At this stage, we prove that for any fixed function $u\in H_0^1(\Omega)\setminus \{0\}$ the fibering map $\phi(t)=I_{\lambda} (tu)$ has exactly two distinct critical points whenever $\lambda > \lambda_n(u) $; see Figure \ref{fig1}. More precisely, we prove the following important result:  
    
\begin{prop} \label{compar} Suppose $0<\beta,p < 1$. Then, for each  $u\in H_0^1(\Omega)\setminus \{0\}$, the following assertions hold: 
\begin{itemize}
\item[(i)] If $\lambda>\lambda_n(u) =R_n(t_n(u)u)$, then  the fibering map $\phi_u$ has exactly two distinct critical points $0<t_\lambda^{n,-}(u)<t_n(u)<t_\lambda^{n,+}(u)$;
\item[(ii)] The functional $t_\lambda^{n,+}(u)$ is a local minimum point for the fibering map $\phi_u$ which satisfies $t_\lambda^{n,+}(u)u\in\mathcal{N}_{\lambda}^+$. Furthermore, the functional $t_\lambda^{n,-}(u)$ is a local maximum for the fibering map $\phi_u$ which satisfies $t_\lambda^{n,-}(u)u\in\mathcal{N}_{\lambda}^-$;
\item[(iii)] Define the open set $U_{\lambda} = \{ u \in H_0^1(\Omega) \setminus \{0\} : \lambda > \lambda_n(u) \}$. Then the functions $u \mapsto t_\lambda^{n,+}(u)$ and $u \mapsto t_\lambda^{n,-}(u)$ are continuous on $U_{\lambda}$;
\item[(iv)] If $\lambda=\lambda_n(u)$, then the fibering map $\phi_u$ has exactly one degenerate critical point $0<t_n(u)$ satisfying $t_n(u)u\in\mathcal{N}_{\lambda}^0$. Furthermore, $t\mapsto \phi_u(t)$ is an increasing function;
\item[(v)] If $\lambda<\lambda_n(u)$, then the fibering map $\phi_u$ has no critical points. Furthermore, $t\mapsto \phi_u(t)$ is an increasing function.
\end{itemize}
\end{prop}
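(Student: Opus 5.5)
The proof reduces everything to the scalar function $t\mapsto R_n(tu)$ via the identity
\[
\phi_u'(t)=t^p\|u\|_{p+1}^{p+1}\bigl(R_n(tu)-\lambda\bigr),\qquad t>0 .
\]
Thus the critical points of $\phi_u$ are exactly the solutions of $R_n(tu)=\lambda$, and the sign of $\phi_u'$ equals the sign of $R_n(tu)-\lambda$. Explicitly,
\[
R_n(tu)=\frac{t^{1-p}\|u\|^2+t^{-\beta-p}\|u\|_{1-\beta}^{1-\beta}}{\|u\|_{p+1}^{p+1}} .
\]
This function is $+\infty$ at both ends of $(0,\infty)$. Its derivative is a positive multiple of $t^{-\beta-p-1}\bigl[(1-p)t^{1+\beta}\|u\|^2-(\beta+p)\|u\|_{1-\beta}^{1-\beta}\bigr]$. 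Hence $R_n(tu)$ is strictly decreasing on $(0,t_n(u))$ and strictly increasing on $(t_n(u),\infty)$, with minimum value $\lambda_n(u)$ (Lemma~\ref{lam-n-e}).

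For (i), fix $\lambda>\lambda_n(u)$. By strict monotonicity and continuity on each branch, the intermediate value theorem gives exactly one $t_\lambda^{n,-}(u)\in(0,t_n(u))$ and exactly one $t_\lambda^{n,+}(u)\in(t_n(u),\infty)$ with $R_n(tu)=\lambda$. No other solutions exist, so these are the only critical points of $\phi_u$.

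For (ii), at a critical point \eqref{save} gives $\phi_u''(t)=t^p\|u\|_{p+1}^{p+1}\frac{d}{dt}R_n(tu)$. The derivative of $R_n(tu)$ is negative at $t_\lambda^{n,-}$ and positive at $t_\lambda^{n,+}$; we need it nonzero there, which holds because the bracket above vanishes only at $t_n(u)$. Proposition~\ref{der-Rn} then gives $\phi_u''<0$ at $t_\lambda^{n,-}$ and $\phi_u''>0$ at $t_\lambda^{n,+}$. So $t_\lambda^{n,-}$ is a strict local maximum and $t_\lambda^{n,+}$ a strict local minimum. To place these points on the Nehari sets, we use $\phi_{su}(\tau)=\phi_u(s\tau)$, which gives $\phi_{su}'(1)=s\phi_u'(s)$ and $\phi_{su}''(1)=s^2\phi_u''(s)$. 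This yields $t^{n,\pm}_\lambda(u)u\in\mathcal N_\lambda^\pm$. The same sign analysis shows $\phi_u$ is increasing on $(0,t^{n,-})$, decreasing on $(t^{n,-},t^{n,+})$, and increasing afterwards.

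For (iv) and (v), the same identity does the work. If $\lambda=\lambda_n(u)$, then $R_n(tu)\ge\lambda$ with equality only at $t_n(u)$. So $\phi_u'\ge0$ with a single zero, and $\phi_u$ is strictly increasing. At $t_n(u)$ we have $\frac{d}{dt}R_n=0$, so $\phi_u''(t_n(u))=0$ and $t_n(u)u\in\mathcal N_\lambda^0$. If $\lambda<\lambda_n(u)$, then $\phi_u'>0$ everywhere, so there are no critical points.

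The main obstacle is (iii). The first step is to show $U_\lambda$ is open; this follows from the continuity of $\lambda_n$ in Lemma~\ref{N0}(ii). The intended argument is then the implicit function theorem in the scalar variable, applied to $F(t,u)=R_n(tu)-\lambda$. This map is continuous in $(t,u)$, since $\|\cdot\|$, $\|\cdot\|_{1-\beta}^{1-\beta}$ and $\|\cdot\|_{p+1}^{p+1}$ are continuous on $H_0^1(\Omega)$. Its $t$-derivative is continuous and nonzero at $(t^{n,\pm}_\lambda(u),u)$. However, $F$ is not $C^1$ in $u$, so a Banach-space implicit function theorem is not directly available. Instead I would prove continuity by hand. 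Take $u_k\to u$ in $U_\lambda$. Then $t_n(u_k)\to t_n(u)$ by the explicit formula \eqref{t-n}, and the coefficients of $R_n(tu_k)$ converge. Fix a small $\delta>0$. Strict monotonicity on each branch gives $R_n((t^{n,+}\pm\delta)u)-\lambda$ of opposite signs. By continuity these signs persist for $u_k$ when $k$ is large. Since $t_n(u_k)<t^{n,+}-\delta$ eventually and $R_n(tu_k)$ is increasing to the right of $t_n(u_k)$, the unique root $t^{n,+}_\lambda(u_k)$ lies in $(t^{n,+}-\delta,t^{n,+}+\delta)$. The same argument applies to $t^{n,-}$, which completes the proof.
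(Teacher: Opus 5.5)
Your proposal is correct and is essentially the paper's argument: the paper defers (i), (ii), (iv), (v) to \cite[Proposition~2.5]{Montenegro}, and your derivation of them from the identity $\phi_u'(t)=t^p\|u\|_{p+1}^{p+1}\bigl(R_n(tu)-\lambda\bigr)$ is the standard argument built from Remark~\ref{rmk1}, Proposition~\ref{der-Rn} and Lemma~\ref{lam-n-e}. For (iii) the paper uses the same two ingredients as you, namely convergence of $t\mapsto R_n(tu_k)$ to $t\mapsto R_n(tu)$ and simplicity of the two roots; your sign and intermediate-value argument is a rigorous by-hand replacement for its loose appeal to the Implicit Function Theorem, which, as you correctly observe, is not available in the usual $C^1$ form because $u\mapsto\|u\|_{1-\beta}^{1-\beta}$ is not differentiable.
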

\begin{proof}
		The proof is similar to \cite[Proposition 2.5]{Montenegro}. Hence, we only verify the continuity asserted in item (iii).

        Since $\lambda_n$ is continuous, $U_\lambda$ is open. Let $(u_k)\subset U_\lambda$ be such that
        $u_k\to u\in U_\lambda$ in $H_0^1(\Omega)$. From the definition of $R_n$ and the compact Sobolev embeddings, the map $t\mapsto R_n(tu_k)$ converges locally uniformly to $t\mapsto R_n(tu)$ on $(0,+\infty)$. Moreover, using the fact that $\lambda>\lambda_n(u)$, the equation $R_n(tu)=\lambda$ has exactly two solutions, one in $(0,t_n(u))$ and the other in $(t_n(u),+\infty)$. These solutions are simple, because at each of them the derivative of $t\mapsto R_n(tu)$ does not vanish. Indeed, $t_n(u)$ is the unique critical point of $R_n(\cdot\,u)$ and $R_n(tu)\to+\infty$ as $t\to0^+$ and as $t\to+\infty$). Hence, by the local uniform convergence and the simplicity of the roots, the Implicit Function Theorem yields
        \[
        t_\lambda^{n,\pm}(u_k)\to t_\lambda^{n,\pm}(u).
        \]
        Therefore, the maps $u\mapsto t_\lambda^{n,\pm}(u)$ are continuous on $U_\lambda$.
	\end{proof}    
    From now on, we observe that the properties analogous to those established in Proposition \ref{compar} also hold for the fibering maps associated with the functional $R_e$. In view of this, for any fixed $u\in H_0^1(\Omega)\setminus \{0\}$, we characterize the geometric behavior of the fibering map $\phi_u$. For convenience, we consider the following graphical representations:
	\begin{figure}[!ht]
		\center{\includegraphics[scale=0.7]{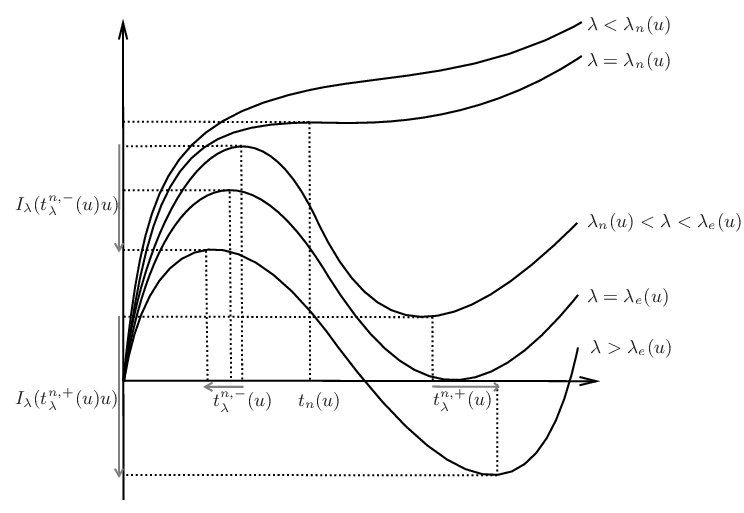}}
		\caption{Fibering geometry for the different parameter regimes.}
		\label{fig1}
	\end{figure}
	\begin{prop}\label{problema}
		Suppose  $0<\beta,p < 1$. Then the following assertions are true:
		\begin{itemize}
			\item[(i)] If $\lambda > \lambda_*$, then the sets $\mathcal{N}_{\lambda}^+$ and $\mathcal{N}_{\lambda}^-$ are not empty;
			\item[(ii)] If $\lambda = \lambda_*$, then $\mathcal{N}_{\lambda_*}=\mathcal{N}_{\lambda_*}^0$;
			\item[(iii)] If $\lambda \geq \lambda_*$, then the set $\mathcal{N}_{\lambda}^0$ is not empty; 
			\item[(iv)] If $\lambda<\lambda_*$, then $\mathcal{N}_{\lambda}$ is empty.  
		\end{itemize}	
	\end{prop}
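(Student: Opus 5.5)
The whole argument runs through the characterization \eqref{a0i}: $tu\in\mathcal N_\lambda$ if and only if $R_n(tu)=\lambda$. This lets us read the structure of the Nehari set fiber by fiber from Lemma~\ref{lam-n-e}, Lemma~\ref{N0} and Proposition~\ref{compar}.

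\emph{Item (iv).} Suppose $\lambda<\lambda_*$ and $u\in\mathcal N_\lambda$. Then
\[
\lambda=R_n(u)\geq\min_{t>0}R_n(tu)=\lambda_n(u)\geq\lambda_*>\lambda,
\]
which is a contradiction. Hence $\mathcal N_\lambda=\emptyset$.

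\emph{Item (i).} Let $\lambda>\lambda_*$. By Lemma~\ref{N0}(iv) there is $u\neq0$ with $\lambda_n(u)=\lambda_*<\lambda$. Proposition~\ref{compar}(i)--(ii) then gives two critical points $t^{n,\pm}_\lambda(u)$, with $t^{n,+}_\lambda(u)u\in\mathcal N^+_\lambda$ and $t^{n,-}_\lambda(u)u\in\mathcal N^-_\lambda$. So both sets are nonempty.

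\emph{Item (ii).} Let $u\in\mathcal N_{\lambda_*}$. Then $R_n(u)=\lambda_*\leq\lambda_n(u)=\min_{t>0}R_n(tu)\leq R_n(u)$, so $t=1$ is a minimum point of $t\mapsto R_n(tu)$. By uniqueness in Lemma~\ref{lam-n-e}, $1=t_n(u)$, and therefore $\frac{d}{dt}R_n(tu)\big|_{t=1}=0$. Proposition~\ref{der-Rn}(iii) then yields $\phi_u''(1)=0$, that is, $u\in\mathcal N^0_{\lambda_*}$. The reverse inclusion is trivial.

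\emph{Item (iii).} This is the only step needing an idea, since for $\lambda>\lambda_*$ the minimizer $u$ of $\lambda_n$ no longer sits at the right level. Instead of using it, we use the range of $\lambda_n$. For $\lambda\geq\lambda_*$ we look for $w\neq0$ with $\lambda_n(w)=\lambda$; then Proposition~\ref{compar}(iv) gives $t_n(w)w\in\mathcal N^0_\lambda$. To find such a $w$:
\begin{itemize}
\item[$\bullet$] take $u_0$ with $\lambda_n(u_0)=\lambda_*$ (Lemma~\ref{N0}(iv));
\item[$\bullet$] take $w_k$ with $\lambda_n(w_k)>\lambda$ (Lemma~\ref{N0}(iii)), and we may assume $w_k\neq -s u_0$ for all $s>0$;
\item[$\bullet$] consider the segment $\gamma(s)=(1-s)u_0+sw_k$, which avoids $0$, and the function $s\mapsto\lambda_n(\gamma(s))$, which is continuous by Lemma~\ref{N0}(ii).
\end{itemize}
The intermediate value theorem then gives $s$ with $\lambda_n(\gamma(s))=\lambda$. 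The case $\lambda=\lambda_*$ is immediate from $u_0$ itself.

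The only mild obstacle is making sure the path avoids the origin. This can always be arranged, since $H_0^1(\Omega)\setminus\{0\}$ is path-connected in infinite dimensions; if needed we replace $w_k$ by a small perturbation that is not a negative multiple of $u_0$, which keeps $\lambda_n(w_k)>\lambda$ by continuity.
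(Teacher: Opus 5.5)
Your proof is correct and follows the paper's argument item by item. Items (i) and (iv) are identical to the paper's. Your direct argument for (ii) is just the contrapositive of the paper's contradiction argument. For (iii) you use the same intermediate-value argument on the range of $\lambda_n$ to get $t_n(w)w\in\mathcal N^0_\lambda$, with your explicit segment making precise the connectedness the paper leaves implicit. The worry about the segment passing through the origin is vacuous: $\lambda_n(-su_0)=\lambda_n(u_0)=\lambda_*<\lambda_n(w_k)$, so $w_k$ can never be a negative multiple of $u_0$.
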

	\begin{proof}From Lemma \ref{N0} $(iv)$, $\displaystyle\lambda_*:=\lambda_n(u)$ for some  $u\in H_0^1(\Omega)\setminus\{0\}$. Assume that $\lambda >\lambda_*=\lambda_n(u)$. It follows from Proposition \ref{compar} $(i)$ and $(ii)$ that there exists $0<t_\lambda^{n,-}(u)<t_\lambda^{n,+}(u)$ such that   $t_\lambda^{n,\pm}(u)u\in\mathcal{N}_{\lambda}^\pm$. This proves item $(i)$. 
		
		Now, we consider $\lambda=\lambda_*=\lambda_n(u)$. From Proposition \ref{compar} $(iv)$, we have that $\mathcal{N}_{\lambda_*}^0\neq \emptyset$. Assume, by contradiction, that $\mathcal{N}_{\lambda_*}^\pm\neq \emptyset$. Then there exists $v^\pm\in H_0^1(\Omega)\setminus \{0\}$ such that $R_n(v^\pm)=\lambda_*$ and
\(
\left.\frac{d}{dt}R_n(tv^\pm)\right|_{t=1}\gtrless0.
\)
Since $t\mapsto R_n(tv^\pm)$ has a global minimum $t_n(v^\pm)>0$, we conclude that $\lambda_*=R_n(v^\pm)>R_n(t_n(v^\pm)v^\pm)$. The last assertion gives us a contradiction. This finishes the proof of item $(ii)$. 
		
		Suppose that $\lambda \geq \lambda_* = \lambda_n(u)$. It follows from Lemma \ref{N0}-$(ii)$ and $(iii)$ that $\lambda_n$ is continuous and unbounded from above. According to the Intermediate Value Theorem, there exists $w \in H_0^1(\Omega) \setminus \{0\}$ such that $\lambda = \lambda_n(w)$. In particular, for $\lambda=\lambda_*$ and taking into account Lemma \ref{N0}-$(iv)$, there exists $u \in H_0^1(\Omega) \setminus \{0\}$ such that $\lambda_* = \lambda_n(u)$. In light of Proposition \ref{der-Rn} $(iii)$, we conclude that $t_n(w)w \in \mathcal{N}_\lambda^0$. In particular,  $\mathcal{N}_{\lambda}^0$ is not empty.
		
		Now, we assume that $\lambda<\displaystyle\lambda_*:=\lambda_n(u)=\inf_{v\in H_0^1(\Omega)\setminus\{0\}}\lambda_n(v)\leq R_n(tu),~\forall t>0$. Then the equation $R_n(tu)=\lambda$ has no solution. Consequently, $\mathcal{N}_\lambda=\emptyset$. This ends the proof.
	\end{proof}

	\begin{prop}\label{min-real-c+}
		Suppose $0<\beta,p < 1$ and $\lambda>\lambda_*$. Then there exists $v_\lambda\in \mathcal{N}^+_{\lambda}\cup \mathcal{N}^0_{\lambda}$ such that $$I_{\lambda}(v_\lambda)=\inf\limits_{\mathcal{N}^+_{\lambda}}I_{\lambda}=:c_{\mathcal{N}^+_{\lambda}}.$$ Moreover, 
		\begin{itemize}
			\item[(i)] For each $\lambda\in (\lambda_*, \lambda^*)$ we have $c_{\mathcal{N}^+_{\lambda}}>0$;
			\item[(ii)] For $\lambda=\lambda^*$ it follows that $c_{\mathcal{N}^+_{\lambda^*}}=0$;
			\item[(iii)] For each $\lambda>\lambda^*$ we have $c_{\mathcal{N}^+_{\lambda}}<0$.
		\end{itemize}		
    \end{prop}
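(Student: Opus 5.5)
We prove Proposition~\ref{min-real-c+} by direct minimization on $\mathcal N_\lambda^+$, using the boundedness and compactness results already established.

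\textbf{Existence of the minimizer.} By Proposition~\ref{problema}(i), $\mathcal N_\lambda^+\neq\emptyset$. Take a minimizing sequence $(u_k)\subset\mathcal N_\lambda^+$ with $I_\lambda(u_k)\to c_{\mathcal N_\lambda^+}$, replaced by $|u_k|$ if desired. On $\mathcal N_\lambda$ we have
\[
I_\lambda(u)=\Big(\tfrac12-\tfrac1{p+1}\Big)\|u\|^2+\Big(\tfrac1{1-\beta}-\tfrac1{p+1}\Big)\|u\|_{1-\beta}^{1-\beta}.
\]
This is bounded on $\mathcal N_\lambda$ by Proposition~\ref{fechada}, so $c_{\mathcal N_\lambda^+}$ is finite. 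By Corollary~\ref{weak-conv}, up to a subsequence $u_k\rightharpoonup v\neq0$. The norms $\|u_k\|_{p+1}$ and $\|u_k\|_{1-\beta}$ converge by compact embedding, as in the proof of Lemma~\ref{N0}(iv). Weak lower semicontinuity of $\|\cdot\|$ then gives
\[
\|v\|^2+\|v\|_{1-\beta}^{1-\beta}\le\lambda\|v\|_{p+1}^{p+1}
\quad\text{and}\quad
2\|v\|^2+(1-\beta)\|v\|_{1-\beta}^{1-\beta}\le\lambda(p+1)\|v\|_{p+1}^{p+1}.
\]
This is the main obstacle: we must rule out strict inequality, that is, prove strong convergence.

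Suppose the first inequality is strict, so $\phi_v'(1)<0$. Then $R_n(v)<\lambda$, so $\lambda>\lambda_n(v)$, and Proposition~\ref{compar} gives $t^{n,+}:=t_\lambda^{n,+}(v)>1$ with $\phi_v$ decreasing on $[1,t^{n,+}]$. Strictness of the norm limit gives $\phi_v(t)<\liminf\phi_{u_k}(t)$ for all $t>0$. Since $t=1$ is the global minimum of $\phi_{u_k}$ on $[t^{n,-}(u_k),\infty)$, we get
\[
I_\lambda(t^{n,+}v)<I_\lambda(v)<\liminf I_\lambda(u_k)=c_{\mathcal N_\lambda^+},
\]
a contradiction since $t^{n,+}v\in\mathcal N_\lambda^+$. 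To make "global minimum on $[t^{n,-},\infty)$" legitimate, we need $t^{n,+}\ge t^{n,-}(u_k)$. We will check this using continuity of $t^{n,\pm}$ (Proposition~\ref{compar}(iii)) and the ordering $t^{n,-}<t_n<t^{n,+}$. As a fallback, we will compare $I_\lambda(u_k)\le \phi_{u_k}(t)$ for $t\ge t_n(u_k)$ together with $t_n(u_k)\to t_n(v)$.

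Hence $\phi_v'(1)=0$ and $\|u_k\|\to\|v\|$, so $u_k\to v$ strongly. Passing to the limit in $\phi_{u_k}''(1)>0$ yields $\phi_v''(1)\ge0$, so $v\in\mathcal N_\lambda^+\cup\mathcal N_\lambda^0$, and $I_\lambda(v)=c_{\mathcal N_\lambda^+}$ by continuity.

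\textbf{Sign classification.}
\begin{itemize}
\item[(ii)] This is exactly Lemma~\ref{gamma}(iii).
\item[(iii)] For $\lambda>\lambda^*$, take $u^*$ from Lemma~\ref{gamma}. Then $R_e(t_e(u^*)u^*)=\lambda^*<\lambda$, so $I_\lambda(t_e(u^*)u^*)<0$ by Remark~\ref{rmk11}. Since $\lambda>\lambda_n(u^*)$, the point $t_\lambda^{n,+}(u^*)$ minimizes $\phi_{u^*}$ over $[t_\lambda^{n,-}(u^*),\infty)$, and $\phi_{u^*}$ is increasing near $0$ and positive there. Hence $\min\phi_{u^*}<0$ is attained at $t^{n,+}$, giving $c_{\mathcal N_\lambda^+}<0$.
\item[(i)] For $\lambda_*<\lambda<\lambda^*$ and any $u\in\mathcal N_\lambda^+$, we have $R_e(u)\ge\lambda_e(u)\ge\lambda^*>\lambda$, so $I_\lambda(u)>0$ by Remark~\ref{rmk11}. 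Thus $c_{\mathcal N_\lambda^+}=I_\lambda(v_\lambda)\ge0$. The minimizer $v_\lambda$ lies in $\mathcal N_\lambda^+\cup\mathcal N_\lambda^0$, so the same inequality applies to it, and $I_\lambda(v_\lambda)>0$, i.e. $c_{\mathcal N^+_\lambda}>0$.
\end{itemize}
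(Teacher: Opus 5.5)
Your proposal is correct and follows essentially the paper's route. You take a minimizing sequence in $\mathcal N_\lambda^+$, extract a nontrivial weak limit, and rule out $\phi_v'(1)<0$ by the fiber-decrease contradiction via $t_\lambda^{n,+}(v)>1$. You then get strong convergence from the Nehari identity and classify signs through $R_e\geq\lambda_e\geq\lambda^*$ and the function $t_e(u^*)u^*$ of Lemma~\ref{gamma}. Two of your variations are harmless and in fact cleaner: you split on the sign of $\phi_v'(1)$ rather than on $\lambda_n(v)$, and you obtain (i) directly from the attained minimizer instead of running a second compactness argument.

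Two small clean-ups are needed. First, delete the second displayed inequality. It does not follow from weak lower semicontinuity, since $u_k\in\mathcal N_\lambda^+$ gives the reverse inequality for $u_k$. Once $v\in\mathcal N_\lambda$, it would also force $v\in\mathcal N_\lambda^0$. It is never used. Second, drop the concern about $t_\lambda^{n,+}(v)\geq t_\lambda^{n,-}(u_k)$. Your chain $I_\lambda(t_\lambda^{n,+}(v)v)<I_\lambda(v)<\liminf_k I_\lambda(u_k)$ already closes without any global-minimum property of $\phi_{u_k}$.
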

\begin{proof}
    The case $\lambda=\lambda^*$ follows from Lemma \ref{gamma} (iii). Thus, for $\lambda\neq \lambda^*$, we consider a sequence $(v_k)\subset \mathcal{N}_{\lambda}^+$ which is a minimizing sequence for $c_{\mathcal{N}^+_{\lambda}}$. In view of Corollary \ref{weak-conv}, up to a subsequence, we have that $v_k\rightharpoonup v_\lambda\neq 0$. We claim that $v_k\to v_\lambda$ strongly in $H_0^1(\Omega)$. Indeed, due to Lemma \ref{N0} (ii), we deduce that $\lambda_n(v_\lambda)\leq \lambda$. Moreover, the map
    \(
    u\mapsto \phi_u'(1)
    \)
    is weakly lower semicontinuous. This fact implies that
    \(
    \phi_{v_\lambda}'(1)\leq 0.
    \)
    We shall split the proof into two cases:

	\noindent \textbf{Case 1.} $\lambda_n(v_\lambda)=\lambda$. Since $\phi_{v_\lambda}'(1)\leq0$, Remark \ref{rmk1} implies that $R_n(v_\lambda)\leq\lambda$. On the other hand,
    \[
    \lambda=\lambda_n(v_\lambda)\leq R_n(v_\lambda).
    \]
    Consequently, $R_n(v_\lambda)=\lambda=\lambda_n(v_\lambda)$. Since $t_n(v_\lambda)$ is the unique global minimum point of the map $t\mapsto R_n(tv_\lambda)$, we conclude that $t_n(v_\lambda)=1$. Thus,
    \(
    \left.\frac{d}{dt}R_n(tv_\lambda)\right|_{t=1}=0,
    \)
    and, in view of Proposition \ref{der-Rn}, we have
    \(
    \phi_{v_\lambda}''(1)=0.
    \)
    Therefore, $v_\lambda\in\mathcal{N}_\lambda^0$.

    Since $(v_k)\subset\mathcal{N}_\lambda^+$, we have
    \[
    \|v_k\|^2+\|v_k\|_{1-\beta}^{1-\beta}
    =\lambda\|v_k\|_{p+1}^{p+1}.
    \]
    By the compact Sobolev embedding and since $\Omega$ is bounded,
    \[
    \|v_k\|_{p+1}^{p+1}\to\|v_\lambda\|_{p+1}^{p+1}
    \quad\mbox{and}\quad
    \|v_k\|_{1-\beta}^{1-\beta}
    \to\|v_\lambda\|_{1-\beta}^{1-\beta}.
    \]
    Since $v_\lambda\in\mathcal{N}_\lambda^0\subset\mathcal{N}_\lambda$, it follows that
    \[
    \begin{aligned}
    \lim_{k\to\infty}\|v_k\|^2
    =
    \lambda\|v_\lambda\|_{p+1}^{p+1}
    -\|v_\lambda\|_{1-\beta}^{1-\beta}=\|v_\lambda\|^2.
    \end{aligned}
    \]
    Hence, $v_k\to v_\lambda$ strongly in $H_0^1(\Omega)$. Consequently,
    \[
    I_\lambda(v_\lambda)
    =\lim_{k\to\infty}I_\lambda(v_k)
    =c_{\mathcal{N}_\lambda^+}.
    \]

	\noindent \textbf{Case 2.} $\lambda_n(v_\lambda)<\lambda$. Since $\lambda_n(v_\lambda)<\lambda$, we have $v_\lambda\notin\mathcal{N}_\lambda$, hence $\phi_{v_\lambda}'(1)\neq0$. Combined with $\phi_{v_\lambda}'(1)\leq 0$, this gives $\phi_{v_\lambda}'(1)<0$, and therefore the fibering map $\phi_{v_\lambda}$ is strictly decreasing in a neighborhood of $t=1$. Since $R_n(v_\lambda)\leq\lambda$ and $\lambda_n(v_\lambda)<\lambda$, there exists $t_\lambda^{n,+}(v_\lambda)>1$ such that
    \(
    t_\lambda^{n,+}(v_\lambda)v_\lambda
    \in\mathcal{N}_\lambda^+.
    \)
    Using the fact that $\phi_{v_\lambda}$ is strictly decreasing on
    \(
    [1,t_\lambda^{n,+}(v_\lambda)],
    \)
    we obtain
    \[
    \begin{aligned}
    c_{\mathcal{N}_\lambda^+}
    \leq
    I_\lambda(t_\lambda^{n,+}(v_\lambda)v_\lambda)
    <
    I_\lambda(v_\lambda)
    \leq
    \liminf_{k\to\infty}I_\lambda(v_k)
    =
    c_{\mathcal{N}_\lambda^+},
    \end{aligned}
    \]
    which is a contradiction. Therefore,
    \(
    t_\lambda^{n,+}(v_\lambda)=1,
    \)
    and hence $v_\lambda\in\mathcal{N}_\lambda^+$.

    As in Case (1), using the Nehari identity together with the compact Sobolev embedding, we obtain
    $
    \|v_k\|\to\|v_\lambda\|.
    $
    Therefore, $v_k\to v_\lambda$ strongly in $H_0^1(\Omega)$ and
    $
    I_\lambda(v_\lambda)
    =c_{\mathcal{N}_\lambda^+}.
    $
     
    We now address the proof of items (i)--(iii). Consider $\lambda_*<\lambda<\lambda^*$. Given $v\in \mathcal{N}_\lambda^+$, we obtain $\lambda<\lambda^*\leq R_e(t_e(v)v)\leq R_e(v)$. This is equivalent to $I_\lambda(v)>0$. Assume, by contradiction, that $c_{\mathcal{N}^+_{\lambda}}=0$. Then there exists $(u_k)\subset \mathcal{N}_\lambda^+$ such that $I_\lambda(u_k)\to 0$. From Corollary \ref{weak-conv}, there exists $u\in H_0^1(\Omega)\backslash\{0\}$ such that $u_k\rightharpoonup u$. Since
		$$I_\lambda(u_k)=\frac{\|u_k\|_{p+1}^{p+1}}{p+1}\left[R_e(u_k)-\lambda\right]\to 0$$
		and $\|u_k\|_{p+1}\to \|u\|_{p+1}\neq0$, we conclude that $R_e(u_k)\to\lambda$. This implies that
		$$R_e(u)\leq \liminf_{k\to \infty}R_e(u_k)=\lambda<\lambda^*\leq R_e(t_e(u)u)\leq R_e(u).$$
		This is a contradiction. Therefore, $c_{\mathcal{N}^+_{\lambda}}>0$. This proves item $(i)$. The item $(ii)$ follows directly from Lemma \ref{gamma} $(iii)$. Now, assume that $\lambda>\lambda^*$. From Lemma \ref{gamma} $(iii)$, there exists $v_{\lambda^*}:=t_e(u^*)u^*\in H_0^1(\Omega)\setminus\{0\}$ such that $\lambda^*=\lambda_e(v_{\lambda^*})$ and $I_{\lambda^*}(v_{\lambda^*})=0$.
	As a consequence, since the minimum of the fibering map
$t\mapsto I_\lambda(tv_{\lambda^*})$ is attained at
$t_\lambda^{n,+}(v_{\lambda^*})$, we have
\[
c_{\mathcal{N}^+_{\lambda}}
\leq I_\lambda
\bigl(
t_\lambda^{n,+}(v_{\lambda^*})v_{\lambda^*}
\bigr)<I_{\lambda}(v_{\lambda^*})
=
I_{\lambda^*}(v_{\lambda^*})
-\frac{\lambda-\lambda^*}{p+1}
\|v_{\lambda^*}\|_{p+1}^{p+1}
<0.
\]
This finishes the proof. 		
\end{proof}
	\begin{lem}\label{c-lamb-cont}
	Suppose $0<\beta,p<1$. Then the function
	$\lambda\mapsto c_{\mathcal{N}^+_{\lambda}}$
	is left-continuous at $\lambda=\lambda^*$.
    \end{lem}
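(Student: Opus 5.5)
We have to show $\lim_{\lambda\to(\lambda^*)^-}c_{\mathcal N^+_\lambda}=c_{\mathcal N^+_{\lambda^*}}=0$; the last equality is Proposition~\ref{min-real-c+}(ii). By Proposition~\ref{min-real-c+}(i), $c_{\mathcal N^+_\lambda}>0$ for $\lambda\in(\lambda_*,\lambda^*)$. Hence it suffices to produce an upper bound $c_{\mathcal N^+_\lambda}\le g(\lambda)$ with $g(\lambda)\to0$ as $\lambda\to(\lambda^*)^-$. We do this by testing with a fixed fiber.

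\textbf{Test function.} Take $u^*$ from Lemma~\ref{gamma}(iii). Since $\lambda_n(u^*)=\lambda_*<\lambda$ for every $\lambda\in(\lambda_*,\lambda^*]$, we have $u^*\in U_\lambda$. By Proposition~\ref{compar}(i)--(ii), $t_\lambda^{n,+}(u^*)u^*\in\mathcal N_\lambda^+$, and therefore
\[
0<c_{\mathcal N^+_\lambda}\le I_\lambda\bigl(t_\lambda^{n,+}(u^*)u^*\bigr)=:g(\lambda).
\]

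\textbf{Continuity of $g$ in $\lambda$.} The point $t_\lambda^{n,+}(u^*)$ is the larger root of $R_n(tu^*)=\lambda$, which lies on the branch $t>t_n(u^*)$. On this branch $t\mapsto R_n(tu^*)$ is strictly increasing with nonzero derivative (Proposition~\ref{der-Rn} and Lemma~\ref{lam-n-e}). By the inverse function theorem, $\lambda\mapsto t_\lambda^{n,+}(u^*)$ is continuous on $(\lambda_*,+\infty)$.

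From the proof of Lemma~\ref{gamma}(iii), $t_{\lambda^*}^{n,+}(u^*)=t_e(u^*)$ and $I_{\lambda^*}(t_e(u^*)u^*)=0$. The map
\[
(\lambda,t)\mapsto I_\lambda(tu^*)=\phi_{u^*}(t)
\]
is jointly continuous, being explicit in $t$ and affine in $\lambda$. Hence $g(\lambda)\to g(\lambda^*)=0$ as $\lambda\to(\lambda^*)^-$.

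\textbf{Conclusion.} Combining these, $0<c_{\mathcal N^+_\lambda}\le g(\lambda)\to0=c_{\mathcal N^+_{\lambda^*}}$, which gives left-continuity at $\lambda^*$. The only step needing care is the continuity of $t_\lambda^{n,+}(u^*)$ in $\lambda$, and it reduces to the strict monotonicity of $R_n(\cdot\,u^*)$ beyond $t_n(u^*)$. The argument actually yields two-sided continuity, using Proposition~\ref{min-real-c+}(iii) for the right side with a lower bound. A lower bound for $\lambda>\lambda^*$ would need more work (for example a uniform bound on $\|v\|_{p+1}$ over $\mathcal N^+_\lambda$ via Proposition~\ref{fechada}), but that side is not required here.
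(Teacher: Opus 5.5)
Your proof is correct and follows essentially the same route as the paper. Both arguments test $c_{\mathcal N^+_\lambda}$ against the point $t^{n,+}_\lambda$ on the fixed fiber through the $\lambda^*$-minimizer (the paper uses $v_{\lambda^*}=t_e(u^*)u^*$), and both take the lower bound $c_{\mathcal N^+_\lambda}>0$ from Proposition~\ref{min-real-c+}(i). The only difference is how the upper bound is shown to vanish: instead of your continuity of $\lambda\mapsto t^{n,+}_\lambda(u^*)$, the paper uses $t^{n,+}_\lambda(v_{\lambda^*})<1$ and the monotonicity of $t\mapsto I_{\lambda^*}(tv_{\lambda^*})$ on $(t_n(v_{\lambda^*}),1)$, which gives the explicit estimate $c_{\mathcal N^+_\lambda}\le\frac{\lambda^*-\lambda}{p+1}\|v_{\lambda^*}\|_{p+1}^{p+1}$.
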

	\begin{proof}
        In view of Proposition \ref{min-real-c+}, there exists $v_{\lambda^*}\in \mathcal{N}_{\lambda^*}^+$ such that $0=c_{\mathcal{N}^+_{\lambda^*}}=I_{\lambda^*}(v_{\lambda^*})$. Since $\lambda_n(v_{\lambda^*})<\lambda<\lambda^*$, we have $v_{\lambda^*}\in U_{\lambda}$ for each $\lambda\in (\lambda_n(v_{\lambda^*}), \lambda^*)$. Then there exists $t_\lambda^{n,+}(v_{\lambda^*})\in(t_n(v_{\lambda^*}),1=t_{\lambda^*}^{n,+}(v_{\lambda^*}))$ such that $t_\lambda^{n,+}(v_{\lambda^*})v_{\lambda^*}\in \mathcal{N}_\lambda^+$. Using the fact that the map $t\mapsto I_{\lambda^*}(tv_{\lambda^*})$ is decreasing in $(t_n(v_{\lambda^*}),1)$ and $t_\lambda^{n,+}(v_{\lambda^*})\in(t_n(v_{\lambda^*}),1)$, we deduce		
        \begin{align*}
            0<c_{\mathcal{N}^+_{\lambda}}\leq& I_{\lambda}(t_\lambda^{n,+}(v_{\lambda^*})v_{\lambda^*})=I_{\lambda^*+(\lambda-\lambda^*)}(t_\lambda^{n,+}(v_{\lambda^*})v_{\lambda^*})\nonumber\\ 
            =& I_{\lambda^*}(t_\lambda^{n,+}(v_{\lambda^*})v_{\lambda^*})+\frac{\lambda^*-\lambda}{p+1}\|t_\lambda^{n,+}(v_{\lambda^*})v_{\lambda^*}\|_{p+1}^{p+1}\nonumber\\
            \leq & I_{\lambda^*}(v_{\lambda^*})+\frac{\lambda^*-\lambda}{p+1}\|v_{\lambda^*}\|_{p+1}^{p+1}\nonumber\\
            =& c_{\mathcal{N}^+_{\lambda^*}}+\frac{\lambda^*-\lambda}{p+1}\|v_{\lambda^*}\|_{p+1}^{p+1}
            =0+\frac{\lambda^*-\lambda}{p+1}\|v_{\lambda^*}\|_{p+1}^{p+1}.\nonumber
        \end{align*}
        Consequently,
        \begin{eqnarray}\label{c-cont}
            0<c_{\mathcal{N}^+_{\lambda}}=c_{\mathcal{N}^+_{\lambda}}-c_{\mathcal{N}^+_{\lambda^*}}\leq  \frac{\lambda^*-\lambda}{p+1}\|v_{\lambda^*}\|_{p+1}^{p+1},
        \end{eqnarray}
        which proves that the function $\lambda\mapsto c_{\mathcal{N}^+_{\lambda}}$ is left-continuous at $\lambda=\lambda^*$. 
\end{proof}
    
	\begin{prop}\label{min-real-N+}
		Suppose $0<\beta,p < 1$. Then there exists $\epsilon_0=\epsilon_0(\lambda^*)>0$ and  $v_\lambda\in \mathcal{N}^+_{\lambda}$ such that $c_{\mathcal{N}^+_{\lambda}}=I_\lambda(v_\lambda)$ for all $\lambda>\lambda^*-\epsilon_0$.		
	\end{prop}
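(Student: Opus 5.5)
By Proposition~\ref{min-real-c+}, for every $\lambda>\lambda_*$ there is a minimizer $v_\lambda\in\mathcal{N}^+_\lambda\cup\mathcal{N}^0_\lambda$ with $I_\lambda(v_\lambda)=c_{\mathcal{N}^+_\lambda}$, obtained as the strong limit of a minimizing sequence in $\mathcal{N}^+_\lambda$. So the only thing left to show is that $v_\lambda\notin\mathcal{N}^0_\lambda$ once $\lambda$ is close to, or above, $\lambda^*$. In the proof of Proposition~\ref{min-real-c+}, Case~2 already gives $v_\lambda\in\mathcal{N}^+_\lambda$. Hence we only have to rule out Case~1, where $\lambda_n(v_\lambda)=\lambda$ and $v_\lambda\in\mathcal{N}^0_\lambda$. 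The idea is to compare energy levels: on $\mathcal{N}^0_\lambda$ the energy is bounded below by a positive constant, while $c_{\mathcal{N}^+_\lambda}$ is small, or nonpositive, near and above $\lambda^*$.

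\textbf{Step 1: the bound on $\mathcal{N}^0_\lambda$.} Proposition~\ref{N0-dist-0} gives $I_\lambda(u)\ge\sigma_\lambda$ for all $u\in\mathcal{N}^0_\lambda$. From \eqref{sigm-lambda}, $\sigma_\lambda$ is an explicit negative power of $\lambda$ times a constant, so it is continuous and decreasing in $\lambda$. In particular, on a compact interval $[\lambda^*-\delta,\lambda^*]$ with $\lambda^*-\delta>\lambda_*$ we have $\sigma_\lambda\ge\sigma_{\lambda^*}>0$.

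\textbf{Step 2: the case $\lambda\ge\lambda^*$.} Here Proposition~\ref{min-real-c+}(ii)--(iii) gives $c_{\mathcal{N}^+_\lambda}\le 0<\sigma_\lambda$. Therefore $v_\lambda$ cannot lie in $\mathcal{N}^0_\lambda$, and so $v_\lambda\in\mathcal{N}^+_\lambda$. For $\lambda=\lambda^*$ the element $v_{\lambda^*}$ of Lemma~\ref{gamma}(iii) already works.

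\textbf{Step 3: the case $\lambda<\lambda^*$.} Use the quantitative estimate \eqref{c-cont} from Lemma~\ref{c-lamb-cont}:
\[
0<c_{\mathcal{N}^+_\lambda}\le\frac{\lambda^*-\lambda}{p+1}\|v_{\lambda^*}\|_{p+1}^{p+1}
\quad\text{for }\lambda\in(\lambda_n(v_{\lambda^*}),\lambda^*).
\]
Choose $\epsilon_0>0$ so small that the following hold:
\begin{itemize}
\item $\lambda^*-\epsilon_0>\max\{\lambda_*,\lambda_n(v_{\lambda^*})\}$, which is possible because $\lambda_n(v_{\lambda^*})<\lambda^*$;
\item $\epsilon_0\le\delta$;
\item $\frac{\epsilon_0}{p+1}\|v_{\lambda^*}\|_{p+1}^{p+1}<\sigma_{\lambda^*}$.
\end{itemize}
Then for $\lambda\in(\lambda^*-\epsilon_0,\lambda^*)$ we get $c_{\mathcal{N}^+_\lambda}<\sigma_{\lambda^*}\le\sigma_\lambda\le\inf_{\mathcal{N}^0_\lambda}I_\lambda$. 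So the minimizer $v_\lambda$ again cannot belong to $\mathcal{N}^0_\lambda$, and hence $v_\lambda\in\mathcal{N}^+_\lambda$.

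\textbf{Main obstacle.} The delicate point is that the bound $\sigma_\lambda$ must be uniform for $\lambda$ near $\lambda^*$, and that the monotonicity direction of $\sigma_\lambda$ is correct. Since $\sigma_\lambda$ decreases in $\lambda$, for $\lambda<\lambda^*$ we indeed have $\sigma_\lambda\ge\sigma_{\lambda^*}$, and the argument closes. A second point to check is that the limit $v_\lambda$ really satisfies $I_\lambda(v_\lambda)=c_{\mathcal{N}^+_\lambda}$ in Case~1 as well. This holds by the strong convergence established in Proposition~\ref{min-real-c+}, which is exactly what lets us compare its energy with $\sigma_\lambda$.
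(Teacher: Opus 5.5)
Your proposal is correct and follows essentially the same argument as the paper. You compare the energy of a minimizer against the uniform lower bound $\sigma_\lambda$ on $\mathcal{N}^0_\lambda$, using $c_{\mathcal{N}^+_\lambda}\le 0$ for $\lambda\ge\lambda^*$ and the left-continuity of Lemma~\ref{c-lamb-cont} together with $\sigma_\lambda\ge\sigma_{\lambda^*}$ for $\lambda<\lambda^*$; the only difference is that you fix $\epsilon_0$ explicitly through the quantitative bound \eqref{c-cont} rather than just invoking left-continuity.
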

	\begin{proof}
		Due to Proposition \ref{min-real-c+}, there exists $v_\lambda\in \mathcal{N}^+_{\lambda}\cup \mathcal{N}^0_{\lambda}$ such that $c_{\mathcal{N}^+_{\lambda}}=I_\lambda(v_\lambda)$. We claim that $v_\lambda\in \mathcal{N}^+_{\lambda}$ for $\lambda$ sufficiently close to $\lambda^*$. The proof follows by contradiction, assuming that $v_\lambda\in \mathcal{N}_\lambda^0$. If $\lambda\geq\lambda^*$, then from Proposition \ref{min-real-c+} we have that $c_{\mathcal{N}^+_{\lambda}}\leq 0$. On the other hand, if $v_\lambda\in\mathcal N_\lambda^0$, then Proposition~\ref{N0-dist-0} yields $I_\lambda(v_\lambda)\geq\sigma_\lambda>0$. Since $I_\lambda(v_\lambda)=c_{\mathcal N_\lambda^+}$, we obtain a contradiction. Furthermore, for $\lambda<\lambda^*$, by Lemma \ref{c-lamb-cont},
there exists $\epsilon=\epsilon(\lambda^*)$ such that if
$\lambda>\lambda^*-\epsilon$, we have that
$c_{\mathcal{N}^+_{\lambda}}
=|c_{\mathcal{N}^+_{\lambda}}-c_{\mathcal{N}^+_{\lambda^*}}|
<\sigma_{\lambda^*}$. Since \eqref{sigm-lambda} implies that
$\sigma_\lambda\geq\sigma_{\lambda^*}$ whenever
$\lambda\leq\lambda^*$, we deduce		$$c_{\mathcal{N}^+_{\lambda}}<\sigma_{\lambda^*}\leq I_\lambda(v_\lambda)=c_{\mathcal{N}^+_{\lambda}}.$$
		This is a contradiction, which finishes the proof. 		
	\end{proof}

 	\section{Analysis of the perturbed problem via Rayleigh quotients}\label{trunc}
	Inspired by \cite{Montenegro}, we introduce the following perturbed problem:
		\begin{equation}\label{P-pertubed}
			\left\{\begin{array}{ll}\tag{$P_{\lambda,\eps}$}
				-\Delta u  +g_\epsilon (u) =  \lambda |u|^{p-1}u, & \mbox{in }   \Omega,  \\
				u=0,&\mbox{on }\, \partial\Omega,
			\end{array}
			\right.
		\end{equation}
		where 
        \begin{equation}\label{pertubation_salvadora}
	g_\epsilon(u)=\displaystyle\frac{|u|^{q-1}u}{(|u|+\epsilon)^{q+\beta}},
	\quad 0<\beta<1.
\end{equation}
Although $g_\epsilon$ is not differentiable at the origin, the map
$t\mapsto g_\epsilon(t)t$ is of class $C^1$. We set
\[
\Gamma_\epsilon(t):=
\begin{cases}
g_\epsilon'(t)t, & t\neq0,\\
0, & t=0.
\end{cases}
\]
Throughout the work, we consider the following assumption:
\begin{enumerate}
    \item[$(Q)$]
    $0<q\leq
    \min\left\{
    \dfrac{p(\beta+2)}{p+2\beta+2},
    \dfrac{1-\beta}{p+\beta+2}
    \right\}<p<1$.
\end{enumerate}
Note that $g_\epsilon$ is continuous and odd, and $g_\epsilon(t)t\geq0$ for every $t\in\mathbb{R}$. Recall that a function $u\in H_0^1(\Omega)$ is said to be a weak solution of
Problem~\eqref{P-pertubed} whenever
\[
\int_\Omega \nabla u\nabla\varphi\,\mathrm{d}x = \int_\Omega\left[-g_\eps(u)+\lambda |u|^{p-1}u\right] \varphi\,\mathrm{d}x, \quad \forall\,\varphi\in H_0^1(\Omega).
\]
     The energy functional $I_{\lambda,\epsilon}: H_0^1(\Omega)\to \mathbb{R}$ associated with Problem \eqref{P-pertubed} is defined as
         $$I_{\lambda,\eps}(u)=\frac{1}{2}\|u\|^2+\int_{\Omega}G_{\epsilon}(u)\,\mathrm{d}x -\frac{\lambda}{p+1}\|u\|^{p+1}_{p+1},$$
        where $G_\epsilon(u):=\int_0^u g_\epsilon(s)\,\mathrm{d}s\geq 0$. One may check that $I_{\lambda,\epsilon}$ is a $C^1$-functional. 

Condition $(Q)$ is introduced in order to guarantee the strict
monotonicity of the auxiliary map $f_\eps$ defined in
\eqref{functionf} below, and consequently the uniqueness of the
critical point $t_{n,\eps}(u)$ of the fibering map associated with
$R_{n,\eps}$. This condition is of the same type as the restriction considered in \cite{Montenegro} and is always feasible for 
$0<\beta,p<1$.

\begin{rmk}
Since $g_\epsilon$ is odd, $G_\epsilon$ is even. Consequently,
$I_{\lambda,\epsilon}(|u|)=I_{\lambda,\epsilon}(u)$ for every
$u\in H_0^1(\Omega)$. Moreover, the perturbed Nehari constraint is
invariant under the map $u\mapsto |u|$. Hence, minimizing sequences
and constrained minimizers may be replaced by their absolute values
without changing the corresponding energy levels.

Notice, however, that the evenness of $I_{\lambda,\epsilon}$ alone
implies only that $-u$ is a critical point whenever $u$ is a critical
point; in general, it does not imply that $|u|$ is also a critical
point. In the present work, nonnegativity is instead obtained through
the variational constructions themselves. For the constrained
minimization problem, this follows from the invariance under the
modulus, whereas, for the Mountain Pass solution, the minimax level
may be computed by restricting the admissible paths to the
nonnegative cone.
\end{rmk}
		
We next derive lower and upper estimates for the perturbed nonlinear term. These estimates will play important role in the proof of our results.

\begin{lem}\label{G,g}
The following estimates hold:
\begin{equation} \label{est:g-prime}
-\beta\, g_\varepsilon(t)t
\le \Gamma_\varepsilon(t)t
\le q\, g_\varepsilon(t)t. \quad \text{and} \quad (1-\beta) G_\eps(t) \leq g_\eps(t)t \leq (q+1)G_\eps(t), \quad \forall t \in\mathbb{R}.
\end{equation}
Furthermore, the following growth condition is satisfied:
\begin{equation} \label{est:G-growth}
G_\eps(1) \min\{|t|^{q+1}, |t|^{1-\beta}\} \leq G_\eps(t) \leq \frac{1}{1-\beta} |t|^{1-\beta}, \quad \forall t \in\mathbb{R}.
\end{equation}
\end{lem}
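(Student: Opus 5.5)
Since $g_\epsilon$ is odd, $\Gamma_\epsilon$ is even and $G_\epsilon$ is even, so all products $g_\epsilon(t)t$, $\Gamma_\epsilon(t)t$ and $G_\epsilon(t)$ are even in $t$. It therefore suffices to prove every inequality for $t>0$; the case $t=0$ is trivial, since all quantities vanish. For $t>0$ we compute directly from $g_\epsilon(t)=t^q(t+\epsilon)^{-(q+\beta)}$:
\[
\frac{g_\epsilon'(t)\,t}{g_\epsilon(t)} = q-(q+\beta)\frac{t}{t+\epsilon}.
\]
As $t/(t+\epsilon)\in(0,1)$, this ratio lies in $(-\beta,q)$. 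Multiplying by $g_\epsilon(t)t\ge0$ gives $-\beta g_\epsilon(t)t\le \Gamma_\epsilon(t)t\le q g_\epsilon(t)t$, which is the first chain.

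For the second chain, set $h(t)=g_\epsilon(t)t-(1-\beta)G_\epsilon(t)$, so $h(0)=0$. Then
\[
h'(t)=g_\epsilon'(t)t+g_\epsilon(t)-(1-\beta)g_\epsilon(t)=g_\epsilon(t)\Big(\frac{g_\epsilon'(t)t}{g_\epsilon(t)}+\beta\Big)\ge0
\]
by the first step, so $h\ge0$. Similarly, $k(t)=(q+1)G_\epsilon(t)-g_\epsilon(t)t$ satisfies $k(0)=0$ and $k'(t)=g_\epsilon(t)\big(q-g_\epsilon'(t)t/g_\epsilon(t)\big)\ge0$. Near $0$ we have $g_\epsilon(t)\sim \epsilon^{-(q+\beta)}t^q$, so $G_\epsilon$ and $g_\epsilon(t)t$ are continuous at $0$ and vanish there, which justifies integrating from $0$.

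For the growth bounds, the upper one follows from $(t+\epsilon)^{-(q+\beta)}\le t^{-(q+\beta)}$, which gives $g_\epsilon(s)\le s^{-\beta}$ and hence $G_\epsilon(t)\le \frac{t^{1-\beta}}{1-\beta}$. For the lower bound, the second chain says $\frac{d}{dt}\log G_\epsilon(t)=g_\epsilon(t)/G_\epsilon(t)\in\big[\frac{1-\beta}{t},\frac{q+1}{t}\big]$. Integrating between $t$ and $1$ gives $G_\epsilon(t)\ge G_\epsilon(1)t^{q+1}$ for $t\le1$ and $G_\epsilon(t)\ge G_\epsilon(1)t^{1-\beta}$ for $t\ge1$. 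In both cases this is at least $G_\epsilon(1)\min\{t^{q+1},t^{1-\beta}\}$.

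The only delicate point is the behaviour at $t=0$: $g_\epsilon$ is not differentiable there, but the ratio bound and the integrability of $g_\epsilon$ near $0$ handle it.
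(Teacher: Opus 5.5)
Your proof is correct and is essentially the paper's argument. Your ratio $g_\epsilon'(t)t/g_\epsilon(t)=q-(q+\beta)\frac{t}{t+\epsilon}$ is exactly the paper's $\theta(t)=\frac{q\epsilon-\beta|t|}{|t|+\epsilon}$, and the second chain comes from integrating the same bounds on $(g_\epsilon(t)t)'$ from $0$. The lower growth bound uses the same comparison $\frac{1-\beta}{t}\le g_\epsilon/G_\epsilon\le\frac{q+1}{t}$; the paper phrases the case $t\le1$ as monotonicity of $G_\epsilon(t)/t^{q+1}$. Your short proof of the upper bound via $g_\epsilon(s)\le s^{-\beta}$ fills in a step the paper only asserts. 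One slip: $\Gamma_\epsilon$ is odd, not even, because $g_\epsilon'$ is even; that is exactly why $\Gamma_\epsilon(t)t$ is even, which is what you correctly use.
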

\begin{proof} 

Note that
$
\Gamma_\varepsilon(t)=\theta(t)g_\varepsilon(t),
$
where
$
\theta(t):=\frac{q\varepsilon-\beta |t|}{|t|+\varepsilon}.
$
It is easy to check that $-\beta\leq \theta(t)\leq q$. Hence, for each \(t\ge 0\), we see that \(g_\varepsilon(t)\ge 0\) and
\begin{equation}\label{positive}
-\beta\, g_\varepsilon(t)\le g_\varepsilon'(t)t\le q\, g_\varepsilon(t), \quad \forall t\geq0.
\end{equation} 
Moreover, for each \(t\le 0\), we obtain \(g_\varepsilon(t)\le 0\) and
\[
q\, g_\varepsilon(t)\le g_\varepsilon'(t)t\le -\beta\, g_\varepsilon(t), \quad \forall t\leq0.
\]
Finally, since \(g_\varepsilon(t)t\ge 0\) for all \(t\in\mathbb{R}\), we infer that
\[
-\beta\, g_\varepsilon(t)t\le g_\varepsilon'(t)t^{2}\le q\, g_\varepsilon(t)t, \quad \forall t\in\mathbb{R}.
\]

Now, we consider the second set of estimates in \eqref{est:g-prime}. It follows from \eqref{positive} that
\[
(1-\beta)g_\varepsilon(t)\le g_\varepsilon(t)+g_\varepsilon'(t)t\le (q+1)g_\varepsilon(t), \quad \forall \, t>0,
\]
which implies that
\[
(1-\beta)g_\varepsilon(t)\le \bigl(tg_\varepsilon(t)\bigr)'\le (q+1)g_\varepsilon(t),
\qquad \forall \, t>0.
\]
Hence,
\[
(1-\beta)G_\varepsilon(t)\le t\,g_\varepsilon(t)\le (q+1)G_\varepsilon(t),
\qquad \forall \, t>0.
\]
Since \(G_\varepsilon\) and \(g_\varepsilon(t)t\) are even functions, we conclude that
\[
(1-\beta)G_\varepsilon(t)\le g_\varepsilon(t)t\le (q+1)G_\varepsilon(t),
\qquad \forall \,t\in\mathbb{R}.
\]

Now, we prove the growth condition \eqref{est:G-growth}. We observe that
$$\frac{d}{dt}\left[\frac{G_{\epsilon}(t)}{t^{q+1}}\right] = \frac{g_{\epsilon}(t)t^{q+1}-(q+1) t^{q}G_{\epsilon}(t)}{t^{2q+2}} = \frac{t^q}{t^{2q+2}}\left[g_{\epsilon}(t)t-(q+1)G_{\epsilon}(t)\right] \leq 0, \quad \forall t>0.$$
Consequently, the map $t \mapsto \frac{G_{\epsilon}(t)}{t^{q+1}}$ is non-increasing. 

Now, we consider two cases based on the value of $t$.

\noindent \textbf{Case 1:} $0 < t \leq 1$. From the monotonicity established above, it follows that
$$
\frac{G_{\epsilon}(t)}{t^{q+1}} \geq \frac{G_{\epsilon}(1)}{1^{q+1}} \implies G_{\epsilon}(t) \geq G_{\epsilon}(1)t^{q+1} = G_{\epsilon}(1)\min\{t^{1-\beta}, t^{q+1}\}, \quad \forall~ 0 < t \leq 1.
$$

\vspace{0,2cm}
\noindent \textbf{Case 2:} $t > 1$.
Taking into account that $\frac{g_{\epsilon}(s)}{G_{\epsilon}(s)} \geq \frac{1-\beta}{s}$, we integrate this inequality over the interval $[1, t]$ to obtain
$$
\int_{1}^{t}\frac{g_{\epsilon}(s)}{G_{\epsilon}(s)}\,\mathrm{d}s \geq \int_{1}^{t}\frac{1-\beta}{s}\,\mathrm{d}s \implies \ln\left(\frac{G_{\epsilon}(t)}{G_{\epsilon}(1)}\right) \geq \ln\left(t^{1-\beta}\right).
$$
This yields $G_{\epsilon}(t) \geq G_{\epsilon}(1)t^{1-\beta}$. In particular, for each $t > 1$, we have $t^{1-\beta} = \min\{t^{1-\beta}, t^{q+1}\}$. Thus,
$$
G_{\epsilon}(t) \geq G_{\epsilon}(1)t^{1-\beta} = G_{\epsilon}(1)\min\{t^{1-\beta}, t^{q+1}\}, \quad \forall t > 1.$$
Hence, in both cases we know that 
$$G_{\epsilon}(t) \geq G_{\epsilon}(1)\min\{t^{1-\beta}, t^{q+1}\}, \quad \forall t > 0.
$$
Furthermore, by using the fact that \(G_\varepsilon\) is even, we infer that 
\[
G_\varepsilon(1) \min\{|t|^{q+1},|t|^{1-\beta}\}\le G_\varepsilon(t)
\le \frac{1}{1-\beta}|t|^{1-\beta},
\qquad \forall t\in\mathbb R,
\]
which finishes the proof.
\end{proof}

\begin{rmk}

Importantly, \eqref{est:G-growth} is a uniform estimate that is preserved as $\epsilon \to 0$, since
$$\displaystyle G_{\epsilon}(1)\geq \frac{1}{2^{q+\beta}}\int_{\epsilon}^1s^{q}=\frac{1}{2^{q+\beta}}\frac{1-\epsilon^{q+1}}{q+1}\geq\frac{q}{2^{\beta+q}(q+1)}.$$ 
\end{rmk}

\begin{lem}\label{G-eps-norm1-beta} 
    Given $0<\eps<1$, we have
    $$\int_\Omega G_\eps(u)\,\mathrm{d}x\leq \frac{1}{1-\beta}\|u\|^{1-\beta}_{1-\beta}, \quad \forall \, u\in H_0^1(\Omega).$$
\end{lem}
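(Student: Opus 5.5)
This follows from the pointwise upper bound in \eqref{est:G-growth} of Lemma~\ref{G,g}, integrated over $\Omega$. The plan is to verify that pointwise inequality directly, to make clear that it is uniform in $\eps$, and then integrate.

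Fix $t>0$. For $0<s\le t$ we have $s+\eps\ge s$, and $q+\beta>0$, so $(s+\eps)^{q+\beta}\ge s^{q+\beta}$. Hence
\[
0\le g_\eps(s)=\frac{s^{q}}{(s+\eps)^{q+\beta}}\le \frac{s^{q}}{s^{q+\beta}}=s^{-\beta}.
\]
Since $0<\beta<1$, $s^{-\beta}$ is integrable near $0$. Integrating over $[0,t]$ gives
\[
G_\eps(t)=\int_0^t g_\eps(s)\,\mathrm{d}s\le \int_0^t s^{-\beta}\,\mathrm{d}s=\frac{t^{1-\beta}}{1-\beta}.
\]
Because $g_\eps$ is odd, $G_\eps$ is even, so $0\le G_\eps(t)\le \frac{1}{1-\beta}|t|^{1-\beta}$ for every $t\in\mathbb{R}$. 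Note that this bound holds for every $\eps>0$; the restriction $0<\eps<1$ in the statement is not needed.

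Now let $u\in H_0^1(\Omega)$. Since $\Omega$ is bounded and $0<1-\beta<2$, we have $|u|^{1-\beta}\in L^1(\Omega)$. Also $G_\eps(u)$ is measurable, because $G_\eps$ is continuous. Applying the pointwise bound with $t=u(x)$ and integrating over $\Omega$ gives
\[
\int_\Omega G_\eps(u)\,\mathrm{d}x\le \frac{1}{1-\beta}\int_\Omega |u|^{1-\beta}\,\mathrm{d}x=\frac{1}{1-\beta}\|u\|_{1-\beta}^{1-\beta}.
\]

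No step here is a real obstacle. The only point to check is that the pointwise bound does not depend on $\eps$, which holds because $(s+\eps)^{q+\beta}\ge s^{q+\beta}$.
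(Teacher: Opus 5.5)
Your proof is correct and is essentially the paper's own argument, which just integrates the upper bound in \eqref{est:G-growth} over $\Omega$. Your direct check via $0\le g_\eps(s)\le s^{-\beta}$ supplies the pointwise step that the paper's proof of Lemma~\ref{G,g} states without justification, and you are right that the restriction $0<\eps<1$ is not needed.
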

\begin{proof} The proof follows by using  \eqref{est:G-growth}. The details are omitted. \end{proof}


Now, we consider the Nehari manifold for Problem \eqref{P-pertubed} as follows:
		\begin{equation}
				\mathcal{N}_{\lambda,\epsilon}:=\{ u \in H_0^1(\Omega)\setminus \{0\} : I_{\lambda,\epsilon}'(u)u=0 \}
				=\left\{ u\in  H_0^1(\Omega)\setminus \{0\}:\|u\|^2+\int_\Omega g_\epsilon(u)u \,\mathrm{d}x=\lambda\|u\|^{p+1}_{p+1}\right\}.
			\label{nehari-eps}
		\end{equation}
Note that $u \in \mathcal{N}_{\lambda, \epsilon}$ if and only if
		\begin{equation}\label{a0i-eps}
			\lambda = \dfrac{\displaystyle\|u\|^2 +\int_\Omega g_\eps(u)u\,\mathrm{d}x}{\|u\|^{p+1}_{p+1}}.
		\end{equation}
Arguing as before, we introduce the functionals $R_{n,\epsilon},R_{e,\eps}: H_0^1(\Omega)\setminus\{0\} \to \mathbb{R}$ defined as
	\begin{eqnarray}\label{Rs}
		R_{n,\epsilon}(u):=\dfrac{\displaystyle\|u\|^2 +\int_\Omega g_\eps(u)u\,\mathrm{d}x}{\|u\|^{p+1}_{p+1}}\qquad\mbox{and}\qquad R_{e,\eps}(u)= \dfrac{\frac{1}{2}\|u\|^2 +\displaystyle\int_\Omega G_\eps(u)\,\mathrm{d}x}{\frac{1}{p+1}\|u\|^{p+1}_{p+1}}.
	\end{eqnarray}
	Note that, for $t>0$,
\begin{align*}
\|u\|_{p+1}^{p+1}\frac{d}{dt}R_{n,\epsilon}(tu)
=\,(1-p)\|u\|^2t^{-p}
-t^{q-p-1}\int_\Omega
\frac{|u|^{q+1}\big[(p+\beta)t|u|+(p-q)\epsilon\big]}
{(t|u|+\epsilon)^{q+\beta+1}}\,\mathrm{d}x.
\end{align*}
The last identity show us that $\frac{d}{dt}R_{n,\epsilon}(tu)=0$ is equivalent to 
\begin{equation}\label{eq-crit-R}
    (1-p)\|u\|^2
    =
    \int_\Omega
    \frac{
        (p+\beta)t|u|+(p-q)\eps
    }{
        (t|u|)^{1-q}
        (t|u|+\eps)^{q+\beta+1}
    }
    |u|^2\,\mathrm{d}x.
\end{equation}

		We claim that the function 
        \begin{equation}\label{functionf}
         \displaystyle f_\eps(s):=\frac{[(p+\beta)s+(p-q)\epsilon]}{s^{1-q}(s+\epsilon)^{q+\beta+1}}=\frac{(p+\beta)s^q}{(s+\epsilon)^{q+\beta+1}}+\frac{(p-q)\epsilon}{s^{1-q}(s+\epsilon)^{q+\beta+1}}
        \end{equation}
        is decreasing for $s>0$. Indeed, taking into account that $q\leq p\frac{(\beta+2)}{p+2\beta+2}$, we deduce that 
		\begin{equation*}
			(\beta+p)q- (p-q)(\beta+2)\leq 0.
		\end{equation*}		
		 Therefore, 
		 $$f_\eps'(s)=\frac{-(p+\beta)(\beta+1)s^2+[(\beta+p)q-(p-q)(\beta+2)]\epsilon s-(p-q)(1-q)\epsilon^2}{s^{2-q}(s+\epsilon)^{q+\beta+2}}<0, \quad \forall \, s>0,$$		
	which proves the claim. Moreover, observe that
	\begin{equation}\label{limits}
    \lim_{s\to0^+}f_\eps(s)=+\infty
    \qquad\mbox{and}\qquad
    \lim_{s\to\infty}f_\eps(s)=0.
    \end{equation}
    Therefore, \eqref{eq-crit-R} admits a unique solution $t=t_{n,\epsilon}(u)$ which is a global mínimum of $t\mapsto R_{n,\epsilon}(tu)$.  Define
		\begin{eqnarray}\label{gamma-ep}
			\lambda_{n,\epsilon}(u):=R_{n,\eps}(t_{n,\epsilon}(u)u)>0,\quad \forall \, u\in H_0^1(\Omega)\setminus\{0\}.
		\end{eqnarray}
\begin{lem}\label{N0-ep}
Suppose that $(Q)$ holds, and let $\lambda_{n,\epsilon} : H_0^1(\Omega) \setminus \{0\} \to \mathbb{R}$ be defined in \eqref{gamma-ep}. Then the following assertions hold:
\begin{itemize}
\item[(i)] $\lambda_{n,\epsilon}$ is $0$--homogeneous, i.e., $\lambda_{n,\epsilon}(tu)=\lambda_{n,\epsilon}(u)$ for every $t>0$ and $u\in H_0^1(\Omega)\setminus\{0\}$;
\item[(ii)] The function $t_{n,\eps}:H_0^1(\Omega)\setminus\{0\}\to(0,\infty)$ is continuous, and  $\lambda_{n,\epsilon}\in C^1(H_0^1(\Omega)\setminus\{0\},\mathbb{R})$. Furthermore, $\lambda_{n,\epsilon}:H_0^1(\Omega)\setminus\{0\} \to\mathbb{R}$ is weakly lower semicontinuous;
\item[(iii)] The function $\lambda_{n,\epsilon}$ is unbounded from above, that is, there exists a sequence $(w_k) \subset H_0^1(\Omega) \setminus \{0\}$ such that $\lambda_{n,\eps}(w_k) \to \infty$ as $k \to \infty$;
\item[(iv)] There exists $u:=u_{\eps,*}\in H_0^1(\Omega)\setminus\{0\}$ such that $\displaystyle\lambda_{\epsilon,*}=\lambda_{n,\epsilon}(u)=\inf_{v\in H_0^1(\Omega)\setminus\{0\}}\lambda_{n,\epsilon}(v)>0$, that is, the number $\lambda_{\epsilon,*}$ is attained by some function $u \in H_0^1(\Omega) \setminus \{0\}$;
\item[(v)] Let $u_{\varepsilon,*}\in H_0^1(\Omega)\setminus\{0\}$ be a minimizer of $\lambda_{n,\varepsilon}$, and set
$w_{\varepsilon,*}:=
    t_{n,\varepsilon}(u_{\varepsilon,*})
    u_{\varepsilon,*}.$
Then
$w_{\varepsilon,*}\in\mathcal N_{\lambda_{\varepsilon,*},\varepsilon}^{0},$
where $\lambda_{\varepsilon,*}
    =
    \lambda_{n,\varepsilon}(u_{\varepsilon,*})
    =
    R_{n,\varepsilon}(w_{\varepsilon,*}).$
Moreover, $w_{\varepsilon,*}$ satisfies
\begin{equation}\label{eq:3.36}
    2\int_\Omega
    \nabla w_{\varepsilon,*}\nabla\varphi\,dx
    +
    \int_\Omega
    \bigl[
        \Gamma_\varepsilon(w_{\varepsilon,*})
        +
        g_\varepsilon(w_{\varepsilon,*})
    \bigr]\varphi\,dx
    =
    (p+1)\lambda_{\varepsilon,*}
    \int_\Omega
    |w_{\varepsilon,*}|^{p-1}
    w_{\varepsilon,*}\varphi\,dx,
\end{equation}
for every $\varphi\in H_0^1(\Omega)$.
Equivalently, it satisfies the Dirichlet problem
\[
\left\{
\begin{aligned}
    -2\Delta w_{\varepsilon,*}
    &=
    -\bigl[
        \Gamma_\varepsilon(w_{\varepsilon,*})
        +
        g_\varepsilon(w_{\varepsilon,*})
    \bigr]
    +(p+1)\lambda_{\varepsilon,*}
    |w_{\varepsilon,*}|^{p-1}
    w_{\varepsilon,*}
    &&\text{in }\Omega,
    \\
    w_{\varepsilon,*}&=0
    &&\text{on }\partial\Omega.
    \end{aligned}
    \right.
    \]
	\end{itemize}	
\end{lem}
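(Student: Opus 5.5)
The plan is to go through items (i)--(v) in order, imitating Lemma \ref{N0}. The one structural difference is that the regularized singular term is not homogeneous, so every step has to go through the fiber parametrization $t\mapsto R_{n,\epsilon}(tu)$ and the unique critical point $t_{n,\epsilon}(u)$ produced by the strict monotonicity of $f_\epsilon$ in \eqref{functionf}.

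\emph{Items (i)--(ii).} For (i), note that $t_{n,\epsilon}(su)=t_{n,\epsilon}(u)/s$ for $s>0$. Indeed, \eqref{eq-crit-R} depends on $(t,u)$ only through $tu$ (after dividing by $t^{2}$ it reads $(1-p)\|tu\|^2=\int_\Omega f_\epsilon(|tu|)|tu|^2\,dx$). Hence $\lambda_{n,\epsilon}(su)=R_{n,\epsilon}(t_{n,\epsilon}(u)u)=\lambda_{n,\epsilon}(u)$.

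For the continuity of $t_{n,\epsilon}$ in (ii), set
\[
F(t,u):=(1-p)\|u\|^2-\int_\Omega f_\epsilon(t|u|)|u|^2\,dx .
\]
Then $F$ is continuous, and for fixed $u$ it is strictly increasing in $t$. By \eqref{limits} it is negative for small $t$ and positive for large $t$; the small-$t$ behaviour uses monotone convergence, since $f_\epsilon(t|u|)\to+\infty$ on $\{u\neq0\}$. A standard monotone-root argument then gives continuity. Since $I_{\lambda,\epsilon}\in C^1$, so is $R_{n,\epsilon}$ on $H_0^1(\Omega)\setminus\{0\}$. Because $\partial_tR_{n,\epsilon}(tu)=0$ at $t=t_{n,\epsilon}(u)$, the chain rule gives $\lambda_{n,\epsilon}'(u)=t_{n,\epsilon}(u)R_{n,\epsilon}'(t_{n,\epsilon}(u)u)$. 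If needed, I will justify differentiability of $t_{n,\epsilon}$ by the implicit function theorem, using $\partial_tF>0$.

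Weak lower semicontinuity comes from the representation
\[
\lambda_{n,\epsilon}(u)=\inf_{t>0}R_{n,\epsilon}(tu).
\]
If $u_k\rightharpoonup u\neq0$, then for each fixed $t$ the integrals $\int g_\epsilon(tu_k)tu_k$ and $\|tu_k\|_{p+1}$ converge by compact embedding and the bound $|g_\epsilon(s)s|\le|s|^{1-\beta}$, while the norm is weakly lower semicontinuous. So $\liminf_k\lambda_{n,\epsilon}(u_k)\le\liminf_kR_{n,\epsilon}(t u_k)$ fails to be the right direction. I will therefore use $t_k=t_{n,\epsilon}(u_k)$ and show that it stays in a compact subset of $(0,\infty)$, by bounding $F(t,u_k)$ uniformly. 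Then $\liminf_kR_{n,\epsilon}(t_ku_k)\ge R_{n,\epsilon}(t_0u)\ge\lambda_{n,\epsilon}(u)$ along a subsequence $t_k\to t_0$.

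\emph{Items (iii)--(iv).} For (iii), use Lemma \ref{G,g}, which gives $g_\epsilon(s)s\ge0$, so that $\lambda_{n,\epsilon}(u)\ge\inf_t t^{1-p}\|u\|^2/\|u\|_{p+1}^{p+1}$. This alone is useless, since it tends to $0$ as $t\to0$. Instead I will compare with the singular quotient: from $g_\epsilon(s)s\ge(1-\beta)G_\epsilon(s)\ge(1-\beta)G_\epsilon(1)\min\{|s|^{q+1},|s|^{1-\beta}\}$, the quotient $R_{n,\epsilon}(tu)$ dominates a two-term Rayleigh quotient. Evaluating at the eigenfunctions $\varphi_k$ as in \eqref{coer-ln} then yields growth like $\Lambda_k^{\,t'}$.

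For (iv), take a minimizing sequence normalized by $\|u_k\|_{p+1}=1$ and write $w_k=t_{n,\epsilon}(u_k)u_k$. Here $\lambda_{n,\epsilon}(u_k)=R_{n,\epsilon}(w_k)$ bounds $\|w_k\|^2/\|w_k\|_{p+1}^{p+1}$ and $\int g_\epsilon(w_k)w_k/\|w_k\|_{p+1}^{p+1}$. I will obtain an $H_0^1$ bound on $w_k$ and a lower bound on $\|w_k\|_{p+1}$ by the interpolation trick of Lemma \ref{N0}(iv), using the lower growth bound on $G_\epsilon$. Weak compactness and (ii) then give a minimizer, and positivity of the infimum follows from the same estimates.

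\emph{Item (v).} Since $t\mapsto R_{n,\epsilon}(tu_{\epsilon,*})$ has a critical point at $t_{n,\epsilon}(u_{\epsilon,*})$, the analogue of \eqref{save} gives $\phi''_{w_{\epsilon,*}}(1)=0$, so $w_{\epsilon,*}\in\mathcal N^0_{\lambda_{\epsilon,*},\epsilon}$. Because $w_{\epsilon,*}$ is a global minimizer of the $C^1$ functional $R_{n,\epsilon}$ (by (i) and the definition), we have $R_{n,\epsilon}'(w_{\epsilon,*})=0$. Expanding this with the quotient rule, and using $\|w\|^2+\int g_\epsilon(w)w=\lambda_{\epsilon,*}\|w\|_{p+1}^{p+1}$ together with $(g_\epsilon(s)s)'=\Gamma_\epsilon(s)+g_\epsilon(s)$, yields \eqref{eq:3.36} directly.

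I expect the main obstacle to be the uniform control of $t_{n,\epsilon}(u_k)$, which is needed both for weak lower semicontinuity and for compactness in (iv), because nonhomogeneity rules out an explicit formula like \eqref{t-n}. As a first attempt I will derive two-sided bounds for $t_{n,\epsilon}(u)$ in terms of $\|u\|$ and $\|u\|_{1-\beta}$ from $-\beta\le\theta\le q$ in Lemma \ref{G,g}.
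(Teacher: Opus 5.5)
Your proposal is correct and keeps the paper's architecture for (i), (ii) and (v): the scaling $t_{n,\epsilon}(su)=t_{n,\epsilon}(u)/s$, the implicit function theorem with $\partial_tF>0$ coming from $f_\epsilon'<0$, and the first variation of $R_{n,\epsilon}$. However, (iii) and the nontriviality step in (iv) go by different routes.

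\textbf{Item (iii).} The paper uses the shrinking bumps $w_\rho=\phi((x-x_0)/\rho)$. Their explicit scaling gives $\lambda_{n,\epsilon}(w_\rho)\geq\min\{C_2\rho^{-2(p-q)/(1-q)},C_3\rho^{-2}\}$, uniformly in $\epsilon\in(0,1)$. Your eigenfunction route also works, and is also uniform in $\epsilon$ because $G_\epsilon(1)$ is bounded below. It does not work by the single interpolation of \eqref{coer-ln}, though. Since $\frac{p-q}{1-q}<\frac{p+\beta}{1+\beta}$, no single exponent interpolates $|s|^{p+1}$ between $s^2$ and $\min\{|s|^{q+1},|s|^{1-\beta}\}$. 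You therefore have to split $\Omega$ into $\{t|\varphi_k|\leq1\}$ and $\{t|\varphi_k|>1\}$. On these sets you interpolate with $\theta=\frac{p-q}{1-q}$ and $\theta'=\frac{p+\beta}{1+\beta}$ respectively. Then $A^\theta B^{1-\theta}\leq A+B$ gives $R_{n,\epsilon}(t\varphi_k)\geq c\Lambda_k^{\theta}$ uniformly in $t>0$.

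\textbf{Item (iv).} The paper argues by contradiction. It uses the degenerate identity \eqref{eq-crit-R} for $\tilde v_k$ to obtain $\int_\Omega g_\epsilon(\tilde v_k)\tilde v_k\,dx\leq C\|\tilde v_k\|^2$. Your direct route needs only $R_{n,\epsilon}(w_k)\leq\Lambda$. Set $a=\|w\|$, $G=\int_\Omega g_\epsilon(w)w\,dx$ and $\gamma=\frac{2^*-(p+1)}{2^*-(q+1)}$. Interpolate $L^{p+1}$ between $L^{q+1}$ on $\{|w|\leq1\}$, which is controlled by $G$ via Lemma~\ref{G,g}, and $L^{2^*}$. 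Minimizing $(a^2+G)/G^\gamma$ over $G>0$ then shows $R_{n,\epsilon}(w)\to\infty$ as $a\to0$. Note that the interpolation must use $L^{q+1}$, not $L^{1-\beta}$ as in Lemma~\ref{N0}(iv).

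\textbf{Item (v).} Your version is slightly cleaner than the paper's. Observing that $w_{\epsilon,*}$ globally minimizes $R_{n,\epsilon}$ gives $R_{n,\epsilon}'(w_{\epsilon,*})=0$ without differentiating $u\mapsto t_{n,\epsilon}(u)$.

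\textbf{Small corrections.} The two-sided bounds on $t_{n,\epsilon}$ that you anticipate are not needed.
\begin{itemize}
\item In (iv) you never use $t_{n,\epsilon}(u_k)$ after passing to $w_k$.
\item In (ii) your monotone-in-$t$ argument already confines $t_k$ to a compact subset of $(0,\infty)$. Use Fatou on the small-$t$ side and $f_\epsilon(s)\leq Cs^{-1-\beta}$ on the large-$t$ side. Alternatively, use the paper's trichotomy: $t_k\to0$ or $t_k\to\infty$ forces $R_{n,\epsilon}(t_ku_k)\to\infty$.
\item In (i) one multiplies \eqref{eq-crit-R} by $t^2$, rather than dividing.
\item $R_{n,\epsilon}\in C^1$ follows from $s\mapsto g_\epsilon(s)s$ being $C^1$ with derivative $\Gamma_\epsilon+g_\epsilon=O(|s|^q)$. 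It does not follow from $I_{\lambda,\epsilon}\in C^1$.
\end{itemize}
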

\begin{proof}
		For (i), since $R_{n,\eps}'(tu)tu=0$ has a unique solution, we have that $R_{n,\eps}(t_{n,\epsilon}(u)u)=R_{n,\eps}\left(\frac{t_{n,\epsilon}(u)}{s}su\right)$. Thus $t_{n,\epsilon}(su)=\frac{t_{n,\epsilon}(u)}{s}$. Consequently,  
		$$\lambda_{n,\eps}(su)=R_{n,\eps}(t_{n,\epsilon}(su)su)=R_{n,\eps}\left(\frac{t_{n,\epsilon}(u)}{s}su\right)=R_{n,\eps}(t_{n,\epsilon}(u)u)=\lambda_{n,\eps}(u).$$
	
To prove (ii), we define the function $\mathcal{F}:(0,\infty) \times (H_0^1(\Omega) \setminus \{0\}) \to \mathbb{R}$ by $\mathcal{F}(t,u) = R'_{n,\eps}(tu)u$. Note that $\mathcal{F}$ is of class $C^1$ on $(0,\infty)\times(H_0^1(\Omega)\setminus\{0\})$, and
$$
\mathcal{F}(t_{n,\eps}(u),u) = R'_{n,\eps}(t_{n,\eps}(u)u)u = \frac{d}{dt}R_{n,\eps}(tu) \Big|_{t=t_{n,\epsilon}(u)} = 0.
$$
Furthermore, by using that $t_{n,\eps}(u)u$ is the global minimum of $R_{n,\eps}$, we see that 
$$
\frac{\partial}{\partial t}\mathcal{F}(t, u) \Big|_{t=t_{n,\epsilon}(u)} = \frac{d^2}{dt^2}R_{n,\eps}(tu) \Big|_{t=t_{n,\epsilon}(u)} > 0.
$$
In light of the Implicit Function Theorem, for each $u\in H_0^1(\Omega) \setminus \{0\}$, there exist $\delta > 0$ and a continuously differentiable function $\tau: B_\delta(u) \to (0,\infty)$, such that 
\[
 \mathcal{F}(\tau(v),v) = 0 \quad \mbox{and} \quad \frac{\partial}{\partial t}\mathcal{F}(t, u) \Big|_{t=\tau(u)}>0, \quad \forall \, v \in B_\delta(u).
\]
Now, for each $v \in B_\delta(u)$ and taking into account that $t_{n,\epsilon}(v)$ is the unique solution to the equation $R_{n,\eps}'(tv)v = 0$, we have $\tau(v) = t_{n,\epsilon}(v)$ in $B_\delta(u)$. Consequently, the functional $t_{n,\epsilon}$ is differentiable in $B_\delta(u)$. Since $u$ was chosen arbitrarily, we conclude that $t_{n,\epsilon}$ is differentiable in $H_0^1(\Omega) \setminus \{0\}$. It follows that $\lambda_{n,\eps}(u):=R_{n,\eps}(t_{n,\epsilon}(u)u)$ is differentiable in $H_0^1(\Omega) \setminus \{0\}$.

 Now, we prove that $\lambda_{n,\epsilon}: H_0^1(\Omega)\setminus\{0\} \to \mathbb{R}$ is weakly lower semicontinuous. It is already known that for each sequence $(u_k) \in H^1_0(\Omega)$ such that $u_k \rightharpoonup u \neq 0$ in $H^1_0(\Omega)$, we have $R_{n,\epsilon}(u) \leq \liminf_{k\to\infty} R_{n,\epsilon}(u_k)$. Suppose that $t_k := t_{n,\epsilon}(u_k) \to t_0 \neq 0$. Consequently, $t_k u_k \rightharpoonup t_0 u$ and
\begin{equation}
    \lambda_{n,\epsilon}(u) = R_{n,\epsilon}(t_{n,\epsilon}(u)u) \leq R_{n,\epsilon}(t_0 u) \leq \liminf_{k\to\infty} R_{n,\epsilon}(t_k u_k) = \liminf_{k\to\infty} \lambda_{n,\epsilon}(u_k).
\end{equation}

It remains to consider the cases $t_k \to 0$ and $t_k \to \infty$. Assuming that $t_k \to 0$, we obtain that $t_k u_k \to 0$ in $H_0^1(\Omega)$. Now, using the fact that  
\[
\displaystyle \int_{\Omega} \frac{|u_k|^{q+1}}{(t_k|u_k| + \eps)^{q+\beta}} \, dx \to \frac{1}{\eps^{q+\beta}}\int_{\Omega} |u|^{q+1} \, dx \neq 0,
\]
we conclude 
\begin{equation*}
\|u_k\|_{p+1}^{p+1} R_{n,\epsilon}(t_k u_k) \geq t_k^{q-p} \int_{\Omega} \frac{|u_k|^{q+1}}{(t_k|u_k| + \eps)^{q+\beta}} \, dx \to \infty.
\end{equation*}
Now, assume that $t_k \to \infty$. In particular, we obtain 
\begin{equation*}
\|u_k\|_{p+1}^{p+1} R_{n,\epsilon}(t_k u_k) \geq t_k^{1-p} \|u_k\|^2 \to \infty,
\end{equation*}
which finishes the proof of item (ii).
		
Now, we prove item (iii). Let $x_0\in \Omega$ and $\rho_0>0$ be such that $B_\rho(x_0)\subset \Omega$ for all $0<\rho<\rho_0$. Choose a test function $\phi\in C_c^\infty(B_1(0))$, with $\phi\not\equiv 0$, such that $\operatorname{supp}(\phi)\subset B_1(0)$ and $||\phi||_{\infty}=1$. We define
\begin{equation}
    w_\rho(x):=\phi\!\left(\frac{x-x_0}{\rho}\right).
\end{equation}
It is not hard to see that $w_\rho\in C_c^\infty(\Omega)\subset H_0^1(\Omega)$ and $w_\rho\not\equiv 0$. We claim that
\begin{equation}
    \lambda_{n,\varepsilon}(w_\rho)\to\infty, \quad \text{as } \rho\downarrow 0,
\end{equation} 
uniformly in $\epsilon\in (0,1)$. To see this, note that 
 \[
  g_\varepsilon(s)s\geq \frac{|s|^{q+1}}{(|s|+1)^{q+\beta}}, \quad \forall\; 0<\epsilon<1.
\]
As a consequence, for every $t>0$, we have
\begin{equation}
    R_{n,\varepsilon}(tw_\rho)\geq R_{n,1}(tw_\rho)
    =  \frac{t^2\|w_\rho\|^2+\displaystyle\int_\Omega \frac{t^{q+1}|w_\rho|^{q+1}}{(t|w_\rho|+1)^{q+\beta}}\,\mathrm{d}x}{t^{p+1}\|w_\rho\|_{p+1}^{p+1}}.
\end{equation}
Since $t_{n,\varepsilon}(w_\rho)$ minimizes the map $t\mapsto R_{n,\varepsilon}(tw_\rho)$, it is enough to estimate the term $\inf_{t>0} R_{n,1}(tw_\rho)$. For simplicity, we set the following constants:
\begin{equation}
    A:=\|\nabla \phi\|_2^2, \quad B:=\|\phi\|_{p+1}^{p+1}, \quad D:=\|\phi\|_{q+1}^{q+1}.
\end{equation}
By a change of variables, we observe that
\begin{equation}
    \|w_\rho\|^2=\rho^{N-2}A \quad \text{and} \quad \|w_\rho\|_{p+1}^{p+1} = \rho^N B.
\end{equation}
It also follows that
\begin{equation}
    \int_\Omega \frac{t^{q+1}|w_\rho|^{q+1}}{(t|w_\rho|+1)^{q+\beta}}\,\mathrm{d}x = \rho^N \int_{B_1} \frac{t^{q+1}|\phi(y)|^{q+1}}{(t|\phi(y)|+1)^{q+\beta}}\,\mathrm{d}y.
\end{equation}
{Thus, a direct computation yields}
\begin{equation}
    R_{n,1}(tw_\rho) = \frac{A}{B}\rho^{-2}t^{1-p} +\frac{1}{B} \int_{B_1} \frac{t^{q-p}|\phi|^{q+1}}{(t|\phi|+1)^{q+\beta}}\,\mathrm{d}y.
\end{equation}
We divide the argument into two cases.

\medskip
\noindent \textbf{Case 1: $0<t\le 1$.} \noindent In this case, we have $t|\phi(y)|+1\le 2$ for all $y\in B_1(0)$. Hence,
\begin{equation}
    \frac{1}{B} \int_{B_1} \frac{t^{q-p}|\phi|^{q+1}}{(t|\phi|+1)^{q+\beta}}\,\mathrm{d}y \ge \frac{t^{q-p}}{B\;2^{q+\beta}} \int_{B_1} |\phi|^{q+1}\,\mathrm{d}y = C_1 t^{q-p},
\end{equation}
where $C_1:=D[B\,2^{q+\beta}]^{-1}>0$. Consequently,
\begin{equation}
    R_{n,1}(tw_\rho) \ge \frac{A}{B}\rho^{-2}t^{1-p}+C_1 t^{q-p}.
\end{equation}
Since $0<q<p<1$, we have $1-p>0$ and $q-p<0$. Now, we consider the auxiliary function $f_\rho(t):=\frac{A}{B}\rho^{-2}t^{1-p}+C_1 t^{q-p}$. A straightforward minimization shows that
\begin{equation}
    \min_{t>0} f_\rho(t) = C_2 \rho^{-\frac{2(p-q)}{1-q}},
\end{equation}
where $C_2>0$ is a constant {independent of $\rho$}. Therefore,
\begin{equation}
    \inf_{0<t\leq 1} R_{n,1}(tw_\rho) \ge C_2 \rho^{-\frac{2(p-q)}{1-q}}.
\end{equation}

\medskip
\noindent \textbf{Case 2: $t\ge 1$.} Now, we observe that
\begin{equation}
    R_{n,1}(tw_\rho) \geq \frac{A}{B}\rho^{-2}t^{1-p} \ge \frac{A}{B}\rho^{-2}.
\end{equation}
Thus, there exists $C_3>0$, {uniform with respect to $\rho$}, such that
\begin{equation}
    \inf_{t\geq 1} R_{n,1}(tw_\rho)\ge C_3 \rho^{-2}.
\end{equation}

From now on, using the previous estimates, we obtain
\begin{equation}
    \lambda_{n,\epsilon}(w_\rho) = \inf_{t>0} R_{n,\varepsilon}(tw_\rho) \geq \inf_{t>0} R_{n,1}(tw_\rho)   \geq \min\left\{ C_2 \rho^{-\frac{2(p-q)}{1-q}}, \, C_3 \rho^{-2} \right\},
\end{equation}
which implies that $\lambda_{n,\varepsilon}(w_\rho)\to\infty$ as $\rho\downarrow 0$ uniformly for any $0<\epsilon<1.$ This finishes the proof of item (iii).

\medskip
\noindent \textbf{Proof of (iv).}
We divide the argument into three steps.

\medskip
\noindent \textbf{Step 1.} Boundedness of the minimizing sequence.
We mention that $\lambda_{n,\eps}(u) > 0$ for all $u \in H_0^1(\Omega) \setminus \{0\}$. Let $(v_k)$ be a minimizing sequence for $\lambda_{n,\eps}$. We claim that the sequence $(\tilde{v}_k) := (t_{n,\eps}(v_k)v_k)$ is bounded. Indeed, taking into account that $R_{n,\epsilon}(\tilde{v}_k) \leq \lambda_{\epsilon,*} + 1$ for $k\in\mathbb{N}$ sufficiently large, we have
		\begin{equation}
			\Vert \tilde{v}_k \Vert^{1-p} \leq S_{p+1}^{p+1}(\lambda_{\epsilon,*} + 1), \quad k \gg 1.
		\end{equation}
This ends the proof of the claim.

\medskip
\noindent \textbf{Step 2.} Nontriviality of the weak limit.
By Step 1, there exists $v\in H_0^1(\Omega)$ such that, up to a subsequence, $\tilde{v}_k\rightharpoonup v$ in $H_0^1(\Omega)$. We shall prove that $v \neq 0$. The proof proceeds by contradiction. Assume that $v = 0.$ Thus we may suppose that
\(
\tilde v_k\to0
\text{ in }L^{p+1}(\Omega)
\)
as $k \to \infty$. Since the sequence $(\lambda_k):= (R_{n,\epsilon}(\tilde{v}_k))$ is bounded and
\[
\|\tilde v_k\|^2
+\int_\Omega g_\epsilon(\tilde v_k)\tilde v_k\,\mathrm{d}x
=
\lambda_k\|\tilde v_k\|_{p+1}^{p+1},
\]
with $g_\epsilon(t)t\geq0$, we obtain
\(
\|\tilde v_k\|^2
\leq
\lambda_k\|\tilde v_k\|_{p+1}^{p+1}
\longrightarrow0.
\)
Hence, $\tilde v_k\to0$ strongly in $H_0^1(\Omega)$. It follows from $R'_{n,\epsilon}(\tilde{v}_k)\tilde{v}_k = 0$ and \eqref{eq-crit-R} that
\begin{equation}\label{tilde-v-k}
	(1-p)\|\tilde{v}_k\|^2
	=
	\int_{\Omega}
	\frac{(p+\beta)|\tilde{v}_k|+(p-q)\epsilon}
	{|\tilde{v}_k|^{1-q}(|\tilde{v}_k|+\epsilon)^{q+\beta+1}}
	|\tilde{v}_k|^2\,\mathrm{d}x.
\end{equation}   
 Define
\(
a_k:=\|\tilde v_k\|
\mbox{ and }
w_k:=\frac{\tilde v_k}{a_k}.
\)
In particular, $a_k\to0$ and $\|w_k\|=1$. Since
$R_{n,\epsilon}'(\tilde v_k)\tilde v_k=0$, we have
\[
(1-p)a_k^2
=
\int_\Omega
g_\epsilon(a_kw_k)a_kw_k
\left[
(p-q)
+
(q+\beta)
\frac{a_k|w_k|}{a_k|w_k|+\epsilon}
\right]
\,\mathrm{d}x.
\]
Consequently,
\begin{equation}\label{g-ak}
\int_\Omega
g_\epsilon(a_kw_k)a_kw_k\,\mathrm{d}x
\leq C a_k^2.
\end{equation}
Now, we consider the set
\(
E_k:=\{x\in\Omega:a_k|w_k|\leq\epsilon\}.
\)
On $E_k$, we have
\[
g_\epsilon(a_kw_k)a_kw_k
\geq C_\epsilon a_k^{q+1}|w_k|^{q+1}.
\]
Hence, by using \eqref{g-ak}, we see that 
\[
\int_{E_k}|w_k|^{q+1}\,\mathrm{d}x
\leq C_\epsilon a_k^{1-q}.
\]
On the other hand, using $\|w_k\|=1$ and the Sobolev embedding,
\[
\int_{\Omega\setminus E_k}|w_k|^{q+1}\,\mathrm{d}x
\leq
C_\epsilon a_k^{2^*-(q+1)}
\leq
C_\epsilon a_k^{1-q}.
\]
Therefore,
\begin{equation}\label{wk-q}
\|w_k\|_{q+1}^{q+1}
\leq C_\epsilon a_k^{1-q}.
\end{equation}
Now, by the interpolation inequality using $L^{q+1}(\Omega)$ and $L^{2^*}(\Omega)$, we deduce that 
\[
\|w_k\|_{p+1}^{p+1}
\leq
C
\left(
\|w_k\|_{q+1}^{q+1}
\right)^\gamma,
\qquad
\gamma:=
\frac{2^*-(p+1)}{2^*-(q+1)}.
\]
In view of \eqref{wk-q} we obtain that
\[
\|w_k\|_{p+1}^{p+1}
\leq
C_\epsilon a_k^{\gamma(1-q)}.
\]
In light of the fact that $(\lambda_k)$ is bounded and
\(
a_k^2
\leq
\lambda_k a_k^{p+1}\|w_k\|_{p+1}^{p+1},
\)
we obtain
\[
a_k^{1-p}
\leq
C\|w_k\|_{p+1}^{p+1}
\leq
C_\epsilon a_k^{\gamma(1-q)}.
\]
However,
\(
\gamma(1-q)-(1-p)
=
\frac{(2^*-2)(p-q)}
{2^*-(q+1)}
>0,
\)
which is impossible as $a_k\to0$. Therefore, $v\neq0$.

\medskip
\noindent \textbf{Step 3.} Attainment of $\lambda_{\eps,*}$.
Since $u \mapsto \lambda_{n,\epsilon}(u)$ is a weakly lower semicontinuous functional, we deduce that
\[
\lambda_{\eps,*}
\leq \lambda_{n,\eps}(v)
\leq \liminf_{k\to\infty}\lambda_{n,\eps}(\tilde v_k)
= \liminf_{k\to\infty}\lambda_{n,\eps}(v_k)
= \lambda_{\eps,*}.
\]
Therefore,
\(
\lambda_{n,\eps}(v)=\lambda_{\eps,*}>0,
\)
where we used the fact that $v\neq0$. This finishes the proof of item (iv).

\medskip
\noindent \textbf{Proof of (v).}
Since $u_{\varepsilon,*}$ minimizes $\lambda_{n,\varepsilon}$, we have
$\lambda_{n,\varepsilon}'(u_{\varepsilon,*})\varphi=0$ for every $\varphi\in H_0^1(\Omega)$. Let
$w_{\varepsilon,*}=
    t_{n,\varepsilon}(u_{\varepsilon,*})
    u_{\varepsilon,*}.$
By the definition of $t_{n,\varepsilon}$, we have
$R_{n,\varepsilon}'(w_{\varepsilon,*})
    w_{\varepsilon,*}=0.$
Differentiating
$\lambda_{n,\varepsilon}(u)
    =
    R_{n,\varepsilon}
    \bigl(t_{n,\varepsilon}(u)u\bigr)$
at $u=u_{\varepsilon,*}$, we conclude that
$R_{n,\varepsilon}'(w_{\varepsilon,*})\varphi=0$ for every $\varphi\in H_0^1(\Omega)$. Furthermore, we observe that
\[
    R_{n,\varepsilon}(w)
    =
    \frac{
        \|w\|^2
        +
        \displaystyle\int_\Omega g_\varepsilon(w)w\,dx
    }{
        \|w\|_{p+1}^{p+1}
    }.
\]
A simple differentiation gives
\[
\begin{aligned}
0&=R_{n,\varepsilon}'(w_{\varepsilon,*})\varphi
    =
    \frac{1}{\|w_{\varepsilon,*}\|_{p+1}^{p+1}}
    \Bigg[
        2\int_\Omega
        \nabla w_{\varepsilon,*}\nabla\varphi\,dx
        +
        \int_\Omega
        \bigl(
            \Gamma_\varepsilon(w_{\varepsilon,*})
            +
            g_\varepsilon(w_{\varepsilon,*})
        \bigr)\varphi\,dx
        -(p+1)\lambda_{\varepsilon,*}
        \int_\Omega
        |w_{\varepsilon,*}|^{p-1}
        w_{\varepsilon,*}\varphi\,dx
    \Bigg],
\end{aligned}
\]
which implies \eqref{eq:3.36}. Finally,
$R_{n,\varepsilon}'(w_{\varepsilon,*})
    w_{\varepsilon,*}=0
$
is equivalent to
$w_{\varepsilon,*}\in
    \mathcal N_{\lambda_{\varepsilon,*},\varepsilon}^{0}.$
This completes the proof.		
	\end{proof}

Now, we define
\[
\lambda_{e,\eps}(u) :=\min_{t>0} R_{e,\eps}(t u), \quad \forall \,u \in H_0^1(\Omega) \setminus \{0\}.
\]
The result below is a version of Lemma \ref{N0-ep} to $\lambda_{e,\eps}(u)$.

\begin{lem}\label{gamma-e-ep}
	Assume that $(Q)$ is satisfied. Then the following statements hold:
	\begin{itemize}
		\item[(i)] For every $u\in H_0^1(\Omega)\setminus\{0\}$, there exists
		a unique $t_{e,\eps}(u)>0$ such that
		\[
		\lambda_{e,\eps}(u)
		=
		R_{e,\eps}(t_{e,\eps}(u)u)
		=
		\min_{t>0}R_{e,\eps}(tu).
		\]
		Moreover, $\lambda_{e,\eps}$ is $0$--homogeneous,
		$\lambda_{e,\eps}\in
		C^1(H_0^1(\Omega)\setminus\{0\},\mathbb R)$, and
		$\lambda_{e,\eps}$ is weakly lower semicontinuous and bounded
		from below;
		\item[(ii)] $\lambda_{e,\eps}(u)>\lambda_{n,\eps}(u)$ for every $u\in H_0^1(\Omega)\setminus\{0\}$;
		\item[(iii)] There exists
		$u=u_{e,\eps}\in H_0^1(\Omega)\setminus\{0\}$ such that
		\[
		\lambda_{\eps}^*
		=
		\lambda_{e,\eps}(u)
		=
		\inf_{v\in H_0^1(\Omega)\setminus\{0\}}
		R_{e,\eps}(v)>0.
		\]
		In particular,
		\(
		0<\lambda_{\eps,*}<\lambda_{\eps}^*;
		\)
		\item[(iv)] Let $u$ be a critical point of
		$\lambda_{e,\eps}$ and set
		\(
		\lambda:=\lambda_{e,\eps}(u),
		~
		w:=t_{e,\eps}(u)u.
		\)
		Then
		\(
		w\in\mathcal N_{\lambda,\eps}^+
		\)
		and $w$ is a weak solution of Problem~\eqref{P-pertubed}.
		In particular, for the minimizer $u=u_{e,\eps}$ obtained in
		(iii), Problem~\eqref{P-pertubed} possesses a weak solution in
		$\mathcal N_{\lambda_{\eps}^*,\eps}^+$.
	\end{itemize}
\end{lem}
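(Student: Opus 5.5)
For (i), I fix $u\neq0$ and study $h(t):=R_{e,\eps}(tu)$ on $(0,\infty)$. Its behaviour at the ends comes from the growth bounds \eqref{est:G-growth}. As $t\to\infty$, $h(t)\geq \frac{p+1}{2}t^{1-p}\|u\|^2/\|u\|_{p+1}^{p+1}\to\infty$. As $t\to0^+$, $G_\eps(s)\geq C_\eps|s|^{q+1}$ for small $|s|$ (this follows from $g_\eps(s)\sim s^q\eps^{-q-\beta}$), so $h(t)\gtrsim t^{q-p}\to\infty$ because $q<p$. Hence a minimum exists.

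For uniqueness I use the analogue of \eqref{aee}: $\frac{t}{p+1}h'(t)=R_{n,\eps}(tu)-h(t)$. Thus at any critical point $t_0$ of $h$ we have $h(t_0)=R_{n,\eps}(t_0u)$. Differentiating once more at $t_0$ gives $\frac{t_0}{p+1}h''(t_0)=\frac{d}{dt}R_{n,\eps}(tu)|_{t_0}$. I would then show that critical points of $h$ can occur only where $R_{n,\eps}(\cdot u)$ is increasing, i.e. for $t>t_{n,\eps}(u)$. If so, every critical point is a strict local minimum, so the critical point is unique.

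To place the critical points, I compare $h$ and $R_{n,\eps}(\cdot u)$ directly. Writing $N(t)=\|tu\|^2+\int g_\eps(tu)tu$, the function $R_{n,\eps}(tu)-h(t)$ has the sign of
\[
\tfrac{1-p}{2}t^2\|u\|^2+\int_\Omega\big[g_\eps(tu)tu-(p+1)G_\eps(tu)\big].
\]
By Lemma~\ref{G,g} the integrand is at most $(q-p)G_\eps\le0$. I expect this sign analysis, combined with the monotonicity of $f_\eps$ from \eqref{functionf}, to rule out critical points of $h$ in $(0,t_{n,\eps}(u)]$. This is the step I regard as the main obstacle. It is exactly where the second half of condition $(Q)$, $q\le\frac{1-\beta}{p+\beta+2}$, should enter. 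My first attempt is to show that $t\mapsto t^{p+1}[R_{n,\eps}(tu)-h(t)]$ changes sign only once, with the same sign change as $\frac{d}{dt}R_{n,\eps}(tu)$, using the pointwise estimates on $\Gamma_\eps$.

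Once uniqueness holds, the remaining properties of (i) follow from known results:
\begin{itemize}
\item $0$-homogeneity holds because $t_{e,\eps}(su)=t_{e,\eps}(u)/s$.
\item The $C^1$ regularity follows from the Implicit Function Theorem applied to $\mathcal F(t,u)=R_{e,\eps}'(tu)u$, using $h''(t_{e,\eps})>0$, exactly as in Lemma~\ref{N0-ep}(ii).
\item Weak lower semicontinuity follows by the same three-case argument ($t_k\to t_0$, $t_k\to0$, $t_k\to\infty$).
\item The lower bound will follow from (ii).
\end{itemize}

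For (ii), at $t_e=t_{e,\eps}(u)$ we have $\lambda_{e,\eps}(u)=R_{n,\eps}(t_eu)\ge\lambda_{n,\eps}(u)$. Equality would force $t_e=t_{n,\eps}(u)$, since $t_{n,\eps}(u)$ is the unique minimizer of $R_{n,\eps}(\cdot u)$. But $t_e>t_{n,\eps}(u)$ by the location result from (i), so the inequality is strict.

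For (iii), I take a minimizing sequence, normalize it to $\tilde v_k=t_{e,\eps}(v_k)v_k$, and bound it using $R_{e,\eps}\ge\frac{p+1}{2}\|v\|^{1-p}S_{p+1}^{-(p+1)}$. Nontriviality of the weak limit follows as in Step 2 of Lemma~\ref{N0-ep}(iv), replacing $g_\eps(s)s$ by $G_\eps$ through \eqref{est:g-prime}. Weak lower semicontinuity then gives attainment. The chain $\lambda_{\eps}^*=\lambda_{e,\eps}(u_{e,\eps})>\lambda_{n,\eps}(u_{e,\eps})\ge\lambda_{\eps,*}$ gives the strict inequality. I also note that $\inf R_{e,\eps}=\inf\lambda_{e,\eps}$.

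For (iv), I differentiate $\lambda_{e,\eps}(u)=R_{e,\eps}(t_{e,\eps}(u)u)$ as in Lemma~\ref{N0-ep}(v). This shows $R_{e,\eps}'(w)=0$, and since $R_{e,\eps}(w)=\lambda$ this means
\[
\frac{p+1}{\|w\|_{p+1}^{p+1}}\,I_{\lambda,\eps}'(w)=0,
\]
so $w$ is a weak solution. Finally, $h'(t_e)=0$ gives $R_{n,\eps}(w)=\lambda$, so $w\in\mathcal N_{\lambda,\eps}$. From $h''(t_e)>0$ we get $\frac{d}{dt}R_{n,\eps}(tw)|_{t=1}>0$, which yields $\phi''>0$ via the perturbed analogue of \eqref{save}. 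Hence $w\in\mathcal N^+_{\lambda,\eps}$.
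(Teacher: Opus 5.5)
There is a genuine gap, and it is at the step you flag yourself. Your whole argument depends on the claim that $h(t)=R_{e,\eps}(tu)$ has no critical point in $(0,t_{n,\eps}(u)]$. That claim is what gives uniqueness of $t_{e,\eps}(u)$. It also gives the nondegeneracy $h''(t_{e,\eps}(u))>0$ used in your Implicit Function Theorem step, the strict inequality in (ii), and $w\in\mathcal N^+_{\lambda,\eps}$ in (iv). You do not prove it, and the tools you list do not deliver it.

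The bound $g_\eps(s)s-(p+1)G_\eps(s)\le (q-p)G_\eps(s)$ only shows that $\Psi_\eps:=(p+1)G_\eps-g_\eps s$ is positive. The monotonicity of $f_\eps$ alone says nothing about where $h'$ vanishes. To see what is really needed, note that $h'(t)=0$ if and only if $\frac{1-p}{2}\|u\|^2=\int_\Omega K_\eps(t|u|)|u|^2\,dx$, where $K_\eps(s):=\Psi_\eps(s)/s^2$. A direct check gives $f_\eps(s)=\Psi_\eps'(s)/s$, hence $f_\eps-2K_\eps=sK_\eps'$. So excluding critical points of $h$ on $(0,t_{n,\eps}(u)]$ by a pointwise comparison amounts to proving $K_\eps'\le 0$. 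Your proposed use of the bounds of Lemma~\ref{G,g} separately ($\Gamma_\eps\ge-\beta g_\eps$ together with $g_\eps s\le(q+1)G_\eps$) gives $s\Psi_\eps'\le 2\Psi_\eps$ only when $q\le\frac{p-\beta}{p+\beta+2}$. Condition $(Q)$ does not imply this; for $p\le\beta$ that condition allows no $q>0$ at all.

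The missing idea, which is what the paper uses, is to prove directly that $K_\eps$ is strictly decreasing on $(0,\infty)$, with $K_\eps(0^+)=+\infty$ and $K_\eps(+\infty)=0$.
\begin{itemize}
\item \emph{Uniqueness.} The map $t\mapsto\int_\Omega K_\eps(t|u|)|u|^2\,dx$ is then strictly decreasing from $+\infty$ to $0$. So $h$ has exactly one critical point, and your second-derivative detour is unnecessary for this part.
\item \emph{Location.} Since $f_\eps<2K_\eps$, for $t\le t_{n,\eps}(u)$ you get $\int_\Omega 2K_\eps(t|u|)|u|^2>\int_\Omega f_\eps(t|u|)|u|^2\ge(1-p)\|u\|^2$, hence $h'(t)<0$. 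Thus $t_{e,\eps}(u)>t_{n,\eps}(u)$.
\item \emph{Nondegeneracy.} Your identity $\frac{t_0}{p+1}h''(t_0)=\frac{d}{dt}R_{n,\eps}(tu)|_{t=t_0}$ then gives $h''(t_{e,\eps}(u))>0$.
\end{itemize}
The paper only asserts the monotonicity of $K_\eps$, but it is a one-variable computation. Set $\Phi:=s\Psi_\eps'-2\Psi_\eps$. Then $\Phi(0^+)=0$ and
\[
\Phi'(s)=g_\eps(s)\,P\Bigl(\frac{s}{s+\eps}\Bigr),\qquad P(x)=-(p-q)(1-q)+(q+\beta)(2q-p)x-(q+\beta)(1+q+\beta)x^2 .
\]
Under $(Q)$, $P<0$ on $[0,1]$. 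This is immediate if $2q\le p$; otherwise it follows from the first inequality in $(Q)$ together with $q<\frac12$. Hence $K_\eps'=\Phi/s^3<0$. With this in place, the rest of your argument (behaviour at the ends, the compactness in (iii), and the derivative computation in (iv)) goes through as written.
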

\begin{proof}
The proof follows the same arguments as in Lemma~\ref{N0-ep}. We only
point out the main differences.
For $s>0$, set
\[
\Psi_\eps(s):=(p+1)G_\eps(s)-g_\eps(s)s,
\qquad
K_\eps(s):=\frac{\Psi_\eps(s)}{s^2}.
\]
A direct computation gives
\[
\frac{d}{dt}R_{e,\eps}(tu)=0
\quad\Longleftrightarrow\quad
\frac{1-p}{2}\|u\|^2
=
\int_\Omega K_\eps(t|u|)|u|^2\,\mathrm{d}x.
\]
Moreover, $(Q)$ says that 
\(
K_\eps'(s)<0,
~s>0,
\)
while
\(
K_\eps(s)\to+\infty
~\text{as }s\to0^+,
~
K_\eps(s)\to0
~\text{as }s\to+\infty.
\)
Therefore, there exists a unique $t_{e,\eps}(u)>0$ such that
\[
\lambda_{e,\eps}(u)
=
R_{e,\eps}(t_{e,\eps}(u)u)
=
\min_{t>0}R_{e,\eps}(tu).
\]
The $0$--homogeneity, the $C^1$--regularity, the weak lower
semicontinuity, and the attainment of $\lambda_{\eps}^*$ follow
exactly as in Lemma~\ref{N0-ep}.

Now, we use the identity
\begin{equation}\label{Rn-Re-eps}
R_{n,\eps}(tu)-R_{e,\eps}(tu)
=
\frac{t}{p+1}
\frac{d}{dt}R_{e,\eps}(tu),
\qquad t>0,
\end{equation}
which holds for every $u\in H_0^1(\Omega)\setminus\{0\}$ and every $t>0$.
Arguing as in the proof of the corresponding result for the
unperturbed quotients, we obtain
\(
t_{n,\eps}(u)<t_{e,\eps}(u),
\)
and consequently
\(
\lambda_{n,\eps}(u)
<
\lambda_{e,\eps}(u),
~
u\in H_0^1(\Omega)\setminus\{0\}.
\)
In particular, we see that 
\(
0<\lambda_{\eps,*}<\lambda_{\eps}^*.
\)

Finally, we assume that $u$ is a critical point of $\lambda_{e,\eps}$ and
\(
w:=t_{e,\eps}(u)u,
~
\lambda:=\lambda_{e,\eps}(u).
\)
Hence, by using the same argument employed in Lemma~\ref{N0-ep}, we deduce that 
\(
R_{e,\eps}'(w)=0.
\)
Together with \eqref{Rn-Re-eps}, this implies that
\(
w\in\mathcal N_{\lambda,\eps}^{+}
\)
and that $w$ is a weak solution of Problem~\eqref{P-pertubed}.
In particular, using the fact that $u=u_{e,\eps}$ is a minimizer, we have
$\lambda=\lambda_{\eps}^*$. This ends the proof. 
\end{proof}

Now, we establish some properties of the functionals associated with the  problems \eqref{P-pertubed} and \eqref{P}.

\begin{lem}\label{dep-cont-eps}
	Assume that $(Q)$ is satisfied. If $u \in H_0^1(\Omega) \setminus \{0\}$, then the maps 
    \begin{equation} \label{all-maps-dec}
        \epsilon \mapsto t_{n,\epsilon}(u), t_{e,\epsilon}(u), R_{e,\epsilon}(u), R_{n,\epsilon}(u), \lambda_{n,\epsilon}(u), \lambda_{e,\epsilon}(u),\lambda_{\eps,*},\lambda_{\eps}^*
    \end{equation}
    are strictly decreasing in $\epsilon>0$. In particular, for all $\eps>0$ the following properties hold:
    \begin{enumerate}
        \item[(i)] $t_{n,\epsilon}(u) < t_n(u)$ and $t_{e,\epsilon}(u) < t_e(u)$;
        \item[(ii)] $R_{n,\epsilon}(u) < R_{n}(u)$ and $R_{e,\epsilon}(u) < R_{e}(u)$;
        \item[(iii)] $\lambda_{n,\epsilon}(u) < \lambda_{n}(u)$ and $\lambda_{e,\epsilon}(u) < \lambda_{e}(u)$;
        \item[(iv)] $0 < \lambda_{\epsilon,*} < \lambda_{*}$ and $0 < \lambda_{\epsilon}^* < \lambda^{*}$.
    \end{enumerate}
    Furthermore, the maps defined by \eqref{all-maps-dec} are right-continuous at $\epsilon = 0$.
\end{lem}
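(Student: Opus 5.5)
The whole statement rests on a pointwise monotonicity of the regularized nonlinearity in $\epsilon$, which is then propagated through the Rayleigh quotients, their fiberwise minima, and the global infima.

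\textbf{Monotonicity of the nonlinearity.} For $s>0$, $g_\epsilon(s)s=s^{q+1}(s+\epsilon)^{-(q+\beta)}$ is strictly decreasing in $\epsilon\ge0$. Hence $G_\epsilon(s)=\int_0^s g_\epsilon$ is strictly decreasing in $\epsilon$ as well. The case $\epsilon=0$ gives exactly $g_0(s)s=s^{1-\beta}$ and $G_0(s)=s^{1-\beta}/(1-\beta)$, which are the singular terms. Since $u\neq0$ on a set of positive measure, integration gives that $\epsilon\mapsto R_{n,\epsilon}(u)$ and $\epsilon\mapsto R_{e,\epsilon}(u)$ are strictly decreasing on $[0,\infty)$, with $R_{n,0}=R_n$ and $R_{e,0}=R_e$. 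This yields (ii).

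\textbf{Fiberwise minima and global infima.} For $0\le\epsilon_1<\epsilon_2$ I would write
\[
\lambda_{n,\epsilon_2}(u)=R_{n,\epsilon_2}(t_{n,\epsilon_2}(u)u)<R_{n,\epsilon_1}(t_{n,\epsilon_2}(u)u)\le\lambda_{n,\epsilon_1}(u),
\]
and the same argument works for $\lambda_{e,\epsilon}$. This gives (iii). For the infima I would use attainment, from Lemma~\ref{N0} and Lemma~\ref{N0-ep}(iv) for $\lambda_*$ and $\lambda_{\epsilon,*}$, and from Lemma~\ref{gamma} and Lemma~\ref{gamma-e-ep}(iii) for $\lambda^*$ and $\lambda^*_\epsilon$. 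If $u_1$ is a minimizer at level $\epsilon_1$, then
\[
\lambda_{\epsilon_2,*}\le\lambda_{n,\epsilon_2}(u_1)<\lambda_{n,\epsilon_1}(u_1)=\lambda_{\epsilon_1,*},
\]
and similarly for $\lambda^*_\epsilon$. This gives (iv).

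\textbf{The critical times.} $t_{n,\epsilon}(u)$ is the unique root of \eqref{eq-crit-R}, i.e.\ of $(1-p)\|u\|^2=\int_\Omega f_\epsilon(t|u|)|u|^2$, whose right-hand side is strictly decreasing in $t$. A direct computation gives
\[
\partial_\epsilon f_\epsilon(s)=\frac{s^{q-1}\big[\big((p-q)-(q+\beta+1)(p+\beta)\big)s-(q+\beta)(p-q)\epsilon\big]}{(s+\epsilon)^{q+\beta+2}}<0,
\]
because $(q+\beta+1)(p+\beta)>p-q$. So increasing $\epsilon$ lowers the right-hand side and the root moves left. Since $f_0(s)=(p+\beta)s^{-1-\beta}$ reproduces \eqref{t-n}, this gives $t_{n,\epsilon}(u)<t_n(u)$. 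For $t_{e,\epsilon}$ I would argue the same way with $K_\epsilon(s)=\big((p+1)G_\epsilon(s)-g_\epsilon(s)s\big)/s^2$. Writing $\Psi_\epsilon(s)=\int_0^s\big(p\,g_\epsilon(\tau)-\Gamma_\epsilon(\tau)\big)d\tau$, I would show that $\partial_\epsilon\big(p\,g_\epsilon-\Gamma_\epsilon\big)<0$ pointwise. This is an explicit sign check of a quadratic, where $(Q)$ may be needed. I expect this step to be a computational nuisance but routine.

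\textbf{Right-continuity at $\epsilon=0$.} As $\epsilon\downarrow0$, $g_\epsilon(s)s\uparrow s^{1-\beta}$ and $G_\epsilon\uparrow G_0$. Monotone convergence then gives $R_{n,\epsilon}(tu)\to R_n(tu)$ and $R_{e,\epsilon}(tu)\to R_e(tu)$, locally uniformly in $t$ (Dini). Since the defining equations are strictly monotone in $t$ with simple roots, $t_{n,\epsilon}(u)\to t_n(u)$ and $t_{e,\epsilon}(u)\to t_e(u)$, and therefore $\lambda_{n,\epsilon}(u)\to\lambda_n(u)$ and $\lambda_{e,\epsilon}(u)\to\lambda_e(u)$.

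\textbf{Main obstacle: convergence of the infima.} By monotonicity $L:=\lim_{\epsilon\to0}\lambda_{\epsilon,*}\le\lambda_*$, and the task is to show $L\ge\lambda_*$. I would take minimizers $u_\epsilon$ and use $0$-homogeneity to rescale them so that $t_{n,1}(u_\epsilon)=1$. Since $\lambda_{n,1}(u_\epsilon)\le\lambda_{\epsilon,*}\le\lambda_*$, Steps~1--2 of the proof of Lemma~\ref{N0-ep}(iv), applied with the fixed parameter $\epsilon=1$, show that $(u_\epsilon)$ is bounded and has a nonzero weak limit $u$; those steps use only boundedness of $R_{n,1}$ along the sequence. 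Then, for fixed $\epsilon_0$ and all $\epsilon<\epsilon_0$, monotonicity gives $\lambda_{\epsilon,*}=\lambda_{n,\epsilon}(u_\epsilon)\ge\lambda_{n,\epsilon_0}(u_\epsilon)$. Weak lower semicontinuity (Lemma~\ref{N0-ep}(ii)) yields $L\ge\lambda_{n,\epsilon_0}(u)$. Letting $\epsilon_0\to0$ and using the pointwise convergence above gives $L\ge\lambda_n(u)\ge\lambda_*$. The same scheme, run with $\lambda_{e,\epsilon}$, gives $\lambda^*_\epsilon\to\lambda^*$. This uniform non-degeneracy of the minimizers is the step I expect to require the most care.
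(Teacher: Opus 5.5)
Your overall plan is sound, but one displayed inequality is wrong as written. In
\[
\lambda_{n,\epsilon_2}(u)=R_{n,\epsilon_2}(t_{n,\epsilon_2}(u)u)<R_{n,\epsilon_1}(t_{n,\epsilon_2}(u)u)\le\lambda_{n,\epsilon_1}(u)
\]
the last step goes the wrong way. Since $\lambda_{n,\epsilon_1}(u)$ is the minimum of $t\mapsto R_{n,\epsilon_1}(tu)$, in fact $R_{n,\epsilon_1}(t_{n,\epsilon_2}(u)u)>\lambda_{n,\epsilon_1}(u)$. The inequality is strict because $t_{n,\epsilon_2}(u)\neq t_{n,\epsilon_1}(u)$ and the minimizer is unique. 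You must evaluate at the minimizer of the \emph{larger} function, exactly as you do correctly for $\lambda_{\epsilon,*}$ a few lines later:
\[
\lambda_{n,\epsilon_2}(u)\le R_{n,\epsilon_2}(t_{n,\epsilon_1}(u)u)<R_{n,\epsilon_1}(t_{n,\epsilon_1}(u)u)=\lambda_{n,\epsilon_1}(u).
\]
The same fix applies to $\lambda_{e,\epsilon}$, and taking $\epsilon_1=0$ gives (iii); this is the paper's chain.

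The sign check you postpone for $t_{e,\epsilon}$ is not really a computation. For $s>0$ one has $p\,g_\epsilon(s)-\Gamma_\epsilon(s)=s f_\epsilon(s)$, so $\Psi_\epsilon(s)=\int_0^s\tau f_\epsilon(\tau)\,d\tau$. It is therefore strictly decreasing in $\epsilon$ by your formula for $\partial_\epsilon f_\epsilon$, with no further use of $(Q)$. $(Q)$ enters only through the decrease of $K_\epsilon$ in $s$, i.e.\ uniqueness of $t_{e,\epsilon}(u)$, which is already in Lemma~\ref{gamma-e-ep}.

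With these repairs the proof is complete. The monotonicity part and the convergence of $t_{n,\epsilon}(u)$ and $\lambda_{n,\epsilon}(u)$ follow the paper. The paper uses the Implicit Function Theorem and a monotone limit in \eqref{eq-crit-R}, where you use monotone convergence and Dini.

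The genuine difference is the limit of the infima. The paper's route is as follows:
\begin{itemize}
\item It normalizes $t_{n,\epsilon}(u_{\epsilon,*})=1$, so that $u_{\epsilon,*}\in\mathcal N^0_{\lambda_{\epsilon,*},\epsilon}$.
\item It obtains a nonzero weak limit $u_*$ from the $\epsilon$-uniform lower bound of Proposition~\ref{N-way-origin-eps}(ii), a result proved only after this lemma.
\item It passes to the limit in the Nehari identity (dominated convergence in $\int g_\epsilon(u_{\epsilon,*})u_{\epsilon,*}$, Fatou for the gradient) to get $R_n(u_*)\le\lim\lambda_{\epsilon,*}\le\lambda_*\le R_n(u_*)$.
\end{itemize}
As a by-product it gets strong convergence $u_{\epsilon,*}\to u_*$ with $R_n(u_*)=\lambda_*$, which is reused in Section~\ref{Pass-lim}.

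You instead freeze the level $\epsilon=1$. The bound $\lambda_{n,1}(u_\epsilon)\le\lambda_{n,\epsilon}(u_\epsilon)=\lambda_{\epsilon,*}\le\lambda_*$ lets you quote the fixed-$\epsilon$ compactness of Lemma~\ref{N0-ep}(iv). Note that its Step~2 also needs the fiber identity \eqref{eq-crit-R} at $t=1$, which your normalization $t_{n,1}(u_\epsilon)=1$ supplies. Monotonicity, weak lower semicontinuity at a fixed $\epsilon_0$, and pointwise convergence as $\epsilon_0\to0$ then close the argument.

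This route needs no $\epsilon$-uniform estimate and no limit in the singular term. It transfers directly to $\lambda^*_\epsilon$; you may even normalize minimizers of $\lambda_{e,\epsilon}$ by $t_{n,1}(u_\epsilon)=1$, since $\lambda_{n,1}(u_\epsilon)<\lambda_{e,1}(u_\epsilon)\le\lambda_{e,\epsilon}(u_\epsilon)=\lambda^*_\epsilon\le\lambda^*$. The price is that you only obtain weak convergence of the minimizers (to a minimizer of $\lambda_n$), not the strong convergence the paper extracts.
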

\begin{proof}
We divide the proof into two parts.

\medskip
Part 1. Monotonicity in $\eps$.
\noindent Consider $f_\epsilon$ defined by \eqref{functionf}. Since 
	\begin{equation}
		\frac{\partial f_\epsilon(t)}{\partial \epsilon} = -\frac{(q+\beta)t^{q-1} \left[ (p+\beta+1)t + (p-q)\epsilon \right]}{(t+\epsilon)^{q+\beta+2}} < 0,
	\end{equation}
	it follows that $\eps\mapsto f_\epsilon(t)$ is decreasing for all $\epsilon \geq 0$ and $t>0$. Consequently,  by using \eqref{eq-crit-R} with $0\leq\eps_1<\eps_2$, we see that  $t_{n,\eps_1}(u) > t_{n,\epsilon_2}(u)$. 
	
	It is straightforward to show that $R_{n,\epsilon_1}(u) > R_{n,\eps_2}(u)$ for $0\leq \eps_1<\eps_2$. In view of the fact that $t_{n,\epsilon_i}(u)$ is the unique critical point (global minimum) of the map $t \mapsto R_{n,\epsilon_i}(tu)$, $i=1,2$, we have
	\begin{equation}\label{lamb-ep}
		\lambda_{n,\epsilon_2}(u) := R_{n,\epsilon_2}(t_{n,\epsilon_2}(u)u) < R_{n,\epsilon_2}(t_{n,\epsilon_1}(u)u)< R_{n,\epsilon_1}(t_{n,\epsilon_1}(u)u) =: \lambda_{n,\epsilon_1}(u).
	\end{equation}
	In view of Lemma \ref{N0-ep}, there exists $u_{\eps_1,*} \in
H_0^1(\Omega)\setminus\{0\}$ such that $\lambda_{\eps_1,*} = \lambda_{n,\eps_1}(u_{\eps_1,*})$. Now, choosing $u = u_{\eps_1,*}$ in \eqref{lamb-ep}, we deduce that 
	\begin{equation}
		0 < \lambda_{\epsilon_2,*} \leq \lambda_{n,\epsilon_2}(u_{\eps_1,*}) < \lambda_{n,\eps_1}(u_{\eps_1,*}) = \lambda_{\eps_1,*}, \quad \forall \, 0\leq\eps_1<\eps_2.
	\end{equation}

\medskip
\noindent Part 2. Right-continuity at $\eps=0$.
\noindent At this stage, we prove that the map $\epsilon \mapsto t_{n,\epsilon}(u)$
is of class $C^1$ in $(0,1)$. Indeed, for
$u\in H_0^1(\Omega)\setminus\{0\}$ define the function
$h:(0,\infty)\times(0,1)\to\mathbb{R}$ by
\begin{equation}
	h(t,\epsilon)
	:=
	\int_{\Omega} f_\eps(t|u|)|u|^2 \,\mathrm{d}x
	-(1-p)\|u\|^2.
\end{equation}
Moreover, the partial derivative with respect to $t$ is given by
\begin{equation}
	\frac{\partial h}{\partial t}(t,\epsilon)
	=
	\int_{\Omega}f_\eps'(t|u|)|u|^3 \,\mathrm{d}x.
\end{equation}
Recall also that
\(
q\leq\frac{p(\beta+2)}{p+2\beta+2}.
\)
Therefore, 
\(
\frac{\partial h}{\partial t}(t,\epsilon)<0,
~ t>0.
\)
In light of the Implicit Function Theorem
\cite[Theorem~4.2.1]{DrabekMilota}, the map
$\epsilon\mapsto t_{n,\epsilon}(u)$ is of class $C^1$ in $(0,1)$.
Furthermore, taking into account that $\epsilon\mapsto t_{n,\epsilon}(u)$ is decreasing
and $t_{n,\epsilon}(u)<t_n(u)$, there exists $\bar t\leq t_n(u)$ such
that
\(
t_{n,\epsilon}(u)\to\bar t
~\mbox{as }\epsilon\to0^+.
\)
Passing to the limit in \eqref{eq-crit-R}, we obtain
\[
(1-p)\|u\|^2
=
\int_{\Omega}f_0(\bar t|u|)|u|^2\,\mathrm{d}x.
\]
By the uniqueness of the solution of \eqref{eq-crit-R} for
$\epsilon=0$, we conclude that $\bar t=t_n(u)$. Consequently,
\(
t_{n,\epsilon}(u)\to t_n(u)
~\mbox{as }\epsilon\to0^+.
\)
It is straightforward to show that the maps
$\epsilon\mapsto R_{n,\epsilon}(u)$ and
$\epsilon\mapsto\lambda_{n,\epsilon}(u)$ are also continuous at
$\epsilon=0$.

Now, we prove that $\displaystyle \lim_{\eps\to 0^+}\lambda_{\eps,*}=\lambda_*$. In view of Lemma~\ref{N0-ep}~(iv), there exists $u_{\eps,*} \neq 0$ such that $\lambda_{\eps,*} = \lambda_{n,\eps}(u_{\eps,*})$ and $1 = t_{n,\eps}(u_{\eps,*}) < t_n(u_{\eps,*})$. Since $t_{n,\eps}(u_{\eps,*}) = 1$, we have $u_{\eps,*} \in \mathcal{N}_{\lambda_{\eps,*},\eps}^0$ and \(
\|u_{\eps,*}\|^{1-p}
\leq
\lambda_{\eps,*}S_{p+1}^{p+1}
\leq
\lambda_*S_{p+1}^{p+1}
\). Consequently, $u_{\eps,*} \rightharpoonup u_*$ weakly in $H_0^1(\Omega)$. Furthermore, by using the fact that $0<\lambda_{\eps,*}\leq\lambda_*$, the proof of
Proposition~\ref{N-way-origin-eps} (ii), with $\lambda_*$ in place of
$\lambda$, yields
$\|u_{\eps,*}\|_{p+1}\geq C_{\lambda_*}>0$. The last assertion together with the compact Sobolev embedding ensures that $u_*\not\equiv0$. Up to a subsequence, $u_{\eps,*} \to u_*$ a.e. in $\Omega$ with $|u_{\eps,*}| \leq h_r \in L^r(\Omega)$. Furthermore, by the monotonicity proved above and the fact that $\lambda_{\eps,*}<\lambda_*$ for every $\eps>0$, there exists $\tilde{\lambda}\leq\lambda_*$ such that
$\lambda_{\eps,*}\to\tilde{\lambda}$ as $\eps\to0^+$. Hence, by using the Dominated Convergence Theorem and Fatou's Lemma, we infer that
    \begin{equation}
        \int_{\Omega} |\nabla u_*|^2\,\mathrm{d}x
        \leq \liminf_{\eps \to 0^+} \int_{\Omega} |\nabla u_{\eps,*}|^2 \,\mathrm{d}x
        = \int_{\Omega} (\tilde\lambda |u_*|^{p+1} - |u_*|^{1-\beta}) \,\mathrm{d}x.
    \end{equation}
    This assertion implies that $R_n(u_*) \leq \tilde \lambda\leq \lambda_*\leq R_n(u_*)$. Therefore, $\lambda_* =\tilde\lambda= R_n(u_*)$, and $u_{\eps,*} \to u_*$ strongly in $H_0^1(\Omega)$. 
    As a consequence, we see that 
\begin{align}
    0 < \lambda_* - \lambda_{\eps,*}
    &\leq \lambda_{n}(u_{\eps,*})
    -\lambda_{n,\eps}(u_{\eps,*}) \nonumber\\
    &= R_n(t_n(u_{\eps,*})u_{\eps,*})
    -R_{n,\eps}(t_{n,\eps}(u_{\eps,*})u_{\eps,*}) \nonumber\\
    &\leq
    R_n(t_{n,\eps}(u_{\eps,*})u_{\eps,*})
    -R_{n,\eps}(t_{n,\eps}(u_{\eps,*})u_{\eps,*}) \nonumber\\
    &=\displaystyle
    \frac{1}{
    \|u_{\eps,*}\|_{p+1}^{p+1}
    }
    \int_{\Omega}
    \left(
    |u_{\eps,*}|^{1-\beta}
    -g_{\eps}(u_{\eps,*})u_{\eps,*}
    \right)\,\mathrm{d}x
        \to0,
    \qquad\mbox{as }\eps\to0^+.
\end{align}
The proof of the corresponding properties for $t_{e,\eps}(u)$,
$R_{e,\eps}(u)$, $\lambda_{e,\eps}(u)$, and $\lambda_{\eps}^*$ follows
by the same arguments. In fact,  we just apply Lemma~\ref{gamma-e-ep} and the identity
\[
\frac{1-p}{2}\|w\|^2
=
\int_\Omega
\big[(p+1)G_\eps(w)-g_\eps(w)w\big]\,\mathrm{d}x,
\qquad
w=t_{e,\eps}(u)u.
\]
In particular, the same compactness argument applied to the minimizers
of $\lambda_{e,\eps}$ yields $\lambda_{\eps}^*\to\lambda^*$ as
$\eps\to0^+$.
\end{proof}

\begin{prop}\label{compar-ep}
Assume that $0<\beta<1$ and that condition $(Q)$ holds. Then, for every
$u\in H_0^1(\Omega)\setminus\{0\}$, the following alternatives hold:
\begin{enumerate}
    \item[(i)] Assume that $
    \lambda>\lambda_{n,\eps}(u)=R_{n,\eps}\bigl(t_{n,\eps}(u)u\bigr)$. Then
    the fibering map
    $
        \phi_{\eps,u}(t)=I_{\lambda,\eps}(tu),
    $
    for $t>0$, has exactly two distinct critical points
    $
        0<t_{\lambda,\eps}^-(u)<t_{n,\eps}(u)<t_{\lambda,\eps}^+(u).
    $
    Moreover, the following properties hold:
    \begin{enumerate}
        \item[(i.1)] The point $t_{\lambda,\eps}^+(u)$ is a local minimum of
        the fibering map $\phi_{\eps,u}$, and
        $
            t_{\lambda,\eps}^+(u)u\in \mathcal{N}_{\lambda,\eps}^+.
        $
        On the other hand, $t_{\lambda,\eps}^-(u)$ is a local maximum of
        $\phi_{\eps,u}$, and
        $
            t_{\lambda,\eps}^-(u)u\in \mathcal{N}_{\lambda,\eps}^-.
        $

        \item[(i.2)] Consider the set
        $
            U_{\lambda,\eps}:=
            \left\{
                u\in H_0^1(\Omega)\setminus\{0\}:
                \lambda>\lambda_{n,\eps}(u)
            \right\}.
        $
        Then the maps
        $
            u\mapsto t_{\lambda,\eps}^+(u)$ and 
            $u\mapsto t_{\lambda,\eps}^-(u)
        $
        belong to $C^1\bigl(U_{\lambda,\eps},\mathbb{R}\bigr)$.
    \end{enumerate}

    \item[(ii)] Suppose that
    $
        \lambda=\lambda_{n,\eps}(u).
    $
    Then the fibering map $\phi_{\eps,u}$ has exactly one critical point,
    namely $t_{n,\eps}(u)>0$. This critical point is degenerate and satisfies
    $
        t_{n,\eps}(u)u\in \mathcal{N}_{\lambda,\eps}^0.
    $
    Furthermore, the map $t\mapsto \phi_{\eps,u}(t)$ is nondecreasing on $(0,\infty)$.

    \item[(iii)] Assume that
    $
        \lambda<\lambda_{n,\eps}(u).
    $
    Then the fibering map $\phi_{\eps,u}$ has no critical points. Moreover,
    the map $t\mapsto \phi_{\eps,u}(t)$ is increasing on $(0,\infty)$.
\end{enumerate}

\noindent In addition, for each $0\leq\eps_1\leq\eps_2$, we have
$U_{\lambda,\eps_1}\subset U_{\lambda,\eps_2}$. Moreover, for every
$u\in U_{\lambda,\eps_1}$,
\[
t_{\lambda,\eps_2}^-(u)
\leq t_{\lambda,\eps_1}^-(u)
<t_{\lambda,\eps_1}^+(u)
\leq t_{\lambda,\eps_2}^+(u).
\]
\end{prop}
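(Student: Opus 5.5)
The plan is to reduce everything to the scalar function $t\mapsto R_{n,\eps}(tu)$, in the same way Proposition~\ref{compar} was handled for $\eps=0$. The first step is the identity
\[
R_{n,\eps}(tu)-\lambda=\frac{\phi_{\eps,u}'(t)}{t^{p}\|u\|_{p+1}^{p+1}},\qquad t>0.
\]
At any $t$ with $R_{n,\eps}(tu)=\lambda$, differentiating gives
\[
\frac{d}{dt}R_{n,\eps}(tu)=\frac{\phi_{\eps,u}''(t)}{t^p\|u\|_{p+1}^{p+1}}.
\]
Thus critical points of $\phi_{\eps,u}$ are exactly the solutions of $R_{n,\eps}(tu)=\lambda$, and the sign of $\phi''_{\eps,u}$ at such a point equals the sign of the slope of $R_{n,\eps}(\cdot u)$ there.

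The second step uses the analysis after \eqref{functionf}. Under $(Q)$, $t_{n,\eps}(u)$ is the unique critical point of $t\mapsto R_{n,\eps}(tu)$, and it is a global minimum. From the derivative formula, $R_{n,\eps}(\cdot u)$ is strictly decreasing on $(0,t_{n,\eps}(u))$ and strictly increasing afterwards, because $f_\eps$ is decreasing. For the limits, $R_{n,\eps}(tu)\ge t^{1-p}\|u\|^2/\|u\|_{p+1}^{p+1}\to\infty$ as $t\to\infty$. As $t\to0^+$ one has $g_\eps(tu)tu\sim \eps^{-q-\beta}t^{q+1}|u|^{q+1}$, so $R_{n,\eps}(tu)\gtrsim t^{q-p}\to\infty$ since $q<p$. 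The three alternatives then follow from this picture.
\begin{itemize}
\item If $\lambda>\lambda_{n,\eps}(u)$, there are exactly two roots $t^-<t_{n,\eps}(u)<t^+$. The slope of $R_{n,\eps}(\cdot u)$ is negative at $t^-$ and positive at $t^+$, so $t^-u\in\mathcal N^-_{\lambda,\eps}$ is a local maximum and $t^+u\in\mathcal N^+_{\lambda,\eps}$ is a local minimum.
\item If $\lambda=\lambda_{n,\eps}(u)$, the only root is $t_{n,\eps}(u)$. It is degenerate, and $\phi'_{\eps,u}\ge0$ everywhere, so $\phi_{\eps,u}$ is nondecreasing.
\item If $\lambda<\lambda_{n,\eps}(u)$, then $\phi'_{\eps,u}>0$ and there are no critical points.
\end{itemize}

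For (i.2), apply the Implicit Function Theorem to $\mathcal G(t,v):=R_{n,\eps}(tv)-\lambda$ near $(t^\pm_{\lambda,\eps}(u),u)$. The map $\mathcal G$ is $C^1$ because $I_{\lambda,\eps}\in C^1$ and $s\mapsto g_\eps(s)s$ is $C^1$. Its $t$-derivative is nonzero at these simple roots. $U_{\lambda,\eps}$ is open by continuity of $\lambda_{n,\eps}$ (Lemma~\ref{N0-ep}(ii)). Uniqueness of the roots on each side of $t_{n,\eps}(v)$, together with continuity of $t_{n,\eps}$, identifies the local implicit branch with $t^\pm_{\lambda,\eps}(v)$.

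For the monotonicity in $\eps$, use Lemma~\ref{dep-cont-eps}. Since $\lambda_{n,\eps}(u)$ is decreasing in $\eps$ (with $\lambda_{n,0}=\lambda_n$), we get $\lambda>\lambda_{n,\eps_1}(u)\ge\lambda_{n,\eps_2}(u)$, which gives $U_{\lambda,\eps_1}\subset U_{\lambda,\eps_2}$. Also $R_{n,\eps_2}(tu)\le R_{n,\eps_1}(tu)$ for every $t$, since $\eps\mapsto g_\eps(s)s$ is decreasing. At $t=t^-_{\lambda,\eps_1}(u)$ this gives $R_{n,\eps_2}(tu)\le\lambda$. Since $R_{n,\eps_2}(\cdot u)>\lambda$ on $(0,t^-_{\lambda,\eps_2}(u))$, it follows that $t^-_{\lambda,\eps_1}(u)\ge t^-_{\lambda,\eps_2}(u)$. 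The same reasoning at $t^+_{\lambda,\eps_1}(u)$, using $R_{n,\eps_2}(\cdot u)>\lambda$ on $(t^+_{\lambda,\eps_2}(u),\infty)$, gives $t^+_{\lambda,\eps_1}(u)\le t^+_{\lambda,\eps_2}(u)$. When $\eps_1=0$ this uses the $\eps=0$ objects of Proposition~\ref{compar}.

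The main obstacle is the unimodality of $R_{n,\eps}(\cdot u)$, since the regularized term is not homogeneous. This is exactly where $(Q)$ enters through $f_\eps'<0$. The point to check is that the sign of $\frac{d}{dt}R_{n,\eps}(tu)$ equals the sign of $(1-p)\|u\|^2-\int_\Omega f_\eps(t|u|)|u|^2$, and that this quantity is strictly increasing in $t$.
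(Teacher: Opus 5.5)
Your proposal is correct and follows the same route as the paper. You reduce everything to the unimodal scalar map $t\mapsto R_{n,\eps}(tu)$ via $R_{n,\eps}(tu)-\lambda=\phi_{\eps,u}'(t)/(t^p\|u\|_{p+1}^{p+1})$, apply the Implicit Function Theorem at the simple roots for (i.2), and use $R_{n,\eps_2}(tu)\le R_{n,\eps_1}(tu)$ for the monotonicity in $\eps$, while supplying details the paper leaves implicit (the limits at $0^+$ and $+\infty$, the sign transfer to $\phi''_{\eps,u}$, and the identification of the implicit branch with $t^{\pm}_{\lambda,\eps}(v)$).
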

\begin{proof}
	Initially, we point out that
	\[
	\lim_{t\to0^+}R_{n,\eps}(tu)
	=
	\lim_{t\to+\infty}R_{n,\eps}(tu)
	=
	+\infty.
	\]
	Moreover, the map $t\mapsto R_{n,\eps}(tu)$ has a unique critical
	point $t_{n,\eps}(u)$, which is a global minimum. Notice also that
	\[
	R_{n,\eps}(tu)-\lambda
	=
	\frac{\phi_{\eps,u}'(t)}
	{t^p\|u\|_{p+1}^{p+1}},
	\qquad t>0.
	\]
	Consequently, the critical points of $\phi_{\eps,u}$ are precisely
	the solutions of $R_{n,\eps}(tu)=\lambda$. Thus, the assertions in
	$(i)$--$(iii)$ follow from the fibering geometry of
	$t\mapsto R_{n,\eps}(tu)$. Furthermore, by using that  $R_{n,\eps}\in
	C^1(H_0^1(\Omega)\setminus\{0\},\mathbb{R})$ and \( \frac{d}{dt}R_{n,\eps}(tu)\neq0 \)
	at $t=t_{\lambda,\eps}^{\pm}(u)$, the Implicit Function Theorem
	implies that the maps
	$u\mapsto t_{\lambda,\eps}^{\pm}(u)$ belong to
	$C^1(U_{\lambda,\eps},\mathbb{R})$.
Furthermore, the last assertions follow from the monotonicity of
	$\eps\mapsto R_{n,\eps}(tu)$. This ends the proof.
\end{proof}
\begin{prop}\label{problema-eps}
		Suppose that $0 < \beta < 1$ and that condition $(Q)$ holds.
		\begin{itemize}
			\item[(i)] If $\lambda > \lambda_{\eps,*}$, then $\mathcal{N}_{\lambda,\eps}^+$ and $\mathcal{N}_{\lambda,\eps}^-$ are not empty;
			\item[(ii)] If $\lambda = \lambda_{\eps,*}$, then $\mathcal{N}_{\lambda_{\eps,*},\eps}=\mathcal{N}_{\lambda_{\eps,*},\eps}^0$;
			\item[(iii)] If $\lambda \geq \lambda_{\eps,*}$, then $\mathcal{N}_{\lambda,\eps}^0$ is not empty;
			\item[(iv)] If $\lambda<\lambda_{\eps,*}$ then $\mathcal{N}_{\lambda,\eps}$ is empty.  
		\end{itemize}	
\end{prop}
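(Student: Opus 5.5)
The plan is to transcribe the proof of Proposition~\ref{problema} to the perturbed setting. The unperturbed argument used only three facts: the fibering geometry of $t\mapsto R_n(tu)$, the attainment of $\lambda_*$, and the continuity and unboundedness of $\lambda_n$. The perturbed analogues are Proposition~\ref{compar-ep}, Lemma~\ref{N0-ep}(iv), and Lemma~\ref{N0-ep}(ii)--(iii). Throughout, the identity
\[
R_{n,\eps}(tu)-\lambda=\frac{\phi_{\eps,u}'(t)}{t^p\|u\|_{p+1}^{p+1}}
\]
is used together with its $t$-derivative at a point where $R_{n,\eps}(tu)=\lambda$. That derivative gives $\frac{d}{dt}R_{n,\eps}(tu)=\phi_{\eps,u}''(t)/(t^p\|u\|_{p+1}^{p+1})$, which is the perturbed version of Proposition~\ref{der-Rn}. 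So the sign of $\phi''_{\eps,u}$ at a Nehari point equals the sign of the derivative of the perturbed Rayleigh fiber.

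For (i), take the minimizer $u_{\eps,*}$ from Lemma~\ref{N0-ep}(iv), so that $\lambda>\lambda_{\eps,*}=\lambda_{n,\eps}(u_{\eps,*})$. Proposition~\ref{compar-ep}(i.1) then gives $t^\pm_{\lambda,\eps}(u_{\eps,*})u_{\eps,*}\in\mathcal N^\pm_{\lambda,\eps}$.

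For (iv), suppose $\lambda<\lambda_{\eps,*}$. Then for every $u\neq0$ and every $t>0$,
\[
R_{n,\eps}(tu)\ge\lambda_{n,\eps}(u)\ge\lambda_{\eps,*}>\lambda .
\]
By \eqref{a0i-eps} no $u$ can satisfy the Nehari identity, so $\mathcal N_{\lambda,\eps}=\emptyset$.

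For (ii), let $v\in\mathcal N_{\lambda_{\eps,*},\eps}$, so $R_{n,\eps}(v)=\lambda_{\eps,*}$. Since $\lambda_{\eps,*}\le\lambda_{n,\eps}(v)=\min_{t>0}R_{n,\eps}(tv)\le R_{n,\eps}(v)$, equality holds throughout. Hence $t=1$ is a global minimum of $t\mapsto R_{n,\eps}(tv)$, and by uniqueness of the critical point $t_{n,\eps}(v)=1$. The derivative of the fiber then vanishes at $t=1$, so $\phi''_{\eps,v}(1)=0$ and $v\in\mathcal N^0$.

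For (iii), fix $\lambda\ge\lambda_{\eps,*}$. By Lemma~\ref{N0-ep}(ii), $\lambda_{n,\eps}$ is continuous on the connected set $H_0^1(\Omega)\setminus\{0\}$ (here $N\ge3$, so removing a point keeps the space path-connected). By Lemma~\ref{N0-ep}(iii)--(iv) it attains the value $\lambda_{\eps,*}$ and is unbounded above. The intermediate value theorem therefore gives $w$ with $\lambda_{n,\eps}(w)=\lambda$, and Proposition~\ref{compar-ep}(ii) yields $t_{n,\eps}(w)w\in\mathcal N^0_{\lambda,\eps}$.

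The only step needing care is the sign transfer in (ii) and (iii): relating $\phi''_{\eps,u}$ to the derivative of $R_{n,\eps}$ along the fiber. This has to be checked directly, since the term $\int g_\eps(tu)tu$ is not homogeneous. Differentiating the displayed identity and using $\phi'_{\eps,u}(t)=0$ at the Nehari point kills the extra term coming from $t^{-p}$, so the check is a one-line computation. Everything else is a direct transcription.
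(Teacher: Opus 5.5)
Your proposal is correct and is exactly the transcription of the proof of Proposition~\ref{problema} that the paper intends (the paper omits the details); your direct argument for (ii) is the contrapositive of the paper's contradiction argument, and the rest matches it step for step. One small correction: the path-connectedness of $H_0^1(\Omega)\setminus\{0\}$ used in (iii) holds because $H_0^1(\Omega)$ is a real vector space of dimension at least two, and has nothing to do with $N\geq 3$.
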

\begin{proof}
    The proof follows the same ideas employed in the proof of Proposition \ref{problema}. The details are omitted.     
\end{proof}

\begin{prop}\label{N-way-origin-eps}
	    Suppose that $0 < \beta < 1$ and that condition $(Q)$ holds. Then:
 \begin{itemize}
     \item[(i)] $\mathcal{N}_{\lambda,\varepsilon}$ is bounded independently of $\varepsilon$;
     \item[(ii)] Assume that $\lambda \geq \lambda_{\eps,*}$. Then for $\eps>0$ sufficiently small there exists $\delta_\lambda>0$, independent of $\eps$, such that \( \|u\|,\|u\|_{p+1}\geq\delta_\lambda, ~u\in\mathcal N_{\lambda,\eps}. \)
     \item[(iii)] Suppose that $u\neq0$ and $I_{\lambda,\eps}(u)=0$. Hence, for each $\eps>0$ sufficiently small, \( \|u\|,\|u\|_{p+1}\geq\tilde\delta_\lambda, \) where $\tilde\delta_\lambda>0$ is independent of $\eps$.
 \end{itemize}        
\end{prop}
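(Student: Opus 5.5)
For item (i) I will imitate the proof of Proposition~\ref{fechada}, using only the sign condition $g_\eps(t)t\geq0$. If $u\in\mathcal N_{\lambda,\eps}$, then
\[
\|u\|^2\leq \|u\|^2+\int_\Omega g_\eps(u)u\,dx=\lambda\|u\|_{p+1}^{p+1}\leq \lambda S_{p+1}^{p+1}\|u\|^{p+1}.
\]
Hence $\|u\|^{1-p}\leq \lambda S_{p+1}^{p+1}=C_\lambda$, with a constant that does not depend on $\eps$.

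For item (ii) the scaling argument of Proposition~\ref{N-way-origin} cannot be reused directly. That argument relied on the exact term $\|u\|_{1-\beta}^{1-\beta}$, while here we only control $\int g_\eps(u)u$. I will replace it with Lemma~\ref{G,g}. From $g_\eps(t)t\geq(1-\beta)G_\eps(t)$, the growth estimate \eqref{est:G-growth} and the uniform lower bound $G_\eps(1)\geq c_0:=\frac{q}{2^{\beta+q}(q+1)}$ (valid for $0<\eps<1$), we get
\[
\int_\Omega g_\eps(u)u\,dx\geq (1-\beta)c_0\int_\Omega \min\{|u|^{q+1},|u|^{1-\beta}\}\,dx\geq (1-\beta)c_0\int_\Omega\min\{|u|^{q+1},|u|^{1-\beta}\}.
\]
The pointwise bound $\min\{|u|^{q+1},|u|^{1-\beta}\}\geq |u|^{q+1}-|u|^{q+1}\chi_{\{|u|>1\}}$ is awkward to use, so I will instead split $\Omega$ into $\{|u|\le1\}$ and $\{|u|>1\}$. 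Put $a:=\|u\|_{p+1}^{p+1}$. On $\{|u|>1\}$, we have $|u|^{p+1}\leq |u|^{2^*}$, and this piece is controlled by $S_{2^*}^{2^*}\|u\|^{2^*}\leq C\lambda^{2^*/2}a^{2^*/2}$, using $\|u\|^2\leq\lambda a$. On $\{|u|\le1\}$, we have $|u|^{p+1}\leq |u|^{q+1}$, and this piece is bounded by $\frac{\lambda a}{(1-\beta)c_0}$ thanks to the Nehari identity. This only yields $a\leq C\lambda a+C\lambda^{2^*/2}a^{2^*/2}$, which is useless when $C\lambda\geq1$. So I will interpolate as in Proposition~\ref{N-way-origin}, with $p+1=s(q+1)+(1-s)2^*$ on $\{|u|\le1\}$. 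There $|u|^{p+1}\le|u|^{q+1}$, and we obtain
\[
\int_{\{|u|\le1\}}|u|^{p+1}\leq\Big(\int_{\{|u|\le 1\}}|u|^{q+1}\Big)^{s}\|u\|_{2^*}^{(1-s)2^*}\leq C(\lambda a)^s(\lambda a)^{(1-s)2^*/2}.
\]
The exponent of $a$ is $s+(1-s)2^*/2>1$. Combining this with the estimate on $\{|u|>1\}$ gives $a\leq C_\lambda(a^{\kappa_1}+a^{\kappa_2})$ with $\kappa_i>1$, and the constants do not depend on $\eps\in(0,1)$. This forces $a\geq\delta>0$. The bound on $\|u\|$ then follows from $\|u\|_{p+1}\leq S_{p+1}\|u\|$. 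I expect this step to be the main obstacle, namely obtaining uniformity in $\eps$ when the $L^{1-\beta}$ information degenerates near $0$ into $L^{q+1}$ information. The splitting above is how I intend to handle it. The hypothesis $\lambda\geq\lambda_{\eps,*}$ is used only to guarantee $\mathcal N_{\lambda,\eps}\neq\emptyset$.

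For item (iii) I will argue identically, starting from $I_{\lambda,\eps}(u)=0$, which reads
\[
\tfrac12\|u\|^2+\int_\Omega G_\eps(u)=\tfrac{\lambda}{p+1}\|u\|_{p+1}^{p+1}.
\]
This gives $\|u\|^2\leq\frac{2\lambda}{p+1}a$ and $\int_\Omega\min\{|u|^{q+1},|u|^{1-\beta}\}\leq\frac{\lambda}{c_0(p+1)}a$, directly from \eqref{est:G-growth}. Repeating the same splitting and interpolation yields $a\geq\tilde\delta>0$ uniformly in $\eps$, and then $\|u\|\geq S_{p+1}^{-1}\|u\|_{p+1}$ completes the argument.
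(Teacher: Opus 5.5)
Your proposal is correct. Item (i) is identical to the paper's, but for items (ii) and (iii) you take a genuinely different route.

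\textbf{The paper's route.} In (ii) the paper works directly with the explicit form of $g_\eps$. It fixes $\eta=\eta(\lambda)$ with $\lambda 2^{q+\beta}\eta^{p+\beta}\le\frac12$ and restricts to $0<\eps\le\eta$. It then uses two pointwise bounds:
\begin{itemize}
\item $\lambda|u|^{p+1}\le\frac12 g_\eps(u)u$ on $\{|u|\le\eta\}$;
\item $|u|^{p+1}\le\eta^{p+1-r}|u|^r$ with $r>2$ on $\{|u|>\eta\}$.
\end{itemize}
These give $I_{\lambda,\eps}'(u)u\ge\|u\|^2-C\lambda\eta^{p+1-r}\|u\|^r$. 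So the Nehari functional is positive on a punctured ball whose radius does not depend on $\eps$. The lower bound on $\|u\|$ comes first, and the $L^{p+1}$ bound follows from $\lambda\|u\|_{p+1}^{p+1}\ge\|u\|^2$. Item (iii) is handled the same way, using $g_\eps(t)t\le(q+1)G_\eps(t)$ to reach $I_{\lambda,\eps}(u)\ge\frac12\|u\|^2-C\|u\|^r$.

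\textbf{Your route.} You work with integrated quantities instead.
\begin{itemize}
\item The $\eps$-uniform growth bound $G_\eps(t)\ge G_\eps(1)\min\{|t|^{q+1},|t|^{1-\beta}\}$ from Lemma~\ref{G,g}, combined with the Nehari identity (resp.\ $I_{\lambda,\eps}(u)=0$), controls both $\int_{\{|u|\le1\}}|u|^{q+1}$ and $\|u\|^2$ by a multiple of $\lambda\|u\|_{p+1}^{p+1}$.
\item H\"older interpolation between $L^{q+1}$ and $L^{2^*}$ on $\{|u|\le1\}$, plus the trivial $L^{2^*}$ bound on $\{|u|>1\}$, gives a superlinear inequality $a\le C_\lambda(a^{\kappa_1}+a^{\kappa_2})$ for $a=\|u\|_{p+1}^{p+1}$.
\end{itemize}
This is the perturbed analogue of the interpolation in Proposition~\ref{N-way-origin}, with $q+1$ replacing $1-\beta$ on the small-value set. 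You obtain the $L^{p+1}$ bound first.

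\textbf{What each buys.} Your argument works for every $\eps\in(0,1)$, not only for $\eps\le\eta(\lambda)$. This relies on $G_\eps(1)\ge\frac{1}{2^{q+\beta}(q+1)}$ for all such $\eps$. Your constants are also monotone in $\lambda$, so they are automatically uniform for $\lambda\le\Lambda$, which the paper uses later. The paper's argument is shorter and entirely pointwise. It also yields stronger sign information: $I_{\lambda,\eps}'(u)u>0$ (resp.\ $I_{\lambda,\eps}(u)>0$) for $0<\|u\|<\delta_\lambda$. That is the same kind of small-ball estimate that underlies the mountain-pass geometry of Lemma~\ref{GMP1}.
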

\begin{proof}
Let us assume that $u\in\mathcal{N}_{\lambda,\varepsilon}$. Then
	 \[
	  \Vert u\Vert^{2}+\int_\Omega g_{\varepsilon}(u)u\,\mathrm{d}x=\lambda\Vert u\Vert_{p+1}^{p+1},
	 \]
	which implies that 
	 \(
	  \Vert u\Vert^{1-p}\leq \lambda S_{p+1}^{p+1}.
	 \)
This proves item (i). 

Now, we prove item (ii). Fix $r>2$ such that
$H_0^1(\Omega)\hookrightarrow L^r(\Omega)$. Choose $\eta>0$ such that
\(
\lambda 2^{q+\beta}\eta^{p+\beta}\leq\frac12,
\)
where $0<\epsilon\leq\eta$. On the set $\{|u|\leq\eta\}$, we see that 
\[
\lambda |u|^{p+1}
\leq
\frac12
\frac{|u|^{q+1}}{(|u|+\epsilon)^{q+\beta}}
=
\frac12 g_\epsilon(u)u.
\]
On the other hand, on $\{|u|>\eta\}$,
\(
|u|^{p+1}
\leq
\eta^{p+1-r}|u|^r.
\)
Consequently, by the Sobolev embedding,
\[
\begin{aligned}
I_{\lambda,\epsilon}'(u)u
=
\|u\|^2
+\int_\Omega g_\epsilon(u)u\,\mathrm{d}x
-\lambda\|u\|_{p+1}^{p+1}
\geq
\|u\|^2
-
C\lambda\eta^{p+1-r}\|u\|^r.
\end{aligned}
\]
Hence, there exists $\delta_\lambda>0$, independent of $\epsilon$, such that
\(
I_{\lambda,\epsilon}'(u)u>0,
~
0<\|u\|<\delta_\lambda,
\)
for every $0<\epsilon\leq\eta$. Therefore,
\(
\|u\|\geq\delta_\lambda,
~
u\in\mathcal{N}_{\lambda,\epsilon}.
\)
In view of the fact that $u\in\mathcal{N}_{\lambda,\epsilon}$, we obtain
\[
\lambda\|u\|_{p+1}^{p+1}
=
\|u\|^2+\int_\Omega g_\epsilon(u)u\,\mathrm{d}x
\geq\|u\|^2.
\]
Hence,
\(
\|u\|_{p+1}
\geq
\left(\frac{\delta_\lambda^2}{\lambda}\right)^{\frac1{p+1}}.
\)
Note that for $0 < \lambda \leq \Lambda$, the constants above can be chosen independently of $\lambda$ by using $\Lambda$ instead of $\lambda$ in the previous estimates.

Now, the proof of item (iii) follows from the same argument used in item (ii).
Indeed, by using the fact that \(g_\eps(t)t\leq(q+1)G_\eps(t)\)
and $q<p$, the same estimates yield
\[
I_{\lambda,\eps}(u)
\geq
\frac12\|u\|^2-C\|u\|^r,
\qquad r>2.
\]
Hence, for each $u\neq0$ and $I_{\lambda,\eps}(u)=0$, we infer that
$\|u\|\geq\tilde\delta_\lambda$. Moreover,
\[
\frac{\lambda}{p+1}\|u\|_{p+1}^{p+1}
=
\frac12\|u\|^2+\int_\Omega G_\eps(u)\,\mathrm{d}x,
\]
which also gives
$\|u\|_{p+1}\geq\tilde\delta_\lambda$, after decreasing
$\tilde\delta_\lambda$ if necessary.
\end{proof}

\begin{prop}\label{N0-dist-0-eps}
Suppose that $0<\beta<1$ and that condition $(Q)$ holds.
For every fixed $\lambda>0$, there exists $\sigma_\lambda>0$,
independent of $\eps$, such that
\[
    I_{\lambda,\eps}(u)\geq\sigma_\lambda,
\]
for every $0<\eps\ll1$ satisfying \( \lambda\geq\lambda_{\eps,*} \) and every $u\in\mathcal N_{\lambda,\eps}^0$.
\end{prop}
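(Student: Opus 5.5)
The plan is to imitate the proof of Proposition~\ref{N0-dist-0}, replacing the exact homogeneity identities by the two-sided inequalities of Lemma~\ref{G,g}. Let $u\in\mathcal N_{\lambda,\eps}^0$. Then
\[
\|u\|^2+\int_\Omega g_\eps(u)u\,dx=\lambda\|u\|_{p+1}^{p+1},
\]
and the degeneracy condition $\phi_{\eps,u}''(1)=0$ reads
\[
\|u\|^2+\int_\Omega \Gamma_\eps(u)u\,dx=p\lambda\|u\|_{p+1}^{p+1},
\]
where we use $\frac{d}{dt}[g_\eps(tu)tu]=g_\eps(tu)u+\Gamma_\eps(tu)u$. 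Subtracting, we get
\[
(1-p)\|u\|^2=\int_\Omega\bigl[(p-1)g_\eps(u)u+g_\eps(u)u+\Gamma_\eps(u)u\bigr]\,dx-\dots
\]
More cleanly, we eliminate $\lambda\|u\|_{p+1}^{p+1}$. Multiplying the Nehari identity by $p$ and subtracting the second identity gives
\[
(1-p)\|u\|^2=\int_\Omega\bigl[\Gamma_\eps(u)u-p\,g_\eps(u)u\bigr]dx\le (q-p)\int_\Omega g_\eps(u)u\,dx\le 0?
\]
This sign looks wrong, so I will recheck the convention for $\phi''$. We have $\phi_{\eps,u}''(1)=\|u\|^2+\int(\Gamma_\eps(u)u)\,dx-p\lambda\|u\|_{p+1}^{p+1}$ after using the Nehari identity. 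Here $\Gamma_\eps(u)u=g_\eps'(u)u^2$, and $\frac{d}{dt}[g_\eps(tu)u]$ at $t=1$ equals $g'_\eps(u)u^2$. So the correct second identity is $\|u\|^2+\int g_\eps'(u)u^2=p\lambda\|u\|_{p+1}^{p+1}$. Then $p\cdot$Nehari minus this gives
\[
(1-p)\|u\|^2=\int_\Omega\bigl[p\,g_\eps(u)u-g_\eps'(u)u^2\bigr]\,dx\le (p+\beta)\int_\Omega g_\eps(u)u\,dx,
\]
using the lower bound $\Gamma_\eps(t)t\ge-\beta g_\eps(t)t$ from \eqref{est:g-prime}. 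Hence $\int g_\eps(u)u\ge\frac{1-p}{p+\beta}\|u\|^2$; this is the $\eps$-analogue of the first relation in Proposition~\ref{N0-dist-0}.

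Next we compute the energy. Using the Nehari identity to eliminate the $L^{p+1}$ term gives
\[
I_{\lambda,\eps}(u)=\Bigl(\tfrac12-\tfrac1{p+1}\Bigr)\|u\|^2+\int_\Omega\Bigl[G_\eps(u)-\tfrac{1}{p+1}g_\eps(u)u\Bigr]dx.
\]
The first term is negative, so we need the second to dominate. By Lemma~\ref{G,g}, $G_\eps(u)\ge \frac{1}{q+1}g_\eps(u)u$, so the bracket is at least $\bigl(\frac1{q+1}-\frac1{p+1}\bigr)g_\eps(u)u$, with a positive coefficient since $q<p$. Combining this with the lower bound on $\int g_\eps(u)u$ yields
\[
I_{\lambda,\eps}(u)\ge\Bigl[\frac{(p-q)(1-p)}{(q+1)(p+1)(p+\beta)}-\frac{1-p}{2(p+1)}\Bigr]\|u\|^2.
\]
This is the main obstacle: the coefficient need not be positive. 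Condition $(Q)$ on $q$ (for instance $q\le\frac{1-\beta}{p+\beta+2}$) is presumably designed to make it so. If it is not, I would instead use the exact Nehari-type elimination in another direction. One can eliminate $\|u\|^2$ rather than the $L^{p+1}$ term: the two identities give $\lambda\|u\|_{p+1}^{p+1}(1-p)=\int[g_\eps u+\ldots]$. Then $I=\frac12\|u\|^2+\int G_\eps-\frac{\lambda}{p+1}\|u\|^{p+1}_{p+1}$ can be bounded below using $G_\eps\ge\frac{1}{q+1}g_\eps u$ and $\Gamma_\eps u\le q g_\eps u$. Here the $-\beta$ and $q$ bounds from \eqref{est:g-prime} play the roles of the exact exponents $-\beta$ in the singular case, and positivity reduces to an explicit algebraic inequality in $p,q,\beta$ guaranteed by $(Q)$. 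This gives $I_{\lambda,\eps}(u)\ge c(p,q,\beta)\|u\|^2$ with $c>0$ independent of $\eps$ and $\lambda$.

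Finally, since $\lambda\ge\lambda_{\eps,*}$ and $\eps$ is small, Proposition~\ref{N-way-origin-eps}(ii) gives $\|u\|\ge\delta_\lambda$ with $\delta_\lambda$ independent of $\eps$. Therefore $I_{\lambda,\eps}(u)\ge c\,\delta_\lambda^2=:\sigma_\lambda>0$, uniformly in $\eps$.
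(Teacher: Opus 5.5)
There is a genuine gap at the decisive step: you never establish $\eps$-uniform positivity of $I_{\lambda,\eps}$ on $\mathcal N_{\lambda,\eps}^0$. Your two identities, the inequality $(1-p)\|u\|^2\le(p+\beta)\int_\Omega g_\eps(u)u\,dx$, and the final appeal to Proposition~\ref{N-way-origin-eps}(ii) are correct and match the paper.

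Your first coefficient is positive if and only if $q<\frac{p-\beta}{p+\beta+2}$. Condition $(Q)$ does not imply this, and no $q>0$ satisfies it when $p\le\beta$; for instance $p=0.3$, $\beta=0.5$, $q=0.1$ satisfies $(Q)$. The fallback does not help. Eliminating both $\|u\|^2$ and $\lambda\|u\|_{p+1}^{p+1}$, in either order, leads to the same exact formula
\[
I_{\lambda,\eps}(u)=\int_\Omega H_\eps(u)\,dx,\qquad
H_\eps(s):=G_\eps(s)-\frac{2+p-\theta_\eps(s)}{2(p+1)}\,g_\eps(s)s,\qquad
\theta_\eps(s):=\frac{q\eps-\beta|s|}{|s|+\eps},
\]
with $\Gamma_\eps=\theta_\eps g_\eps$. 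Since $H_\eps$ increases with $\theta_\eps$, the bound $\Gamma_\eps(t)t\le q\,g_\eps(t)t$ points the wrong way. The usable constants of Lemma~\ref{G,g} are $\theta_\eps\ge-\beta$ and $G_\eps(s)\ge\frac{1}{q+1}g_\eps(s)s$, and they only give $H_\eps(s)\ge\bigl[\frac{1}{q+1}-\frac{2+p+\beta}{2(p+1)}\bigr]g_\eps(s)s$. The sign of this coefficient is that of $p-\beta-q(p+\beta+2)$, which is the same failing condition.

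The loss is structural. $G_\eps(s)\approx\frac{1}{q+1}g_\eps(s)s$ only for $|s|\ll\eps$, where $\theta_\eps\approx q$. Conversely, $\theta_\eps\approx-\beta$ only for $|s|\gg\eps$, where $G_\eps(s)\approx\frac{1}{1-\beta}g_\eps(s)s$. Pairing the two worst cases describes a regime that never occurs.

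The missing idea, which is what the paper does, is to estimate $H_\eps$ as a single function of $s$. With $a=\frac{s}{s+\eps}$ and $b=\frac{\eps}{s+\eps}$, a direct computation gives, for $s>0$,
\[
H_\eps'(s)=\frac{g_\eps(s)}{2(p+1)}\Bigl[(1+\beta)(p+\beta)a^2+\bigl(p(\beta+2)-q(p+2\beta+2)\bigr)ab+(1-q)(p-q)b^2\Bigr].
\]
The cross coefficient is nonnegative exactly by the first constraint $q\le\frac{p(\beta+2)}{p+2\beta+2}$ in $(Q)$, not the one you singled out. Since $a+b=1$ gives $a^2+b^2\ge\frac12$, one gets $H_\eps'\ge c\,g_\eps$ with $c=\frac{\min\{(1-q)(p-q),(1+\beta)(p+\beta)\}}{4(p+1)}$. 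Integrating from $0$ and using $G_\eps(s)\ge\frac{1}{q+1}g_\eps(s)s$ yields $H_\eps(s)\ge\frac{c}{q+1}g_\eps(s)s$. Only then does your inequality for $\int_\Omega g_\eps(u)u\,dx$ give $I_{\lambda,\eps}(u)\ge\frac{c(1-p)}{(q+1)(p+\beta)}\|u\|^2$, with a constant independent of $\eps$ and $\lambda$.
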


\begin{proof}
Fix $u \in \mathcal{N}_{\lambda,\eps}^0$. Then $u$ satisfies
\begin{equation}
\begin{cases}
    \displaystyle
    \|u\|^2
    +
    \int_\Omega g_\eps(u)u \,\mathrm{d}x
    =
    \lambda \|u\|_{p+1}^{p+1},
    \\[2mm]
    \displaystyle
    \|u\|^2
    +
    \int_\Omega
    \left[
        q-(q+\beta)\frac{|u|}{|u|+\eps}
    \right]
    g_\eps(u)u \,\mathrm{d}x
    =
    p\lambda \|u\|_{p+1}^{p+1}.
\end{cases}
\end{equation}
As a consequence, we obtain
\begin{equation}\label{n-n0}
    (1-p)\|u\|^2
    =
    \int_{\Omega}
    g_{\eps}(u)u
    \left[
        (p-q)
        +
        (q+\beta)\frac{|u|}{|u|+\eps}
    \right]
    \,\mathrm{d}x.
\end{equation}
Since $0\leq |u|/(|u|+\eps)\leq 1,$ it follows from \eqref{n-n0} that
\begin{equation}\label{j1}
    (1-p)\|u\|^2
    \leq
    (p+\beta)
    \int_{\Omega}g_{\eps}(u)u\,\mathrm{d}x.
\end{equation}

On the other hand, by using the fact that 
$u\in\mathcal{N}_{\lambda,\eps}$, we have
\begin{align}
    I_{\lambda,\eps}(u)
    &=
    \left(
        \frac{1}{2}
        -
        \frac{1}{p+1}
    \right)\|u\|^2
    +
    \int_\Omega
    \left[
        G_\eps(u)
        -
        \frac{1}{p+1}g_\eps(u)u
    \right]
    \,\mathrm{d}x.
    \label{energy-N0-1}
\end{align}
In light of \eqref{n-n0} and \eqref{energy-N0-1}, we obtain
\begin{align}
    I_{\lambda,\eps}(u)
    &=
    \int_\Omega
    \Bigg[
        G_\eps(u)
        -
        \frac{
            p-q+2
            +
            (q+\beta)\dfrac{|u|}{|u|+\eps}
        }{2(p+1)}
        g_\eps(u)u
    \Bigg]
    \,\mathrm{d}x.
    \label{energy-N0-exact}
\end{align}
Since both $G_\eps(s)$ and $g_\eps(s)s$ are even functions of $s$, it is enough to consider $s\geq0$. For $s>0$, we deduce
\begin{align}
&\frac{d}{ds}
\left[
G_\eps(s)
-
\frac{
p-q+2
+
(q+\beta)\dfrac{s}{s+\eps}
}{2(p+1)}
g_\eps(s)s
\right]
=
\frac{g_\eps(s)}{2(p+1)}
\Bigg[
(1-q)(p-q)
\left(
\frac{\eps}{s+\eps}
\right)^2
\nonumber\\
&\qquad\qquad
+
\big[
p(\beta+2)-q(p+2\beta+2)
\big]
\frac{s\eps}{(s+\eps)^2}
+
(1+\beta)(p+\beta)
\left(
\frac{s}{s+\eps}
\right)^2
\Bigg].
\label{derivative-energy-N0}
\end{align}
In view of assumption $(Q)$, we have $q<p$ and $p(\beta+2)-q(p+2\beta+2)\geq0$. Hence,
$
(1-q)(p-q)>0
$ and $(1+\beta)(p+\beta)>0.$
Using the inequality
\[
\left(
\frac{\eps}{s+\eps}
\right)^2
+
\left(
\frac{s}{s+\eps}
\right)^2
\geq\frac12,
\]
we infer from \eqref{derivative-energy-N0} that
\begin{align}
&\frac{d}{ds}
\left[
G_\eps(s)
-
\frac{
p-q+2
+
(q+\beta)\dfrac{s}{s+\eps}
}{2(p+1)}
g_\eps(s)s
\right]
\geq
\frac{
\min\{
(1-q)(p-q),(1+\beta)(p+\beta)
\}
}{4(p+1)}
g_\eps(s).
\label{derivative-energy-N0-lower}
\end{align}
Integrating
\eqref{derivative-energy-N0-lower} over $(0,s)$ and using Lemma \ref{G,g}, we obtain
\begin{align}
&G_\eps(s)
-
\frac{
p-q+2
+
(q+\beta)\dfrac{s}{s+\eps}
}{2(p+1)}
g_\eps(s)s
\geq
\frac{
\min\{
(1-q)(p-q),(1+\beta)(p+\beta)
\}
}{
4(p+1)(q+1)
}
g_\eps(s)s.
\label{energy-integrand-final}
\end{align}
Hence, from \eqref{j1}, \eqref{energy-N0-exact} and \eqref{energy-integrand-final}, we obtain
\begin{align}
    I_{\lambda,\eps}(u)
    &\geq
    \frac{
        (1-p)
        \min\left\{
            (1-q)(p-q),(1+\beta)(p+\beta)
        \right\}
    }{
        4(p+1)(q+1)(p+\beta)
    }
    \|u\|^2.
    \label{I-N0-positive}
\end{align}
It follows from Proposition~\ref{N-way-origin-eps} (ii) that
\[
    \|u\|^2\geq\frac{\delta_\lambda^{2}}{2},
    \qquad 0<\eps\leq1,
\]
where $\delta_\lambda>0$ is independent of $\eps$. Therefore,
\[
    I_{\lambda,\eps}(u)
    \geq
    \frac{
        (1-p)
        \min\left\{
            (1-q)(p-q),(1+\beta)(p+\beta)
        \right\}
    }{
        8(p+1)(q+1)(p+\beta)
    }
    \delta_\lambda^2
    :=\sigma_\lambda>0,
\]
for all $u\in\mathcal N_{\lambda,\eps}^0.$ Moreover, by the uniform choice in Proposition~\ref{N-way-origin-eps}, for each $0<\lambda\leq\Lambda$, $\sigma_\lambda$ can be replaced by
a constant $\sigma_\Lambda>0$ independent of both $\lambda$ and $\eps$.
\end{proof}

\begin{prop} \label{fechada-eps}	
			Suppose that $0 < \beta < 1$ and that condition $(Q)$ holds. Assume also that $\lambda\geq\lambda_{\eps,*}$. Then the Nehari manifolds $\mathcal{N}_{\lambda,\eps}$ and $\mathcal{N}_{\lambda,\eps}^0$ are closed. 
		\end{prop}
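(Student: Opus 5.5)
The plan is to repeat the proof of Proposition \ref{fechada}, now using the regularized functionals. First, I will show that $I_{\lambda,\eps}\in C^1(H_0^1(\Omega),\mathbb R)$ and that the maps
\[
F_{1,\eps}(u):=\|u\|^2+\int_\Omega g_\eps(u)u\,\mathrm{d}x-\lambda\|u\|_{p+1}^{p+1},
\qquad
F_{2,\eps}(u):=2\|u\|^2+\int_\Omega\bigl[\Gamma_\eps(u)u+g_\eps(u)u\bigr]\mathrm{d}x-\lambda(p+1)\|u\|_{p+1}^{p+1}
\]
are continuous on $H_0^1(\Omega)$. For $F_{1,\eps}$ this is immediate: $|g_\eps(s)s|\le \eps^{-(q+\beta)}|s|^{q+1}$, so the Nemytskii map $u\mapsto g_\eps(u)u$ is continuous from $L^{q+1}(\Omega)$ into $L^1(\Omega)$. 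For $F_{2,\eps}$, Lemma \ref{G,g} gives $|\Gamma_\eps(s)s|\le \max\{q,\beta\}\,g_\eps(s)s$. Since $\Gamma_\eps(s)s$ is continuous in $s$, dominated convergence along a.e. convergent subsequences (Vitali) gives continuity as well. Note that $F_{2,\eps}(u)=\phi_{\eps,u}''(1)+F_{1,\eps}(u)$, because $\frac{d}{dt}[g_\eps(tu)tu]=\Gamma_\eps(tu)u+g_\eps(tu)u$. Hence on $\mathcal N_{\lambda,\eps}$ the condition $\phi''_{\eps,u}(1)=0$ is equivalent to $F_{2,\eps}(u)=0$.

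With these maps in hand, we have $\mathcal N_{\lambda,\eps}\cup\{0\}=F_{1,\eps}^{-1}(\{0\})$ and $\mathcal N_{\lambda,\eps}^0\cup\{0\}=F_{1,\eps}^{-1}(\{0\})\cap F_{2,\eps}^{-1}(\{0\})$, and both are closed in $H_0^1(\Omega)$. It remains to exclude $0$ as a limit point. For this I will invoke Proposition \ref{N-way-origin-eps} (ii): since $\lambda\ge\lambda_{\eps,*}$, we have $\|u\|\ge\delta_\lambda>0$ for every $u\in\mathcal N_{\lambda,\eps}$. Hence $\mathcal N_{\lambda,\eps}=F_{1,\eps}^{-1}(\{0\})\setminus B_{\delta_\lambda}(0)$, and similarly for $\mathcal N^0_{\lambda,\eps}$. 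Each is then the intersection of a closed set with the closed complement of an open ball, so it is closed.

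The one delicate point is that Proposition \ref{N-way-origin-eps} (ii) is stated only for $\eps$ small. For a fixed $\eps>0$ I will instead rederive the bound directly, without uniformity. For $u\in\mathcal N_{\lambda,\eps}$ we have $g_\eps(s)s\ge (1+\eps)^{-(q+\beta)}|s|^{q+1}$ for $|s|\le1$, and $q<p$ gives $\lambda|s|^{p+1}\le\frac12 g_\eps(s)s$ for $|s|$ small. The splitting argument of that proposition, with $r>2$, then yields $I'_{\lambda,\eps}(u)u\ge\|u\|^2-C_\eps\|u\|^r>0$ near the origin. This bound away from zero is the only step needing care; the rest of the argument is routine.
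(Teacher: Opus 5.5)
Your proposal is correct and follows the paper's proof: you write $\mathcal N_{\lambda,\eps}\cup\{0\}$ and $\mathcal N^0_{\lambda,\eps}\cup\{0\}$ as zero sets of continuous functionals, and you remove the origin using the lower bound from Proposition \ref{N-way-origin-eps}; your $F_{2,\eps}=\phi''_{\eps,u}(1)+F_{1,\eps}$ is equivalent on the Nehari set to the paper's $F_2=\phi''_{\eps,u}(1)$. Your fixed-$\eps$ derivation of that lower bound is a sound way to handle the fact that Proposition \ref{N-way-origin-eps}(ii) is stated only for small $\eps$, a point the paper passes over.
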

		\begin{proof}
			 The functionals
				$F_1, F_2:H_0^1(\Omega)\to \mathbb{R}$ defined as 
				$$F_1(u)=\|u\|^2+\int_\Omega g_\epsilon(u)u\,\mathrm{d}x-\lambda\|u\|_{p+1}^{p+1}
                \qquad\mbox{and}\qquad
                F_2(u)=\|u\|^2+\int_\Omega \Gamma_\epsilon(u)u\,\mathrm{d}x-p\lambda\|u\|_{p+1}^{p+1}$$
				are continuous and $\mathcal{N}_{\lambda,\eps}\cup \{0\}=F_{1}^{-1}(\{0\})$ and $\mathcal{N}_{\lambda,\eps}^0\cup \{0\}=F_{1}^{-1}(\{0\})\cap F_{2}^{-1}(\{0\})$. Since $F_1$ and $F_2$ are continuous, the sets $F_{1}^{-1}(\{0\})$ and $F_{1}^{-1}(\{0\})\cap F_{2}^{-1}(\{0\})$ are closed. In view of Proposition \ref{N-way-origin-eps}, the sets $\mathcal{N}_{\lambda,\eps}$ and $\mathcal{N}_{\lambda,\eps}^0$ are bounded away from zero. As a result, we conclude that $\mathcal{N}_{\lambda,\eps}$ and $\mathcal{N}_{\lambda,\eps}^0$ are closed.		
        \end{proof}

\begin{prop}\label{min-eps}
Assume that $0<\beta<1$ and that $(Q)$ holds. Let
$\lambda>\lambda_{\eps,*}$ and $0<\eps\ll1$. Then there exists $v_{\lambda,\eps}
    \in
    \mathcal N_{\lambda,\eps}^+
    \cup
    \mathcal N_{\lambda,\eps}^0$
such that $I_{\lambda,\eps}(v_{\lambda,\eps})
    =
    c_{\mathcal N_{\lambda,\eps}^+}.$
Moreover, any minimizing sequence for
$c_{\mathcal N_{\lambda,\eps}^+}$ admits a subsequence converging
strongly in $H_0^1(\Omega)$ to $v_{\lambda,\eps}$.
\end{prop}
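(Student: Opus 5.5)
The plan is to follow the scheme of Proposition~\ref{min-real-c+}, now for the $C^1$ functional $I_{\lambda,\eps}$; here all integral terms are continuous under strong $L^r$ convergence.

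\textbf{Step 1: a weak limit.} Let $(v_k)\subset\mathcal N_{\lambda,\eps}^+$ be a minimizing sequence for $c_{\mathcal N_{\lambda,\eps}^+}$. By Proposition~\ref{problema-eps}(i) this set is nonempty, so $c_{\mathcal N_{\lambda,\eps}^+}<+\infty$. The Nehari identity and Lemma~\ref{G,g} give $I_{\lambda,\eps}\ge(\tfrac12-\tfrac1{p+1})\|u\|^2+\dots$, but this is not coercive, since $\tfrac12-\tfrac1{p+1}<0$. Boundedness instead comes from Proposition~\ref{N-way-origin-eps}(i): $\|v_k\|^{1-p}\le\lambda S_{p+1}^{p+1}$. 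Passing to a subsequence, $v_k\rightharpoonup v$ in $H_0^1(\Omega)$ and $v_k\to v$ in $L^r(\Omega)$ for $r<2^*$, and a.e. Proposition~\ref{N-way-origin-eps}(ii) gives $\|v_k\|_{p+1}\ge\delta_\lambda$, hence $v\neq0$. Since $|g_\eps(s)s|\le\eps^{-q-\beta}|s|^{q+1}$ and $0\le G_\eps(s)\le C_\eps|s|^{q+1}$, both $\int g_\eps(v_k)v_k$ and $\int G_\eps(v_k)$ converge by dominated convergence (generalized form, with $L^{q+1}$ convergence).

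\textbf{Step 2: locating $v$.} By weak lower semicontinuity of the norm, $\phi_{\eps,v}'(1)\le0$, i.e.\ $R_{n,\eps}(v)\le\lambda$. Lemma~\ref{N0-ep}(ii) gives $\lambda_{n,\eps}(v)\le\liminf\lambda_{n,\eps}(v_k)\le\lambda$. We then split into two cases.

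\emph{Case $\lambda_{n,\eps}(v)=\lambda$.} Here $\lambda=\lambda_{n,\eps}(v)\le R_{n,\eps}(v)\le\lambda$. By uniqueness of the minimizer $t_{n,\eps}(v)$ we get $t_{n,\eps}(v)=1$, and so $v\in\mathcal N^0_{\lambda,\eps}$ by Proposition~\ref{compar-ep}(ii).

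\emph{Case $\lambda_{n,\eps}(v)<\lambda$.} If $\phi'_{\eps,v}(1)<0$, then $1\in(t^-_{\lambda,\eps}(v),t^+_{\lambda,\eps}(v))$ by Proposition~\ref{compar-ep}(i), and $\phi_{\eps,v}$ decreases on $[1,t^+_{\lambda,\eps}(v)]$. This yields
\[
c_{\mathcal N^+_{\lambda,\eps}}\le I_{\lambda,\eps}(t^+_{\lambda,\eps}(v)v)<I_{\lambda,\eps}(v)\le\liminf I_{\lambda,\eps}(v_k)=c_{\mathcal N^+_{\lambda,\eps}},
\]
a contradiction. Hence $v\in\mathcal N_{\lambda,\eps}$. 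We must still exclude $v\in\mathcal N^-_{\lambda,\eps}$, i.e.\ $t^-_{\lambda,\eps}(v)=1$. Here I would use $\phi''_{\eps,v_k}(1)>0$ and pass to the limit. Since $\|v\|\le\liminf\|v_k\|$ and the remaining terms converge, the limit inequality only gives $\phi''_{\eps,v}(1)\ge$ something weaker. So I would instead first prove strong convergence: once $\phi'_{\eps,v}(1)=0$, the Nehari identities for $v_k$ and $v$ give $\lim\|v_k\|^2=\lambda\|v\|_{p+1}^{p+1}-\int g_\eps(v)v=\|v\|^2$. Thus $v_k\to v$ strongly. By continuity of $F_2$ from Proposition~\ref{fechada-eps}, $\phi''_{\eps,v}(1)=\lim\phi''_{\eps,v_k}(1)\ge0$, so $v\in\mathcal N^+_{\lambda,\eps}\cup\mathcal N^0_{\lambda,\eps}$.

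\textbf{Step 3: conclusion.} In both cases $v\in\mathcal N_{\lambda,\eps}$, and the Nehari argument just described gives $\|v_k\|\to\|v\|$, hence strong convergence. Continuity of $I_{\lambda,\eps}$ then yields $I_{\lambda,\eps}(v)=c_{\mathcal N^+_{\lambda,\eps}}$, and we set $v_{\lambda,\eps}:=v$. The argument applies to any minimizing sequence, so the second claim follows along subsequences; different subsequences could in principle have different limits, all of which are minimizers.

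The main obstacle is Step 2: showing that the weak limit lies on the Nehari set, i.e.\ ruling out $\phi'_{\eps,v}(1)<0$. This needs the fibering comparison in the case $\lambda_{n,\eps}(v)<\lambda$ and relies on the closedness (Proposition~\ref{fechada-eps}) and the uniform lower bound of Proposition~\ref{N-way-origin-eps}(ii), which is why $0<\eps\ll1$ is assumed.
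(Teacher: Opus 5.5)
Your proposal is correct and follows essentially the same route as the paper. The shared steps are: a minimizing sequence bounded via Proposition~\ref{N-way-origin-eps} with a nontrivial weak limit; the fibering argument on $[1,t^+_{\lambda,\eps}(v)]$ ruling out $\phi'_{\eps,v}(1)<0$; strong convergence from the Nehari identities; and continuity of $u\mapsto \|u\|^2+\int_\Omega\Gamma_\eps(u)u\,dx-p\lambda\|u\|_{p+1}^{p+1}$, which places the limit in $\mathcal N^+_{\lambda,\eps}\cup\mathcal N^0_{\lambda,\eps}$. Your case split on $\lambda_{n,\eps}(v)$, borrowed from Proposition~\ref{min-real-c+}, is harmless but unnecessary, since $\phi'_{\eps,v}(1)<0$ already forces $\lambda_{n,\eps}(v)\le R_{n,\eps}(v)<\lambda$.
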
  

\begin{proof}
Let $(u_k)\subset\mathcal N_{\lambda,\eps}^+$ be a minimizing
sequence. By Proposition~\ref{N-way-origin-eps}, $(u_k)$ is bounded
and bounded away from zero. Hence, up to a subsequence,
$u_k\rightharpoonup u$ weakly in $H_0^1(\Omega)$ and
$u_k\to u$ strongly in $L^{p+1}(\Omega)$. Moreover, for fixed $\eps>0$,
\(
g_\eps(u_k)u_k\to g_\eps(u)u
~\text{and}~
G_\eps(u_k)\to G_\eps(u)
~\text{in }L^1(\Omega).
\)
Indeed, by using the fact that $0<1-\beta<p+1$, the sequence
$(u_k)$ is uniformly bounded in $L^{1-\beta}(\Omega)$. Therefore, together with
the strong convergence in $L^{p+1}(\Omega)$ and the pointwise
convergence, we obtain that $L^1$-convergence by using the 
Dominated Convergence Theorem. Furthermore, Proposition~\ref{N-way-origin-eps} and the strong
convergence in $L^{p+1}(\Omega)$ imply that $u\neq0$.

Passing to the limit in the nonlinear terms and using the weak lower
semicontinuity of the norm gives $I_{\lambda,\eps}'(u)u\leq0$. Assume that the
inequality is strict. Then $R_{n,\eps}(u)<\lambda$ and 
$t_{\lambda,\eps}^+(u)>1$ is well defined. Since the fibering map is
decreasing on $[1,t_{\lambda,\eps}^+(u)]$, we have
\[
c_{\mathcal N_{\lambda,\eps}^+}
    \leq
    I_{\lambda,\eps}
    \bigl(t_{\lambda,\eps}^+(u)u\bigr)
    <
    I_{\lambda,\eps}(u)
    \leq
    \liminf_{k\to\infty}
    I_{\lambda,\eps}(u_k)
    =
    c_{\mathcal N_{\lambda,\eps}^+},
\]
which is not possible. Therefore, $I_{\lambda,\eps}'(u)u=0$. Using the Nehari identities for $u_k$ and $u$, together with the
strong convergence of the nonlinear terms, we obtain
$\|u_k\|\to\|u\|$. Hence $u_k\to u$ strongly in $H_0^1(\Omega)$.
Moreover, by the strong convergence obtained above and the continuity of
\[
u\;\;\mapsto\;\;
\|u\|^2+\int_\Omega\Gamma_\eps(u)u\,\mathrm{d}x
-p\lambda\|u\|_{p+1}^{p+1},
\]
we obtain
\(
u\in\mathcal N_{\lambda,\eps}^+
\cup\mathcal N_{\lambda,\eps}^0.
\)
As a result, setting $v_{\lambda,\eps}:=u$, we have
\(
I_{\lambda,\eps}(v_{\lambda,\eps})
=
c_{\mathcal N_{\lambda,\eps}^+}.
\)
\end{proof}
We aim to prove that the minimizers obtained in Proposition \ref{min-eps} solve Problem \eqref{P-pertubed}. For this purpose, we state the following lemma.

\begin{lem}\label{min-sol}
Suppose that $0 < \beta < 1$ and that condition $(Q)$ holds. Let $u$ be a local minimizer of $I_{\lambda,\epsilon}$ on $\mathcal{N}_{\lambda,\epsilon}^+$ or $\mathcal{N}_{\lambda,\epsilon}^-$. Then $u$ is a critical point of $I_{\lambda,\epsilon}$ in $H_0^1(\Omega)$, i.e., $I_{\lambda,\epsilon}'(u)\psi = 0$ for all $\psi \in H_0^1(\Omega)$.
\end{lem}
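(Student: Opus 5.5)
The plan is the standard Lagrange multiplier argument on the Nehari set, using that $\mathcal N^{\pm}_{\lambda,\epsilon}$ are open subsets of a $C^1$ level set whose defining functional has nonzero derivative there. Set
\[
F(v):=I_{\lambda,\epsilon}'(v)v=\|v\|^2+\int_\Omega g_\epsilon(v)v\,dx-\lambda\|v\|_{p+1}^{p+1}.
\]
Since $t\mapsto g_\epsilon(t)t$ is of class $C^1$ with derivative $g_\epsilon(t)+\Gamma_\epsilon(t)$, and this derivative is bounded by $(1+q)|g_\epsilon(t)|$, the functional $F$ is of class $C^1$ on $H_0^1(\Omega)$. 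Its derivative is
\[
F'(v)\psi=2\int_\Omega\nabla v\nabla\psi\,dx+\int_\Omega\bigl[g_\epsilon(v)+\Gamma_\epsilon(v)\bigr]\psi\,dx-(p+1)\lambda\int_\Omega|v|^{p-1}v\psi\,dx .
\]
Hence $\mathcal N_{\lambda,\epsilon}=F^{-1}(0)\setminus\{0\}$. Moreover, for $v\in\mathcal N_{\lambda,\epsilon}$ we have
\[
F'(v)v=\|v\|^2+\int_\Omega\Gamma_\epsilon(v)v\,dx-p\lambda\|v\|_{p+1}^{p+1}+F(v)=\phi_{\epsilon,v}''(1).
\]
So on $\mathcal N^{\pm}_{\lambda,\epsilon}$ we get $F'(v)v\neq0$, and in particular $F'(v)\neq0$. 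Since $\mathcal N^0_{\lambda,\epsilon}$ is closed (Proposition~\ref{fechada-eps}) and $v\mapsto\phi''_{\epsilon,v}(1)$ is continuous, each of $\mathcal N^{+}_{\lambda,\epsilon}$ and $\mathcal N^{-}_{\lambda,\epsilon}$ is relatively open in $\mathcal N_{\lambda,\epsilon}$ and is a $C^1$ submanifold of codimension one.

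Now let $u$ be a local minimizer of $I_{\lambda,\epsilon}$ on $\mathcal N^+_{\lambda,\epsilon}$ (the case $\mathcal N^-_{\lambda,\epsilon}$ is identical). Then $u$ is a local minimizer on the manifold $F^{-1}(0)$ near $u$. The Lagrange multiplier theorem, valid since $I_{\lambda,\epsilon}\in C^1$ and $F'(u)\neq0$, yields $\mu\in\mathbb R$ such that
\[
I_{\lambda,\epsilon}'(u)\psi=\mu F'(u)\psi\quad\text{for all }\psi\in H_0^1(\Omega).
\]
Testing with $\psi=u$ gives $0=F(u)=I'_{\lambda,\epsilon}(u)u=\mu F'(u)u=\mu\,\phi''_{\epsilon,u}(1)$. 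Since $\phi''_{\epsilon,u}(1)\neq0$, this forces $\mu=0$, and therefore $I'_{\lambda,\epsilon}(u)=0$.

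The only delicate point is the regularity of $F$. Because $g_\epsilon$ is not differentiable at $0$, one must check that $v\mapsto\int_\Omega g_\epsilon(v)v\,dx$ is $C^1$. I would argue this through the Nemytskii operator $t\mapsto g_\epsilon(t)+\Gamma_\epsilon(t)$, which is continuous (with the convention $\Gamma_\epsilon(0)=0$ and the bound $|\Gamma_\epsilon(t)|\le\max\{q,\beta\}|g_\epsilon(t)|$) and bounded by a constant times $\min\{|t|^{q},\epsilon^{-\beta}\}$. Continuity of the derivative in $H_0^1(\Omega)$ then follows from dominated convergence together with the Sobolev embedding. If one preferred to avoid the manifold structure, an alternative is the deformation $s\mapsto t^{\pm}_{\lambda,\epsilon}(u+s\psi)(u+s\psi)$, using the $C^1$ dependence from Proposition~\ref{compar-ep}(i.2); differentiating at $s=0$ gives the same conclusion.
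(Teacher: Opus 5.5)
Your proposal is correct and follows the same route as the paper: you apply the Lagrange multiplier theorem with the constraint $F(v)=I_{\lambda,\epsilon}'(v)v$, then test with $u$ to force $\mu=0$, using $F'(u)u=\phi''_{\epsilon,u}(1)\neq 0$ on $\mathcal N^{\pm}_{\lambda,\epsilon}$. You also fill in details the paper leaves implicit: the $C^1$ regularity of $F$ even though $g_\epsilon$ is not differentiable at $0$, the identity $F'(v)v=\phi''_{\epsilon,v}(1)+F(v)$, and the relative openness of $\mathcal N^{\pm}_{\lambda,\epsilon}$ in $\mathcal N_{\lambda,\epsilon}$, which lets you treat $u$ as a local minimizer on $F^{-1}(0)$.
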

\begin{proof}
By the Lagrange Multiplier Theorem, there exists $\mu\in\mathbb{R}$ such that
\[
I_{\lambda,\eps}'(u)=\mu\mathcal{J}_{\lambda,\eps}'(u),
\]
where $\mathcal{J}_{\lambda,\eps}(u)=I_{\lambda,\eps}'(u)u$. Once
$\mathcal{J}_{\lambda,\eps}'(u)u\neq0$ and 
$u\in\mathcal{N}_{\lambda,\eps}^{+}\cup\mathcal{N}_{\lambda,\eps}^{-}$,
we obtain $\mu=0$. Hence, $I_{\lambda,\eps}'(u)=0$.
\end{proof}

\subsection{First solution: Minimization on \texorpdfstring{$\mathcal{N}_{\lambda, \eps}^+$}{mathcal{N}\_{lambda, eps}\^+}} In this section, we show that Problem \eqref{P-pertubed} admits a solution in $\mathcal{N}_{\lambda, \epsilon}^+$. For this purpose, we ensure that the map $\lambda \mapsto c_{\mathcal{N}^+_{\lambda, \epsilon}}$ is left-continuous at $\lambda = \lambda_{\eps}^*$.
\begin{lem}\label{c-lamb-cont-eps}
    Suppose that $0 < \beta < 1$ and that condition $(Q)$ holds. Then the function $\lambda \mapsto c_{\mathcal{N}^+_{\lambda, \epsilon}}$ is left-continuous at $\lambda = \lambda_{\eps}^*$.
\end{lem}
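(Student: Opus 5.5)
The plan is to copy the argument of Lemma~\ref{c-lamb-cont}, now at the regularized level. The role of $v_{\lambda^*}$ will be played by the minimizer supplied in Lemma~\ref{gamma-e-ep}. By Lemma~\ref{gamma-e-ep} (iii)--(iv), there is $u_{e,\eps}$ with $\lambda_{\eps}^*=\lambda_{e,\eps}(u_{e,\eps})$. Set $w_\eps:=t_{e,\eps}(u_{e,\eps})u_{e,\eps}$. Then
\[
w_\eps\in\mathcal N^+_{\lambda_\eps^*,\eps},\qquad R_{e,\eps}(w_\eps)=\lambda_\eps^*,\qquad I_{\lambda_\eps^*,\eps}(w_\eps)=0 .
\]

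The first step is to check that $c_{\mathcal N^+_{\lambda_\eps^*,\eps}}=0$. For any $v\in\mathcal N^+_{\lambda_\eps^*,\eps}$ we have $R_{e,\eps}(v)\ge\lambda_{e,\eps}(v)\ge\lambda_\eps^*$. Since $I_{\lambda,\eps}(v)=\frac{\|v\|_{p+1}^{p+1}}{p+1}\,[R_{e,\eps}(v)-\lambda]$, this gives $I_{\lambda_\eps^*,\eps}(v)\ge0$, and the value $0$ is attained at $w_\eps$. The same computation shows $c_{\mathcal N^+_{\lambda,\eps}}\ge0$ for every $\lambda<\lambda_\eps^*$, since then $R_{e,\eps}(v)\ge\lambda_\eps^*>\lambda$ on $\mathcal N^+_{\lambda,\eps}$.

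Next fix $\lambda\in(\lambda_{n,\eps}(w_\eps),\lambda_\eps^*)$. This interval is nonempty because $\lambda_{n,\eps}(w_\eps)<\lambda_{e,\eps}(w_\eps)=\lambda_\eps^*$ by Lemma~\ref{gamma-e-ep} (ii). Then $w_\eps\in U_{\lambda,\eps}$, so Proposition~\ref{compar-ep} gives $t^+_{\lambda,\eps}(w_\eps)$ with $t^+_{\lambda,\eps}(w_\eps)w_\eps\in\mathcal N^+_{\lambda,\eps}$. I need to locate this point: $t_{n,\eps}(w_\eps)<t^+_{\lambda,\eps}(w_\eps)<1=t^+_{\lambda^*_\eps,\eps}(w_\eps)$. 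This holds because $t\mapsto R_{n,\eps}(tw_\eps)$ is strictly increasing on $(t_{n,\eps}(w_\eps),\infty)$ and takes the value $\lambda_\eps^*$ at $t=1$ (as $w_\eps$ lies on the Nehari set for $\lambda_\eps^*$), so the value $\lambda<\lambda_\eps^*$ is reached strictly to the left of $1$.

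On the interval $(t_{n,\eps}(w_\eps),1)$ we have $R_{n,\eps}(tw_\eps)<\lambda_\eps^*$, hence $\phi'_{\eps,w_\eps}(t)<0$ for the fibering map at level $\lambda_\eps^*$. So $t\mapsto I_{\lambda_\eps^*,\eps}(tw_\eps)$ is decreasing there. Writing $t_\lambda:=t^+_{\lambda,\eps}(w_\eps)$, we get
\[
0\le c_{\mathcal N^+_{\lambda,\eps}}\le I_{\lambda,\eps}(t_\lambda w_\eps)
= I_{\lambda_\eps^*,\eps}(t_\lambda w_\eps)+\frac{\lambda_\eps^*-\lambda}{p+1}\,t_\lambda^{p+1}\|w_\eps\|_{p+1}^{p+1}
\le 0+\frac{\lambda_\eps^*-\lambda}{p+1}\|w_\eps\|_{p+1}^{p+1}.
\]
Letting $\lambda\uparrow\lambda_\eps^*$ gives $c_{\mathcal N^+_{\lambda,\eps}}\to0=c_{\mathcal N^+_{\lambda_\eps^*,\eps}}$, which is the left-continuity.

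The only genuinely delicate step is the placement $t^+_{\lambda,\eps}(w_\eps)<1$ together with the monotonicity of the fibering map on $(t_{n,\eps},1)$. In the homogeneous case these came from explicit formulas; here they must come from the uniqueness of the critical point of $t\mapsto R_{n,\eps}(tu)$, which in turn relies on condition $(Q)$ through the monotonicity of $f_\eps$ in \eqref{functionf}, and from the identity $R_{n,\eps}(tu)-\lambda=\phi'_{\eps,u}(t)/(t^p\|u\|_{p+1}^{p+1})$ used in the proof of Proposition~\ref{compar-ep}. As a side remark, the final estimate can be made uniform in small $\eps$, because $\|w_\eps\|$ is bounded by Proposition~\ref{N-way-origin-eps} (i); this uniformity will likely be useful later.
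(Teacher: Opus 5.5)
Your plan is the paper's own: the paper's proof of this lemma just says it is analogous to Lemma~\ref{c-lamb-cont}. Most of your set-up is correct:
\begin{itemize}
\item $w_\eps\in\mathcal N^+_{\lambda_\eps^*,\eps}$ with $I_{\lambda_\eps^*,\eps}(w_\eps)=0$;
\item $c_{\mathcal N^+_{\lambda,\eps}}\ge 0$ for $\lambda<\lambda_\eps^*$;
\item the placement $t_{n,\eps}(w_\eps)<t_\lambda<1$.
\end{itemize}
The final chain, however, applies the monotonicity in the wrong direction. You correctly show that $t\mapsto I_{\lambda_\eps^*,\eps}(tw_\eps)$ is strictly decreasing on $(t_{n,\eps}(w_\eps),1)$. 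Since $t_\lambda<1$, this gives $I_{\lambda_\eps^*,\eps}(t_\lambda w_\eps)>I_{\lambda_\eps^*,\eps}(w_\eps)=0$, not $\le 0$. In fact $I_{\lambda_\eps^*,\eps}\ge0$ on all of $H_0^1(\Omega)\setminus\{0\}$, because $R_{e,\eps}\ge\lambda_\eps^*$. On the ray through $w_\eps$ it vanishes only at $t=t_{e,\eps}(w_\eps)=1$, by uniqueness of the minimizer of $t\mapsto R_{e,\eps}(tw_\eps)$. So the step bounding the first term by $0$ is false: splitting at $t=t_\lambda$ cannot work. The paper's proof of Lemma~\ref{c-lamb-cont}, which it invokes as the model here, contains the same slip.

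The fix is to use the fibering map at level $\lambda$ and split at $t=1$. Since $R_{n,\eps}(tw_\eps)$ is strictly increasing on $(t_{n,\eps}(w_\eps),\infty)$ and equals $\lambda$ at $t_\lambda$, the identity
\[
\frac{d}{dt}I_{\lambda,\eps}(tw_\eps)=t^p\|w_\eps\|_{p+1}^{p+1}\bigl[R_{n,\eps}(tw_\eps)-\lambda\bigr]
\]
shows that $t\mapsto I_{\lambda,\eps}(tw_\eps)$ is increasing on $(t_\lambda,\infty)$, an interval containing $1$. Hence
\[
0\le c_{\mathcal N^+_{\lambda,\eps}}\le I_{\lambda,\eps}(t_\lambda w_\eps)\le I_{\lambda,\eps}(w_\eps)=I_{\lambda_\eps^*,\eps}(w_\eps)+\frac{\lambda_\eps^*-\lambda}{p+1}\|w_\eps\|_{p+1}^{p+1}=\frac{\lambda_\eps^*-\lambda}{p+1}\|w_\eps\|_{p+1}^{p+1}.
\]
This is the estimate you wanted, and left-continuity follows as you state. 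With this one change the rest of your argument goes through, including the side remark on uniformity in $\eps$. The placement $t_\lambda<1$ is still needed, since it puts $1$ inside the interval where the level-$\lambda$ fibering map increases.
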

\begin{proof}
   The proof is analogous to the proof of Lemma \ref{c-lamb-cont}.
\end{proof}

\begin{prop}\label{min-real-c+-eps}
	Suppose that $0 < \beta < 1$ and that condition $(Q)$ holds. Then there exists $v_{\lambda,\eps}\in \mathcal{N}^+_{\lambda,\eps}\cup \mathcal{N}^0_{\lambda,\eps}$ such that $I_{\lambda,\eps}(v_{\lambda,\eps})=\inf\limits_{\mathcal{N}^+_{\lambda,\eps}}I_{\lambda,\eps}=:c_{\mathcal{N}^+_{\lambda,\eps}}$. Moreover, 
		\begin{itemize}
			\item[(i)] For each $\lambda\in (\lambda_{\eps,*}, \lambda_{\eps}^*)$ we obtain that $c_{\mathcal{N}^+_{\lambda,\eps}}>0$;
			\item[(ii)] For $\lambda=\lambda_{\eps}^*$ it follows that $c_{\mathcal{N}^+_{\lambda_{\eps}^*,\eps}}=0$;
			\item[(iii)] For each $\lambda>\lambda_{\eps}^*$ we obtain also that $c_{\mathcal{N}^+_{\lambda,\eps}}<0$.
		\end{itemize}		
\end{prop}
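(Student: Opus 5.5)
The plan is to reproduce, at the level of the perturbed functional, the argument of Proposition~\ref{min-real-c+}. The existence of a minimizer $v_{\lambda,\eps}\in\mathcal N^+_{\lambda,\eps}\cup\mathcal N^0_{\lambda,\eps}$ is already given by Proposition~\ref{min-eps}; note that $\mathcal N^+_{\lambda,\eps}\neq\emptyset$ for $\lambda>\lambda_{\eps,*}$ by Proposition~\ref{problema-eps}. So only the sign classification (i)--(iii) remains. The key tool is the perturbed analogue of Remark~\ref{rmk11}:
\[
I_{\lambda,\eps}(u)=\frac{\|u\|_{p+1}^{p+1}}{p+1}\bigl[R_{e,\eps}(u)-\lambda\bigr],
\]
so the sign of $I_{\lambda,\eps}(u)$ equals the sign of $R_{e,\eps}(u)-\lambda$. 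We combine it with $R_{e,\eps}(u)\geq\lambda_{e,\eps}(u)\geq\lambda^*_\eps$ from Lemma~\ref{gamma-e-ep}.

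For (i), take $\lambda\in(\lambda_{\eps,*},\lambda^*_\eps)$ and $v\in\mathcal N^+_{\lambda,\eps}$. Then $R_{e,\eps}(v)\geq\lambda^*_\eps>\lambda$, so $I_{\lambda,\eps}(v)>0$. This gives $c_{\mathcal N^+_{\lambda,\eps}}\geq0$. To get strict positivity, use the attained minimizer: $c_{\mathcal N^+_{\lambda,\eps}}=I_{\lambda,\eps}(v_{\lambda,\eps})$. If $v_{\lambda,\eps}\in\mathcal N^0_{\lambda,\eps}$, the value is at least $\sigma_\lambda>0$ by Proposition~\ref{N0-dist-0-eps}. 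Otherwise $v_{\lambda,\eps}\in\mathcal N^+_{\lambda,\eps}$ and the previous inequality gives $I_{\lambda,\eps}(v_{\lambda,\eps})>0$. Alternatively, one can rerun the contradiction argument of Proposition~\ref{min-real-c+}(i) with a minimizing sequence $u_k$, using the bound away from zero in Proposition~\ref{N-way-origin-eps} and the weak lower semicontinuity of $R_{e,\eps}$. The latter holds since $\int G_\eps(u_k)\to\int G_\eps(u)$ by the dominated convergence argument in the proof of Proposition~\ref{min-eps}.

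For (ii), the lower bound $c_{\mathcal N^+_{\lambda^*_\eps,\eps}}\geq0$ follows as in (i), now with non-strict inequality. For the upper bound, invoke Lemma~\ref{gamma-e-ep}(iii)--(iv): the minimizer $u_{e,\eps}$ yields $w=t_{e,\eps}(u_{e,\eps})u_{e,\eps}\in\mathcal N^+_{\lambda^*_\eps,\eps}$ with $R_{e,\eps}(w)=\lambda^*_\eps$. Hence $I_{\lambda^*_\eps,\eps}(w)=0$ and $c=0$.

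For (iii), take $\lambda>\lambda^*_\eps$ and the same $w$. Since $\lambda_{n,\eps}(w)<\lambda^*_\eps<\lambda$, Proposition~\ref{compar-ep}(i) gives $t^+_{\lambda,\eps}(w)w\in\mathcal N^+_{\lambda,\eps}$, and $t^+_{\lambda,\eps}(w)$ is a local minimum of the fibering map. We then compute
\[
I_{\lambda,\eps}(w)=I_{\lambda^*_\eps,\eps}(w)-\tfrac{\lambda-\lambda^*_\eps}{p+1}\|w\|_{p+1}^{p+1}<0.
\]

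The main obstacle is the step needed to finish (iii): showing $I_{\lambda,\eps}(t^+_{\lambda,\eps}(w)w)\leq I_{\lambda,\eps}(w)$. This is not automatic from $t^+_{\lambda,\eps}(w)$ being only a local minimum. I would argue via the fiber geometry. The map $\phi_{\eps,w}$ is increasing on $(0,t^-_{\lambda,\eps})$, decreasing on $(t^-,t^+)$ and increasing afterwards, so $t^+$ is the global minimum over $[t^-,\infty)$. It therefore suffices to check $t^-_{\lambda,\eps}(w)\leq1$. This follows from the monotonicity in $\lambda$ of the roots of $R_{n,\eps}(tw)=\lambda$. At $\lambda^*_\eps$ the point $t=1$ is the root $t^+_{\lambda^*_\eps,\eps}(w)$. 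Increasing $\lambda$ pushes $t^-$ down and $t^+$ up, so $t^-_{\lambda,\eps}(w)<t_{n,\eps}(w)<1$. Hence
\[
c_{\mathcal N^+_{\lambda,\eps}}\leq I_{\lambda,\eps}(t^+_{\lambda,\eps}(w)w)\leq I_{\lambda,\eps}(w)<0.
\]
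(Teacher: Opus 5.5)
Your proposal is correct and follows the paper's route, which transplants the proof of Proposition~\ref{min-real-c+} to the perturbed setting: existence comes from Proposition~\ref{min-eps}, the sign of $I_{\lambda,\eps}$ is read off from $R_{e,\eps}-\lambda$ together with $R_{e,\eps}\geq\lambda_\eps^*$, and the test function $w=t_{e,\eps}(u_{e,\eps})u_{e,\eps}$ handles (ii)--(iii). Two minor simplifications are available. In (i) the case split via Proposition~\ref{N0-dist-0-eps} is unnecessary, since $R_{e,\eps}(v)\geq\lambda_\eps^*>\lambda$ holds for every $v\neq0$. In (iii) the inequality $t^-_{\lambda,\eps}(w)<t_{n,\eps}(w)<1$ follows directly from Proposition~\ref{compar-ep}(i) and $w\in\mathcal N^+_{\lambda_\eps^*,\eps}$, with no monotonicity in $\lambda$ needed; your justification of $I_{\lambda,\eps}(t^+_{\lambda,\eps}(w)w)\leq I_{\lambda,\eps}(w)$ makes rigorous a step the paper states only loosely.
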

\begin{proof}
   The proof is similar to the proof of Proposition \ref{min-real-c+}.
\end{proof}

\begin{prop}\label{eps-0}
    Suppose that $0 < \beta < 1$ and that condition $(Q)$ holds. There exists $\delta_0=\delta_0(\lambda^*)>0$ such that for all $\lambda\geq\lambda^*-\delta_0$, we have that $c_{\mathcal{N}_{\lambda,\epsilon}^+}< c_{\mathcal{N}_{\lambda}^+}$ for all $\eps>0$.
\end{prop}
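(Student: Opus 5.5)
We prove the inequality by comparing energies along fibers, using that the perturbed energy lies strictly below the singular one. The starting point is the pointwise bound $G_\eps(t)<\frac{1}{1-\beta}|t|^{1-\beta}$ for $t\neq0$. It holds because $g_\eps(s)<s^{-\beta}$ for $s>0$, since $s^q/(s+\eps)^{q+\beta}<s^q/s^{q+\beta}$, and then one integrates. Consequently $I_{\lambda,\eps}(w)<I_\lambda(w)$ for every $w\in H_0^1(\Omega)\setminus\{0\}$. It also gives $R_{n,\eps}<R_n$ and $\lambda_{n,\eps}<\lambda_n$, as in Lemma \ref{dep-cont-eps}.

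Let $\epsilon_0$ be as in Proposition \ref{min-real-N+}, and choose $\delta_0\le\epsilon_0$ with $\lambda^*-\delta_0>\lambda_*$. For $\lambda\ge\lambda^*-\delta_0$, that proposition gives $v_\lambda\in\mathcal N_\lambda^+$ with $I_\lambda(v_\lambda)=c_{\mathcal N_\lambda^+}$. Since $v_\lambda\in\mathcal N_\lambda^+$, we have $\lambda=R_n(v_\lambda)>\lambda_n(v_\lambda)$ and $t=1=t_\lambda^{n,+}(v_\lambda)$. Because $\lambda_{n,\eps}(v_\lambda)<\lambda_n(v_\lambda)<\lambda$, we get $v_\lambda\in U_{\lambda,\eps}$. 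Proposition \ref{compar-ep} then yields $t^+:=t^+_{\lambda,\eps}(v_\lambda)$ with $t^+v_\lambda\in\mathcal N_{\lambda,\eps}^+$, and its monotonicity statement gives $t^+\ge t^{n,+}_\lambda(v_\lambda)=1$.

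We then want a chain of inequalities:
\[
c_{\mathcal N_{\lambda,\eps}^+}\le I_{\lambda,\eps}(t^+v_\lambda)\le I_{\lambda,\eps}(v_\lambda)<I_\lambda(v_\lambda)=c_{\mathcal N_\lambda^+}.
\]
The last strict inequality is the pointwise bound above. The middle inequality requires $\phi_{\eps,v_\lambda}$ to be nonincreasing on $[1,t^+]$. This holds because $t^-_{\lambda,\eps}(v_\lambda)\le t^{n,-}_\lambda(v_\lambda)<1\le t^+$ by the same monotonicity comparison. On $(t^-_{\lambda,\eps},t^+_{\lambda,\eps})$ we have $R_{n,\eps}(tv_\lambda)<\lambda$, so $\phi_{\eps,v_\lambda}'<0$ there.

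The main obstacle is to make sure the inequalities are strict and the monotonicity interval is correct. Specifically, we must check that $1$ lies in $[t^-_{\lambda,\eps},t^+_{\lambda,\eps}]$, which follows from $R_{n,\eps}(v_\lambda)<R_n(v_\lambda)=\lambda$. Even at $t^+=1$ the strict inequality survives, thanks to $I_{\lambda,\eps}<I_\lambda$. The choice $\delta_0\le\epsilon_0$ only serves to guarantee that $c_{\mathcal N_\lambda^+}$ is attained in $\mathcal N_\lambda^+$ rather than in $\mathcal N^0_\lambda$. If the minimizer lay in $\mathcal N_\lambda^0$, then $\lambda=\lambda_n(v_\lambda)$, and one would have to use $\lambda_{n,\eps}(v_\lambda)<\lambda$ directly. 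That case still works by the same argument with $t_n(v_\lambda)=1$, but it is avoided by Proposition \ref{min-real-N+}.
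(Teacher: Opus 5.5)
Your proposal is correct and is essentially the paper's proof: you take the minimizer $v_\lambda\in\mathcal N_\lambda^+$ from Proposition~\ref{min-real-N+}, use $R_{n,\eps}<R_n$ to get $R_{n,\eps}(v_\lambda)<\lambda$, so that $1\in\bigl(t^-_{\lambda,\eps}(v_\lambda),t^+_{\lambda,\eps}(v_\lambda)\bigr)$ (hence in fact $t^+_{\lambda,\eps}(v_\lambda)>1$ strictly, as the paper asserts), and conclude with $c_{\mathcal N_{\lambda,\eps}^+}\le I_{\lambda,\eps}(t^+v_\lambda)\le I_{\lambda,\eps}(v_\lambda)<I_\lambda(v_\lambda)=c_{\mathcal N_\lambda^+}$. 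Your remark that the $\mathcal N_\lambda^0$ case goes through identically is also right; just take $\delta_0<\epsilon_0$ strictly, so that the endpoint $\lambda=\lambda^*-\delta_0$ is covered by Proposition~\ref{min-real-N+}.
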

\begin{proof}
    In view of Proposition \ref{min-real-N+}, there exist $\delta_0=\delta_0(\lambda^*)>0$ and $v_\lambda\in \mathcal{N}_\lambda^+$ such that $c_{\mathcal{N}_{\lambda}^+}=I_\lambda(v_\lambda)$ for all $\lambda>\lambda^*-\delta_0$. Due to Lemma \ref{dep-cont-eps}, $R_{n,\eps}(u)<R_{n}(u)$ for $u\in H_0^1(\Omega)\setminus \{0\}$. This implies that $t_{\lambda,\eps}^+(v_\lambda)>1$. Consequently,
    $$c_{\mathcal{N}_{\lambda,\epsilon}^+}\leq I_{\lambda,\epsilon}(t_{\lambda,\eps}^+(v_\lambda)v_\lambda)< I_{\lambda,\epsilon}(v_\lambda)<I_{\lambda}(v_\lambda)=c_{\mathcal{N}_{\lambda}^+},$$
    which finishes the proof.
\end{proof}

In \cite{Montenegro}, the authors obtained solutions in parameter regimes
where the corresponding energies have distinct signs. The following
proposition is crucial for obtaining distinct solutions whose energies
may have the same sign.

\begin{prop}\label{min-real-N-eps}
		Assume $0 < \beta < 1$ and that $(Q)$ is satisfied.         Then there exists $\delta = \delta(\lambda^*) > 0$ such that for every $\lambda > \lambda^* - \delta$ there exists $v_{\lambda,\epsilon} \in \mathcal{N}^+_{\lambda,\epsilon}$ satisfying $c_{\mathcal{N}^+_{\lambda,\epsilon}} = I_{\lambda,\epsilon}(v_{\lambda,\epsilon})$ and $I_{\lambda,\epsilon}'(v_{\lambda,\epsilon}) = 0$, for $0 < \epsilon \ll 1$.
\end{prop}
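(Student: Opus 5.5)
By Proposition~\ref{min-eps}, for each $\lambda>\lambda_{\eps,*}$ and $0<\eps\ll1$ there is a minimizer $v_{\lambda,\eps}\in\mathcal N^+_{\lambda,\eps}\cup\mathcal N^0_{\lambda,\eps}$ with $I_{\lambda,\eps}(v_{\lambda,\eps})=c_{\mathcal N^+_{\lambda,\eps}}$. Everything else follows once we rule out $v_{\lambda,\eps}\in\mathcal N^0_{\lambda,\eps}$ with constants uniform in $\eps$. Indeed, if $v_{\lambda,\eps}\in\mathcal N^+_{\lambda,\eps}$, then $\mathcal N^+_{\lambda,\eps}$ is open in $\mathcal N_{\lambda,\eps}$, so $v_{\lambda,\eps}$ is a local minimizer of $I_{\lambda,\eps}$ on $\mathcal N^+_{\lambda,\eps}$. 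Lemma~\ref{min-sol} then gives $I'_{\lambda,\eps}(v_{\lambda,\eps})=0$.

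First we check the hypothesis $\lambda>\lambda_{\eps,*}$. Lemma~\ref{dep-cont-eps} gives $\lambda_{\eps,*}<\lambda_*<\lambda^*$. Hence if we choose $\delta\le\lambda^*-\lambda_*$, every $\lambda>\lambda^*-\delta$ satisfies $\lambda>\lambda_{\eps,*}$ for all $\eps$.

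\textbf{Case $\lambda\ge\lambda^*$.} Proposition~\ref{eps-0} together with Proposition~\ref{min-real-c+} gives
\[
c_{\mathcal N^+_{\lambda,\eps}}<c_{\mathcal N^+_\lambda}\le0 .
\]
On the other hand, Proposition~\ref{N0-dist-0-eps} gives $I_{\lambda,\eps}\ge\sigma_\lambda>0$ on $\mathcal N^0_{\lambda,\eps}$. So the minimizer cannot lie in $\mathcal N^0_{\lambda,\eps}$.

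\textbf{Case $\lambda^*-\delta<\lambda<\lambda^*$.} Here we need $c_{\mathcal N^+_{\lambda,\eps}}$ to be uniformly small. Proposition~\ref{eps-0} gives $c_{\mathcal N^+_{\lambda,\eps}}<c_{\mathcal N^+_\lambda}$ for $\lambda\ge\lambda^*-\delta_0$. Estimate \eqref{c-cont} in Lemma~\ref{c-lamb-cont} gives
\[
c_{\mathcal N^+_\lambda}\le\frac{\lambda^*-\lambda}{p+1}\|v_{\lambda^*}\|_{p+1}^{p+1}.
\]
Now take $\Lambda=\lambda^*$ in Proposition~\ref{N0-dist-0-eps}, which supplies a constant $\sigma_{\lambda^*}>0$. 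That proposition asserts this constant is independent of both $\lambda\le\Lambda$ and $\eps$. We then choose $\delta\le\min\{\delta_0,\lambda^*-\lambda_*\}$ so small that
\[
\frac{\delta}{p+1}\|v_{\lambda^*}\|_{p+1}^{p+1}<\sigma_{\lambda^*}.
\]
For $\lambda\in(\lambda^*-\delta,\lambda^*)$ this yields $c_{\mathcal N^+_{\lambda,\eps}}<\sigma_{\lambda^*}\le I_{\lambda,\eps}(w)$ for every $w\in\mathcal N^0_{\lambda,\eps}$, which excludes the degenerate alternative.

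The main obstacle is this uniformity in $\eps$. The lower bound on $\mathcal N^0_{\lambda,\eps}$ must not degenerate as $\eps\to0^+$ and must be uniform for $\lambda$ in a compact range near $\lambda^*$. This rests on the $\eps$-independent estimate $\|u\|\ge\delta_\lambda$ from Proposition~\ref{N-way-origin-eps}(ii), which is valid only for $\eps\le\eta$. That is why the statement is restricted to $0<\eps\ll1$. The upper bound comes from comparison with the singular problem and is automatically $\eps$-free.

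The nonnegativity of the minimizer is not part of the statement, but if desired it follows by replacing $v_{\lambda,\eps}$ with $|v_{\lambda,\eps}|$, using the invariance of the functional and of the constraint.
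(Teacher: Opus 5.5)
Your proof is correct. It follows the same basic strategy as the paper: bound $c_{\mathcal N^+_{\lambda,\eps}}$ from above by the singular level through Proposition~\ref{eps-0}, then compare it with the uniform lower bound $\sigma_{\lambda^*}$ on $\mathcal N^0_{\lambda,\eps}$ from Proposition~\ref{N0-dist-0-eps}. Where you differ is the case split.

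The paper splits at the \emph{perturbed} threshold $\lambda_\eps^*$. For $\lambda\ge\lambda_\eps^*$ it uses the perturbed sign classification, Proposition~\ref{min-real-c+-eps}, to get $c_{\mathcal N^+_{\lambda,\eps}}\le0$. For $\lambda^*-\delta<\lambda<\lambda_\eps^*$ it needs $\lambda_\eps^*\to\lambda^*$ from Lemma~\ref{dep-cont-eps}, so that this interval lies where Proposition~\ref{eps-0} and the left-continuity of Lemma~\ref{c-lamb-cont} give $c_{\mathcal N^+_{\lambda,\eps}}<c_{\mathcal N^+_\lambda}<\tfrac12\sigma_{\lambda^*}$.

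You split at the singular threshold $\lambda^*$ and use the comparison with $c_{\mathcal N^+_\lambda}$ in both cases. For $\lambda\ge\lambda^*$, Proposition~\ref{min-real-c+} makes the level strictly negative, so only positivity of $I_{\lambda,\eps}$ on $\mathcal N^0_{\lambda,\eps}$ is really needed. For $\lambda<\lambda^*$, the explicit bound \eqref{c-cont} lets you fix $\delta$ concretely, namely $\delta\le\min\{\delta_0,\lambda^*-\lambda_*\}$ with $\delta\|v_{\lambda^*}\|_{p+1}^{p+1}<(p+1)\sigma_{\lambda^*}$.

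Your route gives a case split that does not depend on $\eps$. It also avoids $\lambda_\eps^*\to\lambda^*$ and the perturbed results whose proofs the paper only sketches, namely Proposition~\ref{min-real-c+-eps} and Lemma~\ref{c-lamb-cont-eps}. The paper's route additionally records the sign of $c_{\mathcal N^+_{\lambda,\eps}}$ relative to $\lambda_\eps^*$, but that information is not needed for this statement.
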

\begin{proof}
    We divide the proof into two cases: $\lambda \geq \lambda_{\eps}^*$ and $\lambda^* - \delta < \lambda < \lambda_{\eps}^*$, for some $\delta = \delta(\lambda^*)$.
    \vspace{0,2cm}
    
    \nd {\bf Case 1: $\lambda\geq \lambda_{\eps}^*$.} In view of Propositions \ref{min-eps} and \ref{min-real-c+-eps}, for each $\lambda \geq \lambda_{\eps}^*$, there exists $v_{\lambda,\epsilon} \in \overline{\mathcal{N}_{\lambda,\epsilon}^+} \subset \mathcal{N}_{\lambda,\epsilon}^+ \cup \mathcal{N}_{\lambda,\epsilon}^0$ such that $c_{\mathcal{N}_{\lambda,\epsilon}^+} = I_{\lambda,\epsilon}(v_{\lambda,\epsilon}) \leq 0$. In particular, if $\lambda=\lambda_{\eps}^*$, then $c_{\mathcal{N}_{\lambda_{\eps}^*,\epsilon}^+} = I_{\lambda_{\eps}^*,\epsilon}(v_{\lambda_{\eps}^*,\epsilon})=0$; see Proposition \ref{min-real-c+-eps}. On the other hand, by Proposition \ref{N0-dist-0-eps}, we have $I_{\lambda,\epsilon}(v)\geq \sigma_{\lambda}>0$  for every $v \in \mathcal{N}_{\lambda,\epsilon}^0$, which implies that $v_{\lambda,\epsilon} \in \mathcal{N}_{\lambda,\epsilon}^+$.
   \vspace{0,2cm}

\nd {\bf Case 2: $\lambda^* - \delta < \lambda < \lambda_{\eps}^*$.} By Proposition \ref{eps-0}, we obtain $\delta_1>0$, uniform with respect to $\epsilon>0$, such that $c_{\mathcal{N}_{\lambda,\epsilon}^+} < c_{\mathcal{N}_{\lambda}^+}$ for all $\lambda > \lambda^* - \delta_1$. Furthermore, by Lemma \ref{dep-cont-eps}, there exists $\epsilon_1 > 0$ such that for each $0<\eps\leq \eps_1$ we have $\lambda_{\eps}^* > \lambda^* - \delta_1$. Thus, assuming that $\lambda^* - \delta_1 < \lambda < \lambda_{\eps}^*$, it follows from Propositions \ref{min-real-c+}, \ref{min-real-c+-eps} and \ref{eps-0} that
\begin{equation}\label{des-N0-C-ep}
    0 < I_{\lambda,\epsilon}(v_{\lambda,\epsilon}) = c_{\mathcal{N}_{\lambda,\epsilon}^+} = c_{\mathcal{N}_{\lambda,\epsilon}^+} - c_{\mathcal{N}_{\lambda_{\eps}^*,\epsilon}^+} < c_{\mathcal{N}_{\lambda}^+}= c_{\mathcal{N}_{\lambda}^+} - c_{\mathcal{N}_{\lambda^*}^+}, \quad \mbox{for } \lambda^* - \delta_1 < \lambda < \lambda_{\eps}^*.
\end{equation}
Furthermore, by Propositions~\ref{min-real-c+} and
\ref{N0-dist-0-eps}, together with Lemma~\ref{c-lamb-cont}, there
exists $\delta_2=\delta_2(\lambda^*)>0$ such that, for every $\lambda\in(\lambda^*-\delta_2,\lambda^*)$, Proposition~\ref{N0-dist-0-eps}, with
$\Lambda=\lambda^*$, yields
\begin{equation}\label{des-N0-C}
    c_{\mathcal{N}_{\lambda}^+} - c_{\mathcal{N}_{\lambda^*}^+} = c_{\mathcal{N}_{\lambda}^+} < {\frac{1}{2}\sigma_{\lambda^*} \leq I_{\lambda,\epsilon}(v), \quad \forall v \in \mathcal{N}_{\lambda,\epsilon}^0}.
\end{equation}
Arguing as before, in view of Lemma \ref{c-lamb-cont-eps}, we obtain $\epsilon_2 > 0$ such that $\lambda_{\eps}^* > \lambda^* - \delta_2$ for all $0 < \epsilon \leq \epsilon_2$. It follows from \eqref{des-N0-C-ep} and \eqref{des-N0-C} that
\[
0 < I_{\lambda,\epsilon}(v_{\lambda,\epsilon})
= c_{\mathcal{N}_{\lambda,\epsilon}^+}
< c_{\mathcal{N}_{\lambda}^+}
< \frac{1}{2}\sigma_{\lambda^*}
\leq I_{\lambda,\epsilon}(v), \quad
\forall v \in \mathcal{N}_{\lambda,\epsilon}^0, \quad
\lambda \in (\lambda^* - \delta, \lambda^*],
\]
where $\delta=\min\{\delta_1,\delta_2,(\lambda^*-\lambda_*)/2\}$ and $0<\eps\leq \min\{\eps_1,\eps_2\}$. This allows us to conclude that $v_{\lambda,\epsilon} \notin \mathcal{N}_{\lambda,\epsilon}^0$ for all $\lambda > \lambda^* - \delta$. Therefore, $v_{\lambda,\epsilon} \in \mathcal{N}_{\lambda,\epsilon}^+$. Finally, by Lemma~\ref{min-sol}, $I_{\lambda,\epsilon}'(v_{\lambda,\epsilon})=0$.
\end{proof}

\begin{cor}\label{C-monotone-eps}
    Assume $0 < \beta < 1$ and that $(Q)$ is satisfied. There exists $\delta = \delta(\lambda^*) > 0$ such that, for every $\lambda > \lambda^* - \delta$, the map $\epsilon \mapsto c_{\mathcal{N}_{\lambda,\epsilon}^+}$ is strictly decreasing for $\epsilon \geq 0$ sufficiently small.
\end{cor}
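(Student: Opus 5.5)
The plan is to compare the two levels along a fixed minimizer, in the same way as Proposition~\ref{eps-0}, which is the case $\eps_1=0$. Fix $\lambda>\lambda^*-\delta$, where $\delta$ is given by Proposition~\ref{min-real-N-eps} and Proposition~\ref{min-real-N+}. Take $0\le\eps_1<\eps_2$ small enough that both levels are attained in the $+$-component:
\begin{itemize}
\item for $\eps_1>0$, Proposition~\ref{min-real-N-eps} gives $v_{\lambda,\eps_1}\in\mathcal N^+_{\lambda,\eps_1}$ with $I_{\lambda,\eps_1}(v_{\lambda,\eps_1})=c_{\mathcal N^+_{\lambda,\eps_1}}$;
\item for $\eps_1=0$, we use $v_\lambda\in\mathcal N_\lambda^+$ from Proposition~\ref{min-real-N+}.
\end{itemize}
Write $v:=v_{\lambda,\eps_1}$. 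The goal is to show that $v$ lies in $U_{\lambda,\eps_2}$ and that its $\eps_2$-fiber produces a strictly smaller $\eps_2$-energy.

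\textbf{Step 1: $v\in U_{\lambda,\eps_2}$.} Since $v\in\mathcal N^+_{\lambda,\eps_1}$, we have $R_{n,\eps_1}(v)=\lambda$ and $\frac{d}{dt}R_{n,\eps_1}(tv)|_{t=1}>0$, so $1=t^+_{\lambda,\eps_1}(v)$ and $v\in U_{\lambda,\eps_1}$. Proposition~\ref{compar-ep} gives $U_{\lambda,\eps_1}\subset U_{\lambda,\eps_2}$, together with
\[
t^+_{\lambda,\eps_2}(v)\ge t^+_{\lambda,\eps_1}(v)=1 .
\]
The inequality is in fact strict. Lemma~\ref{dep-cont-eps} gives the strict decrease $R_{n,\eps_2}(v)<R_{n,\eps_1}(v)=\lambda$. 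Hence $\phi'_{\eps_2,v}(1)<0$, so $t=1$ lies strictly between the two roots of $R_{n,\eps_2}(tv)=\lambda$, and therefore $t^+_{\lambda,\eps_2}(v)>1$.

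\textbf{Step 2: the chain of inequalities.} The fibering map $\phi_{\eps_2,v}$ is strictly decreasing on $[1,t^+_{\lambda,\eps_2}(v)]$. Moreover, $\eps\mapsto G_\eps(s)$ is strictly decreasing for $s\neq0$, since $g_\eps(s)$ decreases in $\eps$ for $s>0$ and $G_\eps$ is even. Combining these,
\[
c_{\mathcal N^+_{\lambda,\eps_2}}\le I_{\lambda,\eps_2}\bigl(t^+_{\lambda,\eps_2}(v)v\bigr)<I_{\lambda,\eps_2}(v)<I_{\lambda,\eps_1}(v)=c_{\mathcal N^+_{\lambda,\eps_1}} .
\]
When $\eps_1=0$, the last comparison is $\int G_{\eps_2}(v)<\frac1{1-\beta}\int|v|^{1-\beta}$, which follows from Lemma~\ref{G-eps-norm1-beta} with strictness since $v\not\equiv0$. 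This gives the strict monotonicity of $\eps\mapsto c_{\mathcal N^+_{\lambda,\eps}}$ on $[0,\eps_0]$.

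\textbf{Main obstacle: uniform smallness.} The delicate point is that ``$\eps$ sufficiently small'' must be uniform, so that for each $\eps_1$ in the range the minimizer lies in $\mathcal N^+_{\lambda,\eps_1}$ and not in $\mathcal N^0_{\lambda,\eps_1}$. The threshold $\min\{\eps_1,\eps_2\}$ appearing in the proof of Proposition~\ref{min-real-N-eps} is independent of the particular $\eps$, because it comes only from $\lambda^*_\eps\to\lambda^*$ (Lemma~\ref{dep-cont-eps}) and from the $\eps$-independent bound $\sigma_{\lambda^*}$ (Proposition~\ref{N0-dist-0-eps}). So one fixed $\eps_0$ works for all $\eps\in(0,\eps_0]$, and I would check this against that proof. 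The point $\eps=0$ itself is covered by Proposition~\ref{min-real-N+}, after shrinking $\delta$ to the minimum of the two thresholds.
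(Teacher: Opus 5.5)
Your proof is correct and follows the same route as the paper. Like the paper, you fix the $\eps_1$-minimizer $v$, place it in $U_{\lambda,\eps_2}$ with $t^+_{\lambda,\eps_2}(v)\ge 1$, and chain $c_{\mathcal N^+_{\lambda,\eps_2}}\le I_{\lambda,\eps_2}(t^+_{\lambda,\eps_2}(v)v)<I_{\lambda,\eps_2}(v)<I_{\lambda,\eps_1}(v)=c_{\mathcal N^+_{\lambda,\eps_1}}$. You are slightly more careful than the paper in two places, where the paper leaves things implicit: the strictness $t^+_{\lambda,\eps_2}(v)>1$ (Proposition~\ref{compar-ep} alone gives only $\ge$, though the final strict step already suffices), and the uniformity of the admissible $\eps$-range (which may depend on the fixed $\lambda$, e.g.\ through $\eta$ in Proposition~\ref{N-way-origin-eps}).
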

\begin{proof}
    In view of Proposition \ref{min-real-N-eps} (or Proposition \ref{min-real-N+} if $\eps=0$), for $0 \leq \epsilon \ll 1$, there exists $v_{\lambda,\epsilon} \in \mathcal{N}_{\lambda,\epsilon}^+$ such that $c_{\mathcal{N}_{\lambda,\epsilon}^+} = I_{\lambda,\epsilon}(v_{\lambda,\epsilon})$ for all $\lambda >\lambda^*-\delta$. Given $\eps'<\eps''$, it follows from Lemma \ref{dep-cont-eps} that 
    $\lambda_{n,\eps''}(v_{\lambda,\eps'})<\lambda_{n,\eps'}(v_{\lambda,\eps'})$. Hence, it follows from Proposition \ref{compar-ep} that $1=t_{\lambda,\eps'}^+(v_{\lambda,\eps'})<t_{\lambda,\eps''}^+(v_{\lambda,\eps'})$, when $\lambda_{n,\eps'}(v_{\lambda,\eps'})<\lambda$. Therefore, 
    $$c_{\mathcal{N}_{\lambda,\epsilon''}^+}\leq I_{\lambda,\eps''}(t_{\lambda,\eps''}^+(v_{\lambda,\eps'})v_{\lambda,\eps'})<I_{\lambda,\eps''}(t_{\lambda,\eps'}^+(v_{\lambda,\eps'})v_{\lambda,\eps'})<I_{\lambda,\eps'}(t_{\lambda,\eps'}^+(v_{\lambda,\eps'})v_{\lambda,\eps'})=I_{\lambda,\eps'}(v_{\lambda,\eps'})=c_{\mathcal{N}_{\lambda,\epsilon'}^+}.$$
    This completes the proof.
\end{proof}

\subsection{Second solution: A mountain pass solution}\label{MPCap} In this section, we solve Problem \eqref{P-pertubed} by applying the Mountain Pass Theorem. Initially, we observe that there exists a weak solution in $\mathcal{N}_{\lambda,\eps}^+$. The main objective is to obtain an additional solution. In contrast to the methods in \cite{Montenegro}, we use the results from Proposition \ref{min-real-N-eps} taking into account the framework established in \cite{Silva}. 

\begin{lem}\label{GMP1}
Given $\lambda>0$, there exists $\rho_0>0$, independent of $\eps$, such that, for every $0<\rho\leq\rho_0$ and $0<\eps<1$,
\[
I_{\lambda,\eps}(u)\geq A_\rho:=\frac{\rho^2}{4}>0,\qquad u\in\partial B_\rho(0).
\]
\end{lem}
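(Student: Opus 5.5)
The plan is to prove a lower bound for $I_{\lambda,\eps}$ on small spheres that uses only the sign of $G_\eps$ and the growth of the sublinear term near zero. The difficulty is that $\frac{\lambda}{p+1}\|u\|_{p+1}^{p+1}$ is sublinear, so it is \emph{not} dominated by $\frac12\|u\|^2$ near the origin. It must therefore be absorbed by the regularized singular term $\int_\Omega G_\eps(u)$. This is exactly the mechanism used in Proposition~\ref{N-way-origin-eps}(ii)--(iii), and the lemma is essentially the energy version of item (iii) there, proved with the analogous bound on $G_\eps$ in place of $g_\eps(u)u$.

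\textbf{Step 1: split $\Omega$ by the size of $|u|$.} Fix $\eta\in(0,1)$ and $r\in(2,2^*]$, to be chosen. On $\{|u|\le\eta\}$ with $0<\eps\le\eta$, Lemma~\ref{G,g} gives $G_\eps(u)\ge \frac{1}{q+1}g_\eps(u)u$, and $g_\eps(u)u=\frac{|u|^{q+1}}{(|u|+\eps)^{q+\beta}}\ge (2\eta)^{-(q+\beta)}|u|^{q+1}$. Since $q<p$, we have $|u|^{p+1}\le \eta^{p-q}|u|^{q+1}$ there. Choosing $\eta$ small, depending only on $\lambda,p,q,\beta$, ensures
\[
\frac{\lambda}{p+1}|u|^{p+1}\le G_\eps(u)\quad\text{on }\{|u|\le\eta\}.
\]
On $\{|u|>\eta\}$ we use $|u|^{p+1}\le \eta^{p+1-r}|u|^r$ together with the Sobolev embedding. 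Since $G_\eps\ge0$, this yields
\[
I_{\lambda,\eps}(u)\ge \frac12\|u\|^2-C_\lambda\eta^{p+1-r}S_r^r\|u\|^r .
\]

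\textbf{Step 2: choose $\rho_0$.} Take $\rho_0$ with $C_\lambda\eta^{p+1-r}S_r^r\rho_0^{r-2}\le \frac14$. Then for every $\|u\|=\rho\le\rho_0$ we get $I_{\lambda,\eps}(u)\ge \frac{\rho^2}{4}$. All constants are independent of $\eps$.

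\textbf{Step 3: handle $\eta<\eps<1$.} Step 1 only covers $\eps\le\eta$, and this remaining range is the main obstacle. I would first try the uniform bound $(|u|+\eps)^{q+\beta}\le (\eta+1)^{q+\beta}\le 2^{q+\beta}$ on $\{|u|\le\eta\}$. This gives $g_\eps(u)u\ge 2^{-(q+\beta)}|u|^{q+1}$ for all $\eps<1$, so the absorption in Step 1 holds as soon as $\lambda\eta^{p-q}\le \frac{p+1}{(q+1)2^{q+\beta}}$. With that choice the restriction $\eps\le\eta$ disappears and the bound holds uniformly for $0<\eps<1$.
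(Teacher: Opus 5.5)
Your proof is correct. With Step 3 in place, the bound $G_\eps(u)\ge \frac{1}{q+1}g_\eps(u)u\ge (q+1)^{-1}2^{-(q+\beta)}|u|^{q+1}$ on $\{|u|\le\eta\}$ holds for every $0<\eps<1$. So the restriction $\eps\le\eta$ and the separate estimate in Step 1 can be dropped, and $\rho_0$ depends only on $\lambda,p,q,\beta,r$ and $S_r$.

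The paper gives no argument of its own here; it just cites \cite[Lemma~2.2]{Montenegro}. Your splitting at level $\eta$, with the sublinear term absorbed into $\int_\Omega G_\eps(u)$ near zero and Sobolev control on $\{|u|>\eta\}$, is a self-contained proof using only Lemma~\ref{G,g}. It is the same mechanism the paper uses in Proposition~\ref{N-way-origin-eps}(ii)--(iii).
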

\begin{proof}
The result follows from \cite[Lemma 2.2]{Montenegro}.
\end{proof}
\begin{lem}\label{GMP2}
    Assume $0 < \beta < 1$ and that $(Q)$ is satisfied. There exists
    $\delta_1=\delta_1(\lambda^*)>0$ such that if
    $\lambda > \lambda^* - \delta_1$, then there is a function
    $w\in H_0^1(\Omega)\setminus\{0\}$ satisfying
    \[
    \|w\|>\rho
    \qquad\mbox{and}\qquad
    I_{\lambda,\epsilon}(w)<A,
    \]
    for $0<\eps\ll 1$, where $\rho$ and $A=A_\rho$ are given by
    Lemma~\ref{GMP1}.
\end{lem}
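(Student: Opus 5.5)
The plan is to take $w$ on the fiber of the minimizer $v_{\lambda,\eps}\in\mathcal N^+_{\lambda,\eps}$ from Proposition~\ref{min-real-N-eps}, or on the fiber of the fixed function $v_{\lambda^*}=t_e(u^*)u^*\in\mathcal N^+_{\lambda^*}$ from Lemma~\ref{gamma}(iii). The second choice is independent of $\eps$, so I will work with it. The key facts are $I_{\lambda^*}(v_{\lambda^*})=0$ and the pointwise inequality $I_{\lambda,\eps}(u)\le I_\lambda(u)$. The latter holds because $\int_\Omega G_\eps(u)\,dx\le \frac{1}{1-\beta}\|u\|^{1-\beta}_{1-\beta}$ by Lemma~\ref{G-eps-norm1-beta}, and it holds for every $0<\eps<1$.

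First I fix $\rho\in(0,\rho_0]$ from Lemma~\ref{GMP1}, which is independent of $\eps$, with $A=A_\rho=\rho^2/4$. I then shrink $\rho$ if needed so that $\rho<\|v_{\lambda^*}\|$. Lemma~\ref{GMP1} holds for every $0<\rho\le\rho_0$, so this costs nothing. Next I compute
\[
I_\lambda(v_{\lambda^*})=I_{\lambda^*}(v_{\lambda^*})+\frac{\lambda^*-\lambda}{p+1}\|v_{\lambda^*}\|_{p+1}^{p+1}=\frac{\lambda^*-\lambda}{p+1}\|v_{\lambda^*}\|_{p+1}^{p+1}.
\]
I choose $\delta_1>0$ with $\frac{\delta_1}{p+1}\|v_{\lambda^*}\|_{p+1}^{p+1}<A$. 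Then for every $\lambda>\lambda^*-\delta_1$ we get $I_\lambda(v_{\lambda^*})<A$. This covers $\lambda\ge\lambda^*$ trivially, since there the right-hand side is $\le0$.

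Setting $w:=v_{\lambda^*}$, we get $\|w\|>\rho$ and
\[
I_{\lambda,\eps}(w)\le I_\lambda(w)<A\quad\text{for all }0<\eps<1.
\]
This is the claim. The point that needs care is that $\rho$ must be chosen before $\delta_1$, and that the dependence is only on $\lambda^*$ and $v_{\lambda^*}$. It is also worth noting that $\delta_1$ really does depend on $\rho$ through $A$, consistent with the statement.

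The main obstacle is that the statement fixes $\rho$ and $A$ from Lemma~\ref{GMP1} before quantifying over $\lambda$, and $\rho_0$ there may depend on $\lambda$. If $\rho_0$ is only locally uniform in $\lambda$, I would restrict to $\lambda\in(\lambda^*-\delta_1,\Lambda]$ and handle large $\lambda$ separately: the map $\lambda\mapsto I_\lambda(w)$ is decreasing, so large $\lambda$ only helps, provided $\rho\le\|w\|$ is retained. An alternative, if one prefers $w$ on the $\eps$-fiber, is to use $c_{\mathcal N^+_{\lambda,\eps}}<c_{\mathcal N^+_\lambda}$ (Proposition~\ref{eps-0}) together with the left-continuity bound \eqref{c-cont} to make $c_{\mathcal N^+_\lambda}<A$. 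One then takes $w=v_{\lambda,\eps}$ and uses the uniform lower bound $\|v_{\lambda,\eps}\|\ge\delta_\lambda$ from Proposition~\ref{N-way-origin-eps}(ii) to ensure $\|w\|>\rho$ after shrinking $\rho$.
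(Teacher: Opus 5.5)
Your proof is correct and is essentially the paper's argument: take $w=v_{\lambda^*}$ with $I_{\lambda^*}(v_{\lambda^*})=0$, choose $\rho<\min\{\rho_0,\|v_{\lambda^*}\|\}$, use $I_{\lambda,\eps}\le I_\lambda$, and pick $\delta_1$ proportional to $A/\|v_{\lambda^*}\|_{p+1}^{p+1}$ (the paper takes $\delta_1=(p+1)A/(2\|v_{\lambda^*}\|_{p+1}^{p+1})$). For the $\lambda$-dependence of $\rho_0$ that you flag, the paper's fix in the proof of Proposition~\ref{lim-sol} is simpler than yours: take $\rho_0$ from Lemma~\ref{GMP1} at $\lambda=\lambda^*$, note that $I_{\lambda,\eps}\ge I_{\lambda^*,\eps}$ for $\lambda\le\lambda^*$, and observe that $I_{\lambda,\eps}(w)<0<A$ for $\lambda>\lambda^*$.
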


\begin{proof}
Let $v_{\lambda^*}$ be given by Proposition~\ref{min-real-c+}. As a consequence, we know  that
\(
I_{\lambda^*}(v_{\lambda^*})=0.
\)
Choose
\(
0<\rho<\min\{\rho_0,\|v_{\lambda^*}\|\},
\)
where $\rho_0$ is given by Lemma~\ref{GMP1}. Consider 
$A=A_\rho$. Since $I_{\lambda,\epsilon}(v_{\lambda^*})
<I_{\lambda}(v_{\lambda^*})$, we have
\[
I_{\lambda,\epsilon}(v_{\lambda^*})
<
I_{\lambda^*}(v_{\lambda^*})
+
(\lambda^*-\lambda)
\frac{\|v_{\lambda^*}\|_{p+1}^{p+1}}{p+1}
=
(\lambda^*-\lambda)
\frac{\|v_{\lambda^*}\|_{p+1}^{p+1}}{p+1}.
\]
Therefore, assuming that 
\[
\lambda^*-\lambda
<
\delta_1
:=
\frac{(p+1)A}
{2\|v_{\lambda^*}\|_{p+1}^{p+1}},
\]
we obtain that 
\(
I_{\lambda,\epsilon}(v_{\lambda^*})<\frac{A}{2}<A.
\)
As a consequence, taking $w=v_{\lambda^*}$, we have
\(
\|w\|>\rho
~\mbox{and}~
I_{\lambda,\epsilon}(w)<A.
\)
This completes the proof.
\end{proof}
A sequence $(u_{n}) \subset H^{1}_{0}(\Omega)$ is called a Palais-Smale sequence whenever 
\begin{equation}\label{PS-seq}
    I_{\lambda,\eps}(u_n)\leq C\qquad\mbox{and}\qquad \|I_{\lambda,\eps}'(u_n)\|=o_n(1).
\end{equation}
We say that $I_{\lambda,\eps}$ satisfies the \textit{Palais-Smale condition} if every Palais-Smale sequence admits a convergent subsequence.

\begin{lem}\label{PS}
    Assume $0 < \beta < 1$ and that $(Q)$ is satisfied. Then the functional $I_{\lambda,\eps}$ satisfies the Palais-Smale condition.
\end{lem}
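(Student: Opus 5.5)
The plan is the standard two-step argument: first show that Palais--Smale sequences are bounded, then use compactness of the lower-order terms to upgrade weak convergence to strong convergence.

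\textbf{Step 1: boundedness.} Let $(u_n)$ satisfy \eqref{PS-seq}. We use that $I_{\lambda,\eps}$ is coercive on $H_0^1(\Omega)$, which follows from the sublinear growth $p+1<2$. Since $G_\eps\geq0$ and $\|u\|_{p+1}\leq S_{p+1}\|u\|$,
\[
C\geq I_{\lambda,\eps}(u_n)\geq \frac12\|u_n\|^2-\frac{\lambda}{p+1}S_{p+1}^{p+1}\|u_n\|^{p+1}.
\]
Because $p+1<2$, the right-hand side tends to $+\infty$ as $\|u_n\|\to\infty$. Hence $(u_n)$ is bounded in $H_0^1(\Omega)$. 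This step does not even use the derivative condition $\|I_{\lambda,\eps}'(u_n)\|=o_n(1)$.

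\textbf{Step 2: weak limit and compactness of the lower-order terms.} Up to a subsequence, $u_n\rightharpoonup u$ in $H_0^1(\Omega)$, $u_n\to u$ in $L^r(\Omega)$ for every $r<2^*$, and $u_n\to u$ a.e. in $\Omega$. For fixed $\eps>0$ we have $|g_\eps(s)|\leq \eps^{-\beta}|s|^{q}/(|s|+\eps)^{q}\leq \eps^{-\beta}$, so $g_\eps$ is bounded and continuous. Therefore
\[
\int_\Omega g_\eps(u_n)(u_n-u)\,dx\leq \eps^{-\beta}\|u_n-u\|_1\to0,
\]
and, by H\"older's inequality with exponents $(p+1)/p$ and $p+1$,
\[
\int_\Omega |u_n|^{p-1}u_n(u_n-u)\,dx\leq \|u_n\|_{p+1}^{p}\|u_n-u\|_{p+1}\to0.
\]

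\textbf{Step 3: strong convergence.} Test the derivative against the bounded sequence $u_n-u$. Since $\|I_{\lambda,\eps}'(u_n)\|\to0$ and $(u_n-u)$ is bounded, $I_{\lambda,\eps}'(u_n)(u_n-u)\to0$. Expanding,
\[
I_{\lambda,\eps}'(u_n)(u_n-u)=\int_\Omega\nabla u_n\cdot\nabla(u_n-u)\,dx+\int_\Omega g_\eps(u_n)(u_n-u)\,dx-\lambda\int_\Omega |u_n|^{p-1}u_n(u_n-u)\,dx .
\]
By Step 2 the last two integrals vanish in the limit, so $\int_\Omega \nabla u_n\cdot\nabla(u_n-u)\,dx\to0$. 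Weak convergence gives $\int_\Omega\nabla u\cdot\nabla(u_n-u)\,dx\to0$, and subtracting yields $\|u_n-u\|^2\to0$. Thus $u_n\to u$ strongly in $H_0^1(\Omega)$.

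There is no real obstacle here. The only point needing care is that the bound $|g_\eps|\leq\eps^{-\beta}$ degenerates as $\eps\to0$. This is harmless, because the Palais--Smale condition is claimed only for each fixed $\eps>0$.
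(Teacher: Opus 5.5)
Your proposal is correct and follows essentially the same route as the paper. Boundedness comes from $I_{\lambda,\eps}(u)\geq \frac12\|u\|^2-\frac{\lambda}{p+1}S_{p+1}^{p+1}\|u\|^{p+1}$, and strong convergence comes from testing $I_{\lambda,\eps}'(u_n)$ against $u_n-u$, with the lower-order terms controlled by an $L^\infty$ bound on $g_\eps$ and H\"older's inequality. Your bound $|g_\eps|\leq\eps^{-\beta}$ is slightly sharper than the paper's $\eps^{-(q+\beta)}$, though either suffices for fixed $\eps$.
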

\begin{proof}
     Let $(w_k)$ be a sequence satisfying \eqref{PS-seq}. Hence, there exists $C> 0$ such that
    \[
C\geq {I_{\lambda,\eps}(w_k)
\geq
\frac12\|w_k\|^2
-
\frac{\lambda}{p+1}
S_{p+1}^{p+1}\|w_k\|^{p+1}.}
\]
The last assertion implies that $(w_k)$ is bounded in $H^1_0(\Omega)$. Consequently, up to a subsequence, there exists $w\in H_0^1(\Omega)$
such that $w_k\rightharpoonup w$. By the compact Sobolev embedding,
$w_k\to w$ strongly in $L^1(\Omega)$ and in $L^{p+1}(\Omega)$. From
$I_{\lambda,\eps}'(w_k)(w_k-w)=o_k(1)$, we obtain
\begin{align*}
    |\langle w_k,w_k-w\rangle|
    \leq{}&
    \int_\Omega |g_\eps(w_k)||w_k-w|\,\mathrm{d}x
    +\lambda\int_\Omega |w_k|^{p}|w_k-w|\,\mathrm{d}x
    +o_k(1)\\
    \leq{}&
    \|g_\eps\|_\infty\|w_k-w\|_1
    +\lambda\|w_k\|_{p+1}^{p}\|w_k-w\|_{p+1}
    +o_k(1)
    =o_k(1),
\end{align*} 
where we used that $g_\eps$ is bounded. In fact, we have used that  $\|g_\eps\|_\infty \leq \eps^{-(q+\beta)}$ together with the strong convergence $w_k\to w$ in $L^1(\Omega)$ and the same convergence in $L^{p+1}(\Omega)$. Since $\langle w,w_k-w\rangle\to 0$, we conclude that $w_k\to w$ in $H_0^1(\Omega)$ as $k\rightarrow\infty$. This completes the proof. 
\end{proof}
In light of Lemmas~\ref{GMP1}, \ref{GMP2}, and \ref{PS} with the Mountain Pass Theorem, we obtain the following result.
\begin{prop}\label{MPS}
Let $0<\eps\ll1$ and
$\lambda\in(\lambda^*-\delta_1,+\infty)$. Then there exists
$w\in H_0^1(\Omega)$, with $w\geq0$, such that
$\|w\|>\rho$ and $I_{\lambda,\eps}(w)<A$. Setting
\[
\Gamma:=\{\gamma\in C([0,1],H_0^1(\Omega)):
\gamma(0)=0,\ \gamma(1)=w\},
\]
there exists a nonnegative critical point
$u_{\lambda,\eps}\in H_0^1(\Omega)\setminus\{0\}$ satisfying
\[
I_{\lambda,\eps}(u_{\lambda,\eps})
=c_{\lambda,\eps}^m
:=\inf_{\gamma\in\Gamma}\max_{r\in[0,1]}
I_{\lambda,\eps}(\gamma(r))\geq A,
\]
where $\rho$ and $A$ are given by Lemma~\ref{GMP1}.
\end{prop}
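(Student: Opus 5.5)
We apply the Mountain Pass Theorem to $I_{\lambda,\eps}$, restricting the admissible paths to the nonnegative cone in order to get a nonnegative critical point.

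\textbf{Choice of endpoint.} Fix $\lambda>\lambda^*-\delta_1$ and $0<\eps\ll1$. Lemma~\ref{GMP2} gives the endpoint $w=v_{\lambda^*}$. Since all functionals are invariant under $u\mapsto|u|$, we may replace $v_{\lambda^*}$ by $|v_{\lambda^*}|$; this is still a minimizer in $\mathcal N^+_{\lambda^*}$ with zero energy. Hence $w\ge0$, $\|w\|>\rho$ and $I_{\lambda,\eps}(w)<A/2<A$. The constants $\rho\le\rho_0$ and $A=A_\rho$ come from Lemma~\ref{GMP1}. Every $\gamma\in\Gamma$ joins $0$ to $w$ with $\|w\|>\rho$, so it crosses $\partial B_\rho(0)$, where $I_{\lambda,\eps}\ge A$. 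Therefore $c^m_{\lambda,\eps}\ge A>\max\{I_{\lambda,\eps}(0),I_{\lambda,\eps}(w)\}$, since $I_{\lambda,\eps}(0)=0$. Because $I_{\lambda,\eps}\in C^1$ and satisfies (PS) by Lemma~\ref{PS}, the Mountain Pass Theorem yields a critical point $u$ with $I_{\lambda,\eps}(u)=c^m_{\lambda,\eps}\ge A>0$. In particular $u\neq0$.

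\textbf{Nonnegativity.} Evenness alone does not give this, so we proceed as follows. Set $\Gamma_+:=\{\gamma\in\Gamma:\gamma(r)\ge0\ \forall r\}$. The map $u\mapsto|u|$ is continuous on $H_0^1(\Omega)$, and $|w|=w$, $|0|=0$. Hence $\gamma\mapsto|\gamma|$ sends $\Gamma$ into $\Gamma_+$ and preserves $\max_r I_{\lambda,\eps}(\gamma(r))$. Consequently the infimum over $\Gamma$ equals the infimum over $\Gamma_+$.

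Now choose paths $\gamma_k\in\Gamma_+$ with $\max I_{\lambda,\eps}\circ\gamma_k\le c^m_{\lambda,\eps}+1/k$. Apply the quantitative deformation lemma, or Ekeland's principle in the form of Ghoussoub--Preiss, which locates a (PS) sequence near the paths, to obtain a (PS) sequence $(u_k)$ at level $c^m_{\lambda,\eps}$ with $\operatorname{dist}(u_k,\gamma_k([0,1]))\to0$. Since $\gamma_k([0,1])$ lies in the nonnegative cone and $\|u_k^-\|\le\operatorname{dist}(u_k,\text{cone})$, we get $u_k^-\to0$ in $H_0^1(\Omega)$.

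By Lemma~\ref{PS}, up to a subsequence $u_k\to u_{\lambda,\eps}$ strongly. Then $u_{\lambda,\eps}$ is a critical point with $u_{\lambda,\eps}^-=0$, that is $u_{\lambda,\eps}\ge0$, and $I_{\lambda,\eps}(u_{\lambda,\eps})=c^m_{\lambda,\eps}\ge A$, so $u_{\lambda,\eps}\neq0$.

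\textbf{Main obstacle.} The delicate step is this localization of the (PS) sequence near the optimal nonnegative paths. If it turns out awkward, a fallback is to test the equation with $u^-$. For a critical point $u$ this gives $\|u^-\|^2+\int g_\eps(u)u^-=\lambda\int|u|^{p-1}u\,u^-$. On $\{u<0\}$ both $g_\eps(u)u^-\le0$ and $|u|^{p-1}u\,u^-\le0$, so the signs do not directly force $u^-=0$. For this reason we rely on the cone-localized minimax argument above rather than this test.
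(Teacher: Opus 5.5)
Your proposal follows the paper's proof essentially step for step: take $w=|v_{\lambda^*}|$ from Lemma~\ref{GMP2}, use continuity of $u\mapsto|u|$ and the evenness of $I_{\lambda,\eps}$ to restrict the minimax class to nonnegative paths, obtain from the deformation lemma a Palais--Smale sequence with $\operatorname{dist}(u_k,\gamma_k([0,1]))\to0$, and conclude with Lemma~\ref{PS}.

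One correction: your inequality $\|u_k^-\|\le\operatorname{dist}(u_k,\{u\ge0\})$ is false in the Dirichlet norm. For $v\ge0$ one has $\|u-v\|^2=\|u^+-v\|^2+\|u^-\|^2+2\int_\Omega\nabla v\cdot\nabla u^-\,dx$, and the last term can be negative. You do not need that inequality, however. As in the paper, the nonnegative cone is closed and $u_k\to u_{\lambda,\eps}$ strongly, which already gives $u_{\lambda,\eps}\ge0$. Alternatively, the pointwise bound $|u_k-v|\ge u_k^-$ for every $v\ge0$ gives $u_k^-\to0$ in $L^2(\Omega)$, which also suffices.
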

\begin{proof}
By Lemma~\ref{GMP2} and the identity
$I_{\lambda,\eps}(|u|)=I_{\lambda,\eps}(u)$, the function $w$ may
be chosen nonnegative. Moreover, for every $\gamma\in\Gamma$, the
path $|\gamma|\in\Gamma$ and
\[
I_{\lambda,\eps}(|\gamma(r)|)
=I_{\lambda,\eps}(\gamma(r)),\qquad r\in[0,1].
\]
Hence, $c_{\lambda,\eps}^m$ may be computed by restricting the
minimax class to nonnegative paths. Let $(\gamma_n)\subset\Gamma$ be a minimizing sequence of such
paths.  Now, by using the standard deformation argument for the Mountain Pass Theorem,
there exists a Palais--Smale sequence $(u_n)$ at level
$c_{\lambda,\eps}^m$ such that
\[
\operatorname{dist}
\bigl(u_n,\gamma_n([0,1])\bigr)\to0.
\]
Since $\gamma_n([0,1])$ consists of nonnegative functions, it follows
that
\[
\operatorname{dist}
\bigl(u_n,\{u\in H_0^1(\Omega):u\geq0\}\bigr)\to0.
\]
In light of Lemma~\ref{PS}, up to a subsequence,
$u_n\to u_{\lambda,\eps}$ in $H_0^1(\Omega)$. Since the positive
cone is closed, $u_{\lambda,\eps}\geq0$. Therefore
$I_{\lambda,\eps}'(u_{\lambda,\eps})=0$ and
$I_{\lambda,\eps}(u_{\lambda,\eps})=c_{\lambda,\eps}^m\geq A$.
\end{proof}

	\section{Passage to the limit: From \texorpdfstring{$\eqref{P-pertubed}$}{eqref{P-pertubed}} to \texorpdfstring{$\eqref{P}$}{eqref{P}}}\label{Pass-lim}

    In this section, we prove that Problem \eqref{P} has at least two solutions. For this purpose, we consider the solutions $v_{\lambda,\epsilon}$ and $u_{\lambda,\epsilon}$ obtained in Propositions \ref{min-real-N-eps} and \ref{MPS}, respectively.

    \begin{prop}\label{lim-sol}
        There exists $\delta_2=\delta_2(\lambda^*)>0$ such that, for every $\lambda\in(\lambda^*-\delta_2,+\infty)$ and every sequence $\eps_n\to0^+$, up to a subsequence, $u_{\lambda,\eps_n}\rightharpoonup u_\lambda$ and $v_{\lambda,\eps_n}\rightharpoonup v_\lambda$ weakly in $H_0^1(\Omega)$, where $u_\lambda,v_\lambda$ are distinct nontrivial weak solutions.
    \end{prop}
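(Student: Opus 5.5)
We will pass to the limit along $\eps_n\to0^+$ for both families separately, then separate the two limits by their energies or by their Nehari type. \textbf{Step 1 (uniform bounds).} For $v_{\lambda,\eps}\in\mathcal N_{\lambda,\eps}^+$, Proposition~\ref{N-way-origin-eps}(i) gives $\|v_{\lambda,\eps}\|^{1-p}\le\lambda S_{p+1}^{p+1}$, uniformly in $\eps$. For the mountain pass solution $u_{\lambda,\eps}$, the identity $I'_{\lambda,\eps}(u)u=0$ places $u_{\lambda,\eps}\in\mathcal N_{\lambda,\eps}$, so the same bound holds. Proposition~\ref{N-way-origin-eps}(ii) also gives $\|u_{\lambda,\eps}\|_{p+1},\|v_{\lambda,\eps}\|_{p+1}\ge\delta_\lambda$, independently of $\eps$. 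Hence, up to a subsequence, both families converge weakly in $H_0^1(\Omega)$, strongly in $L^{p+1}(\Omega)$ and a.e. The limits $u_\lambda,v_\lambda$ are nonzero, nonnegative, and satisfy $\|u_\lambda\|_{p+1},\|v_\lambda\|_{p+1}\ge\delta_\lambda$.

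\textbf{Step 2 (limits are weak solutions).} We follow \cite[Lemmas 2.4, 3.1, 4.1]{Montenegro}, as indicated in Remark~\ref{rmk:regularity-classical}. The regularized solutions have uniform $L^\infty$ bounds and uniform local gradient (Lipschitz) bounds on compact subsets. By Arzel\`a--Ascoli this gives local uniform convergence, so $\{u_\lambda>0\}$ is open and $u_{\lambda,\eps_n}\to u_\lambda$ locally uniformly. Testing with $\varphi\in C_c^\infty(\{u_\lambda>0\})$, we have $g_{\eps_n}(u_{\lambda,\eps_n})\to u_\lambda^{-\beta}$ uniformly on the support. For general $\varphi\in C_c^\infty(\Omega)$ we use truncations $\psi_\delta(u)$ together with Fatou's lemma. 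This yields $u_\lambda^{-\beta}\chi_{\{u_\lambda>0\}}\in L^1_{\rm loc}$ and shows that the contribution of $g_{\eps_n}$ on $\{u_\lambda=0\}$ vanishes in the limit. The same argument applies to $v_\lambda$.

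\textbf{Step 3 (distinctness).} This is the main obstacle, since energies need not be strongly continuous. We first upgrade to strong convergence. Testing the equation with $u_{\lambda,\eps_n}$ and passing to the limit gives
\[
\lim\|u_{\lambda,\eps_n}\|^2=\lambda\|u_\lambda\|_{p+1}^{p+1}-\lim\int g_{\eps_n}(u_{\lambda,\eps_n})u_{\lambda,\eps_n}.
\]
Dominated convergence applies because $g_\eps(s)s\le s^{1-\beta}$ and the convergence holds pointwise on both $\{u_\lambda>0\}$ and $\{u_\lambda=0\}$. So this limit equals $\lambda\|u_\lambda\|_{p+1}^{p+1}-\|u_\lambda\|_{1-\beta}^{1-\beta}=\|u_\lambda\|^2$, where the last equality comes from testing the limit equation with $u_\lambda$ itself, justified by the Lipschitz regularity and an approximation argument. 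Hence the convergence is strong, and $I_{\lambda,\eps_n}\to I_\lambda$ along the sequences, using $G_\eps(s)\to s^{1-\beta}/(1-\beta)$ dominated as in Lemma~\ref{G-eps-norm1-beta}. Consequently $I_\lambda(u_\lambda)=\lim c^m_{\lambda,\eps_n}\ge A$ by Proposition~\ref{MPS}, and $I_\lambda(v_\lambda)=\lim c_{\mathcal N^+_{\lambda,\eps_n}}$.

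If $\lambda\ge\lambda^*$, then $c_{\mathcal N^+_{\lambda,\eps}}\le c_{\mathcal N^+_{\lambda,\eps}}|_{\lambda=\lambda_\eps^*}\le 0$ by Proposition~\ref{min-real-c+-eps}, using monotonicity in $\lambda$. Thus $I_\lambda(v_\lambda)\le0<A\le I_\lambda(u_\lambda)$. If $\lambda\in(\lambda^*-\delta_2,\lambda^*)$, we use \eqref{des-N0-C-ep} and Lemma~\ref{c-lamb-cont}: $I_\lambda(v_\lambda)\le c_{\mathcal N^+_\lambda}\le\frac{\lambda^*-\lambda}{p+1}\|v_{\lambda^*}\|_{p+1}^{p+1}$. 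Choosing $\delta_2$ small enough that this bound is below $A/2$ gives $I_\lambda(v_\lambda)<A\le I_\lambda(u_\lambda)$, hence $u_\lambda\neq v_\lambda$. Here $A=A_\rho$ is the $\eps$-independent constant from Lemma~\ref{GMP1}, with $\rho$ fixed as in Lemma~\ref{GMP2}.
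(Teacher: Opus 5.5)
Your proposal is correct. The separation criterion is the paper's: the mountain-pass limit has energy at least $A$, while the minimizer limit has energy at most $c_{\mathcal N^+_\lambda}<A$, or at most $0$ when $\lambda\ge\lambda^*$. The two arguments differ only in how the energies are passed to the limit.

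You first upgrade to strong $H_0^1(\Omega)$ convergence. This makes distinctness depend on Step~2 (the limits actually solve \eqref{P}) and on the Nehari identity for singular solutions, which is the paper's Lemma~\ref{lem:nehari-identity-solution} via the truncations $(u-1/k)^+$.

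The paper avoids this. Since $z_{\lambda,\eps_n}\in\mathcal N_{\lambda,\eps_n}$, it eliminates $\|z_{\lambda,\eps_n}\|^2$ and writes
\[
I_{\lambda,\eps_n}(z_{\lambda,\eps_n})=\int_\Omega\Bigl[G_{\eps_n}(z_{\lambda,\eps_n})-\tfrac12\, g_{\eps_n}(z_{\lambda,\eps_n})z_{\lambda,\eps_n}\Bigr]\,dx+\frac{\lambda(p-1)}{2(p+1)}\|z_{\lambda,\eps_n}\|_{p+1}^{p+1}.
\]
This involves only the lower-order terms whose convergence you already establish in the first display of your Step~3. Passing to the limit gives $J(u_\lambda)\ge A>J(v_\lambda)$, where
\[
J(z):=\frac{1+\beta}{2(1-\beta)}\|z\|_{1-\beta}^{1-\beta}+\frac{\lambda(p-1)}{2(p+1)}\|z\|_{p+1}^{p+1}.
\]
So $u_\lambda\neq v_\lambda$ follows from weak $H_0^1$ and strong $L^r$ convergence alone, independently of the singular passage to the limit.

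Your route costs more, but it also yields strong convergence together with $I_\lambda(u_\lambda)\ge A$ and $I_\lambda(v_\lambda)=\lim c_{\mathcal N^+_{\lambda,\eps_n}}$. The paper proves these only later, in the proofs of Theorems~\ref{theorem1} and~\ref{theorem2}, by precisely your argument.

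One detail to make explicit: for $\delta_2$ to depend only on $\lambda^*$, the constant $A$ must be uniform for $\lambda<\lambda^*$. Take $\rho$ and $A$ from Lemma~\ref{GMP1} at $\lambda=\lambda^*$ and use $I_{\lambda,\eps}\ge I_{\lambda^*,\eps}$ for $\lambda\le\lambda^*$, as the paper does.
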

 \begin{proof}
Proceeding as in \cite[Proposition 2.5]{Montenegro}, we have, up to a subsequence, that $z_{\lambda,\eps_n}\rightharpoonup z_\lambda$ weakly in $H_0^1(\Omega)$ and $z_{\lambda,\eps_n}\to z_\lambda$ strongly in $L^r(\Omega)$ for $1\leq r<2^*$, where
$z_{\lambda,\eps_n}=u_{\lambda,\eps_n}$ or
$z_{\lambda,\eps_n}=v_{\lambda,\eps_n}$. Indeed, the sequences $(z_{\lambda,\eps_n})$ are bounded in $H_0^1(\Omega)$ uniformly with respect to $\eps_n$ (by Proposition~\ref{N-way-origin-eps}). Therefore, the weak convergence follows from the Banach--Alaoglu Theorem. Moreover, the strong convergence in $L^r(\Omega)$ follows from the compact Sobolev embedding. As a consequence,
\[
\int_\Omega g_{\eps_n}(z_{\lambda,\eps_n})z_{\lambda,\eps_n}\,dx
\to\int_\Omega z_\lambda^{1-\beta}\,dx,
\qquad
\int_\Omega G_{\eps_n}(z_{\lambda,\eps_n})\,dx
\to\frac{1}{1-\beta}\int_\Omega z_\lambda^{1-\beta}\,dx.
\]
Recall also that $z_{\lambda,\eps_n}\in\mathcal N_{\lambda,\eps_n}$. Therefore, 
\[
I_{\lambda,\eps_n}(z_{\lambda,\eps_n})
=\int_\Omega\left[G_{\eps_n}(z_{\lambda,\eps_n})
-\frac12g_{\eps_n}(z_{\lambda,\eps_n})z_{\lambda,\eps_n}\right]dx
+\frac{\lambda(p-1)}{2(p+1)}
\int_\Omega z_{\lambda,\eps_n}^{p+1}\,dx.
\]

Let $\rho>0$ and $A:=\rho^2/4$ be given by
Lemma~\ref{GMP1} with $\lambda=\lambda^*$. Notice that, for
$\lambda\leq\lambda^*$,
\[
I_{\lambda,\eps}(u)
=
I_{\lambda^*,\eps}(u)
+
\frac{\lambda^*-\lambda}{p+1}\|u\|_{p+1}^{p+1}
\geq
I_{\lambda^*,\eps}(u).
\]
Hence, the same constant $A>0$ can be used for the Mountain Pass
geometry for each $\lambda\leq\lambda^*$ sufficiently close to
$\lambda^*$.

Let $\delta>0$ be given by Proposition~\ref{min-real-N-eps} and
$\delta_1>0$ by Proposition~\ref{MPS}. In light of Lemma~\ref{c-lamb-cont},
there exists $\delta_3>0$ such that
\[
c_{\mathcal N_\lambda^+}<A,
\qquad
\lambda\in(\lambda^*-\delta_3,\lambda^*).
\]
Set
\(
\delta_2
:=
\min\left\{
\delta,\delta_1,\delta_3,
\frac{\lambda^*-\lambda_*}{2}
\right\}.
\)
Therefore, Proposition~\ref{eps-0} yields
\[
I_{\lambda,\eps_n}(v_{\lambda,\eps_n})
=
c_{\mathcal N_{\lambda,\eps_n}^+}
<
c_{\mathcal N_\lambda^+}
<A,
\]
whereas Proposition~\ref{MPS} gives
$
I_{\lambda,\eps_n}(u_{\lambda,\eps_n})\geq A.
$
Passing to the limit in the preceding energy identity, we obtain
\[
\frac{1+\beta}{2(1-\beta)}
\int_\Omega |v_\lambda|^{1-\beta}\,\mathrm{d}x
+
\frac{\lambda(p-1)}{2(p+1)}
\int_\Omega |v_\lambda|^{p+1}\,\mathrm{d}x
<A,
\]
while
\[
\frac{1+\beta}{2(1-\beta)}
\int_\Omega |u_\lambda|^{1-\beta}\,\mathrm{d}x
+
\frac{\lambda(p-1)}{2(p+1)}
\int_\Omega |u_\lambda|^{p+1}\,\mathrm{d}x
\geq A.
\]

Now, for each $\lambda\geq\lambda^*$ with $0<\eps\ll1$,
$\lambda\geq\lambda_{\eps,*}$ and
$I_{\lambda,\eps}(v_{\lambda,\eps})\leq0$, we infer that
$I_{\lambda,\eps}(u_{\lambda,\eps})\geq A>0$. Passing to the limit
as above again yields $u_\lambda\neq v_\lambda$.
Moreover, Proposition~\ref{N-way-origin-eps} yields, uniformly in
$\eps_n$,
\(
\|u_{\lambda,\eps_n}\|_{p+1}, \|v_{\lambda,\eps_n}\|_{p+1}\geq\delta_\lambda>0.
\)
This completes the proof. 
\end{proof}

\begin{prop}\label{C-cont-eps}
    Assume $0 < \beta < 1$ and that $(Q)$ is satisfied. There exists $\delta = \delta(\lambda^*) > 0$ such that, for every $\lambda > \lambda^* - \delta$, the map $\epsilon \mapsto c_{\mathcal{N}_{\lambda,\epsilon}^+}$ is right-continuous at $\epsilon = 0$.
\end{prop}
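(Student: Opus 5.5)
We must show $c_{\mathcal N^+_{\lambda,\eps}}\to c_{\mathcal N^+_\lambda}$ as $\eps\to0^+$, for $\lambda>\lambda^*-\delta$, with $\delta$ the minimum of the constants in Proposition~\ref{min-real-N+}, Proposition~\ref{min-real-N-eps} and Corollary~\ref{C-monotone-eps}. By Corollary~\ref{C-monotone-eps} the map $\eps\mapsto c_{\mathcal N^+_{\lambda,\eps}}$ is strictly decreasing near $0$, and Proposition~\ref{eps-0} gives $c_{\mathcal N^+_{\lambda,\eps}}<c_{\mathcal N^+_\lambda}$. Hence $L:=\lim_{\eps\to0^+}c_{\mathcal N^+_{\lambda,\eps}}$ exists and $L\le c_{\mathcal N^+_\lambda}$. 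The proof therefore reduces to the reverse inequality $L\geq c_{\mathcal N^+_\lambda}$.

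\textbf{Upper-bound route.} One approach uses the minimizer $v_\lambda\in\mathcal N_\lambda^+$ of Proposition~\ref{min-real-N+}. Since $\lambda_n(v_\lambda)<\lambda$, Lemma~\ref{dep-cont-eps} gives $\lambda_{n,\eps}(v_\lambda)\to\lambda_n(v_\lambda)$, so $v_\lambda\in U_{\lambda,\eps}$ for small $\eps$. The root $t^+_{\lambda,\eps}(v_\lambda)$ of $R_{n,\eps}(tv_\lambda)=\lambda$ converges to $t^{n,+}_\lambda(v_\lambda)=1$. This follows from local uniform convergence $R_{n,\eps}(tv_\lambda)\to R_n(tv_\lambda)$, which holds by dominated convergence since $g_\eps(s)s\le |s|^{1-\beta}$, together with the simplicity of the root. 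This only yields $c_{\mathcal N^+_{\lambda,\eps}}\le I_{\lambda,\eps}(t^+_{\lambda,\eps}v_\lambda)\to I_\lambda(v_\lambda)$, which is the direction we already have. The real content is the lower bound.

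\textbf{Lower bound: compactness argument.} Take $\eps_k\to0$ and the minimizers $v_k:=v_{\lambda,\eps_k}\in\mathcal N^+_{\lambda,\eps_k}$ from Proposition~\ref{min-real-N-eps}, which are nonnegative after replacing by absolute values. By Proposition~\ref{N-way-origin-eps}, $(v_k)$ is bounded and $\|v_k\|_{p+1}\ge\delta_\lambda$ uniformly. So $v_k\rightharpoonup v\neq0$, with strong convergence in $L^r$ for $r<2^*$. Passing to the limit in the nonlinear terms, as in Proposition~\ref{lim-sol}, gives
\[
\int_\Omega g_{\eps_k}(v_k)v_k\to\|v\|_{1-\beta}^{1-\beta},\qquad \int_\Omega G_{\eps_k}(v_k)\to\tfrac1{1-\beta}\|v\|_{1-\beta}^{1-\beta}.
\]
Weak lower semicontinuity of the norm then yields $\phi_v'(1)\le 0$ and $I_\lambda(v)\le\liminf I_{\lambda,\eps_k}(v_k)=L$. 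Thus $R_n(v)\le\lambda$, hence $\lambda_n(v)\le\lambda$, and we split into cases.
\begin{itemize}
\item[(a)] If $\lambda_n(v)<\lambda$, then $t^{n,+}_\lambda(v)\geq1$ exists, and $\phi_v$ is nonincreasing on $[1,t^{n,+}_\lambda(v)]$ because $\phi_v'\le0$ between the two roots. Hence $c_{\mathcal N^+_\lambda}\le I_\lambda(t^{n,+}_\lambda(v)v)\le I_\lambda(v)\le L$.
\item[(b)] If $\lambda_n(v)=\lambda$, we argue as in Case~1 of Proposition~\ref{min-real-c+}: $v\in\mathcal N^0_\lambda$ and the norms converge, so $v_k\to v$ strongly. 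Then $L=I_\lambda(v)\ge\sigma_\lambda$ by Proposition~\ref{N0-dist-0}. This contradicts $L<c_{\mathcal N^+_\lambda}<\sigma_{\lambda^*}\le\sigma_\lambda$ for $\lambda$ near $\lambda^*$, as in Proposition~\ref{min-real-N+}. For $\lambda\ge\lambda^*$ the contradiction is simply $L\le0<\sigma_\lambda$.
\end{itemize}
In either case $L\ge c_{\mathcal N^+_\lambda}$, which gives right-continuity. Since every sequence $\eps_k$ has a subsequence with this limit, the full limit follows.

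\textbf{Main obstacle.} The delicate step is passing to the limit in the singular terms, $\int g_{\eps_k}(v_k)v_k\to\int|v|^{1-\beta}$. I would use $0\le g_\eps(s)s\le|s|^{1-\beta}$ and pointwise convergence $g_{\eps_k}(v_k)v_k\to|v|^{1-\beta}$ a.e. The pointwise limit holds on $\{v>0\}$. On $\{v=0\}$ it holds because $0\le g_\eps(s)s\le |s|^{1-\beta}$ and $v_k\to 0$ there. Domination comes from $|v_k|^{1-\beta}$, which converges in $L^1$ by strong $L^{p+1}$ convergence, so the generalized dominated convergence theorem applies. The $G_\eps$ term is handled the same way, using Lemma~\ref{G,g}.
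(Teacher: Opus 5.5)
Your proof is correct, but it is organized differently from the paper's. The paper never passes to a weak limit to get the lower bound. Instead, for each small $\eps$ it projects the perturbed minimizer $v_{\lambda,\eps}\in\mathcal N^+_{\lambda,\eps}$ onto $\mathcal N^+_\lambda$ through $t^+_\lambda(v_{\lambda,\eps})<1$ (the comparison in Proposition~\ref{compar-ep}). It then uses that $t\mapsto I_\lambda(tv_{\lambda,\eps})$ increases on $[t^+_\lambda(v_{\lambda,\eps}),1]$, and obtains the explicit gap bound
\[
0<c_{\mathcal N_\lambda^+}-c_{\mathcal N_{\lambda,\eps}^+}<\int_\Omega\Bigl[\frac{|v_{\lambda,\eps}|^{1-\beta}}{1-\beta}-G_\eps(v_{\lambda,\eps})\Bigr]\,dx .
\]
Its right-hand side tends to zero along subsequences by the same convergence of the singular integrals that you verify in your last paragraph.

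That route is quantitative, but it needs $\lambda_n(v_{\lambda,\eps})<\lambda$ for $t^+_\lambda(v_{\lambda,\eps})$ to exist at all. Proposition~\ref{compar-ep} does not supply this: its comparison is stated for $u\in U_{\lambda,\eps_1}$, here with $\eps_1=0$, whereas the inclusion $U_\lambda\subset U_{\lambda,\eps}$ runs the wrong way. Your route sidesteps the issue by projecting only the weak limit $v$, for which $R_n(v)\le\lambda$ follows from lower semicontinuity. You then rule out the degenerate alternative $\lambda_n(v)=\lambda$ with the energy gap $\sigma_\lambda$ on $\mathcal N^0_\lambda$. Your argument also only uses that $v_k\in\mathcal N_{\lambda,\eps_k}$ carries the energy $c_{\mathcal N^+_{\lambda,\eps_k}}$.

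Two minor remarks on case (b). You only know $L\le c_{\mathcal N^+_\lambda}$, not the strict inequality you wrote, but that suffices. Strong convergence is also unnecessary there, since $\sigma_\lambda\le I_\lambda(v)\le L$ already yields the contradiction.
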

\begin{proof}
Now, by using Propositions~\ref{eps-0} and~\ref{min-real-N-eps}, for $0<\eps\ll1$ there exists
$v_{\lambda,\eps}\in\mathcal N_{\lambda,\eps}^+$ such that
$I_{\lambda,\eps}(v_{\lambda,\eps})=c_{\mathcal N_{\lambda,\eps}^+}$
and
$c_{\mathcal N_{\lambda,\eps}^+}<c_{\mathcal N_\lambda^+}$.
Moreover, by Proposition~\ref{compar-ep},
$t_\lambda^+(v_{\lambda,\eps})<t_{\lambda,\eps}^+(v_{\lambda,\eps})=1$.
Hence, by the fibering properties,
\[
c_{\mathcal N_{\lambda,\eps}^+}
<c_{\mathcal N_\lambda^+}
\leq I_\lambda(t_\lambda^+(v_{\lambda,\eps})v_{\lambda,\eps})
<I_\lambda(v_{\lambda,\eps}),
\]
and consequently
\begin{equation}\label{est-c-eps=0}
0<c_{\mathcal N_\lambda^+}-c_{\mathcal N_{\lambda,\eps}^+}
<
\int_\Omega\left[
\frac{|v_{\lambda,\eps}|^{1-\beta}}{1-\beta}
-G_\eps(v_{\lambda,\eps})
\right]\,dx.
\end{equation}

Let $\eps_n\to0^+$ be arbitrary and set
$v_n:=v_{\lambda,\eps_n}$. By Proposition~\ref{lim-sol}, up to a subsequence,
$v_n\to v_\lambda$ in $L^1(\Omega)$ and
\[
\int_\Omega G_{\eps_n}(v_n)\,dx
\longrightarrow
\frac{1}{1-\beta}\int_\Omega|v_\lambda|^{1-\beta}\,dx.
\]
Since $0<1-\beta<1$,
\[
\int_\Omega\bigl||v_n|^{1-\beta}-|v_\lambda|^{1-\beta}\bigr|\,dx
\leq
|\Omega|^\beta
\|v_n-v_\lambda\|_{L^1(\Omega)}^{1-\beta}
\longrightarrow0.
\]
Therefore, the right-hand side of \eqref{est-c-eps=0} converges to zero, which implies that
\(
c_{\mathcal N_{\lambda,\eps_n}^+}
\longrightarrow
c_{\mathcal N_\lambda^+}.
\)

Now, by Corollary~\ref{C-monotone-eps}, the map
$\eps\mapsto c_{\mathcal N_{\lambda,\eps}^+}$ is monotone for $\eps>0$
sufficiently small. Thus the limit at $\eps=0$ of the expression on the right-hand side is finite.
On the other hand, the sequence $(\eps_n)$ considered above was
arbitrary, and we have shown that it possesses a subsequence along
which
$
    c_{\mathcal N_{\lambda,\eps_n}^+}
    \rightarrow
    c_{\mathcal N_\lambda^+}.
$
Therefore,
\[
\lim_{\eps\to0^+}c_{\mathcal N_{\lambda,\eps}^+}
=
c_{\mathcal N_\lambda^+}.
\]
This finishes the proof. 
\end{proof}

\begin{lem}\label{lem:localized-test}
 Let $u\in H_0^1(\Omega)$ be a weak solution of problem~\eqref{P}.
Assume that $\varphi\in H_0^1(\Omega)\cap L^\infty(\Omega)$ and that, for some $\delta>0$, $\varphi=0$ a.e. on $\{|u|\leq\delta\}$. Then $\varphi$ is admissible as a test function in the weak formulation of~\eqref{P}. More precisely,
\[
    \int_\Omega \nabla u\nabla\varphi\,dx
    +
    \int_{\{u\neq0\}}
    \frac{u}{|u|^{\beta+1}}\varphi\,dx
    =
    \lambda
    \int_\Omega |u|^{p-1}u\varphi\,dx.
\]
\end{lem}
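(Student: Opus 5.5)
We extend the weak formulation from $C_c^\infty(\Omega)$ to the given $\varphi$ by approximation. The key point is that on the support of $\varphi$ the singular term is bounded. Indeed, on $\{|u|>\delta\}$ we have $\bigl|u/|u|^{\beta+1}\bigr|=|u|^{-\beta}\le\delta^{-\beta}$, and $\varphi=0$ elsewhere. Hence the singular integral against $\varphi$ is really an integral of an $L^\infty$ function against $\varphi$.

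\textbf{Step 1: localization.} The set $\{|u|>\delta\}$ need not be compactly contained in $\Omega$, while the weak formulation only uses $L^1_{\mathrm{loc}}$ integrability. To handle this, first replace $\varphi$ by $\varphi\,\eta_k$. Here $\eta_k\in C_c^\infty(\Omega)$, $0\le\eta_k\le1$, $\eta_k\to1$ pointwise, and $\eta_k\varphi\to\varphi$ in $H_0^1(\Omega)$. Such cutoffs exist because $\varphi\in H_0^1\cap L^\infty$: approximate $\varphi$ by $C_c^\infty$ functions $\psi_k$ and truncate. More cleanly, use $\varphi_k:=T(\psi_k)$ composed with a smooth cutoff of $|u|$, as in Step 2.

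\textbf{Step 2: mollification preserving the vanishing condition.} Choose $\psi_k\in C_c^\infty(\Omega)$ with $\psi_k\to\varphi$ in $H_0^1$, a.e., and $\|\psi_k\|_\infty\le\|\varphi\|_\infty+1$; truncation by a smooth bounded function keeps this. Let $\theta\in C^\infty(\mathbb R)$ vanish on $[-\delta/2,\delta/2]$ and equal $1$ outside $(-\delta,\delta)$. Since $u$ is only $H^1$, $\theta(u)\psi_k$ is not smooth. So instead test the equation directly with $\psi_k$ and write $\psi_k=\theta(u)\psi_k+(1-\theta(u))\psi_k$. This is awkward; the simpler route is a density argument in the space of test functions. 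The identity holds for all $\psi\in C_c^\infty$, and both sides are continuous in $\psi$ with respect to $H_0^1$-convergence plus bounded a.e. convergence on sets where the singular weight is integrable. Thus the identity extends to $\psi\in H_0^1\cap L^\infty$ with compact support in $\Omega$, by mollifying: $\psi*\rho_j\to\psi$ in $H^1$, uniformly bounded, supported in a fixed compact $K$. Then use dominated convergence with dominant $\|\psi\|_\infty|u|^{-\beta}\chi_{\{u\neq0\}}\chi_K\in L^1$. Applying this to $\psi=\eta_k\varphi$, the singular term equals $\int_{\{|u|>\delta\}}u|u|^{-\beta-1}\eta_k\varphi$.

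\textbf{Step 3: remove the cutoff.} Let $k\to\infty$. The gradient term converges since $\eta_k\varphi\to\varphi$ in $H_0^1$, which requires choosing $\eta_k$ with $\|\nabla\eta_k\,\varphi\|_2\to0$. The sublinear term converges by dominated convergence, with dominant $\lambda|u|^p\|\varphi\|_\infty\in L^1$. The singular term converges by dominated convergence with dominant $\delta^{-\beta}\|\varphi\|_\infty\in L^1(\Omega)$, which uses $\Omega$ bounded.

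\textbf{Main obstacle.} The delicate step is producing $\eta_k\varphi\to\varphi$ in $H_0^1$ with $\eta_k\in C_c^\infty$, since $\varphi$ may not have compact support. I would try to avoid the cutoff $\eta_k$ altogether. Take $\varphi_k\in C_c^\infty$ with $\varphi_k\to\varphi$ in $H_0^1$, and set $\tilde\varphi_k:=\max\{\min\{\varphi_k,\varphi^+\},-\varphi^-\}$-type truncations. These are compactly supported, lie in $H_0^1\cap L^\infty$, vanish where $\varphi$ does, and converge to $\varphi$ in $H_0^1$ by continuity of lattice operations in $H^1$. Step 2 applies to each $\tilde\varphi_k$, and Step 3's dominated convergence then passes to the limit.
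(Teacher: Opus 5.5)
Your final argument, the lattice truncation in your last paragraph combined with the compact-support extension from your Step 2, is correct. It removes the compact-support restriction differently from the paper.

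\textbf{Shared first step.} Both proofs first extend the identity to $\psi\in H_0^1(\Omega)\cap L^\infty(\Omega)$ with compact support. This uses mollification and dominated convergence with dominant $\|\psi\|_\infty|u|^{-\beta}\chi_{\{u\neq0\}}\chi_{K'}\in L^1$, where $K'$ is a fixed compact set.

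\textbf{The paper's second step.} The paper takes uniformly bounded $v_j\in C_c^\infty(\Omega)$ with $v_j\to\varphi$ in $H_0^1$ and a.e. It tests with $\psi_j=\vartheta_\delta(u)v_j$, where $\vartheta_\delta$ is a Lipschitz cutoff vanishing on $[-\delta/2,\delta/2]$. Multiplying by $\vartheta_\delta(u)$ forces $\psi_j=0$ on $\{|u|\le\delta/2\}$. Proving $\psi_j\to\varphi$ in $H_0^1$ then requires controlling the extra term $\vartheta_\delta'(u)(v_j-\varphi)\nabla u$, by dominated convergence with dominant $C|\nabla u|^2$.

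\textbf{Your second step.} Your truncation $\tilde\varphi_k=\max\{\min\{\varphi_k,\varphi^+\},-\varphi^-\}$ squeezes the approximant between $0$ and $\varphi$. So the following are inherited directly, without using $u$ in the construction:
\begin{itemize}
\item compact support, since $\tilde\varphi_k=0$ wherever $\varphi_k=0$;
\item the bound $|\tilde\varphi_k|\le|\varphi|$;
\item vanishing on $\{|u|\le\delta\}$.
\end{itemize}
The singular weight on the support is then bounded by $\delta^{-\beta}$ directly. The price is the continuity of $f\mapsto f^+$ on $H^1$, which is standard.

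\textbf{Clean-up of the write-up.} Two earlier parts should be cut or corrected.
\begin{itemize}
\item Your objection to $\theta(u)\psi_k$, that it is not smooth, is not a real obstacle. Once the compact-support extension is available, no smoothness is needed, and this is exactly how the paper proceeds.
\item The cutoff route with $\eta_k$ in your Steps 1 and 3 is unjustified as written. Approximating $\varphi$ by $C_c^\infty$ functions and truncating does not produce cutoffs with $\|\varphi\nabla\eta_k\|_2\to0$. On a smooth bounded domain such cutoffs do exist, via Hardy's inequality $\int_\Omega\varphi^2/d(x)^2\,dx\le C\|\varphi\|^2$, but you never argue this. Drop that route and present only the lattice version.
\end{itemize}
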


\begin{proof}
We divide the argument into two steps.

\medskip
\noindent
\textbf{Step 1.} Extension to bounded $H_0^1$-functions with compact support.
Let $\psi\in H_0^1(\Omega)\cap L^\infty(\Omega)$ have compact
support in $\Omega$. Since
\[
\frac{u}{|u|^{\beta+1}}\chi_{\{u\neq0\}}\in L_{\rm loc}^1(\Omega)
\quad\text{and}\quad
|u|^p\in L^1(\Omega),
\]
a standard density argument extends the weak formulation from
$C_c^\infty(\Omega)$ to $\psi$. Indeed, approximating $\psi$ in
$H_0^1(\Omega)$ by uniformly bounded smooth functions whose supports
are contained in a fixed compact subset of $\Omega$, the gradient
term converges by the $H_0^1$ convergence, whereas the remaining
terms converge by the Dominated Convergence Theorem, using
$\frac{u}{|u|^{\beta+1}}\chi_{\{u\neq0\}}\in L^1_{\rm loc}(\Omega)$ and
$|u|^p\in L^1(\Omega)$ together with the uniform boundedness of the
approximating sequence. Consequently,
\[
\int_\Omega\nabla u\nabla\psi\,dx
+
\int_{\{u\neq0\}}
\frac{u}{|u|^{\beta+1}}\psi\,dx
=
\lambda\int_\Omega |u|^{p-1}u\psi\,dx.
\]

\medskip
\noindent
\textbf{Step 2.} Removal of the compact-support assumption. Let $\varphi\in H_0^1(\Omega)\cap L^\infty(\Omega)$ satisfy
$\varphi=0$ a.e. on $\{|u|\leq\delta\}$. Let
$\vartheta_\delta:\mathbb R\to[0,1]$ be a Lipschitz function such that
\[
\vartheta_\delta(s)=0 \quad\text{if }|s|\leq\frac{\delta}{2},
\qquad
\vartheta_\delta(s)=1 \quad\text{if }|s|\geq\delta.
\]
Choose $(v_j)\subset C_c^\infty(\Omega)$ such that
$v_j\to\varphi$ in $H_0^1(\Omega)$ and a.e. in $\Omega$, with
$|v_j|\leq C\|\varphi\|_\infty$. Now, we consider the set
\[
\psi_j:=\vartheta_\delta(u)v_j.
\]
Then $\psi_j\in H_0^1(\Omega)\cap L^\infty(\Omega)$ has compact
support, and hence it is an admissible test function by Step~1.

Since $\vartheta_\delta(u)\varphi=\varphi$, the Sobolev product and
chain rules, together with dominated convergence, yield
\[
\psi_j\to\varphi
\quad\text{in }H_0^1(\Omega).
\]
Moreover, $\psi_j=0$ on $\{|u|\leq\delta/2\}$, so that
\[
\left|
\frac{u}{|u|^{\beta+1}}\psi_j
\right|
\leq
C\|\varphi\|_\infty
\left(\frac{2}{\delta}\right)^\beta
\quad\text{a.e. in }\Omega.
\]
Hence, by the Dominated Convergence Theorem, we may pass to the limit
in the singular term. The reaction term is treated similarly since
$|u|^p\in L^1(\Omega)$ and the sequence $(\psi_j)$ is uniformly
bounded. Letting $j\to\infty$ in the weak formulation
for $\psi_j$, we obtain
\[
\int_\Omega \nabla u\nabla\varphi\,dx
+
\int_{\{u\neq0\}}
\frac{u}{|u|^{\beta+1}}\varphi\,dx
=
\lambda
\int_\Omega |u|^{p-1}u\varphi\,dx.
\]
This completes the proof.
\end{proof}

\begin{lem}\label{lem:nehari-identity-solution}
Let $u\in H_0^1(\Omega)$ be a nontrivial nonnegative weak solution of
Problem~\eqref{P}. Then
\begin{equation}\label{eq:nehari-identity-solution}
    \|u\|^2
    +
    \|u\|_{1-\beta}^{1-\beta}
    =
    \lambda
    \|u\|_{p+1}^{p+1}.
\end{equation}
In particular,
\(
    u\in\mathcal N_\lambda.
\)
\end{lem}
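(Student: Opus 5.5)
The idea is to test the weak formulation with $u$ itself. Since the singular term makes $u$ inadmissible a priori, we approximate it by truncations vanishing near $\{u=0\}$ and then pass to the limit using monotone convergence. Throughout, $u\geq0$, so $\{|u|>0\}=\{u>0\}$ and the singular term reads $u^{-\beta}$ on that set.

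\textbf{Truncation.} For $\delta>0$ and $M>\delta$, set $T_{\delta,M}(s):=\min\{(s-\delta)^+,M\}$ and $\varphi_{\delta,M}:=T_{\delta,M}(u)$. This function lies in $H_0^1(\Omega)\cap L^\infty(\Omega)$, since $T_{\delta,M}$ is Lipschitz with $T_{\delta,M}(0)=0$. Moreover, $\varphi_{\delta,M}=0$ on $\{u\leq\delta\}$. Lemma~\ref{lem:localized-test} therefore applies and gives
\[
\int_{\{\delta<u<\delta+M\}}|\nabla u|^2\,dx+\int_{\{u>0\}}u^{-\beta}\varphi_{\delta,M}\,dx=\lambda\int_\Omega u^p\varphi_{\delta,M}\,dx .
\]
Here we used $\nabla\varphi_{\delta,M}=\nabla u\,\chi_{\{\delta<u<\delta+M\}}$ by the chain rule.

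\textbf{Passage to the limit.} We first let $M\to\infty$ and then $\delta\to0^+$, or equivalently take $M=1/\delta$. The three terms behave as follows.
\begin{itemize}
\item The gradient term increases to $\int_{\{u>0\}}|\nabla u|^2=\|u\|^2$, because $\nabla u=0$ a.e. on $\{u=0\}$.
\item On the right-hand side, $0\leq u^p\varphi_{\delta,M}\leq u^{p+1}\in L^1$, so dominated convergence yields $\lambda\|u\|_{p+1}^{p+1}$.
\item For the singular term, the integrand $u^{-\beta}\varphi_{\delta,M}$ is nonnegative and increases pointwise to $u^{-\beta}u=u^{1-\beta}$ on $\{u>0\}$. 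By monotone convergence it converges to $\|u\|_{1-\beta}^{1-\beta}$, which is finite because $1-\beta<p+1$ and $\Omega$ is bounded.
\end{itemize}
Monotonicity in $\delta$ needs some care. For fixed $M=\infty$, the map $\delta\mapsto(u-\delta)^+$ is decreasing, so the integrand increases as $\delta\downarrow0$. With $M$ finite, we first send $M\to\infty$ by monotone convergence for each fixed $\delta$. That limit is finite, because the equality shows it is bounded by $\lambda\|u\|_{p+1}^{p+1}$. After that we send $\delta\to0$. These limits give \eqref{eq:nehari-identity-solution}.

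\textbf{Conclusion.} Since $u\neq0$, identity \eqref{eq:nehari-identity-solution} is exactly the defining relation in \eqref{nehari}, so $u\in\mathcal N_\lambda$.

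\textbf{Main obstacle.} The delicate point is the singular term: we have no a priori integrability of $u^{1-\beta}$ against the singular weight beyond $L^1_{\rm loc}$. The way around this is that all integrands are nonnegative, so monotone convergence applies without any domination. Local integrability of $u^{-\beta}\chi_{\{u>0\}}$ is used only through Lemma~\ref{lem:localized-test}, which makes the truncated test functions admissible.
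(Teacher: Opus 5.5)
Your proposal is correct and follows the paper's proof. Both test with $(u-\delta)^+$ through Lemma~\ref{lem:localized-test} and pass to the limit by monotone convergence, using $\nabla u=0$ a.e.\ on $\{u=0\}$. Your extra cap $\min\{(u-\delta)^+,M\}$ actually improves on the paper: it applies Lemma~\ref{lem:localized-test} to $(u-1/k)^+$ without checking the $L^\infty$ hypothesis, which is not known a priori for an arbitrary weak solution, whereas your truncation secures admissibility.
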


\begin{proof}
For $k\in\mathbb N$,  we consider 
\[
    \varphi_k:=\left(u-\frac1k\right)^+.
\]
Since $\varphi_k=0$ a.e. on $\{u\leq1/k\}$, the singular term is
bounded on the support of $\varphi_k$. Hence, by
Lemma~\ref{lem:localized-test}, $\varphi_k$ is an admissible test
function in the weak formulation of Problem~\eqref{P}. Therefore,
\[
\int_{\{u>1/k\}}|\nabla u|^2\,dx
+
\int_{\{u>1/k\}}
\frac{u-1/k}{u^\beta}\,dx
=
\lambda
\int_{\{u>1/k\}}
u^p\left(u-\frac1k\right)\,dx.
\]
Letting $k\to\infty$, we have
\[
\int_{\{u>1/k\}}|\nabla u|^2\,dx
\longrightarrow
\|u\|^2,
\]
since $\nabla u=0$ a.e. on $\{u=0\}$. Moreover, by the Monotone
Convergence Theorem applied to the sequences
$\frac{u-1/k}{u^\beta}\chi_{\{u>1/k\}}$ and
$u^p(u-1/k)\chi_{\{u>1/k\}}$ (which are nonnegative and increase
a.e. to $u^{1-\beta}$ and $u^{p+1}$, respectively), we obtain
\[
\int_{\{u>1/k\}}
\frac{u-1/k}{u^\beta}\,dx
\longrightarrow
\int_\Omega u^{1-\beta}\,dx
\qquad\mbox{and}\qquad
\int_{\{u>1/k\}}
u^p\left(u-\frac1k\right)\,dx
\longrightarrow
\int_\Omega u^{p+1}\,dx.
\]
As a consequence, we see that 
\[
\|u\|^2
+
\int_\Omega u^{1-\beta}\,dx
=
\lambda
\int_\Omega u^{p+1}\,dx.
\]
Hence
$u\in\mathcal N_\lambda$.
\end{proof}

Now, we turn to our main results. The proof strategy relies on the interplay between the techniques developed in \cite[Theorem 1.1]{Montenegro} and the preliminary results obtained in the previous sections.

\subsection{Proof of Theorem \ref{theorem1}} 
Choose $q>0$ satisfying $(Q)$, which is always possible since
$0<\beta,p<1$. In view of Lemma~\ref{gamma}-(iii), we have
$0<\lambda_*<\lambda^*<+\infty$. Let $\epsilon_1:=\min\{\delta_1,\delta_2\}$ and
$\lambda\in(\lambda^*-\epsilon_1,+\infty)$. By
Proposition~\ref{MPS}, for $0<\eps\ll1$ there exists a nonnegative
critical point
$u_{\lambda,\eps}\in H_0^1(\Omega)\setminus\{0\}$ such that
$
I_{\lambda,\eps}(u_{\lambda,\eps})\geq A.
$
Moreover, Proposition~\ref{lim-sol} yields, up to a subsequence,
$
u_{\lambda,\eps_n}\rightharpoonup u_\lambda
$ weakly in $H_0^1(\Omega)$, with $u_\lambda\neq0$. Hence,
$u_{\lambda,\eps_n}\to u_\lambda$ strongly in $L^r(\Omega)$ for every
$1\leq r<2^*$ and a.e. in $\Omega$. Since
$u_{\lambda,\eps_n}\geq0$, it follows that $u_\lambda\geq0$ a.e. in
$\Omega$. Furthermore,
\[
g_{\eps_n}(u_{\lambda,\eps_n})
=
\frac{u_{\lambda,\eps_n}^{q}}
{(u_{\lambda,\eps_n}+\eps_n)^{q+\beta}}.
\]
Therefore, proceeding as in \cite[Theorem 1.1]{Montenegro}, the local
estimates and the truncation argument used there allow us to pass to
the limit in the singular term. Consequently, $u_\lambda$ is a
nontrivial weak solution of Problem~\eqref{P}.

Now, we prove that the convergence is strong in $H_0^1(\Omega)$. Since
$u_\lambda$ is a nontrivial nonnegative weak solution of
Problem~\eqref{P}, Lemma~\ref{lem:nehari-identity-solution} yields
\[
\|u_\lambda\|^2
+
\int_\Omega u_\lambda^{1-\beta}\,dx
=
\lambda
\int_\Omega u_\lambda^{p+1}\,dx.
\]
On the other hand, taking into account that $u_{\lambda,\eps_n}$ is a critical point of
$I_{\lambda,\eps_n}$, we have
\[
\|u_{\lambda,\eps_n}\|^2
+\int_\Omega
g_{\eps_n}(u_{\lambda,\eps_n})u_{\lambda,\eps_n}\,\mathrm{d}x
=
\lambda\int_\Omega u_{\lambda,\eps_n}^{p+1}\,\mathrm{d}x.
\]
Arguing as in the proof of Proposition~\ref{lim-sol}, we obtain
\[
\int_\Omega
g_{\eps_n}(u_{\lambda,\eps_n})u_{\lambda,\eps_n}\,\mathrm{d}x
\to
\int_\Omega u_\lambda^{1-\beta}\,\mathrm{d}x.
\]
Moreover, by using the fact that $u_{\lambda,\eps_n}\to u_\lambda$ in $L^{p+1}(\Omega)$, we deduce that 
\[
\lim_{n\to\infty}\|u_{\lambda,\eps_n}\|^2
=
\lambda\int_\Omega u_\lambda^{p+1}\,\mathrm{d}x
-\int_\Omega u_\lambda^{1-\beta}\,\mathrm{d}x
=
\|u_\lambda\|^2.
\]
Therefore, $u_{\lambda,\eps_n}\to u_\lambda$ strongly in $H_0^1(\Omega)$.

By Proposition~\ref{lim-sol} and the strong convergence
obtained above, we conclude that
$
I_{\lambda,\eps_n}(u_{\lambda,\eps_n})
\to I_\lambda(u_\lambda).
$
It follows from $I_{\lambda,\eps_n}(u_{\lambda,\eps_n})\geq A$ that
$I_\lambda(u_\lambda)\geq A$. Hence, Problem~\eqref{P} admits a
nontrivial weak solution $u_\lambda$ for every
$\lambda\in(\lambda^*-\epsilon_1,+\infty)$. This finishes the proof of Theorem \ref{theorem1}.

\subsection{Proof of Theorem \ref{theorem2}} 
Let us consider $q>0$ satisfying $(Q)$. Choose $\epsilon_2>0$ sufficiently small so that
Propositions~\ref{min-real-N-eps}, \ref{lim-sol} and
\ref{C-cont-eps} can be applied for 
$\lambda\in(\lambda^*-\epsilon_2,+\infty)$. In view of Proposition~\ref{min-real-N-eps}, for
$0<\eps\ll1$ there exists
$v_{\lambda,\eps}\in\mathcal N_{\lambda,\eps}^+$ such that
$I_{\lambda,\eps}(v_{\lambda,\eps})
    =
    c_{\mathcal N_{\lambda,\eps}^+}.
$
Since $I_{\lambda,\eps}$ and
$\mathcal N_{\lambda,\eps}^+$ are invariant under the map
$u\mapsto |u|$, the function $|v_{\lambda,\eps}|$ is again a
minimizer of $I_{\lambda,\eps}$ on
$\mathcal N_{\lambda,\eps}^+$. Therefore, replacing
$v_{\lambda,\eps}$ by $|v_{\lambda,\eps}|$, we
assume that
$v_{\lambda,\eps}\geq0$ in $\Omega$.
Now, by using Lemma~\ref{min-sol},  $v_{\lambda,\eps}$ is a minimizer which give us
a critical point of $I_{\lambda,\eps}$. Hence
$I_{\lambda,\eps}'(v_{\lambda,\eps})=0.$ Now, by taking $\eps_n\to0^+$ and taking into account Proposition~\ref{lim-sol}, we infer that
$
v_{\lambda,\eps_n}\rightharpoonup v_\lambda
$ weakly in $H_0^1(\Omega)$,
where $v_\lambda\neq0$. Arguing similarly to the proof of Theorem~\ref{theorem1}, with
$v_{\lambda,\eps_n}$ in place of $u_{\lambda,\eps_n}$, we conclude
that $v_\lambda$ is a nontrivial weak solution of
Problem~\eqref{P}  and 
$
v_{\lambda,\eps_n}\to v_\lambda
$ strongly in $H_0^1(\Omega)$.
Consequently, Proposition~\ref{lim-sol} yields
$
I_{\lambda,\eps_n}(v_{\lambda,\eps_n})
\to I_\lambda(v_\lambda).
$
On the other hand, Proposition~\ref{C-cont-eps} gives 
\(
c_{\mathcal N_{\lambda,\eps_n}^+}
\to c_{\mathcal N_\lambda^+}
\)
as $n\to \infty$. Therefore,
$
I_\lambda(v_\lambda)=c_{\mathcal N_\lambda^+}.
$

Now, we prove that $v_\lambda\in\mathcal N_\lambda^+$, i.e., item $(i)$ of Theorem \ref{theorem2}. The Nehari identity obtained in the same
truncation argument gives $v_\lambda\in\mathcal N_\lambda$. Moreover,
taking into account that $v_{\lambda,\eps_n}\in\mathcal N_{\lambda,\eps_n}^+$, we have that
\[
0<\phi_{\eps_n,v_{\lambda,\eps_n}}''(1)=
(1-p)\|v_{\lambda,\eps_n}\|^2
-\int_\Omega g_{\eps_n}(v_{\lambda,\eps_n})
v_{\lambda,\eps_n}
\left[
p+\beta-(q+\beta)
\frac{\eps_n}{|v_{\lambda,\eps_n}|+\eps_n}
\right]\,\mathrm{d}x.
\]
We observe that
\[
0\leq
\frac{\eps\,g_\eps(s)s}{|s|+\eps}
=
\eps^{1-\beta}
\frac{(|s|/\eps)^{q+1}}
{(1+|s|/\eps)^{q+\beta+1}}
\leq C\eps^{1-\beta},
\qquad s\in\mathbb R.
\]
Hence, by Proposition~\ref{lim-sol} and the strong convergence obtained
above, we mention that 
\[
\phi_{\eps_n,v_{\lambda,\eps_n}}''(1)
\to
(1-p)\|v_\lambda\|^2
-(p+\beta)\int_\Omega |v_\lambda|^{1-\beta}\,\mathrm{d}x
=
\phi_{v_\lambda}''(1).
\]
Therefore, $\phi_{v_\lambda}''(1)\geq 0$.

Now, we show that the previous inequality is strict. Recall from Proposition~\ref{N0-dist-0} that, for every
$\mu\geq\lambda_*$, there exists $\sigma_\mu>0$ such that $I_\mu(u)\geq\sigma_\mu$ for all $u\in\mathcal N_\mu^0$. Moreover, by the explicit expression obtained in
\eqref{sigm-lambda}, the map $\mu\mapsto\sigma_\mu$ is positive and continuous. Hence, after reducing $\epsilon_2>0$, we assume that $\lambda^*-\epsilon_2>\lambda_*$ and 
\[
    \sigma_0
    :=
    \inf_{\mu\in[\lambda^*-\epsilon_2,\lambda^*]}
    \sigma_\mu
    >0.
\]
In view of Lemma \ref{c-lamb-cont}, the map $\lambda\mapsto c_{\mathcal N_\lambda^+}$ is left-continuous at $\lambda=\lambda^*$. Moreover, it follows from Proposition \ref{min-real-c+} $(ii)$ that $c_{\mathcal N_{\lambda^*}^+}=0$. Thus, decreasing $\epsilon_2$ once more if necessary, we may assume
that
\[
    0<c_{\mathcal N_\lambda^+}
    <\sigma_0,
    \quad
    \text{for every }
    \lambda\in(\lambda^*-\epsilon_2,\lambda^*).
\]
Consequently, for each
$\lambda\in(\lambda^*-\epsilon_2,\lambda^*)$, we infer that 
\[
    c_{\mathcal N_\lambda^+}
    <
    \sigma_0
    \leq
    \sigma_\lambda
    \leq
    \inf_{u\in\mathcal N_\lambda^0}I_\lambda(u).
\]
For $\lambda\geq\lambda^*$, Proposition \ref{min-real-c+} $(ii)$ and $(iii)$ give $c_{\mathcal N_\lambda^+}\leq0$. Therefore, by using Proposition~\ref{N0-dist-0}, we also see that
\[
    \inf_{u\in\mathcal N_\lambda^0}I_\lambda(u)
    \geq\sigma_\lambda>0.
\]
Therefore, by using the cases considered above, we conclude that
\begin{equation}\label{energy-separation-N0}
    c_{\mathcal N_\lambda^+}
    <
    \inf_{u\in\mathcal N_\lambda^0}I_\lambda(u),
    \qquad
    \lambda\in(\lambda^*-\epsilon_2,+\infty).
\end{equation}

Now we return to prove that $\phi_{v_\lambda}''(1)>0$. Suppose, by contradiction, that $\phi_{v_\lambda}''(1)=0$. Since $v_\lambda$ is a nontrivial weak solution of \eqref{P}, it follows from Lemma~\ref{lem:nehari-identity-solution} that $v_\lambda\in\mathcal N_\lambda$. Consequently, $v_\lambda\in\mathcal N_\lambda^0$. Thus, by \eqref{energy-separation-N0},
\[
    I_\lambda(v_\lambda)
    \geq
    \inf_{u\in\mathcal N_\lambda^0}I_\lambda(u)
    >
    c_{\mathcal N_\lambda^+},
\]
which contradicts the fact that $I_\lambda(v_\lambda) = c_{\mathcal N_\lambda^+}$. Therefore $\phi_{v_\lambda}''(1)>0$
and $v_\lambda\in\mathcal N_\lambda^+$. This finishes the proof of item $(i)$.

Now, by reducing $\epsilon_2>0$, we assume that
$\lambda^*-\epsilon_2>\lambda_*$. Since
$
I_\lambda(v_\lambda)=c_{\mathcal N_\lambda^+},
$
the remaining assertions follow directly from
Proposition~\ref{min-real-c+}. The proof is now complete.

\subsection{Proof of Corollary \ref{coro}} 
By decreasing $\epsilon_2>0$, we assume that
$\epsilon_2\leq\epsilon_1$, where $\epsilon_1$ is given by
Theorem~\ref{theorem1}. Let
$\lambda\in(\lambda^*-\epsilon_2,+\infty)$. Then,
Theorem~\ref{theorem1} provides a nontrivial weak solution
$u_\lambda$ of Problem~\eqref{P}, whereas
Theorem~\ref{theorem2} provides another nontrivial weak solution
$v_\lambda$. Moreover, Proposition~\ref{lim-sol} yields
$u_\lambda\neq v_\lambda$. Therefore, Problem~\eqref{P} admits at
least two distinct nontrivial weak solutions for every
$\lambda\in(\lambda^*-\epsilon_2,+\infty)$.

\subsection{Proof of Theorems \ref{theorem4a} and \ref{theorem4}} 
By Theorem~\ref{theorem1}, there exists $\lambda_0<\lambda^*$ such
that Problem~\eqref{P} admits a nontrivial nonnegative weak solution.
Hence, by the definition of $\bar\lambda$,
\(
\bar\lambda\leq\lambda_0<\lambda^*.
\)
In order to prove that $\lambda_*<\bar\lambda$ and that the infimum
defining $\bar\lambda$ is attained, we first establish some auxiliary
results.
\begin{lem}
         Assume $0 < \beta < 1$ and that $(Q)$ is satisfied. Then the minimizer $u_{\eps,*}$ of the map $u \mapsto \lambda_{n,\eps}(u)$ satisfies $u_{\eps,*} \to u_*$ as $\eps\rightarrow0^{+}$, where $u_*$ is a minimizer for the functional $\lambda_n$.  Furthermore, the function $v:=t_n(u_*)u_*$ belongs to $\mathcal{N}_{\lambda_*}^0$ and satisfies
\begin{align}
2\int_\Omega \nabla v \nabla\varphi\,\mathrm{d}x
&+
(1-\beta)
\int_\Omega
\frac{v}{|v|^{\beta+1}}
\chi_{\{|v|>0\}}\varphi\,\mathrm{d}x =
\lambda_*(p+1)
\int_\Omega |v|^{p-1}v\varphi\,\mathrm{d}x,
\label{eq-lambda-n-local}
\end{align}
for all $\varphi\in H_0^1(\Omega)\cap L^\infty(\Omega)$ such that $\varphi=0$ a.e. in $\{|v|\leq\delta\}$, for some $\delta>0$.
\end{lem}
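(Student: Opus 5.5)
The plan is to extract everything from the proof of Lemma~\ref{dep-cont-eps} (Part~2), where the needed compactness was already shown, and then pass to the limit in \eqref{eq:3.36}. First, by the $0$--homogeneity in Lemma~\ref{N0-ep}~(i), I normalize the minimizer so that $t_{n,\eps}(u_{\eps,*})=1$, i.e.\ $u_{\eps,*}=w_{\eps,*}\in\mathcal N^0_{\lambda_{\eps,*},\eps}$. By the invariance $u\mapsto|u|$ I also take $u_{\eps,*}\geq0$. The argument in Lemma~\ref{dep-cont-eps} then gives, along any sequence $\eps_n\to0^+$, the following facts:
\begin{itemize}
\item $(u_{\eps_n,*})$ is bounded in $H_0^1(\Omega)$ and $\|u_{\eps_n,*}\|_{p+1}\geq C_{\lambda_*}>0$;
\item $u_{\eps_n,*}\to u_*\neq0$ strongly in $H_0^1(\Omega)$;
\item $\lambda_{\eps_n,*}\to\lambda_*=R_n(u_*)$.
\end{itemize}
Since $\lambda_*\leq\lambda_n(u_*)\leq R_n(u_*)=\lambda_*$, the function $u_*$ minimizes $\lambda_n$. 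Because $t_n(u_*)$ is the unique minimum point of $t\mapsto R_n(tu_*)$, we also get $t_n(u_*)=1$, so $v=u_*$. Proposition~\ref{der-Rn}~(iii) then gives $\phi_v''(1)=0$, hence $v\in\mathcal N^0_{\lambda_*}$ (consistent with Proposition~\ref{problema}~(ii)). To conclude convergence of the whole family and not just a subsequence, I will use that every limit point satisfies these properties; if that is not enough for uniqueness, I will simply state the result along subsequences.

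For the equation \eqref{eq-lambda-n-local}, fix $\varphi\in H_0^1(\Omega)\cap L^\infty(\Omega)$ with $\varphi=0$ a.e.\ on $\{|v|\leq\delta\}$, and test \eqref{eq:3.36} with $\varphi$ for $w_n:=w_{\eps_n,*}$. The gradient term converges to $2\int\nabla v\nabla\varphi$ by strong $H^1$ convergence. The right-hand side converges to $\lambda_*(p+1)\int|v|^{p-1}v\varphi$ because $\lambda_{\eps_n,*}\to\lambda_*$ and $w_n\to v$ in $L^{p+1}$. For the singular term, recall from Lemma~\ref{G,g} that $\Gamma_\eps(s)+g_\eps(s)=(1+\theta_\eps(s))g_\eps(s)$ with $\theta_\eps(s)=\frac{q\eps-\beta|s|}{|s|+\eps}\to-\beta$ pointwise for $s\neq0$. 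Hence, a.e.\ on $\{|v|>\delta\}$,
\[
\bigl[\Gamma_{\eps_n}(w_n)+g_{\eps_n}(w_n)\bigr]\varphi\longrightarrow(1-\beta)\frac{v}{|v|^{\beta+1}}\varphi .
\]
For domination I use $0\leq g_\eps(s)\leq(s+\eps)^{-\beta}\leq s^{-\beta}$ and $0<1+\theta_\eps\leq1+q$. On $\{w_n\geq\delta/2\}$ this gives the bound $C\delta^{-\beta}\|\varphi\|_\infty$.

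The main obstacle is the exceptional set $A_n:=\{|v|>\delta,\ w_n<\delta/2\}$. Its measure tends to $0$, but there the integrand is not uniformly bounded, since $g_\eps(s)$ can be as large as $\eps^{-\beta}$ near $s=0$. My first attempt will be to show that $A_n$ is eventually empty on the support of $\varphi$, via local uniform convergence $w_n\to v$. To get that, I will adapt the uniform $L^\infty$ bound and local gradient estimates of \cite[Lemma~2.4, Lemma~3.1]{Montenegro} to the equation $-2\Delta w=-(\Gamma_\eps+g_\eps)(w)+(p+1)\lambda_{\eps,*}w^p$ satisfied by $w_n$. This equation has the same structure as the one treated there, with $\Gamma_\eps+g_\eps\geq(1-\beta)g_\eps\geq0$ and $\Gamma_\eps+g_\eps\leq(1+q)g_\eps$. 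The resulting equicontinuity gives $w_n\to v$ locally uniformly in $\Omega$, so $w_n>\delta/2$ eventually on any compact subset of $\{v>\delta\}$.

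Near $\partial\Omega$ I will argue differently. If the estimates yield continuity of $v$ up to the boundary, then $\{v>\delta\}$ stays away from $\partial\Omega$ and the previous step covers all of $\operatorname{supp}\varphi$. Failing that, I will split $\varphi=\varphi^+-\varphi^-$ and use the sign of the singular integrand. Fatou's lemma applied to each part gives one inequality; combined with the convergence of all the other terms of the identity, the limit of $\int(\Gamma_{\eps_n}+g_{\eps_n})(w_n)\varphi^\pm$ is forced to exist. I would then close the argument with a Vitali-type uniform integrability estimate on $A_n$. In either case, dominated convergence yields \eqref{eq-lambda-n-local}.
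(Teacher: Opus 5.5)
Your first part matches the paper's proof. You normalize $t_{n,\eps}(u_{\eps,*})=1$ and import from Lemma~\ref{dep-cont-eps} the strong convergence $u_{\eps_n,*}\to u_*$ and $\lambda_{\eps_n,*}\to\lambda_*=R_n(u_*)$. You then deduce $t_n(u_*)=1$ and $v=u_*\in\mathcal N^0_{\lambda_*}$.

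\textbf{The gap is in the derivation of \eqref{eq-lambda-n-local}.} You correctly isolate the exceptional set $A_n=\{|v|>\delta,\ w_n<\delta/2\}$, but you never control the singular term there.
\begin{itemize}
\item Your main route needs equicontinuity of $w_n=w_{\eps_n,*}$. You obtain it only by asserting that the uniform $L^\infty$ and local gradient estimates of \cite{Montenegro} carry over to $-2\Delta w=-(\Gamma_\eps+g_\eps)(w)+(p+1)\lambda_{\eps,*}|w|^{p-1}w$. This is a different equation from the one treated there, and the paper supplies no such bounds for it. Bernstein-type gradient estimates depend delicately on the absorption term. You would also need a uniform barrier at $\partial\Omega$ to keep $\{v>\delta\}$ away from the boundary.
\item Your fallback (Fatou on $\varphi^\pm$ plus a ``Vitali-type'' bound on $A_n$) gives only one inequality. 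It restates the missing uniform integrability rather than proving it.
\end{itemize}
As written, the decisive step is unproved.

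\textbf{The missing idea is that no passage to the limit in \eqref{eq:3.36} is needed.} Since $R_n(v)=\lambda_*\le\lambda_n(w)\le R_n(w)$ for every $w\neq0$, $v$ is a global minimizer of $R_n$ itself, not merely along its fiber. Take $\varphi$ as in the statement and $|t|\le\delta/(2\|\varphi\|_\infty)$. Then $|v+t\varphi|\ge\delta/2$ a.e.\ on $\{\varphi\neq0\}$, and $v+t\varphi=v$ elsewhere. Hence $t\mapsto\int_\Omega|v+t\varphi|^{1-\beta}\,dx$ is differentiable at $t=0$, with derivative $(1-\beta)\int_\Omega|v|^{-\beta-1}v\varphi\,dx$. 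The condition $\left.\frac{d}{dt}R_n(v+t\varphi)\right|_{t=0}=0$, together with $R_n(v)=\lambda_*$, is exactly \eqref{eq-lambda-n-local}. This is the paper's argument, and it needs neither nonnegativity nor any regularity of $v$.

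If you prefer to keep your limiting route, it can be repaired cheaply. Test \eqref{eq:3.36} with $\varphi\,\vartheta(w_n)$, where $\vartheta\in C^1(\mathbb R)$, $0\le\vartheta\le1$, $\vartheta=0$ on $(-\infty,\delta/4]$ and $\vartheta=1$ on $[\delta/2,\infty)$.
\begin{itemize}
\item The singular integrand is bounded by $(1+q)(4/\delta)^{\beta}\|\varphi\|_\infty$, so dominated convergence applies.
\item The extra term $2\int_\Omega\varphi\,\vartheta'(w_n)|\nabla w_n|^2\,dx$ tends to $2\int_\Omega\varphi\,\vartheta'(v)|\nabla v|^2\,dx=0$. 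This uses the strong $H_0^1$ convergence you already have, and $\vartheta'(v)=0$ on $\{\varphi\neq0\}$.
\item The remaining terms pass to the limit directly, since $\vartheta(v)=1$ on $\{\varphi\neq0\}$.
\end{itemize}
No uniform gradient estimates are then required.
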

\begin{proof}
Since $u_{\eps,*}$ is a minimizer of $\lambda_{n,\eps}$ and it is $0$-homogeneous, we also have that $tu_{\eps,*}$ is a minimizer of $\lambda_{n,\eps}$ for all $t>0$. In particular, we can choose $t_{n,\eps}(u_{\eps,*})=1$. Moreover, up to a subsequence, $u_{\eps,*}\to u_*$ strongly in $H_0^1(\Omega)$ as $\eps\rightarrow0^{+}$, 
where $u_*$ is a minimizer of $\lambda_n$, $\lambda_{\eps,*}\to\lambda_*$ as $\eps\rightarrow0^{+}$ and $R_n(u_*)=\lambda_*$. Thus,
\[
\lambda_*
\leq
\lambda_n(u_*)
=
R_n(t_n(u_*)u_*)
\leq
R_n(u_*)
=
\lambda_*.
\]
Consequently, $\lambda_n(u_*)=R_n(u_*)=\lambda_*.$ Notice also that $t_n(u_*)$ is the unique critical point of the map $t\mapsto R_n(tu_*),~ t>0$. Therefore, $t_n(u_*)=1$ which implies that 
$
v:=t_n(u_*)u_*=u_*,
$ satisfies $d/d t[R_n(v)] = 0$. In view of Proposition \ref{der-Rn} (iii), we infer also that $v\in\mathcal N_{\lambda_*}^0$. Furthermore, for every
$w\in H_0^1(\Omega)\setminus\{0\}$, we have
\[
R_n(v)
=
\lambda_*
\leq
\lambda_n(w)
=
R_n(t_n(w)w)
\leq
R_n(w),
\]
which implies that $v$ is a minimizer of $R_n$.

Let $\delta>0$ and
$\varphi\in H_0^1(\Omega)\cap L^\infty(\Omega)$ satisfy
$\varphi=0$ a.e. on $\{|v|\leq\delta\}$. As was done in the proof of 
Lemma~\ref{lem:localized-test}, this condition controls the variation
on the singular set. More precisely, for $|t|$ sufficiently
small,
\[
|v+t\varphi|\geq\frac{\delta}{2}
\quad\text{a.e. on }\{\varphi\neq0\}.
\]
Hence, the map $t\mapsto R_n(v+t\varphi)$ is differentiable at
$t=0$. Since $v$ is a minimizer of $R_n$, it
follows that
\[
\left.
\frac{d}{dt}
R_n(v+t\varphi)
\right|_{t=0}
=0.
\]
Since $R_n(v)=\lambda_*$, we obtain
\eqref{eq-lambda-n-local}, which finishes the proof.
\end{proof}

\begin{prop}\label{non-exis-lam*}
Suppose $0<\beta,p < 1$ and $\lambda=\lambda_*$. Then Problem \eqref{P} has no nontrivial weak solution in $\mathcal{N}_{\lambda_*}^0$.
\end{prop}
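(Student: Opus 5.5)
We argue by contradiction. Suppose $u\in\mathcal N_{\lambda_*}^0$ is a nontrivial weak solution of $(P_{\lambda_*})$. The plan is to show that $u$ satisfies two different Euler--Lagrange equations at once. One comes from being a weak solution, the other from being a minimizer of $R_n$. Subtracting them will force $|u|$ to be constant on $\{u\neq0\}$, and this is incompatible with $u\in H_0^1(\Omega)\setminus\{0\}$.

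\textbf{Step 1: $u$ minimizes $R_n$.} Since $u\in\mathcal N_{\lambda_*}$, \eqref{a0i} gives $R_n(u)=\lambda_*$. For every $w\in H_0^1(\Omega)\setminus\{0\}$, the definition \eqref{mun} and Lemma~\ref{N0} (iv) give
\[
\lambda_*\le\lambda_n(w)=R_n(t_n(w)w)\le R_n(w).
\]
So $u$ is a global minimizer of $R_n$ on $H_0^1(\Omega)\setminus\{0\}$; this is the same observation used in the lemma preceding the statement.

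\textbf{Step 2: the minimization identity.} Fix $\delta>0$ and $\varphi\in H_0^1(\Omega)\cap L^\infty(\Omega)$ with $\varphi=0$ a.e. on $\{|u|\le\delta\}$. For $|t|$ small we have $|u+t\varphi|\ge\delta/2$ on $\{\varphi\ne0\}$. Hence $t\mapsto\|u+t\varphi\|_{1-\beta}^{1-\beta}$ is differentiable at $t=0$, with derivative $(1-\beta)\int_{\{u\neq0\}}u|u|^{-\beta-1}\varphi\,dx$. Differentiating $R_n(u+t\varphi)$ at $t=0$ and using $R_n(u)=\lambda_*$ gives, exactly as in \eqref{eq-lambda-n-local},
\[
2\int_\Omega\nabla u\nabla\varphi\,dx+(1-\beta)\int_{\{u\ne0\}}\frac{u}{|u|^{\beta+1}}\varphi\,dx=\lambda_*(p+1)\int_\Omega|u|^{p-1}u\varphi\,dx .
\]

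\textbf{Step 3: the weak-solution identity.} Lemma~\ref{lem:localized-test} shows that the same $\varphi$ is admissible in the weak formulation of $(P_{\lambda_*})$. Multiplying that identity by $2$ and subtracting the identity of Step 2 eliminates the gradient term:
\[
\int_{\{u\neq0\}}\Bigl[(1+\beta)\frac{u}{|u|^{\beta+1}}-(1-p)\lambda_*|u|^{p-1}u\Bigr]\varphi\,dx=0 .
\]

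\textbf{Step 4: $|u|$ is constant on $\{u\neq0\}$.} On $\{|u|>\delta\}$ the bracket above is a bounded function. Taking $\varphi=\operatorname{sign}(u)\,\vartheta_\delta(u)\,\psi$, with $\psi\in C_c^\infty(\Omega)$ arbitrary and $\vartheta_\delta$ the cutoff from Lemma~\ref{lem:localized-test}, shows that the bracket vanishes a.e. on $\{|u|>\delta\}$. Letting $\delta\to0$, it vanishes a.e. on $\{u\ne0\}$, so
\[
|u|=c:=\Bigl(\frac{1+\beta}{(1-p)\lambda_*}\Bigr)^{\frac1{p+\beta}}\quad\text{a.e. on }\{u\neq0\}.
\]

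\textbf{Step 5: contradiction.} By Step 4, $u$ takes only the values $0$ and $\pm c$ a.e. For $u\in H^1$, $\nabla u=0$ a.e. on each level set. Hence $\nabla u=0$ a.e. in $\Omega$, so $\|u\|=0$ and $u=0$, contradicting nontriviality.

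The main obstacle is the choice of test function in Step 2. The sign factor $\operatorname{sign}(u)$ is not Sobolev in general, so I will write it as $u\,\vartheta_\delta(u)/|u|$, which is a Lipschitz function of $u$ on $\{|u|\ge\delta/2\}$. The product with $\psi$ then stays in $H_0^1\cap L^\infty$ and still vanishes on $\{|u|\le\delta/2\}$, so both Lemma~\ref{lem:localized-test} and Step 2 apply (with $\delta/2$ in place of $\delta$). Alternatively, since the functionals are invariant under $u\mapsto|u|$, one may reduce to $u\ge0$, where no sign factor is needed.
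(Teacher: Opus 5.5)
Your proof is correct and follows the paper's argument step for step: global minimality of $R_n$ at $u$, the localized Euler--Lagrange identity, subtraction against the weak formulation via Lemma~\ref{lem:localized-test}, and a cutoff test function with the fundamental lemma to force $|u|=c$ a.e.\ on $\{u\neq0\}$. The sign factor in Step 4 is unnecessary, since testing with $\vartheta_\delta(u)\psi$ already works as in the paper: the bracket times $\vartheta_\delta(u)$ is integrable, though not necessarily bounded because $|u|^p$ need not be. Your Step 5, using $\nabla u=0$ a.e.\ on each level set, is a cleaner justification of the paper's terse final claim that $w\equiv0$.
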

\begin{proof}
Here, we use the ideas of \cite[Corollary 2.1]{AlvesSantosSilva}.
Assume, by contradiction, that there exists
$w\in\mathcal{N}_{\lambda_*}^0$ such that $w$ is a weak solution of
\eqref{P}. Since $R_n(w)=\lambda_*,$ we have
\[
R_n(w)
=
\lambda_*
\leq
\lambda_n(u)
=
R_n(t_n(u)u)
\leq
R_n(u),
\]
for every $u\in H_0^1(\Omega)\setminus\{0\}$. As a consequence, $w$ is a global minimizer of $R_n$.

Let $\delta>0$ and
$\varphi\in H_0^1(\Omega)\cap L^\infty(\Omega)$ satisfy
$\varphi=0$ a.e. on $\{|w|\leq\delta\}$. Since $w$ is a global
minimizer of $R_n$, the same argument can be used in order to show that the
derive \eqref{eq-lambda-n-local} yields
\[
2\int_\Omega \nabla w \nabla\varphi\,\mathrm{d}x
+
(1-\beta)\int_\Omega
\frac{w}{|w|^{\beta+1}}\varphi\,\mathrm{d}x
=
\lambda_*(p+1)
\int_\Omega |w|^{p-1}w\varphi\,\mathrm{d}x.
\]
On the other hand, Lemma~\ref{lem:localized-test} gives
\[
\int_\Omega \nabla w \nabla\varphi\,\mathrm{d}x
+
\int_\Omega
\frac{w}{|w|^{\beta+1}}\varphi\,\mathrm{d}x
=
\lambda_*
\int_\Omega |w|^{p-1}w\varphi\,\mathrm{d}x.
\]
As a consequence, we obtain
\begin{equation}\label{identity-w}
(1+\beta)
\int_\Omega
\frac{w}{|w|^{\beta+1}}\varphi\,\mathrm{d}x
=
\lambda_*(1-p)
\int_\Omega |w|^{p-1}w\varphi\,\mathrm{d}x.
\end{equation}
For $\delta>0$, choose a Lipschitz function
$\eta_\delta:\mathbb R\to[0,1]$ such that
\[
\eta_\delta(s)=0 \quad\text{if }|s|\leq\delta,
\qquad
\eta_\delta(s)>0 \quad\text{if }|s|>\delta,
\qquad
\eta_\delta(s)=1 \quad\text{if }|s|\geq2\delta.
\]
For every $\psi\in C_c^\infty(\Omega)$, the function
$\varphi=\eta_\delta(w)\psi$ belongs to
$H_0^1(\Omega)\cap L^\infty(\Omega)$ and vanishes on
$\{|w|\leq\delta\}$. Hence, taking this function in
\eqref{identity-w}, we obtain
\[
\int_\Omega
\left[
(1+\beta)\frac{w}{|w|^{\beta+1}}
-\lambda_*(1-p)|w|^{p-1}w
\right]
\eta_\delta(w)\psi\,\mathrm{d}x=0.
\]
Since $\psi\in C_c^\infty(\Omega)$ is arbitrary and
$\eta_\delta(w)>0$ on $\{|w|>\delta\}$, the Du Bois--Reymond
Lemma yields
\[
(1+\beta)\frac{w}{|w|^{\beta+1}}
=
\lambda_*(1-p)|w|^{p-1}w
\quad\text{a.e. on }\{|w|>\delta\}.
\]
Since $\delta>0$ is arbitrary,
\[
(1+\beta)\frac{w}{|w|^{\beta+1}}
=
\lambda_*(1-p)|w|^{p-1}w
\quad\text{a.e. on }\{w\neq0\}.
\]
Consequently,
\[
|w|
=
\left(
\frac{1+\beta}{\lambda_*(1-p)}
\right)^{\frac{1}{p+\beta}}=:c_*,
\quad\text{a.e. in }\{w\neq0\}.
\]
The last assertion implies also that $|w|$ is a constant function of $H_0^1(\Omega)$. That is $w\equiv 0$, which is an absurd. Therefore, Problem~\eqref{P} has no solution in
$\mathcal{N}_{\lambda_*}^0$.
\end{proof}



To prove that the extremal parameter $\bar\lambda$ is attained, we first
establish a compactness result for the regularized problems. Another feature here is to ensure a subsolution principle that implies the monotonicity on an appropriate interval for the parameter $\lambda  > 0$. 

\begin{lem}\label{lem:singular-compactness}
Suppose $0<\beta,p<1$, let $0<q<p$, and fix $\lambda>0$. Let
$\eps_n\to0^+$ and let $(w_n)\subset H_0^1(\Omega)$ be a sequence of
nonnegative weak solutions of
\[
-\Delta w_n+g_{\eps_n}(w_n)=\lambda w_n^p
\quad\text{in }\Omega,
\qquad
w_n=0
\quad\text{on }\partial\Omega.
\]
Then, up to a subsequence, there exists a nonnegative weak solution
$w\in H_0^1(\Omega)$ of Problem~\eqref{P} such that
\(
w_n\rightharpoonup w
\quad\text{in }H_0^1(\Omega),
~
w_n\to w
\quad\text{in }L^r(\Omega)
\)
for every $1\leq r<2^*$, and $w_n\to w$ a.e. in $\Omega$.
Moreover, the convergence is locally uniform in $\Omega$.
Furthermore, assuming that there exists
$u_0\in H_0^1(\Omega)\setminus\{0\}$, $u_0\geq0$, such that
\(
w_n\geq u_0
~\text{a.e. in }\Omega
\)
for every $n$, we obtain that $w\not\equiv0$.
\end{lem}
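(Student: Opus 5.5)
The plan is to follow the passage to the limit of \cite{Montenegro}, now for an arbitrary sequence of nonnegative regularized solutions rather than for the specific minimizers or mountain pass points. The first step is a uniform $H_0^1$ bound. Testing the equation with $w_n$ and using $g_{\eps_n}(w_n)w_n\geq0$ gives
\[
\|w_n\|^2\leq\lambda\|w_n\|_{p+1}^{p+1}\leq\lambda S_{p+1}^{p+1}\|w_n\|^{p+1},
\]
so that $\|w_n\|^{1-p}\leq\lambda S_{p+1}^{p+1}$, exactly as in Proposition~\ref{N-way-origin-eps}(i). Hence, up to a subsequence, $w_n\rightharpoonup w$ in $H_0^1(\Omega)$, $w_n\to w$ in $L^r(\Omega)$ for $1\leq r<2^*$, and a.e., with $w\geq0$. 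Next, since $0\leq g_{\eps_n}(w_n)\leq w_n^{-\beta}$, each $w_n$ is a subsolution of $-\Delta z=\lambda z^p$. The uniform $L^\infty$ estimate \cite[Lemma~2.4]{Montenegro}, which follows by Moser iteration or by comparison with $M\,e$ for $-\Delta e=1$, gives $\|w_n\|_\infty\leq C$ independently of $n$. The local gradient estimate \cite[Lemma~3.1]{Montenegro} then yields uniform local Lipschitz bounds, and Arzel\`a--Ascoli gives locally uniform convergence $w_n\to w$ in $\Omega$.

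The second step is to pass to the limit in the weak formulation. Fix $\varphi\in C_c^\infty(\Omega)$. On $\{w>0\}$ locally uniform convergence gives $g_{\eps_n}(w_n)\to w^{-\beta}$ pointwise. The main obstacle is the region near the free boundary $\{w=0\}$, where $g_{\eps_n}(w_n)$ is not uniformly integrable a priori. I would argue as in \cite[Lemma~4.1]{Montenegro}. First, testing with $\varphi\geq0$ gives the bound
\[
\int_\Omega g_{\eps_n}(w_n)\varphi\leq\lambda\int_\Omega w_n^p\varphi-\int_\Omega\nabla w_n\nabla\varphi\leq C_\varphi .
\]
Fatou's lemma then shows $w^{-\beta}\chi_{\{w>0\}}\in L^1_{\rm loc}(\Omega)$. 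To identify the limit, I would test the equation with $\varphi\,\theta_\delta(w_n)$, where $\theta_\delta$ is a cutoff vanishing on $[0,\delta]$ and equal to $1$ on $[2\delta,\infty)$. On the support of this test function the singular term is bounded by $\delta^{-\beta}$, so dominated convergence applies there. It remains to show that the contribution of $\{w_n<2\delta\}$ vanishes as $\delta\to0$, uniformly in $n$. Since $\varphi(1-\theta_\delta(w_n))$ is supported where $w_n$ is small, this error is controlled by the gradient term $\int|\nabla w_n|^2\theta_\delta'(w_n)\varphi$ together with $\lambda\int w_n^p\varphi\chi_{\{w_n<2\delta\}}$. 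The gradient term can be bounded using the local Lipschitz estimate: one has $|\nabla w_n|^2\leq C w_n^{1-\beta}$ locally, the key inequality of \cite{Montenegro}, and this makes the error $O(\delta^{1-\beta}\cdot\delta^{-1}\cdot\delta)\to0$. It also gives the local integrability of $\nabla w\cdot\nabla\varphi$ on $\{w=0\}$ being zero. Letting $n\to\infty$ and then $\delta\to0$ gives the weak formulation of \eqref{P} for $w$, with the singular term on $\{w>0\}$.

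Finally, if $w_n\geq u_0$ a.e. with $u_0\not\equiv0$, then a.e. convergence gives $w\geq u_0$ a.e., hence $w\not\equiv0$. I expect the free-boundary control in the second step, namely the uniform smallness of the singular term near $\{w=0\}$, to be the delicate point. There I would rely directly on the gradient estimate $|\nabla w_n|^2\leq Cw_n^{1-\beta}$ of \cite{Montenegro}, as recalled in Remark~\ref{rmk:regularity-classical}.
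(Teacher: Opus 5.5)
Your route is the paper's: its proof consists of your $H_0^1$ bound, an appeal to the uniform $L^\infty$ bound, the local gradient estimate and the truncation argument of \cite{Montenegro}, and the a.e.\ inequality $w\geq u_0$ for nontriviality. However, the free-boundary estimate you spell out is wrong as stated, although your own ingredients repair it.

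\textbf{The error.} Take $K=\operatorname{supp}\varphi$, let $\theta_\delta$ be linear on $[\delta,2\delta]$, and assume $|\nabla w_n|^2\leq C_K w_n^{1-\beta}$ on $K$. What you actually get is
\[
\Bigl|\int_\Omega \varphi\,\theta_\delta'(w_n)|\nabla w_n|^2\,dx\Bigr|
\leq C\delta^{-\beta}\,\bigl|K\cap\{\delta\leq w_n\leq 2\delta\}\bigr|.
\]
Nothing makes the measure factor $O(\delta)$. Near a smooth free boundary where $w\approx c\,d^{2/(1+\beta)}$, it is of order $\delta^{(1+\beta)/2}\gg\delta$. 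Nor can you get smallness uniformly in $n$ from what you have. When $\delta$ is small compared with $\eps_n$, one has $g_{\eps_n}(w_n)\ll w_n^{-\beta}$ on $\{\delta\leq w_n\leq 2\delta\}$, so the bound $\int_\Omega g_{\eps_n}(w_n)\varphi\,dx\leq C_\varphi$ gives no useful information about that set.

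\textbf{The fix.} Let $n\to\infty$ first, then $\delta\to0$. For $n$ large, locally uniform convergence gives $K\cap\{\delta\leq w_n\leq2\delta\}\subset K\cap\{0<w\leq3\delta\}$. Hence
\[
\limsup_{n\to\infty}\Bigl|\int_\Omega \varphi\,\theta_\delta'(w_n)|\nabla w_n|^2\,dx\Bigr|
\leq C\delta^{-\beta}\bigl|K\cap\{0<w\leq3\delta\}\bigr|
\leq 3^{\beta}C\int_{K\cap\{0<w\leq3\delta\}} w^{-\beta}\,dx .
\]
The right-hand side tends to $0$ as $\delta\to0$ by dominated convergence, precisely because $w^{-\beta}\chi_{\{w>0\}}\in L^1_{\rm loc}(\Omega)$ by your Fatou step. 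The other terms then pass to the limit as you say. Letting $\delta\to0$ in $\int_\Omega\theta_\delta(w)\nabla w\cdot\nabla\varphi\,dx$ uses only that $\nabla w=0$ a.e.\ on $\{w=0\}$, a standard Sobolev fact, not the gradient estimate.

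\textbf{The gradient bound.} You genuinely need the weighted bound $|\nabla w_n|^2\leq Cw_n^{1-\beta}$, possibly up to a term vanishing as $\eps_n\to0$. Cite it from \cite[Lemma~3.1]{Montenegro} itself, not from Remark~\ref{rmk:regularity-classical}. The plain local Lipschitz bound recorded in that remark would only give $C\delta^{-1}|K\cap\{0<w\leq3\delta\}|$, which the integrability of $w^{-\beta}$ does not control.
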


\begin{proof}
Now, using $w_n$ as a testing function together with the fact that
$g_{\eps_n}(t)t\geq0$, we obtain
\[
\|w_n\|^2
\leq
\lambda\|w_n\|_{p+1}^{p+1}
\leq
\lambda S_{p+1}^{p+1}\|w_n\|^{p+1}.
\]
Hence $(w_n)$ is bounded in $H_0^1(\Omega)$. Therefore, up to a
subsequence, by using the Banach--Alaoglu Theorem and the compact Sobolev embedding,
\(
w_n\rightharpoonup w
~\text{in }H_0^1(\Omega),
~
w_n\to w
\quad\text{in }L^r(\Omega),
\)
for every $1\leq r<2^*$, and $w_n\to w$ a.e. in $\Omega$.

Recall also that $w_n\geq0$. Hence, we can apply the same process on the regularization considered by \cite{Montenegro}.
Consequently, the uniform $L^\infty$ estimate, the local gradient
estimate, and the truncation argument developed in
\cite[Lemma~2.4, Lemma~3.1 and the proof of Theorem~1.1]{Montenegro}
can be applied for $(w_n)$. It follows that $w_n\to w$ locally uniformly in $\Omega$ and that $w$
is a nonnegative weak solution of Problem~\eqref{P}.

Finally, assuming that $w_n\geq u_0$ a.e. in $\Omega$ for every $n$, the
a.e. convergence implies that
\(
w\geq u_0
\quad\text{a.e. in }\Omega.
\)
Since $u_0\not\equiv0$, we conclude that $w\not\equiv0$.
\end{proof}

\begin{prop}\label{prop:existence-monotone}
Assume that $0<\beta,p<1$ and let $\lambda>0$. Suppose that
$u_0\in H_0^1(\Omega)\setminus\{0\}$ is nonnegative,
\(
\frac{1}{u_0^\beta}\chi_{\{u_0>0\}}
\in L^1_{\rm loc}(\Omega),
\)
and satisfies
\begin{equation}\label{subsolution-P}
\int_\Omega\nabla u_0 \nabla\varphi\,dx
+
\int_{\{u_0>0\}}
\frac{\varphi}{u_0^\beta}\,dx
\leq
\lambda\int_\Omega u_0^p\varphi\,dx\;\;\forall \varphi\in C_c^\infty(\Omega),\;\varphi\geq 0.
\end{equation}
Then
Problem~\eqref{P} admits a nontrivial nonnegative weak solution
$w_\lambda$ satisfying $
w_\lambda\geq u_0
\quad\text{a.e. in }\Omega.$\\
In particular, assuming that Problem~$(P_{\lambda_0})$ admits a nontrivial
nonnegative weak solution for some $\lambda_0>0$, we obtain that
Problem~$(P_\lambda)$ admits a nontrivial nonnegative weak solution
for every $\lambda>\lambda_0$.
\end{prop}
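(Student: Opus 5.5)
The plan is to use $u_0$ as a subsolution of the regularized problems \eqref{P-pertubed} and to pass to the limit with Lemma~\ref{lem:singular-compactness}.

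\emph{Step 1: $u_0$ is a subsolution of the regularized problems.} Since $g_\eps(s)\leq s^{-\beta}$ for $s>0$ and $g_\eps(0)=0$, inequality \eqref{subsolution-P} gives
\[
\int_\Omega\nabla u_0\nabla\varphi\,dx+\int_\Omega g_\eps(u_0)\varphi\,dx\leq\lambda\int_\Omega u_0^p\varphi\,dx,\qquad \varphi\geq0 .
\]
This holds first for $\varphi\in C_c^\infty(\Omega)$ and then, by density and Fatou, for nonnegative $\varphi\in H_0^1(\Omega)$. So $u_0$ is a weak subsolution of \eqref{P-pertubed} for every $\eps>0$.

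\emph{Step 2: a solution of the regularized problem lying above $u_0$.} For fixed $\eps$, I would minimize $I_{\lambda,\eps}$ over the closed convex set $K:=\{u\in H_0^1(\Omega): u\geq u_0\}$. This functional is coercive, because $G_\eps\geq0$ and $p+1<2$, and it is weakly lower semicontinuous, since $G_\eps$ has sublinear growth and the embedding is compact. Hence a minimizer $w_\eps\in K$ exists.

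To see that $w_\eps$ is a genuine critical point, I would use the standard trick of a truncated functional. Replace the nonlinearity $f(x,s):=-g_\eps(s)+\lambda|s|^{p-1}s$ by $f(x,\max\{s,u_0(x)\})$. Test the Euler equation of the truncated problem with $(u_0-w)^+$, and use the subsolution inequality from Step~1 together with the fact that $f$ is evaluated at $u_0$ on $\{w<u_0\}$. This gives $\|(u_0-w)^+\|^2\leq0$, so every critical point $w$ of the truncated functional satisfies $w\geq u_0$ and therefore solves \eqref{P-pertubed}.

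The truncated functional is coercive and $C^1$ (the nonlinearity is continuous and bounded by $C(1+|s|^p)$). A global minimizer $w_\eps$ therefore exists, and it is a nonnegative weak solution of \eqref{P-pertubed} with $w_\eps\geq u_0\geq0$.

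\emph{Step 3: passage to the limit.} Take $\eps_n\to0^+$. Lemma~\ref{lem:singular-compactness}, applied with the lower bound $w_{\eps_n}\geq u_0$, gives a nonnegative weak solution $w_\lambda$ of \eqref{P} with $w_\lambda\geq u_0$ a.e. In particular $w_\lambda\not\equiv0$.

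\emph{Step 4: monotonicity in $\lambda$.} Let $u$ be a nontrivial nonnegative solution of $(P_{\lambda_0})$ and let $\lambda>\lambda_0$. The local integrability condition holds because $u$ is a weak solution. For $\varphi\geq0$,
\[
\int_\Omega\nabla u\nabla\varphi\,dx+\int_{\{u>0\}}\frac{\varphi}{u^\beta}\,dx=\lambda_0\int_\Omega u^p\varphi\,dx\leq\lambda\int_\Omega u^p\varphi\,dx .
\]
So $u$ satisfies \eqref{subsolution-P}, and the first part of the proposition applies.

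\emph{Main obstacle.} The delicate point is Step~2, namely the comparison argument and the regularity of the truncated functional. The nonlinearity $g_\eps$ is only Hölder continuous at $0$, but it is continuous and bounded, so the functional is still $C^1$. One must also check that testing with $(u_0-w)^+\in H_0^1(\Omega)$ is legitimate in the subsolution inequality; this is handled by the density extension in Step~1, using $g_\eps(u_0)\in L^\infty(\Omega)$.
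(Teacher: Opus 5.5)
Your proposal is correct and follows the paper's architecture. The truncation $f(x,\max\{s,u_0\})$ is exactly the paper's $\widetilde f_{\lambda,\eps}$. You then take a global minimizer of the truncated functional, compare it with $u_0$ to get $w_\eps\ge u_0$, pass to the limit with Lemma~\ref{lem:singular-compactness}, and close with the same monotonicity step.

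The genuine difference is the comparison step. The paper keeps the singular subsolution inequality as it stands and writes $-\Delta u_0=\lambda u_0^p-u_0^{-\beta}\chi_{\{u_0>0\}}-\mu$ with $\mu\ge0$ a Radon measure. It then applies the Brezis--Ponce measure version of Kato's inequality to the diffuse measure $-\Delta(u_0-w_{\lambda,\eps})$, followed by the weak maximum principle. You instead use $0\le g_\eps(s)\le s^{-\beta}$ at the outset, which makes $u_0$ a subsolution of the regularized problem with lower-order term $g_\eps(u_0)\in L^\infty(\Omega)$. Every term is then a continuous linear functional on $H_0^1(\Omega)$, so the inequality extends to nonnegative $H_0^1$ test functions by density alone; Fatou is not needed. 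Testing with $(u_0-w)^+$ then gives $\|(u_0-w)^+\|^2\le0$ in one line. Both arguments rest on the same pointwise bound $g_\eps(s)\le s^{-\beta}$. Yours avoids measure-valued Laplacians, capacity, and the delicate restriction of $-\Delta h_\eps$ to the set $\{h_\eps>0\}$, so it is more elementary and easier to check. The paper's route works with the singular inequality directly, at the cost of machinery this proposition does not need.

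Two small corrections:
\begin{itemize}
\item The growth bound for the truncated nonlinearity should read $|f(x,\max\{s,u_0\})|\le C_\eps+\lambda(|s|^p+u_0(x)^p)$, not $C(1+|s|^p)$, because $u_0$ need not be bounded. Since $u_0^p\in L^{2^*/p}(\Omega)$, the $C^1$ property and coercivity still hold.
\item The preliminary minimization over $K$ is superfluous. On $K$ the truncated functional differs from $I_{\lambda,\eps}$ by a constant, and its global minimizer already lies in $K$.
\end{itemize}
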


\begin{proof}
We divide the proof into two parts.

\medskip
\noindent \textbf{Part 1. Subsolution and minimization.}
For each $\eps>0$, define
$\widetilde f_{\lambda,\eps}:\Omega\times\mathbb R\to\mathbb R$ by
$$
\widetilde f_{\lambda,\eps}(x,s)
=\left\{
\begin{array}{lr}
\lambda u_0(x)^p-g_\eps(u_0(x)),
& s\leq u_0(x),\\
\lambda s^p-g_\eps(s),
& s>u_0(x).
\end{array}\right.\;\;\text{and}\;\;\;\;
\widetilde F_{\lambda,\eps}(x,s)
:=
\int_0^s\widetilde f_{\lambda,\eps}(x,t)\,dt.
$$
Consider
\[
\widetilde I_{\lambda,\eps}(u)
=
\frac12\|u\|^2
-
\int_\Omega
\widetilde F_{\lambda,\eps}(x,u)\,dx,
\qquad
u\in H_0^1(\Omega).
\]
Since $0<q<p<1$, the functional $\widetilde I_{\lambda,\eps}$ is
coercive and weakly lower semicontinuous. Hence, by the direct
method of the calculus of variations, there exists
$w_{\lambda,\eps}\in H_0^1(\Omega)$ such that
\[
\widetilde I_{\lambda,\eps}(w_{\lambda,\eps})
=
\inf_{H_0^1(\Omega)}\widetilde I_{\lambda,\eps}.
\]
In particular,
\[
-\Delta w_{\lambda,\eps}
=
\widetilde f_{\lambda,\eps}(x,w_{\lambda,\eps})
\quad\text{in }\Omega.
\]

We claim that
\(
w_{\lambda,\eps}\geq u_0
\quad\text{a.e. in }\Omega.
\)
Set
$$
h_\eps:=u_0-w_{\lambda,\eps}
$$
and define
\[
F
:=
\lambda u_0^p
-
\frac{1}{u_0^\beta}\chi_{\{u_0>0\}}
\in L^1_{\rm loc}(\Omega).
\]
By \eqref{subsolution-P},
\[
\left\langle
F-(-\Delta u_0),\varphi
\right\rangle
:=
\int_\Omega F\varphi\,dx
-
\int_\Omega\nabla u_0\nabla\varphi\,dx
\geq0,
\qquad
0\leq\varphi\in C_c^\infty(\Omega).
\]
Hence
\(
\mu:=F-(-\Delta u_0)
\)
is a nonnegative Radon measure in $\Omega$, and therefore
\[
-\Delta u_0=F-\mu
\quad\;\in\mathcal D'(\Omega).
\]
Since
$-\Delta w_{\lambda,\eps}
=
\widetilde f_{\lambda,\eps}
(x,w_{\lambda,\eps})
\in L^1_{\rm loc}(\Omega)
$,
it follows that $-\Delta h_\eps=-\Delta(u_0-w_{\lambda,\epsilon})$ is a Radon measure. Moreover,
$h_\eps\in H_0^1(\Omega)$, and thus
\(
-\Delta h_\eps
\in
\mathcal M_{loc} (\Omega)\cap H^{-1}(\Omega).
\)
In particular, $-\Delta h_\eps$ is diffuse with respect to the
$W^{1,2}$-capacity. Therefore, by the measure version of Kato's
inequality \cite[Theorem~1.1 and Remark~1]{BrezisPonce},
\[
-\Delta h_\eps^+
\leq
\chi_{\{h_\eps>0\}}(-\Delta h_\eps)
\quad\text{in }\mathcal D'(\Omega).
\]

On the set $\{h_\eps>0\}$, we have
$w_{\lambda,\eps}<u_0$, and hence
\[
-\Delta w_{\lambda,\eps}
=
\lambda u_0^p-g_\eps(u_0).
\]
Consequently, on the set $\{h_{\epsilon}>0\}$
$$-\Delta h_\eps
=\left(
g_\eps(u_0)-\frac{1}{u_0^\beta}\right)-\mu\;\;
\leq\;\;0,$$
where we used $
0\leq g_\eps(s)\leq s^{-\beta}$ and $\mu\geq0$. Hence
\(
-\Delta h_\eps^+\leq0
~\text{in }\mathcal D'(\Omega).
\)
Since $h_\eps^+\in H_0^1(\Omega)$, the weak maximum principle gives
\(
h_\eps^+=0.
\)
Therefore,
\(
w_{\lambda,\eps}\geq u_0\geq 0
~\text{a.e. in }\Omega.
\)

Thus the truncation is inactive at $w_{\lambda,\eps}$, and
\[
-\Delta w_{\lambda,\eps}
+g_\eps(w_{\lambda,\eps})
=
\lambda w_{\lambda,\eps}^p
\quad\text{in }\Omega.
\]

Using $w_{\lambda,\eps}$ as a test function and taking into account that
$g_\eps(t)t\geq0$, we obtain
\[
\|w_{\lambda,\eps}\|^2
\leq
\lambda S_{p+1}^{p+1}
\|w_{\lambda,\eps}\|^{p+1}.
\]
Hence $(w_{\lambda,\eps})$ is bounded in $H_0^1(\Omega)$ uniformly
with respect to $\eps>0$.

\medskip

\noindent \textbf{Part 2. Passage to the limit and monotonicity.}
Let $\eps_n\to0^+$. By
Lemma~\ref{lem:singular-compactness}, up to a subsequence, there
exists a nonnegative weak solution $w_\lambda$ of $(P_\lambda)$
such that
\(
w_{\lambda,\eps_n}\to w_\lambda
~\text{a.e. in }\Omega.
\)
Since
\(
w_{\lambda,\eps_n}\geq u_0
\quad\text{a.e. in }\Omega,
\)
we obtain
\(
w_\lambda\geq u_0
~\text{a.e. in }\Omega.
\)
The last assertion ensures that $w_\lambda\not\equiv0$.

Now, we suppose that $u_0$ is a nontrivial nonnegative weak
solution of $(P_{\lambda_0})$ and that $\lambda>\lambda_0$.
Then, for every nonnegative $\varphi\in C_c^\infty(\Omega)$,
\[
\begin{aligned}
\int_\Omega\nabla u_0 \nabla\varphi\,dx
+
\int_{\{u_0>0\}}\frac{\varphi}{u_0^\beta}\,dx
&=
\lambda_0\int_\Omega u_0^p\varphi\,dx
\leq
\lambda\int_\Omega u_0^p\varphi\,dx.
\end{aligned}
\]
Hence $u_0$ satisfies \eqref{subsolution-P}. The last
argument yields a nontrivial nonnegative weak solution of
\eqref{P}.
\end{proof}
Now, we are in a position to prove that the extremal parameter
$\bar\lambda$ is attained.

\medskip
\noindent{\bf Proof of Theorem \ref{theorem4a}.}
We divide the proof into two parts.

\medskip
\noindent \textbf{Part 1.} Existence of a nontrivial subsolution and attainment of $\bar\lambda$.
Let \(
\mathcal S
\mbox{~and~}
\bar\lambda:=\inf\mathcal S
\) be defined as \eqref{lambda-bar}. By the definition of $\bar\lambda$, there exists a sequence
$(\lambda_k)\subset\mathcal S$ such that
\(
\lambda_k\to\bar\lambda.
\)
For each $k$, choose a nontrivial nonnegative weak solution $u_k$
of $(P_{\lambda_k})$. In view of
Lemma~\ref{lem:nehari-identity-solution},
\(
u_k\in\mathcal N_{\lambda_k}.
\)
Therefore, Proposition~\ref{problema}-(iv) gives
\(
\lambda_k\geq\lambda_*
~\text{for every }k.
\)
In particular, we see that
\(
\bar\lambda\geq\lambda_*>0.
\)

Notice also that $(\lambda_k)$ is bounded. Then, there exists $\Lambda>0$ such that
$\lambda_k\leq\Lambda$ for every $k$. Once again, using the definition of the Nehari set, we see that 
\[
\|u_k\|^2
\leq
\lambda_k\|u_k\|_{p+1}^{p+1}
\leq
\Lambda S_{p+1}^{p+1}\|u_k\|^{p+1}.
\]
Thus $(u_k)$ is bounded in $H_0^1(\Omega)$. Up to a subsequence,
we know that \(
u_k\rightharpoonup\bar u
~\text{in }H_0^1(\Omega),
~
u_k\to\bar u
~\text{in }L^{p+1}(\Omega),
\)
and $u_k\to\bar u$ a.e. in $\Omega$. Furthermore, by \eqref{dis-ori} and $\lambda_k\leq\Lambda$, there
exists $d_\Lambda>0$, independent of $k$, such that
\(
\|u_k\|_{p+1}^{p+1}\geq d_\Lambda>0.
\)
Consequently, $\bar u\not\equiv0$ since
\[
\|\bar u\|_{p+1}^{p+1}
=
\lim_{k\to\infty}\|u_k\|_{p+1}^{p+1}
\geq d_\Lambda>0.
\]

We claim that $\bar u$ satisfies \eqref{subsolution-P} with
$\lambda=\bar\lambda$. Let
$\varphi\in C_c^\infty(\Omega)$, $\varphi\geq0$. Recall also that $u_k$ is a
weak solution of $(P_{\lambda_k})$,
\begin{equation}\label{LTD}
\int_\Omega\nabla u_k \nabla\varphi\,dx
+
\int_{\{u_k>0\}}\frac{\varphi}{u_k^\beta}\,dx
=
\lambda_k\int_\Omega u_k^p\varphi\,dx.
\end{equation}
We emphasize that \eqref{LTD} implies uniform boundedness for the integral $\int_{\{u_k>0\}}\frac{\varphi}{u_k^\beta}\,dx,$ once \begin{equation}\label{LTD2}\int_{\{u_k>0\}}\frac{\varphi}{u_k^\beta}\,dx\leq \Lambda ||\varphi||_{\infty}||u_k||_p^p+C||u_k||.\end{equation}
For each $\delta>0$, we consider the truncated integrals
\[
I_k^\delta
:=
\int_{\{u_k>\delta\}}\frac{\varphi}{u_k^\beta}\,dx.
\]
Since $u_k\geq0$ and $\{u_k>\delta\}\subset\{u_k>0\}$, we have
$I_k^\delta\leq\int_{\{u_k>0\}}\varphi u_k^{-\beta}\,dx$. On the set
$\{\bar u>\delta\}$, the strong convergence
$u_k\to\bar u$ in $L^{p+1}(\Omega)$ and the a.e.\ convergence
$u_k\to\bar u$ imply that $u_k^{-\beta}\varphi\to\bar u^{-\beta}\varphi$ a.e in $\Omega$, with
$\bigl|u_k^{-\beta}\varphi\bigr|\chi_{\{u_k>\delta\}}\leq \delta^{-\beta}\|\varphi\|_\infty$.
As a consequence, from Fatou's lemma,

$$
\int_{\{\bar u>\delta\}}\frac{\varphi}{\bar u^\beta}\,dx
\leq 
\liminf_{k\to\infty} \int_{\{u_k>\delta\}}
\frac{\varphi}{u_k^\beta}\,dx
\leq
\liminf_{k\to\infty}
\int_{\{u_k>0\}}
\frac{\varphi}{u_k^\beta}\,dx.
$$

From the uniform boundedness \eqref{LTD2} we may letting $\delta\downarrow0$ and using the Monotone Convergence
Theorem on the left-hand side (since
$\bar u^{-\beta}\varphi\chi_{\{\bar u>\delta\}}$ increases to
$\bar u^{-\beta}\varphi\chi_{\{\bar u>0\}}$ as $\delta\downarrow0$),
we obtain
\[
\int_{\{\bar u>0\}}
\frac{\varphi}{\bar u^\beta}\,dx
\leq
\liminf_{k\to\infty}
\int_{\{u_k>0\}}
\frac{\varphi}{u_k^\beta}\,dx.
\]
On the other hand, the weak convergence in $H_0^1(\Omega)$, the
strong convergence in $L^{p+1}(\Omega)$, imply that 
\[
\lambda_k\int_\Omega u_k^p\varphi\,dx
-
\int_\Omega\nabla u_k \nabla\varphi\,dx
\longrightarrow
\bar\lambda\int_\Omega\bar u^p\varphi\,dx
-
\int_\Omega\nabla\bar u \nabla\varphi\,dx.
\]
Therefore,
\[
\int_\Omega\nabla\bar u \nabla\varphi\,dx
+
\int_{\{\bar u>0\}}
\frac{\varphi}{\bar u^\beta}\,dx
\leq
\bar\lambda\int_\Omega\bar u^p\varphi\,dx.
\]
In particular,
\(
\frac{1}{\bar u^\beta}\chi_{\{\bar u>0\}}
\in L^1_{\rm loc}(\Omega).
\)
Hence, $\bar u$ is a nontrivial nonnegative weak subsolution of
$(P_{\bar\lambda})$. As a consequence, Proposition~\ref{prop:existence-monotone}
yields a nontrivial nonnegative weak solution
$u_{\bar\lambda}$ of $(P_{\bar\lambda})$. Thus 
\(
\bar\lambda\in\mathcal S.
\)

\medskip
\noindent \textbf{Part 2.} Location of $\bar\lambda$.
It remains to show that $\bar\lambda>\lambda_*$. Suppose, by
contradiction, that
\(
\bar\lambda=\lambda_*.
\)
Since $u_{\bar\lambda}$ is a nontrivial nonnegative weak solution
of $(P_{\lambda_*})$, Lemma~\ref{lem:nehari-identity-solution}
gives
\(
u_{\bar\lambda}\in\mathcal N_{\lambda_*}.
\)
In view of Proposition~\ref{problema}-(ii), we mention that
\(
\mathcal N_{\lambda_*}
=
\mathcal N_{\lambda_*}^0.
\)
Therefore, 
\(
u_{\bar\lambda}\in\mathcal N_{\lambda_*}^0.
\)
This is a contradiction with Proposition~\ref{non-exis-lam*}. As a consequence, 
\(
\lambda_*<\bar\lambda.
\)

Now,  we consider $\epsilon_1>0$  which is given by Theorem~\ref{theorem1} and
choose
\(
\lambda_0\in(\lambda^*-\epsilon_1,\lambda^*).
\)
In virtue of Theorem~\ref{theorem1}, Problem~$(P_{\lambda_0})$ admits a
nontrivial nonnegative weak solution. Thus $\lambda_0\in\mathcal S$
and
\(
\bar\lambda\leq\lambda_0<\lambda^*.
\)
Consequently,
\(
\lambda_*<\bar\lambda<\lambda^*,
\)
which completes the proof.

\noindent{\bf Proof of Theorem \ref{theorem4}.}
In light of Theorem~\ref{theorem4a},
$\bar\lambda\in\mathcal{S}$. Hence Problem $(P_{\;\bar\lambda})$ admits a
nontrivial nonnegative weak solution.
Let us consider $\lambda>\bar\lambda$. In virtue of 
Proposition~\ref{prop:existence-monotone} with
$\lambda_0=\bar\lambda$, we conclude that
Problem~\eqref{P} admits a nontrivial nonnegative weak solution
for every $\lambda>\bar\lambda$. Conversely, for each $0<\lambda<\bar\lambda$, we obtain that
$\lambda\notin\mathcal S$. Therefore, we infer that 
\(
\mathcal S=[\bar\lambda,+\infty),
\)
which completes the proof.

\section{\textbf{Declarations}}

\textbf{Ethical Approval: } It is not applicable.

\textbf{Competing interests:} There are no competing interests.

\textbf{Authors' contributions:} All authors wrote and reviewed this paper.

\textbf{Availability of data and materials:} All the data can be accessed from this article.

\end{document}